\definecolor{linkcolour}{rgb}{0,0.2,0.6}
\definecolor{bobby}{gray}{0}
\definecolor{LightCyan}{rgb}{0.88,1,1}
\newtheorem{theorem}{Theorem}[section]
\newtheorem{lemma}[theorem]{Lemma}
\newtheorem{proposition}[theorem]{Proposition}
\newtheorem{corollary}[theorem]{Corollary}
\theoremstyle{definition}
\newtheorem{definition}[theorem]{Definition}
\newtheorem{convention}[theorem]{Convention}
\newtheorem{example}[theorem]{Example}
\theoremstyle{remark}
\newtheorem{remark}[theorem]{Remark}
\numberwithin{equation}{section}
\DeclareMathSymbol{\cmemptyset}{\mathord}{symbols}{59}
\newcommand{\prematrix}[3]{\left(\!\!\def\arraystretch{1.2} \begin{array}{#1} #2 \\\hline #3 \end{array}\!\!\right)}
\begin{document}

\large

\title{Category theory for genetics II:\\genotype, phenotype and haplotype}

\author{R\'{e}my Tuy\'{e}ras}
\curraddr{}
\email{rtuyeras@gmail.com}
\thanks{This research was supported by the AFOSR grants FA9550-14-1-0031 and FA9550-17-1-0058 (email David Spivak at dspivak@mit.edu) and by the NIH grants R01-HG008155 and R01-AG058002 (email Manolis Kellis at manoli@mit.edu).}

\date{}

\dedicatory{}

\begin{abstract}
The overarching goal of this paper is to solve the word problem for a class of idempotent commutative monoids whose elements model population haplotypes. More specifically, we design an algebraic framework in which it is possible to unravel population stratification relationships and infer linkage disequilibrium in terms of algebraic equations of haplotypes expressed in idempotent commutative monoids. We show how these relations can be used to clarify haplotype-phenotype associations through the consideration of intermediate phenotypes and genetic mechanisms such as segregation and homologous recombination. The present work paves the way for the implementation of combinatorial GWAS in the study of complex traits, and for a framework in which one can infer genetic variants interactions along with the corresponding regulatory circuitry. Throughout the paper, we formalize concepts such as genotypes, haplotypes and haplogroups, and model segregation and homologous recombination in the language of pedigrads (introduced in previous work). The benefit of using pedigrads is that they provide a computational framework in which one can reason about haplotype dynamics over several generations.
\end{abstract}

\subjclass[2010]{18C99, 92B99, 16Y60, 20M99, 92D10}

\keywords{
Limit sketches, Idempotent commutative monoids, Semiring, Word problem, Genetics, Linear algebra.
}

\maketitle

\section{Introduction}

\subsection{Short presentation}
The applied scope of the present article is to propose a set of novel techniques taking place in idempotent commutative monoids to better encode and clarify linkage disequilibrium relations across genomic datasets (this is further explained below, in section \ref{ssec:motivations}).
At a more theoretical level, the paper takes on the task of solving the word problem for a specific class of idempotent commutative monoids. These idempotent commutative monoids are defined in a category of models for a certain class of limit sketches. We use the universal property of the models to determine conditions in which the monoids are (locally) isomorphic to free finitely-generated idempotent commutative monoids and use this information to solve the word problem through a specific algorithm.

As will be seen, the use of idempotent commutative monoid structures shall be motivated by the modeling of homologous recombination and the uncertainty that lies in retracing the emergence of new haplotypes. Overall, this paper tackles the question of inferring the intermediate-phenotypic circuitry linking genotypes to phenotypes by encoding the space of population-stratification dependencies through formal algebraic equations of haplotypes. For this purpose, we develop a framework in which one can find minimal algebraic relations between haplogroups to explain how the corresponding haplotypes can lead to specific combinations of phenotypes. In particular, the present work paves the way for a framework in which one can represent and analyze combinatorial genetic interactions leading to complex phenotypes.

\subsection{Motivations} \label{ssec:motivations}
The holy grail of epidemiology is to (1) accurately predict the occurrence of diseases within individuals and (2) to detect the factors that are responsible for the occurrence of these diseases \cite[Sec. \emph{Biological mechanisms versus risk prediction}]{Lander_intro}. By being able to do so, researchers hope to create new opportunities to find tractable ways to act on those factors, specifically to prevent, stop, or limit the development or continuation of the diseases. Because complex diseases usually arise as a combination of a multitude of genetic effects, research in genomics and epidemiology has concentrated its efforts on understanding the likelihood with which sets of genetic variants are inherited together. It turns out that this phenomenon is not rare and underlies many challenges faced in research today. For this very reason, the fact that two genetic variants may be more likely to be transmitted together through generations underlies an important problem, which is referred to as \emph{linkage disequilibrium} \cite{LD1, LD2, LD3, LD4, LD5}.

When ignoring natural selection, linkage disequilibrium can be seen as a consequence of Mendel's first and second inheritance laws \cite{Mendel_nature}, namely segregation and homologous recombination \cite{recomb1,recomb2,McPeek,Haldane,Zhao_mapF,Speed_GMF}. Meanwhile, Mendel's third inheritance law would state that genotypes are linked phenotypes through non-linear relationships. While the first two laws will constitute an inherent part of our axiomatization, we will show how the third law can be regarded as a logical deduction of our theory through the use of pedigrads \cite{Seqali}. For the sake of modeling genetic mechanisms, we argue that these pedigrads are to be enriched in the category of idempotent commutative monoids. To motivate the use of such objects in the paper, this introduction aims to give insights regarding the extent to which the first and second Mendelian laws complicates the discovery of genotype-phenotype associations. In particular, we discuss the concept of linkage disequilibrium from the lens of current techniques in computational biology and intuitions developed by Mendel, the father of genetics (see \cite{Mendel_history}), to eventually justify the necessity of using pedigrads.

First, recall that \emph{haplotypes} are collections of DNA variants that are inherited together. As a result, haplotypes constitute a sound concept to characterize the collection of variants that a given population can possess \cite{Hap_map,Hap_LD}. In the spirit of Mendel's experiments for finding the laws of inheritance, let us initiate our discussion on haplotypes by using an example of a progeny. Note that the point of considering a progeny is to ensure that the amount of variations that could confound our deductions is minimal. In this respect, consider a situation in which two individuals $M$ and $F$ produce an offspring $C_1$ -- we shall suppose that the offsprings do not undergo mutations. In theory, the genomic information possessed by $C_1$ is a mixture of the genomic information possessed by $M$ and $F$. To explain this from the point of view of a specific position $p$ in the genome, we could assume that $M$ has a genotype\footnote{A genotype is made of a pair of \emph{alleles} encoded by nucleotide symbols (\emph{e.i.} $\mathtt{A}$, $\mathtt{C}$, $\mathtt{G}$, $\mathtt{T}$). Note that every person possesses two chromosomes of a same type (these chromosomes are said to be \emph{homologous}) and each letter refers to the nucleotide (or allele) of one of the two homologous chromosomes.} $\mathtt{AT}$ at position $p$ and $F$ has a genotype $\mathtt{AT}$ at the same position. In that case, the child $C_1$ would have one of the following genotypes at position $p$: $\mathtt{AA}$, $\mathtt{AT}$ and $\mathtt{TT}$. To set a directionality in the way these genotypes are generated through generations, we can write this process with an arrow as follows.
\begin{equation}\label{eq:intro:prepare:icm-equations}
\mathtt{AT} + \mathtt{AT} \to \mathtt{AA}+\mathtt{AT}+\mathtt{TT}
\end{equation}
More generally, if the two individuals $F$ and $M$ begets a progeny $C_1$, $C_2$, \dots, $C_n$ of $n$ individuals, then the genotypes of the progeny run over the set $\{\mathtt{AA},\mathtt{AT},\mathtt{TT}\}$. While the family generated by $F$ and $M$ has a haplotype equal to $\{\mathtt{A},\mathtt{T}\}$, there are subgroups of children of $F$ and $M$ whose haplotypes are either $\{\mathtt{A}\}$ or $\{\mathtt{T}\}$, namely those children with the genotype $\mathtt{AA}$ or $\mathtt{TT}$. If we forget the fact that we are looking at a progeny, and instead think of this progeny as a population, the two subpopulations whose haplotypes are either $\{\mathtt{A}\}$ or $\{\mathtt{T}\}$ would not be able to produce individuals with genotypes $\mathtt{AT}$ by themselves. This bottleneck effect suggests a principle discussed in the paragraph below regarding genotypes-phenotypes associations.

For illustration, suppose that $F$ and $M$ are both healthy individuals and that all the progeny of $F$ and $M$ is healthy up to $C_{n-1}$, while $C_n$ is the first individual showing signs of a specific disease. Assuming that this disease is not age dependent and has a single genetic cause (meaning that there is a function from the genotypes to the disease), such combinations of phenotypes suggests that $F$ or $M$ must possess a heterozygous\footnote{Heterozygous genotypes consist of pairs of two different nucleotides while homozygous genotypes consist of pairs of the same nucleotide.} genotype (\emph{e.g.} $\mathtt{AT}$) at the causal position and that the disease can be caused by one of the two possible homozygous genotype (\emph{e.g.} $\mathtt{AA}$) such that all the healthy children $C_1,C_2,\dots,C_{n-1}$ possess the other homozygous genotype (\emph{e.g.} $\mathtt{TT}$) or the heterozygous genotype (\emph{e.g.} $\mathtt{AT}$). This illustrates how one can learn information about the complexity of certain genotype-phenotype associations by considering dynamics resulting from Mendel's first and second laws.

However, current research directions have taken a slightly less combinatorial approach than the one taken by Mendel and instead focus on what is known as additive genetic effects. From now on, our goal will be to review techniques in the latter case in order to emphasize the contribution of the combinatorial framework developed in this paper.

In the additive setting, genetic effects are expected to be proportional to numerical values typically encoding genetic variants, specifically the values 0 or 2 would be given to homozygous genotypes and the value 1 would be given to heterozygous genotypes. In this setting, the transformation shown in (\ref{eq:intro:prepare:icm-equations}) do not have much sense anymore, and cannot even be translated in terms of an equation involving the genotype values. For example, the numerical translation of transformation (\ref{eq:intro:prepare:icm-equations}) is the equation $1+1 = 0+1+2$, which does not hold. In fact, a lot of equations in the additive setting do not translate back to the combinatorial setting discussed earlier. Yet, the additive framework has one major advantage, which is that it allows one to model genetic effects as weighted sums of the genotype values. These weighted sums are known as \emph{polygenic risks scores} (PRS) \cite{PRS_survey, PRS_tutorial} and are commonly used for predicting the occurrence of a disease through a scoring system. However, this advantage also presents a trade-off in accuracy because the calculation of the weights entails approximations that often results in prediction errors.

Nevertheless, one lesson that the additive hypothesis teaches us is that genetic effects are the results of a multitude of genetic variations. Specifically, the PRS of an individual $j$ for a given disease is computed as a linear equation of the genotype values of this individual for a set $I$ of genetic variants. This means that if we denote the genotype values of individual $j$ as $g_i(j)$ for each variant $i \in I$, then the PRS equation multiplies each value $g_i(j)$ by a coefficient $\beta_i$ as shown in equation (\ref{eq:prs-equation:intro}) below.
\begin{equation}\label{eq:prs-equation:intro}
\mathsf{PRS}(j) = \sum_{i \in I} g_i(j) \beta_i
\end{equation}
While the variables $g_i(j)$ take their values in the set $\{0,1,2\}$, the coefficients $\beta_i$ are real values measuring the correlation existing between the genetic variant $i \in I$ and the disease. For the readers learned in statistical modeling, this should suggest that the coefficients $\beta_i$ can be approximated from linear regressions computed with respect to each genetic variant $i \in I$. In practice, when these linear regressions are computed with respect to a large set of variants across the genome, the list of resulting weights $\beta_i$ is referred to as a genome-wide association study (GWAS) \cite{GWAS_1,GWAS_2}. In such studies, each coefficient $\beta_i$ is the effect size of a linear regression between the local genotype matrix $Z_i = (g_i(j))_{j}$ -- or ideally the global genotype matrix $Z = (g_i(j))_{j,i}$ ---- and a vector $y = (y_j)_j$ of disease values $y_j$ for each individual $j$, as shown below.
\[
\def\arraystretch{1.5}
\left\{
\begin{array}{ll}
y = Z_i\beta_i+\varepsilon_i&(\textrm{pointwise linear regression for each $i \in I$})\\
y = Z\beta+\varepsilon&(\textrm{global linear regression over $I$})
\end{array}
\right.
\]
After this, the obtained coefficients $\beta = (\beta_i)_i$ would be used in the computation of the PRS equation. In practice, the pointwise linear regression $y = Z_i\beta_i+\varepsilon$ is faster to compute, and for this reason, would be preferred over the global regression $y = Z\beta+\varepsilon$. However, the global regression would be more accurate as it takes into account the genetic similarities existing between each individual. This additional information is useful to eliminate certain biases that cannot be taken into account by the local regressions $y = Z_i\beta_i+\varepsilon$, and thus provides more accurate estimations of the coefficients $\beta_i$.

The previous paragraph underlies an important challenge regarding the construction of PRS equations, namely the weights provided by GWAS would predict the occurrence of a given genetic effect solely through the lens of one variant at a time \cite{GWAS_prob2,GWAS_prob1}. On the other hand, multivariate linear regressions would measure that genetic effect through the lens of a multitude of variants.  The advantage of considering a greater number of variants is that one has more chances to explain the occurrence of comorbidities that arise along with a given disease -- this phenomenon is known as \emph{pleiotropy} \cite{Pleiotropy1,Pleiotropy2,Pleiotropy3}. Ideally, one wants pleiotropic models to explain comorbidities for a better classification of the associated phenotypes (\emph{e.g.} in the case of Alzheimer's disease, being able to do so would allow one to better understand and classify the disease's subtypes and hence design personalized treatment strategies \cite{Intermediate_Alz}). However, the GWAS procedures used to calculate PRS weights makes RPS equations rather disease-specific. As a result, the use of PRS in the prediction of multiple diseases can introduce further prediction errors. Still, the consideration of models involving multiple phenotypes is expected to increase the overall accuracy of these predictions. Indeed, while the correspondence from genotypes to phenotypes is not one-to-one, it is more likely to be so if we take intermediate phenotypes associated with a given phenotype \cite{Intermediate1,Intermediate2,Intermediate3}.
\[
\xymatrix@C+70pt{
\fbox{\textrm{Genetics}} \ar@<-.7ex>@{->}[r] \ar@<+.7ex>@{<-}[r]^-{\small\begin{array}{c}\textrm{pleiotropic}\\\textrm{relationship}\end{array}} & \fbox{\textrm{Intermediates}}\ar[r]^-{\small\begin{array}{c}\textrm{A phenotype is proxy}\\\textrm{for other phenotypes}\end{array}} & \fbox{\textrm{Phenotype}}
}
\]

Even though non-linear methods are being developed to take into account multiple phenotypes, these usually combine several layers of techniques that make their framework less accessible to human reasoning \cite{non-linear1,non-linear2,non-linear3}. Meanwhile, let us notice that the type of transformations discussed in (\ref{eq:intro:prepare:icm-equations}) can be used to study combinations of phenotypes in a non-linear fashion. To illustrate this, let $\mathsf{A}$, $\mathsf{B}$, $\mathsf{C}$, and $\mathsf{D}$ be four related phenotypes and let us consider four individuals $I_1$, $I_2$, $I_3$ and $I_4$, each possessing the combinations of phenotypes $\{\mathsf{A},\mathsf{B},\mathsf{C}\}$, $\{\mathsf{A},\mathsf{D}\}$, $\{\mathsf{D},\mathsf{C}\}$, and $\{\mathsf{A},\mathsf{B}\}$, respectively. We can use these combinations to infer transformations as shown in (\ref{eq:intro:prepare:icm-equations}), but between the four individuals $I_1$, $I_2$, $I_3$ and $I_4$. Specifically, every equation induced from forming unions on sets of phenotypes, as shown below on the left of (\ref{eq:intro:icm-equations:phenotypes}), suggests a bidirected transformation on the corresponding individuals, as shown on the right.
\begin{equation}\label{eq:intro:icm-equations:phenotypes}
\{\mathsf{A},\mathsf{B},\mathsf{C}\} \cup \{\mathsf{A},\mathsf{D}\} = \{\mathsf{D},\mathsf{C}\} \cup \{\mathsf{A},\mathsf{B}\} \quad\quad\quad \Rightarrow \quad\quad\quad I_1 + I_2 \leftrightarrows I_3 + I_4
\end{equation}
Then, one would ideally expect to be able to explain the occurrence of the phenotypes $\mathsf{A}$, $\mathsf{B}$, $\mathsf{C}$, and $\mathsf{D}$ genetically -- at least to some extent. This means that the transformations inferred on the individuals $I_1$, $I_2$, $I_3$ and $I_4$ should have a translation at the level of the genome of these individuals -- in the same fashion as in transformation (\ref{eq:intro:prepare:icm-equations}). To give an example, let us assume that the phenotypes $\mathsf{A}$, $\mathsf{B}$, $\mathsf{C}$, and $\mathsf{D}$ are caused by mutations on six different positions in the genomes of the individuals $I_1$, $I_2$, $I_3$ and $I_4$. More specifically, let us consider a situation as follows, where the six positions are numbered from 1 to 6 for convenience:
\begin{itemize}
\item[1)] phenotype $\mathsf{A}$ occurs in every individual possessing \emph{at least one} nucleotide $\mathtt{A}$ at position $\mathtt{1}$ and \emph{at least one} nucleotide $\mathtt{T}$ at position $\mathtt{5}$;
\item[2)] phenotype $\mathsf{B}$ occurs in every individual possessing \emph{two} nucleotides $\mathtt{C}$ at position $\mathtt{2}$ and \emph{at least one} nucleotide $\mathtt{C}$ at position $\mathtt{3}$;
\item[3)] phenotype $\mathsf{C}$ occurs in every individual possessing \emph{at least one} nucleotide $\mathtt{A}$ at position $\mathtt{4}$ and \emph{two} nucleotides $\mathtt{T}$ at position $\mathtt{6}$;
\item[4)] phenotype $\mathsf{D}$ occurs in every individual possessing \emph{at least one} nucleotide $\mathtt{A}$ at position $\mathtt{2}$ and \emph{two} nucleotides $\mathtt{C}$ at position $\mathtt{3}$ on the same chromosome.
\end{itemize}
Per the previous rules, the left-hand side equation of (\ref{eq:intro:icm-equations:phenotypes}) can be explained by a transformation of the form shown in (\ref{eq:intro:icm-equations:haplotypes(recomb)}).

\begin{equation}\label{eq:intro:icm-equations:haplotypes(recomb)}
\begin{array}{rcccccccc}
\textrm{positions}&I_1&+&I_2&\leftrightarrows&I_3&+&I_4\\
\rotatebox[origin=c]{-90}{$
\begin{array}{l}
\uparrow\uparrow\uparrow\uparrow\uparrow\uparrow\\
\mathtt{123456}
\end{array}$}
&\rotatebox[origin=c]{-90}{$
\begin{array}{l}
\mathtt{ACCATT}\\
\mathtt{ACCAGT}\\
\end{array}$}
&+&
\rotatebox[origin=c]{-90}{$
\begin{array}{l}
\mathtt{TCCGGT}\\
\mathtt{AACATC}\\
\end{array}$}
&\leftrightarrows&
\rotatebox[origin=c]{-90}{$
\begin{array}{l}
\mathtt{ACCGGT}\\
\mathtt{AACAGT}\\
\end{array}$}
&+&
\rotatebox[origin=c]{-90}{$
\begin{array}{l}
\mathtt{TCCATC}\\
\mathtt{ACCATT}\\
\end{array}$}\\
\end{array}
\end{equation}
For the same reasons as those that would push us to study causal inference models through elementary variations, which are also the same reasons as those that would make Mendel consider progenies in its experiments, one wants to make sure that the amount of variations contained in transformation (\ref{eq:intro:icm-equations:haplotypes(recomb)}) is modeled according to the most elementary reproduction dynamics. Importantly, by doing so, one can limit the amount of confounding and non-heritable factors capable to correlate with the occurrence of the phenotypes. For example, observe that transformation (\ref{eq:intro:icm-equations:haplotypes(recomb)}) was generated by shuffling nucleotides at each given position so that the same nucleotides appear on the two sides of the transformation. The astute reader might even notice that the shuffling operations involved in transformation (\ref{eq:intro:icm-equations:haplotypes(recomb)}) were not only applied at each position, but more generally according to two regions, namely the DNA block indexed by the positions $\mathtt{1}$, $\mathtt{2}$ and $\mathtt{3}$ and that indexed by the positions $\mathtt{4}$, $\mathtt{5}$ and $\mathtt{6}$. In other words, transformation (\ref{eq:intro:icm-equations:haplotypes(recomb)}) is a shuffling of the DNA blocks $\mathtt{ACC}$, $\mathtt{AAC}$, $\mathtt{TCC}$ on the upper part and a shuffling of the DNA blocks $\mathtt{AGT}$, $\mathtt{ATT}$, $\mathtt{GGT}$, $\mathtt{ATC}$. These shuffling operations on regions of genomic materials are meant to model the action of homologous recombination  and segregation on one's genomic materials through sexual reproduction.

It is important to note that, through generations, homologous recombination will not favor any specific direction in which a transformation of the form (\ref{eq:intro:icm-equations:haplotypes(recomb)}) happens. This should therefore push us to express these transformations as proper equations (or equivalence relations). Furthermore, recombination events may not happen for the same block decomposition. This means that individuals of a given population with common ancestors will likely be related through multiple equations of haplotypes that uses different genomic block decompositions. If we were to model these different recombination events within a single algebraic framework, the types of equations that we would obtain would likely hold up to mixtures of decompositions, but none that actually occur in individuals. Hence, if we were to model and infer the amount of linkage disequilibrium that exists in a population, it would certainly be beneficial to keep track of the different block decompositions associated with each homologous recombination event. In this paper, we shall account for these questions through the use of pedigrads, whose sheaf-like structures will allow us to (1) consider multiple haplotype dynamics indexed by different genomic block decompositions and (2) to relate each of these dynamics through adequate mappings.

To conclude this introduction, we have shown how an algebraic framework inspired from Mendel's intuitions could give us additional insights regarding genotype-phenotype associations deduced from usual linear models. Overall, we want to take advantage of these insights to provide a intuitive paradigm in which it is possible to reason about complex genetic diseases. Specifically, the inference of causal pathways -- such as the four rules presented above regarding the occurrence of the phenotypes $\mathsf{A}$, $\mathsf{B}$, $\mathsf{C}$ and $\mathsf{D}$  -- is still quite challenging for linear methods such as PRS, and the beginning of this introduction has demonstrated -- through our example of the progeny $M+F \to C_1+\dots+C_n$ -- how equations of the form shown in (\ref{eq:intro:icm-equations:haplotypes(recomb)}) and (\ref{eq:intro:icm-equations:phenotypes}) can guide us toward a better understanding of the mechanisms that lead to a given combination of phenotypes. To address these considerations, the present article proposes a framework in which we can find and describe the space of transformations relating the genomic materials of individuals in a parametrized manner. However, contrary to additive framework, the techniques used to find these parametrizations will not be compatible with the usual algebraic structures used in standard linear algebra. As a result, the paper will shift from the conventional linear algebra paradigm (defined for rings and fields) to a computational paradigm involving operations that more closely model biological mechanisms, namely Mendel's first and second laws.

The overall achievement of this paper is to show how pedigrads \cite{Seqali} enriched in the category of idempotent commutative monoids give a symbolic and computational framework to solve the word problems for relations of the form shown in (\ref{eq:intro:icm-equations:haplotypes(recomb)}). In particular, these relations accounts for population stratification in a more combinatorial and biological fashion than the linear models. Such techniques could be used along with Mendelian-randomization-based methods \cite{Smith03,Smith05,Smith14} to improve disease predictions and create new opportunities to tailor treatments to specific types of patients.

\subsection{Road map and results}
In this paper, we aim to solve the word problems for a class of monoids whose elements provide models for population haplotypes. Specifically, these monoids are computed through localization-like techniques as coequalizers of diagrams resulting from images of pedigrads enriched in idempotent commutative monoids. The underlying pedigrad structures associated with the construction of these monoids allow us to address two computational challenges arising in population genomics. First, they allow us to reason about population dynamics through a lightweight formalism that implicitly accounts for the action of homologous recombination and segregation on genomic information. Second, they allow us to determine conditions in which we can identify pairs of groups of haplotypes (\emph{i.e.} solve the word problem) and, by doing so, clarify the stratification structure of a population. These two advantages give us a third computational advantage, which comes in the form of a variational principle for linking genotypes to phenotypes and unravel combinatorial interactions existing between genetic variations.

Since the present work builds on previous work (see \cite{Seqali}), we start by recalling the basic definitions of our formalism in section \ref{sec:Chromologies_and_Pedigrads} -- the main concepts being those of a pedigrad and a chromology. As was the case in \cite{Seqali}, section \ref{sec:Chromologies_and_Pedigrads} includes the presentation of a general example that is used throughout the paper to illustrate the various definitions introduced herein (see section \ref{ssec:Main_example}). In section \ref{ssec:truncation_functor}, we recall some of the main constructions of \cite{Seqali}, which were originally introduced to study sequence alignments. In this paper, we will use these structures to model the diploid structure of the genome, which involves the consideration of pairs of alleles for each genetic location.

We then proceed to section \ref{sec:Pedigrads_in_idempotent_commutative_monoids}, which constitutes one of the two main parts of this paper. Specifically, the goal of this section is to show that pedigrads enriched in idempotent commutative monoids can be used to model haplotype dynamics such as homologous recombination and segregation. To do so, we start by recalling the definition of idempotent commutative monoids in section \ref{ssec:ICMonoids}. We then introduce the category of idempotent commutative (denoted as $\mathbf{Icm}$) in section \ref{ssec:Category_Icm} and its associated universal property in section \ref{ssec:ICMonoids_universal_construction}. To prepare for the characterization of the universal property associated with our pedigrads in $\mathbf{Icm}$, we recall, in section \ref{ssec:Reminder_monomorphisms_epimorphisms} and section \ref{ssec:Coequalizers_of_ic_monoids}, general facts about epimorphisms, monomophisms and coequalizers in the category $\mathbf{Icm}$. Then, in section \ref{ssec:biology_algebraic-operations}, we define the concepts of genotypes, haplotypes and haplogroups in terms of the internal language associated with our pedigrads. To express the universal property of our pedigrads, we introduce recombination chromologies in section \ref{ssec:recombination_chromologies}, which are a type of chromology (a type of limit sketch -- see \cite{Seqali}) suited for the modeling homologous recombination. Finally, in section \ref{ssec:recombination_monoids} and section \ref{ssec:Recombination_schemes_and_pedigrads}, we show how to construct canonical pedigrads in $\mathbf{Icm}$ for given recombination chromologies (see Corollary \ref{cor:D_ET_is_a_mon_pedigrad}, which is deduced from Theorem \ref{theo:morphism_to_mon_pedigrad} and Theorem \ref{theo:representable_pedigrad_E_b_varepsilon}) and introduce the concept of recombination schemes, which allows us to characterize the images of our pedigrads in terms of freely generated idempotent commutative monoids. In particular, recombination schemes provide us with an environments in which we can use linear algebra intuitions to find parametrization of haplotypes in terms of other haplotypes.

We develop these linear algebra intuitions throughout section \ref{sec:solving_our_problem} -- the other main part of this paper. Specifically, this section tackles the development of a matrix framework in idempotent commutative monoids to solve the world problem in recombination schemes. First, to be able to reason about equations of haplotypes, we introduce, in section \ref{ssec:Number_systems_for_icmonoids}, a formal subtraction operation for idempotent commutative monoids. For convenience, we denote this subtraction operation by using a fractional notation (as opposed to an additive one). We show in section \ref{ssec:idempotent_commutative_semirings} that these fractions can be encoded through the action of a certain semiring structure on the underlying idempotent commutative monoid. This action structure suggests that our formal subtraction operation can be modeled through a tensor-like structure. From section \ref{ssec:formal_series-polynomials} to section \ref{ssec:Multiplicative-atomic-structures}, we show how we can encode and recover this tensor-like structure in terms of a semiring of polynomials quotiented by a certain equivalence relation (see Theorem \ref{theo:tensor-congruence:semiring-compatiblity}). Once this tensor-like structure is established, we further develop the concept of action of a semiring on a commutative monoid in section \ref{ssec:action-semirings}. This allows us to develop, in section \ref{ssec:linear algebra}, a general matrix framework for commutative monoids and semirings. In order to solve the word problem in recombination schemes, we provide three successive notions of null spaces, the last two notions improving their predecessors through more practical algorithms. Each of the proposed null spaces requires different reformulations of the underlying matrix framework, which we describe from section \ref{secc:skew:linear-algebra} to section \ref{ssec:Calculus_and_algorithm}. More specifically, these sections provide a sequence of theorems (namely, Theorem \ref{theo:correspondence:null-spaces:1}, Theorem \ref{theo:correspondence:null-spaces:2} and Theorem \ref{theo:correspondence:null-spaces:3}) that ultimately allow us to develop the right notion of null space to find parametrizations of haplotypes in terms of other haplotypes (see Convention \ref{conv:solution_ud}, Remark \ref{rem:algogirthm:linear-systems} and Example \ref{exa:pedigrad-unification}).

We conclude the paper in section \ref{sec:conclusion} by summarizing the main points discussed in each of our examples based on the main example provided in section \ref{ssec:Main_example}.


\subsection{Acknowledgments}
The difficulty of translating concepts of biology, and its statistical nature, into category theory has made the present work go through a series of reformulations over these last five years. While the mathematical content has not changed much, the story told in this last version is quite different from the one told in the early versions. In this acknowledgment section, I would like to thank anyone who returned feedback, suggestions, questions, or engaged in discussions about the present work. In particular, I would like to thank (in alphabetical order) David Spivak, Eric Neumann, Gregory Ginot, Gregory Grant, Jean Clairambault, Jean-Fran\c{c}ois Mascari, Manolis Kellis, Nils Baas, Sharon Spivak, Soumyashant Nayak, and Yongjin Park, who, in various ways, all helped to reach the present version of this manuscript.


\section{Chromologies and Pedigrads}\label{sec:Chromologies_and_Pedigrads}

\subsection{Main example}\label{ssec:Main_example}
As was done in \cite{Seqali}, we will make use of a main example to motivate the different concepts introduced herein. Most of our subsequent examples will aim to demonstrate how the different definitions and results of the present paper help understand the main example (stated at the end of this section). In this paper, our main example will showcase a situation in which one is interested to \emph{identify, for a given population, the parts of the genome that are responsible for a set of phenotypes observed in the individuals of that population}.

Ideal conditions to establish a mapping from genotypes to phenotypes would be the consideration of a population that show a great amount of variations at the level of the phenotypes, but little variation at the level of the genotypes. For instance, Mendel was able to reach such conditions by considering progenies in his experiments (see section \ref{ssec:motivations}). Unfortunately, such conditions can rarely be achieved in the case of humans, since members of human progenies can only coexist through five or six successive generations. For this reason, we will center all our reasoning on a slightly broader type of populations called \emph{haplogroups}, which gather groups of individuals sharing the same haplotype.

Before introducing the premise of our main example, let us use the next couple of paragraphs to recall some important challenges about genetics, which will guide of endeavor. First, let us emphasize that there is a practical difficulty to studying haplogroups, as opposed to progenies, in that haplogroups need to be computed through comparisons of genomic information while progenies can just be recorded through time. There is also a challenge in trying to link genotypes to phenotypes in that the occurrence of most phenotypes would be explained by not only genetic information but also external factors.

Despite these difficulties, one often hopes to find genetic causes by comparing genotypes with phenotypes without considering of external factors. To do so, on often focuses on a specific type genetic variant known as \emph{single nucleotide polymorphisms} (SNPs). These SNPs are variants that can be found within at least one percent of the population (see \cite{Dictionary}). Interestingly, SNPs also correspond to the types of variants used to define haplotypes (see \cite{Hap_map}). As a result, our approach, which mostly focus on haplotypes and not SNPs, can be seen as a natural extension of current practices. Furthermore, we could argue that haplogroups are more adequate to describe genotype-phenotype associations due to linkage disequilibrium (see section \ref{ssec:motivations} above). Indeed, it is important to understand that each variant in linkage disequilibrium is not necessarily biologically responsible for the disease, but that it statistically appears to be so because it is likely to be on the same segment as a causal variant. For these reasons, disease-associated variants are usually only referred to as \emph{markers} for the disease. In our approach, we want to more adequately characterize genetic markers in terms of haplotypes.

Without further introduction, we shall now introduce our problem. Since we already build our intuitions on a practical example involving four individuals $I_1$, $I_2$, $I_3$ and $I_4$ in section \ref{ssec:motivations}, we will further build on this example for a broader population and a greater number of variants. For clarity, these added variants will be written in lower case while the causal variants considered in section \ref{ssec:motivations} will be indicated in upper case. For our example, we shall consider a set of 12 individuals and a set of 15 variants, all assumed to be SNPs and each made of two alleles encoded by nucleotide symbols. From now on, the individuals $I_1$, $I_2$, $I_3$, and $I_4$ will be referred to as $\mathtt{p}_1$, $\mathtt{p}_2$, $\mathtt{p}_3$, and $\mathtt{p}_4$.

\[
\begin{array}{|c|l|l|}
\hline
\multicolumn{1}{|c|}{\cellcolor[gray]{0.8}\textrm{Ind.}}& \multicolumn{1}{c|}{\cellcolor[gray]{0.8}\textrm{Gen.}} & \multicolumn{1}{c|}{\cellcolor[gray]{0.8}\textrm{Phen.}}\\
\hline
\multirow{2}{*}{$\mathtt{p}_{1}$}
& \mathtt{aAtCgCtAtTtcaaT} & \multirow{2}{*}{\textsf{ABC}} \\ %
& \mathtt{gAtCcCcAaGtgacT} &\\ %
\hline
\multirow{2}{*}{$\mathtt{p}_{2}$}
& \mathtt{gTtCcCtGaGtgatT} & \multirow{2}{*}{\textsf{AD}} \\ %
& \mathtt{aAtAcCcAtTgcctC} &\\ %
\hline
\multirow{2}{*}{$\mathtt{p}_{3}$}
& \mathtt{aAtCgCtGaGtgaaT} & \multirow{2}{*}{\textsf{CD}} \\ %
& \mathtt{aAtAcCcAaGtgatT} &\\ %
\hline
\end{array}
\quad\quad\quad
\begin{array}{|c|l|l|}
\hline
\multicolumn{1}{|c|}{\cellcolor[gray]{0.8}\textrm{Ind.}}& \multicolumn{1}{c|}{\cellcolor[gray]{0.8}\textrm{Gen.}} & \multicolumn{1}{c|}{\cellcolor[gray]{0.8}\textrm{Phen.}}\\
\hline
\multirow{2}{*}{$\mathtt{p}_{4}$}
& \mathtt{gTtCcCcAtTgcctC} & \multirow{2}{*}{\textsf{AB}} \\ %
& \mathtt{gAtCcCtAtTtcacT} &\\ %
\hline
\multirow{2}{*}{$\mathtt{p}_{5}$}
& \mathtt{gAtCcCtAtGtcaaT} & \multirow{2}{*}{\textsf{ACD}} \\ %
& \mathtt{aAcAgTcAtTgcatT} &\\ %
\hline
\multirow{2}{*}{$\mathtt{p}_{6}$}
& \mathtt{gAtCcCtGaTgcctC} & \multirow{2}{*}{\textsf{AB}} \\ %
& \mathtt{gTtCcCtGaGtgaaaT} &\\ %
\hline
\end{array}
\]
\[
\begin{array}{|c|l|l|}
\hline
\multicolumn{1}{|c|}{\cellcolor[gray]{0.8}\textrm{Ind.}}& \multicolumn{1}{c|}{\cellcolor[gray]{0.8}\textrm{Gen.}} & \multicolumn{1}{c|}{\cellcolor[gray]{0.8}\textrm{Phen.}}\\
\hline
\multirow{2}{*}{$\mathtt{p}_{7}$}
& \mathtt{gTtCcCtAtGtcaaT} & \multirow{2}{*}{\textsf{ABC}} \\ %
& \mathtt{gAtCcCcAtTgcatT} &\\ %
\hline
\multirow{2}{*}{$\mathtt{p}_{8}$}
& \mathtt{aAcAgTtGaTgcctC} & \multirow{2}{*}{\textsf{AD}} \\ %
& \mathtt{gAtCcCtGaGtgaaT} &\\ %
\hline
\multirow{2}{*}{$\mathtt{p}_{9}$}
& \mathtt{aTtAgTcAaGtgaaT} & \multirow{2}{*}{\textsf{CD}} \\ %
& \mathtt{aTtCgTtGaTgcatT} &\\ %
\hline
\end{array}
\quad\quad\quad
\begin{array}{|c|l|l|}
\hline
\multicolumn{1}{|c|}{\cellcolor[gray]{0.8}\textrm{Ind.}}& \multicolumn{1}{c|}{\cellcolor[gray]{0.8}\textrm{Gen.}} & \multicolumn{1}{c|}{\cellcolor[gray]{0.8}\textrm{Phen.}}\\
\hline
\multirow{2}{*}{$\mathtt{p}_{10}$}
& \mathtt{aTtCgTcAaTgcacT} & \multirow{2}{*}{\textsf{AB}} \\ %
& \mathtt{gAtCcCtGaTgcctC} &\\ %
\hline
\multirow{2}{*}{$\mathtt{p}_{11}$}
& \mathtt{aAcAgTcAaTgcatT} & \multirow{2}{*}{\textsf{ACD}} \\ %
& \mathtt{aTtAgTcAaGtgaaT} &\\ %
\hline
\multirow{2}{*}{$\mathtt{p}_{12}$}
& \mathtt{gAtCcCcAaGtgatT} & \multirow{2}{*}{\textsf{CD}} \\ %
& \mathtt{aTtAgTcAaGtgacT} &\\ %
\hline
\end{array}
\]

In the rest of this paper, our goal will be to use the previous dataset to illustrate how we can use pedigrads in idempotent commutative monoids to (1) understand the population structure existing between individuals and (2) help identify genetic marker interactions associated with combinations of phenotypes. In this respect, we shall start by integrating the previous tables into a single pedigrad enriched in the category of idempotent commutative monoids.

\subsection{Pre-ordered sets}\label{sec:pre-ordered_sets}
This section recalls concepts associated with pre-order relations.

\begin{definition}[Pre-ordered sets]
A \emph{pre-ordered set} consists of a set $\Omega$ and a binary relation $\leq$ on $\Omega$ satisfying the following logical statements.
\begin{itemize}
\item[1)] (reflexivity) for every $x \in \Omega$, the relation $x \leq x$ holds;
\item[3)] (transitivity) for every $x,y,z \in \Omega$, if $x \leq y$ and $y \leq z$ hold, then so does $x \leq z$.
\end{itemize}
\end{definition}

\begin{example}\label{exa:pre-ordered_set_0_and_1}
The set $\{0,1\}$ is a pre-ordered set when it is equipped with the relations $0 \leq 1$; $0 \leq 0$ and $1 \leq 1$. In the literature, this pre-ordered set is known as the \emph{Boolean} pre-ordered set and the values 0 and 1 are often understood as encoding the Boolean values $\mathtt{false}$ and $\mathtt{true}$, respectively.
\end{example}

\begin{example}\label{exa:product_pre-order_set_0_1}
For every positive integer $n$, the $n$-fold Cartesian product $\{0,1\}^{\times n}$ of the pre-ordered set $\{0,1\}$ defined in Example \ref{exa:pre-ordered_set_0_and_1} is equipped with a pre-order relation $\leq$ that compares two tuples in $\{0,1\}^{\times n}$ such that the relation $(x_1,\dots,x_n) \leq (y_1,\dots,y_n)$ holds if, and only if, the relation $x_i \leq y_i$ holds for every $1 \leq i \leq n$.
\end{example}

\begin{remark}[Pre-order categories]
A pre-ordered set is equivalently a category in which there exists at most one arrow between every pair of objects. In the sequel, a pre-ordered set will sometimes be called a \emph{pre-order category} to emphasize its categorical structure.
\end{remark}

\begin{definition}[Order-preserving functions]
Let $(\Omega_1,\leq_1)$ and $(\Omega_2,\leq_2)$ be two pre-ordered sets. We shall speak of an \emph{order-preserving function} from $(\Omega_1,\leq_1)$ to $(\Omega_2,\leq_2)$ to refer to a function $f:\Omega_1 \to \Omega_2$ for which every relation $x \leq_1 y$ in $\Omega_1$ gives rise to a relation $f(x) \leq_2 f(y)$ in $\Omega_2$.
\end{definition}

\begin{convention}[Notation]
We shall denote by $\mathbf{pOrd}$ the category whose objects are pre-ordered sets and whose morphisms are order-preserving functions.
\end{convention}

\begin{example}[Projection]\label{exa:product_morphisms_set_0_1}
For every positive integer $n$, the $n$-fold Cartesian product $\{0,1\}^{\times n}$ of Example \ref{exa:product_pre-order_set_0_1} is equipped with a canonical collection of $n$ functions $\pi_i:\{0,1\}^{\times n} \to \{0,1\}$, for each $i \in \{1,\dots,n\}$, where a function $\pi_i$ sends a tuple $(x_1,\dots,x_n)$ in $\{0,1\}^{\times n}$ to its $i$-th component $x_i$ in $\{0,1\}$. It is straightforward to verify that these functions preserve the order relations of $\{0,1\}^{\times n}$ in $\{0,1\}$ and thus define morphisms in $\mathbf{pOrd}$.
\end{example}

\subsection{Finite sets of integers}
For every positive integer $n$, we will denote by $[n]$ the finite set of integers $\{1,2,\dots,n\}$. We will also let $[0]$ denote the empty set. In the sequel, for every non-negative integer $n$, the set $[n]$ will implicitly be equipped with the underlying order defined on integers (note that the restriction of this order on $[0]$ is the empty order).

\subsection{Segments}
Let $(\Omega,\preceq)$ denote a pre-ordered set. 
A \emph{segment} over $\Omega$ consists of a pair of non-negative integers $(n_1,n_0)$, an order-preserving surjection\footnote{\emph{i.e.} an order-preserving function that is a surjection.} $t:[n_1] \to [n_0]$ and a function $c:[n_0] \to \Omega$. In the sequel, we shall often refer to the elements of $\Omega$ as \emph{colors}.

\begin{remark}[Representation]
Segments are equipped with a canonical graphical representation (see \cite{Seqali}). For a segment $(t,c)$, say as defined above, the finite set $[n_1]$ represents the ``genomic positions'' composing the segment while the fibers $t^{-1}(1), \dots, t^{-1}(n_0)$ of the surjection $t:[n_1] \to [n_0]$ gather these elements into patches (see the brackets below).
\[
t = \xymatrix@C-30pt{(\bullet&\bullet&\bullet)&(\bullet&\bullet&\bullet&\bullet)&(\bullet&\bullet&\,\cdots\, &\bullet)&(\bullet& \bullet)}
\]
Finally, each patch of the segment is associated with `colors' in $\Omega$ that are specified by the map $c:[n_0] \to \Omega$. For instance, taking $\Omega$ to be the Boolean pre-ordered set $\{\mathtt{false}\leq \mathtt{true}\}$, as defined in Example \ref{exa:pre-ordered_set_0_and_1}, and taking $c$ such that $c(1) = \mathtt{false}$, $c(2) = \mathtt{true}$, $\dots$, $c(n_0-1) = \mathtt{true}$, and $c(n_0) = \mathtt{true}$ will color all the elements of $[n_1]$ belonging to the fiber $t^{-1}(1)$ in the $\mathtt{false}$-color (represented in \emph{white}) and will color the elements of the fibers $t^{-1}(2)$, $\dots$, $t^{-1}(n_0-1)$, and $t^{-1}(n_0)$ in the $\mathtt{true}$-color (represented in black), as shown below.
\[
(t,c) = \xymatrix@C-30pt{(\circ&\circ&\circ)&(\bullet&\bullet&\bullet&\bullet)&(\bullet&\bullet&\,\cdots\, &\bullet)&(\bullet& \bullet)}
\]
\end{remark}

\begin{remark}[Notations]
In the definition of a segment $(c,t)$ as defined above, the specification of the data $n_1$ and $n_0$ is redundant with the data of the function $t$ and $c$. Yet, for the sake of conciseness and clarity, such a segment $(c,t)$ will sometimes be referred to as an arrow $(t,c):[n_1] \multimap [n_0]$.
\end{remark}

\begin{convention}[Domains, topologies \& types]
For every segment $(t,c):[n_1] \multimap [n_0]$, the data $[n_1]$ will be called the \emph{domain} of $(t,c)$, the data $t$ will be called the \emph{topology} of $(t,c)$ and the data $(n_1,n_0)$ will be called the \emph{type} of $(t,c)$. The type of a segment will be formally specified through the notation $[n_1] \multimap [n_0]$.
\end{convention}

\begin{definition}[Homologous segments]
Two segments $(t,c)$ and $(t',c')$ over $\Omega$ will be said to be \emph{homologous} if their topologies $t$ and $t'$ are equal.
\end{definition}

\begin{definition}[Quasi-homologous segments]
Two segments $(t,c):[n_1] \multimap [n_0]$ and $(t',c'):[n_1'] \multimap [n_0']$ over $\Omega$ will be said to be \emph{quasi-homologous} if their domains $[n_1]$ and $[n_1']$ are equal.
\end{definition}

\subsection{Morphisms of segments}\label{ssec:Morphisms_of_chromosomal_patches}
Let $(\Omega,\preceq)$ be a pre-ordered set and let $(t,c):[n_1] \multimap [n_0]$ and $(t',c'):[n_1'] \multimap [n_0']$ be two segments over $\Omega$. A morphism of segments from $(t,c)$ to $(t',c')$ consists of a pair $(f_1,f_0)$ made of
\begin{itemize}
\item[1)] an order-preserving injection $f_1:[n_1] \to [n_1']$;
\item[2)] an order-preserving function $f_0:[n_0] \to [n_0']$;
\end{itemize}
such that the inequality $c' \circ f_0(i) \preceq c(i)$ holds for every $i \in [n_0]$ and the following diagram commutes.
\[
\xymatrix{
[n_1]\ar@{->>}[r]^{t}\ar@{)->}[d]_{f_1}&[n_0]\ar[d]^{f_0}\\
[n_1']\ar@{->>}[r]^{t'}&[n_0']
}
\]
It is straightforward to check that the obvious component-wise composition on morphisms of segments over $\Omega$ defines a category whose identities are given by the pairs of identities. We will denote this category by $\mathbf{Seg}(\Omega)$; its objects are segments over $\Omega$ and its arrows are the morphisms between them.

\subsection{Pre-orders on homologous segments}
Let $(\Omega,\preceq)$ be a pre-ordered set and let $t:[n_1] \to [n_0]$ be an order-preserving surjection. The subcategory of $\mathbf{Seg}(\Omega)$ whose objects are the homologous segments of topology $t$ and whose arrows are the morphisms of segments for which the components $f_0$ and $f_1$ are identities will be denoted as $\mathbf{Seg}(\Omega:t)$
and referred to as the \emph{category of homologous segments (over $\Omega$) of topology $t$}.

\begin{proposition}[From \cite{Seqali}]
For every order-preserving surjection $t:[n_1] \to [n_0]$, the category $\mathbf{Seg}(\Omega:t)$ is a pre-order category.
\end{proposition}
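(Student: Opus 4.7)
The plan is to unwind the definitions and observe that essentially everything is forced. An object of $\mathbf{Seg}(\Omega:t)$ is a segment of the form $(t,c)$ where $c$ varies over functions $[n_0]\to\Omega$, while the topology $t:[n_1]\to [n_0]$ is fixed. A morphism from $(t,c)$ to $(t,c')$ in this subcategory, by construction, must have components $f_1:[n_1]\to[n_1]$ and $f_0:[n_0]\to[n_0]$ equal to the respective identity maps. Hence a morphism is \emph{not} a choice of data at all: it is just a witness that a certain condition holds.

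Concretely, I would show that given $(t,c)$ and $(t,c')$, the pair $(\mathrm{id}_{[n_1]},\mathrm{id}_{[n_0]})$ satisfies the commutation requirement of Section \ref{ssec:Morphisms_of_chromosomal_patches} automatically (both compositions equal $t$), so the only remaining constraint is the color inequality $c'\circ\mathrm{id}_{[n_0]}(i)\preceq c(i)$, i.e.\ $c'(i)\preceq c(i)$ for all $i\in[n_0]$. Thus there is at most one arrow $(t,c)\to(t,c')$, and such an arrow exists precisely when $c'(i)\preceq c(i)$ holds pointwise. This immediately makes $\mathbf{Seg}(\Omega:t)$ a pre-order category in the sense of the remark on pre-order categories.

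For completeness, one can note that the induced relation on objects inherits reflexivity and transitivity directly from $\preceq$ on $\Omega$: reflexivity corresponds to the identity of $(t,c)$ in $\mathbf{Seg}(\Omega)$ (whose components are identities and therefore belong to the subcategory), and transitivity follows because if $c''(i)\preceq c'(i)$ and $c'(i)\preceq c(i)$ hold for every $i\in[n_0]$, then $c''(i)\preceq c(i)$ holds as well; categorically, this is just the composition of the two arrows, which again has identity components and hence lives in $\mathbf{Seg}(\Omega:t)$.

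There is no real obstacle here: the only thing to verify is that the identity constraints on $f_0$ and $f_1$ force the naturality square to commute trivially, and that the subcategory is closed under composition. Both are immediate. The content of the statement is really just the observation that fixing $t$ and restricting the morphism components to identities collapses the hom-sets to truth values, and the resulting preorder on colorings is the pointwise lift of $\preceq$ along $c\mapsto (c(1),\dots,c(n_0))$.
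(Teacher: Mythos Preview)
Your argument is correct. The paper does not actually supply a proof for this proposition; it simply cites \cite{Seqali}. Your direct unwinding of the definitions---observing that the morphism components $f_0$ and $f_1$ are forced to be identities by the definition of $\mathbf{Seg}(\Omega:t)$, so that a morphism carries no data beyond the witness that $c'(i)\preceq c(i)$ pointwise---is the natural and essentially only way to argue this, and it matches what one would expect the cited proof to contain.
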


\subsection{Pre-orders on quasi-homologous segments}
Let $(\Omega,\preceq)$ be a pre-ordered set and let $n_1$ be a non-negative integer. The subcategory of $\mathbf{Seg}(\Omega)$ whose objects are the quasi-homologous segments of domain $[n_1]$ and whose arrows are the morphisms of segments for which the component $f_1$ is an identity will be denoted as $\mathbf{Seg}(\Omega\,|\,n_1)$ and called the \emph{category of quasi-homologous segments (over $\Omega$) of domain $n_1$}.

The phrasing of following statement will become relevant in proof of Proposition \ref{prop:compare_cone_irreducible}, which is related to one of our main theorems (see Theorem \ref {theo:representable_pedigrad_E_b_varepsilon})

\begin{lemma}\label{lem:quasi_homologous_preordered_category}
If there exists a morphism $(t,c) \to (t',c')$ in $\mathbf{Seg}(\Omega\,|\,n_1)$, then it is the only morphism of type $(t,c) \to (t',c')$ in $\mathbf{Seg}(\Omega)$.
\end{lemma}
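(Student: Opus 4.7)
The plan is to show that both components $(f_1, f_0)$ of any morphism of segments from $(t,c)$ to $(t',c')$ in $\mathbf{Seg}(\Omega)$ are forced, once we know that one such morphism exists whose first component is the identity on $[n_1]$.

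First, I would use the existence of a morphism $(f_1, f_0): (t,c) \to (t',c')$ in $\mathbf{Seg}(\Omega \,|\, n_1)$ to extract the crucial numerical information: by the definition of $\mathbf{Seg}(\Omega \,|\, n_1)$, the component $f_1$ is the identity, so the domains of $(t,c)$ and $(t',c')$ coincide, i.e. $n_1 = n_1'$. Now let $(g_1, g_0)$ be any morphism $(t,c) \to (t',c')$ in $\mathbf{Seg}(\Omega)$. I would first argue that $g_1 = \mathrm{id}_{[n_1]}$: the map $g_1 \colon [n_1] \to [n_1]$ is an order-preserving injection of a finite totally ordered set into itself (with the same cardinality), and the only such map is the identity, because an order-preserving injection $[n] \to [n]$ must be strictly increasing and hence send $i$ to $i$ for each $i$.

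Next, I would handle the second component using the commutativity of the defining square. Since $g_1 = \mathrm{id}_{[n_1]}$, the commutative square forces $g_0 \circ t = t' \circ g_1 = t'$, and likewise $f_0 \circ t = t'$. Because $t \colon [n_1] \twoheadrightarrow [n_0]$ is surjective, it is epic in $\mathbf{Set}$, so the equality $g_0 \circ t = f_0 \circ t$ implies $g_0 = f_0$. This gives $(g_1, g_0) = (f_1, f_0)$, proving the uniqueness claim.

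There is really no hard step here; the proof is a short application of two facts, the rigidity of order-preserving injections between equal finite totally ordered sets, and the right-cancellability of surjections. The only point requiring a moment of care is making explicit that the existence of a morphism in $\mathbf{Seg}(\Omega \,|\, n_1)$ gives not just $f_1 = \mathrm{id}$ but also $n_1 = n_1'$, so that any $g_1$ coming from a morphism in $\mathbf{Seg}(\Omega)$ actually maps $[n_1]$ into a set of the same size and the rigidity argument applies.
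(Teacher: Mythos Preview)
Your proof is correct and follows essentially the same approach as the paper's own proof: first force $g_1=\mathrm{id}_{[n_1]}$ using that an order-preserving injection $[n_1]\to[n_1]$ must be the identity, then deduce $g_0=f_0$ from $g_0\circ t=t'=f_0\circ t$ using that the surjection $t$ is epic. Your write-up is slightly more explicit about why $n_1=n_1'$ and why the rigidity of order-preserving injections holds, but the argument is the same.
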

\begin{proof}
Let $(\mathrm{id},f_0):(t,c) \to (t',c')$ be the morphism of the statement in $\mathbf{Seg}(\Omega\,|\,n_1)$ and let $(g_1,g_0):(t,c) \to (t',c')$ be another morphism in $\mathbf{Seg}(\Omega)$. Because $g_1$ is an order-preserving inclusion of type $[n_1] \to [n_1]$, it must be an identity, so that the identity $g_0 \circ t = t'$ holds. On the other hand, the identity $f_0 \circ t = t'$ also holds, which means that $g_0 \circ t = f_0 \circ t$. Because $t$ is an epimorphism, the identity $g_0=f_0$ must hold.
\end{proof}

Even though the statement of Lemma \ref{lem:quasi_homologous_preordered_category} is intended to be used in subsequent sections, we can use it here to give an alternative proof for the fact that a category of quasi-homologous segments is a pre-order category.

\begin{proposition}[Also proved in \cite{Seqali}]\label{prop:Seg_Omega_n_porder}
For every non-negative integer $n_1$, the category $\mathbf{Seg}(\Omega\,|\,n_1)$ of quasi-homologous segments is a pre-order category.
\end{proposition}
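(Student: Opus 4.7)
The statement is essentially an immediate consequence of Lemma \ref{lem:quasi_homologous_preordered_category}, which is precisely why the author placed the lemma where it is. My plan is therefore just to assemble the observation carefully.

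First I would recall what it means for $\mathbf{Seg}(\Omega\,|\,n_1)$ to be a pre-order category: between any two objects there is at most one arrow. So, fixing two quasi-homologous segments $(t,c)$ and $(t',c')$ of domain $[n_1]$, I have to show that any two morphisms $(\mathrm{id},f_0),(\mathrm{id},g_0):(t,c)\to(t',c')$ in $\mathbf{Seg}(\Omega\,|\,n_1)$ coincide. Since $\mathbf{Seg}(\Omega\,|\,n_1)$ is by definition a subcategory of $\mathbf{Seg}(\Omega)$, both arrows live in $\mathbf{Seg}(\Omega)$ as well, and I may invoke Lemma \ref{lem:quasi_homologous_preordered_category}: the existence of the morphism $(\mathrm{id},f_0)$ in $\mathbf{Seg}(\Omega\,|\,n_1)$ implies that $(\mathrm{id},f_0)$ is the only morphism of type $(t,c)\to(t',c')$ in $\mathbf{Seg}(\Omega)$, hence $(\mathrm{id},g_0)=(\mathrm{id},f_0)$, and in particular $g_0=f_0$.

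If one prefers to keep the argument self-contained rather than to appeal to the lemma, the key mechanism to isolate is the same: the first component of any arrow in $\mathbf{Seg}(\Omega\,|\,n_1)$ is forced to be $\mathrm{id}_{[n_1]}$, so the commutativity constraint reduces to $f_0\circ t = t' = g_0\circ t$, and because $t:[n_1]\twoheadrightarrow[n_0]$ is a surjection (hence an epimorphism in $\mathbf{Set}$, and in $\mathbf{pOrd}$), it is right-cancellable, giving $f_0=g_0$. The color condition $c'\circ f_0 \preceq c$ plays no role in the uniqueness argument; it only matters for the existence of the morphism.

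There is no real obstacle here: the pre-order structure of $\mathbf{Seg}(\Omega\,|\,n_1)$ is genuinely a one-line corollary of the previous lemma, and the only thing to be careful about is the bookkeeping — namely, noting that uniqueness in the ambient category $\mathbf{Seg}(\Omega)$ automatically implies uniqueness in any subcategory, so no extra verification is needed to transfer the conclusion of Lemma \ref{lem:quasi_homologous_preordered_category} from $\mathbf{Seg}(\Omega)$ back to $\mathbf{Seg}(\Omega\,|\,n_1)$.
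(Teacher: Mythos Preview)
Your proposal is correct and follows essentially the same approach as the paper: both invoke Lemma~\ref{lem:quasi_homologous_preordered_category} together with the observation that $\mathbf{Seg}(\Omega\,|\,n_1)$ is a subcategory of $\mathbf{Seg}(\Omega)$, so uniqueness in the ambient category forces uniqueness in the subcategory. Your additional self-contained argument (cancelling the surjection $t$) simply unwinds the lemma's proof and is a harmless elaboration.
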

\begin{proof}
Since $\mathbf{Seg}(\Omega\,|\,n_1)$ is a subcategory of $\mathbf{Seg}(\Omega)$, Lemma \ref{lem:quasi_homologous_preordered_category} implies that there exists at most one arrow between each pair of objects in $\mathbf{Seg}(\Omega\,|\,n_1)$.
\end{proof}

\subsection{Cones}\label{ssec:cones}
Recall that a \emph{cone} in a category $\mathcal{C}$ consists of an object $X$ in $\mathcal{C}$, a small category $A$, a functor $F:A \to \mathcal{C}$ and a natural transformation $\Delta_{A}(X) \Rightarrow F$ where $\Delta_{A}(X)$ denotes the constant functor $A \to \mathbf{1} \to \mathcal{C}$ mapping every object in $A$ to the object $X$ in $\mathcal{C}$.

\begin{definition}[Wide spans]\label{def:wide_spans}
In the sequel, we shall speak of a \emph{wide span} to refer to a cone $\Delta_{A}(X) \Rightarrow F$ defined over a finite discrete small category $A$ whose objects are ordered with respect to a total order (this will allow us to have canonical choices of limits).
\end{definition}

\begin{example}[Wide spans]\label{exa:wide_spans}
Giving a wide span in a category $\mathcal{C}$ amounts to giving a finite collection of arrows $\mathbf{S} :=\{f_i:X \to F_i\}_{i \in [n]}$ in $\mathcal{C}$. When the category $\mathcal{C}$ has products, the implicit order of the set $[n]=\{1,\dots,n\}$ can be used to give a specific representative for the product of the collection $\{F_i\}_{i \in [n]}$ in $\mathcal{C}$.
\end{example}

\subsection{Chromologies}\label{sec:chromologies}
A \emph{chromology} is a pre-ordered set $(\Omega,\preceq)$ that is equipped, for every non-negative integer $n$, with a set $D[n]$ of cones in the category $\mathbf{Seg}(\Omega\,|\,n)$. Such a chromology will later be denoted as a pair $(\Omega,D)$.

\begin{remark}[Future example]
See section \ref{ssec:recombination_chromologies} for an example of a particular chromology.
\end{remark}

\begin{convention}[Subchromologies]\label{conv:subchromologies}
For every pre-ordered set $(\Omega,\preceq)$, we will say that a chromology $(\Omega,D)$ is a \emph{subchromology} of another chromology $(\Omega,D')$ if for every non-negative integer $n$, the inclusion $D[n] \subseteq D'[n]$ holds. We will then write $(\Omega,D) \subseteq (\Omega,D')$.
\end{convention}

\subsection{Logical systems}\label{ssec:Logical_systems}
We will speak of a \emph{logical system} to refer to a category $\mathcal{C}$ that is equipped with a subclass of its cones $\mathcal{W}$ (see section \ref{ssec:cones}).

\subsection{Pedigrads} \label{ssec:Pedigrads}
Let $(\Omega,D)$ be a chromology and $(\mathcal{C},\mathcal{W})$ be a logical system. A \emph{pedigrad} in $(\mathcal{C},\mathcal{W})$ is a functor $\mathbf{Seg}(\Omega) \to \mathcal{C}$ that sends, for every non-negative integer $n$, each cone in $D[n]$ to a cone in $\mathcal{W}$.

\begin{convention}[$\mathcal{W}$-pedigrads]
As was done in \cite{Seqali}, we will often refer to a pedigrad in logical system $(\mathcal{C},\mathcal{W})$ as a $\mathcal{W}$-pedigrad.
\end{convention}

\subsection{Morphisms of pedigrads} \label{ssec:Morphisms_of_pedigrads}
Recall that, for every pair of categories $\mathcal{C}$ and $\mathcal{D}$, the notation $[\mathcal{C},\mathcal{D}]$ denotes the category whose objects are functors $\mathcal{C} \to \mathcal{D}$ and whose arrows are natural transformations in $\mathcal{D}$ over $\mathcal{C}$.
Let $(\Omega, D)$ be a chromology and $(\mathcal{C},\mathcal{W})$ be a logical system. A \emph{morphism of pedigrads} from a pedigrad $A:\mathbf{Seg}(\Omega)\to\mathcal{C}$ in $(\mathcal{C},\mathcal{W})$ for the chromology $(\Omega, D)$ to a pedigrad $B:\mathbf{Seg}(\Omega)\to\mathcal{C}$ in $(\mathcal{C},\mathcal{W})$ for the chromology $(\Omega, D)$ is an arrow  $A \Rightarrow B$ in the category $[\mathbf{Seg}(\Omega),\mathcal{C}]$.

\section{Environment functors}\label{sec:Examples_of_pedigrads_in_sets}
The goal of this section is to recall more constructions made in \cite{Seqali}. Throughout the section, we shall let $(E,\varepsilon)$ denote a pointed set and $(\Omega,\preceq)$ denote a pre-ordered set.

\subsection{Truncation functors}\label{ssec:truncation_functor}
First, we recall the definition of the truncation operation given in \cite{Seqali}. This operation can be compared to a filtering operation that would allow us to only select information marked by ``colors'' above a certain threshold.

\begin{definition}[Truncation]\label{def:truncation}
For every segment $(t,c):[n_1] \multimap [n_0]$ over $\Omega$ and element $b \in \Omega$, we will denote by $\mathsf{Tr}_b(t,c)$ the subset $\{i \in [n_1]~|~ b \preceq c \circ t(i)\}$ of $[n_1]$. This is the set of all elements in $[n_1]$ whose images via $c \circ t$ is greater than or equal to $b$ in $\Omega$.
\end{definition}

\begin{example}[Truncation]\label{exa:truncation}
Let $(\Omega,\preceq)$ be the Boolean pre-ordered set $\{0 \leq 1\}$. If we consider the segment in $\mathbf{Seg}(\Omega)$ given below, on the left, the truncation operation takes the values shown on the right depending on the values of $b \in \{0,1\}$.
\[
(t,c)=\xymatrix@C-30pt{
(\bullet&\bullet&\bullet)&(\circ&\circ)&(\bullet&\bullet&\bullet&\bullet)&(\circ&\circ&\circ&\circ&\circ)&(\bullet&\bullet&\bullet)&(\circ)
}
\quad\quad\quad
\left\{
\begin{array}{ll}
\mathsf{Tr}_1(t,c) = \{1,2,3,6,7,8,9,15,16,17\}\\
\mathsf{Tr}_0(t,c) = {[18]}
\end{array}
\right.
\]
\end{example}

As is done in \cite{Seqali}, we extend the previous operation to a functor on categories of segments by extending the category $\mathbf{Set}$, in which the operation $\mathsf{Tr}_b$ lands, to the category $\mathbf{Set}_{\ast}$ of pointed sets and point-preserving maps. Before doing so, recall from \cite{MacLane} that there is an adjunction
\[
\xymatrix{
\mathbf{Set} \ar@<+1.2ex>[r]^{\mathbb{F}} \ar@{}[r]|{\bot}\ar@<-1.2ex>@{<-}[r]_{\mathbb{U}} & \mathbf{Set}_{\ast}
}
\]
whose right adjoint $\mathbb{U}:\mathbf{Set}_{\ast} \to \mathbf{Set}$ forgets the pointed structure (\emph{i.e.} $\mathbb{U}:(X,p) \mapsto X$) and whose left adjoint 
$\mathbb{F}:\mathbf{Set} \to \mathbf{Set}_{\ast}$ maps a set $X$ to the obvious pointed set $(X+\{\ast\},\ast)$ and maps a function $f:X \to Y$ to the coproduct map $f+\{\ast\}:X+\{\ast\} \to Y+\{\ast\}$.

\begin{proposition}[From \cite{Seqali}]\label{prop:Tr_functor_pointed_Set_op}
For every element $b \in \Omega$, the mapping $(t,c) \mapsto \mathbb{F}\mathsf{Tr}_b(t,c)$ extends to a functor $\mathsf{Tr}_b^{\ast}:\mathbf{Seg}(\Omega) \to \mathbf{Set}_{\ast}^{\mathrm{op}}$ mapping every function $(f_1,f_0):(t,c) \to (t',c')$ in $\mathbf{Seg}(\Omega)$ to the following map of pointed sets.
\[
\begin{array}{llllll}
\mathsf{Tr}_b^{\ast}(f_1,f_0)&:&\mathbb{F}\mathsf{Tr}_b(t',c')&\to&\mathbb{F}\mathsf{Tr}_b(t,c)&\\
&&j&\mapsto & i &\textrm{if } \exists i \in \mathsf{Tr}_b(t,c) \,: \, j = f_1(i);\\
&&j&\mapsto & \ast &\textrm{otherwise.}\\
\end{array}
\]
\end{proposition}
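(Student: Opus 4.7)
The plan is to check, in turn, that the formula defines a point-preserving function, that identities are preserved, and that composition is preserved contravariantly.

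For the first item, three things need verification. (i) Uniqueness in the first case: when $j = f_1(i)$ for some $i \in \mathsf{Tr}_b(t,c)$, the element $i$ is unique because $f_1$ is injective by definition of a morphism in $\mathbf{Seg}(\Omega)$, so the assignment $j \mapsto i$ is unambiguous. (ii) The output $i$ actually lies in $\mathsf{Tr}_b(t,c)$: assuming $j \in \mathsf{Tr}_b(t',c')$ and $j = f_1(i)$, one computes $b \preceq c'\circ t'(j) = c'\circ t'\circ f_1(i) = c'\circ f_0\circ t(i)$ using the commuting square $t'\circ f_1 = f_0\circ t$, and then the defining inequality $c'\circ f_0(k) \preceq c(k)$ together with transitivity of $\preceq$ gives $b \preceq c\circ t(i)$, i.e.\ $i \in \mathsf{Tr}_b(t,c)$. (iii) The basepoint is sent to the basepoint: the element $\ast$ is by construction not in $\mathsf{Tr}_b(t',c')$, so it falls into the ``otherwise'' branch and is mapped to $\ast$.

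Preservation of identities is immediate from the formula, since when $f_1 = \mathrm{id}$, every $j \in \mathsf{Tr}_b(t,c)$ is equal to $f_1(j)$, so $\mathsf{Tr}_b^{\ast}(\mathrm{id},\mathrm{id})$ acts as the identity on $\mathsf{Tr}_b(t,c)$ and fixes $\ast$. For composition, I would chase an element $k \in \mathbb{F}\mathsf{Tr}_b(t'',c'')$ through the two candidates associated to a composable pair $(f_1,f_0):(t,c)\to(t',c')$ and $(g_1,g_0):(t',c')\to(t'',c'')$. Contravariance into $\mathbf{Set}_\ast^{\mathrm{op}}$ amounts to the equality $\mathsf{Tr}_b^{\ast}(g_1\circ f_1,\, g_0\circ f_0) = \mathsf{Tr}_b^{\ast}(f_1,f_0) \circ \mathsf{Tr}_b^{\ast}(g_1,g_0)$ in $\mathbf{Set}_\ast$. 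The left-hand side sends $k$ to the unique $i \in \mathsf{Tr}_b(t,c)$ satisfying $k = g_1\circ f_1(i)$ when such an $i$ exists, and to $\ast$ otherwise; the right-hand side first sends $k$ to $j$ when $k = g_1(j)$ for some $j \in \mathsf{Tr}_b(t',c')$, then to $i$ when $j = f_1(i)$ for some $i \in \mathsf{Tr}_b(t,c)$. The two procedures agree, with the only subtle case being $k = g_1(j)$ for some $j \in \mathsf{Tr}_b(t',c')$ that is not in the image of $f_1$, where both sides return $\ast$.

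The main obstacle is really just the well-definedness check (ii): it is the only step that uses both defining conditions of a morphism in $\mathbf{Seg}(\Omega)$ (the commuting square together with the color inequality $c'\circ f_0 \preceq c$), and it is the reason why the construction respects the subset inclusion $\mathsf{Tr}_b(\cdot) \subseteq [n_1]$ functorially. Everything else reduces to straightforward case analysis once the injectivity of $f_1$ is in hand.
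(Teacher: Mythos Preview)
Your verification is correct. The paper itself does not supply a proof of this proposition: it is stated with the attribution ``From \cite{Seqali}'' and left unproven here, so there is no in-paper argument to compare against. Your direct check of well-definedness (using injectivity of $f_1$, the commuting square $t'\circ f_1 = f_0\circ t$, and the color inequality $c'\circ f_0 \preceq c$), together with the routine identity and composition verifications, is exactly the sort of argument one would expect, and your observation that step~(ii) is where both defining conditions of a morphism of segments are actually used is on point.
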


\subsection{Environment functors}
In this section, we construct functors of the form $\mathbf{Seg}(\Omega) \to \mathbf{Set}$ for every pointed set $(E,\varepsilon)$ and every parameter $b \in \Omega$ (Definition \ref{def:set_E_b_varepsilon}). We do so by using the truncation functor defined in section \ref{ssec:truncation_functor}.

The following convention recapitulates various notations used in \cite{Seqali}.

\begin{convention}[Notation]
The hom-set of a category $\mathcal{C}$ from an object $X$ to an object $Y$ will be denoted as $\mathcal{C}(X,Y)$. For instance, the set of functions from a set $X$ to a set $Y$ will be denoted by $\mathbf{Set}(X,Y)$. Recall that, for any category $\mathcal{C}$, the hom-sets give rise to a functor $\mathcal{C}(\_,\_):\mathcal{C}^{\mathrm{op}}\times \mathcal{C} \to \mathbf{Set}$ called the \emph{hom-functor} \cite[page 27]{MacLane}.
\end{convention}

\begin{definition}[Environment functors]\label{def:set_E_b_varepsilon}
For every element $b \in \Omega$, we will denote by $E_{b}^{\varepsilon}$ the functor $\mathbf{Seg}(\Omega) \to \mathbf{Set}$ defined as the composition of the following pair of functors.
\[
\xymatrix@C+20pt{
\mathbf{Seg}(\Omega)\ar[r]^{\mathsf{Tr}_b^{\ast}}&\mathbf{Set}_{\ast}^{\mathrm{op}}\ar[rr]^{\mathbf{Set}_{\ast}(\_,(E,\varepsilon))}&&\mathbf{Set}
}
\]
\end{definition}

\begin{example}[Objects]\label{exa:elements_as_words}
Suppose that $(\Omega,\preceq)$ denotes the Boolean pre-ordered set $\{0\leq 1\}$ and let $(E,\varepsilon)$ be the pointed set $\{\mathtt{A},\mathtt{C},\mathtt{G},\mathtt{T},\varepsilon\}$. If we consider the segment
\[
(t,c) = \xymatrix@C-30pt{
(\bullet&\bullet&\bullet)&(\circ&\circ)&(\bullet&\bullet&\bullet&\bullet)&(\bullet&\bullet&\bullet&\bullet&\bullet)&(\circ&\circ&\circ)&(\circ)
}
\]
then the set $E_1^{\varepsilon}(t,c)$ (where $b = 1$) will contain the following words (which have been parenthesized for clarity), among many others.
\[
\begin{array}{c}
\xymatrix@C-30pt{
(\mathtt{A}&\mathtt{G}&\varepsilon)&(\mathtt{T}&\mathtt{C}&\mathtt{A}&\mathtt{A})&(\mathtt{T}&\mathtt{A}&\mathtt{G}&\mathtt{G}&\varepsilon);
}\\
\xymatrix@C-30pt{
(\mathtt{G}&\mathtt{T}&\varepsilon)&(\varepsilon&\varepsilon&\varepsilon&\mathtt{C})&(\mathtt{A}&\mathtt{G}&\mathtt{T}&\mathtt{A}&\mathtt{C});
}\\
\xymatrix@C-30pt{
(\mathtt{T}&\mathtt{A}&\mathtt{A})&(\mathtt{G}&\mathtt{A}&\mathtt{T}&\mathtt{C})&(\mathtt{A}&\mathtt{G}&\mathtt{T}&\mathtt{T}&\mathtt{T});
}\\
\textrm{etc.}\\
\end{array}
\]
\end{example}

\begin{example}[Morphisms]\label{exa:morphisms_as_inclusion_of_words}
Suppose that $\Omega$ denotes the Boolean pre-ordered set $\{0\leq 1\}$ and let $(E,\varepsilon)$ be the pointed set $\{\mathtt{A},\mathtt{C},\mathtt{G},\mathtt{T},\varepsilon\}$. If we consider the morphism of segments given below, in which we use adequate labeling to show how the first segment is included in the second one,
\begin{equation}\label{eq:morphisms-segment:env-functors}
\xymatrix@C-30pt{
(\mathop{\bullet}\limits^1&\mathop{\bullet}\limits^2&\mathop{\bullet}\limits^3)&(\mathop{\circ}\limits^4&\mathop{\circ}\limits^5)&(\mathop{\bullet}\limits^6&\mathop{\bullet}\limits^7&\mathop{\bullet}\limits^8&\mathop{\bullet}\limits^9)&(\mathop{\bullet}\limits^{10\,}&\mathop{\bullet}\limits^{11})
} \to \xymatrix@C-30pt{
(\mathop{\bullet}\limits^1&\mathop{\bullet}\limits^2&\mathop{\bullet}\limits^3&\mathop{\bullet}\limits^{\ast}&\mathop{\bullet}\limits^{\ast})&(\mathop{\circ}\limits^4&\mathop{\circ}\limits^5&\mathop{\circ}\limits^{\ast})&(\mathop{\bullet}\limits^6&\mathop{\bullet}\limits^7&\mathop{\bullet}\limits^8&\mathop{\bullet}\limits^9)&(\mathop{\bullet}\limits^{\ast})&(\mathop{\circ}\limits^{10\,}&\mathop{\circ}\limits^{11})
}
\end{equation}
then the image of the previous arrow via $E_1^{\varepsilon}$ is a function whose mappings rules look as follows.
\[
\begin{array}{ccc}
\xymatrix@C-30pt{
(\mathtt{A}&\mathtt{G}&\varepsilon)&(\mathtt{T}&\mathtt{C}&\mathtt{A}&\mathtt{A})&(\mathtt{G}&\mathtt{C})
} &\mapsto &
\xymatrix@C-30pt{
(\mathtt{A}&\mathtt{G}&\varepsilon&\varepsilon&\varepsilon)&(\mathtt{T}&\mathtt{C}&\mathtt{A}&\mathtt{A})&(\varepsilon);
}\\
\xymatrix@C-30pt{
(\mathtt{G}&\mathtt{T}&\varepsilon)&(\varepsilon&\varepsilon&\varepsilon&\mathtt{C})&(\mathtt{T}&\mathtt{A})
} &\mapsto &
\xymatrix@C-30pt{
(\mathtt{G}&\mathtt{T}&\varepsilon&\varepsilon&\varepsilon)&(\varepsilon&\varepsilon&\varepsilon&\mathtt{C})&(\varepsilon);
}\\
\xymatrix@C-30pt{
(\mathtt{T}&\mathtt{A}&\mathtt{A})&(\mathtt{G}&\mathtt{A}&\mathtt{T}&\mathtt{C})&(\mathtt{A}&\mathtt{A})
} &\mapsto& 
\xymatrix@C-30pt{
(\mathtt{T}&\mathtt{A}&\mathtt{A}&\varepsilon&\varepsilon)&(\mathtt{G}&\mathtt{A}&\mathtt{T}&\mathtt{C})&(\varepsilon);
}\\
&\textrm{etc.}&\\
\end{array}
\]
Note how consistent appearances of the pointed element $\varepsilon$ in the previous mappings match the newly inserted elements marked by the symbol $\ast$ in the target of the arrow shown in (\ref{eq:morphisms-segment:env-functors}).
\end{example}

\subsection{Sequence alignments}
In this section, we use a particular case of a more general definition given in \cite{Seqali}. Specifically, the construction of Convention \ref{conv:aligned_pedigrad}, given below, corresponds to an example of what was called an \emph{aligned pedigrad} in \cite{Seqali} while the subsequent construction given in Definition \ref{conv:sequence_alignment_sample} corresponds to an example of what was called a \emph{sequence alignment} in \cite{Seqali}.

\begin{convention}[Notation]\label{conv:aligned_pedigrad}
Let $b$ be an element in $\Omega$. We denote by $\mathbf{2}E_{b}^{\varepsilon}$ the functor $\mathbf{Seg}(\Omega) \to \mathbf{Set}$ resulting from the composition of the three functors given in (\ref{eq:aligned_pedigrad}), where 
\begin{itemize}
\item[-] the rightmost functor is the obvious Cartesian functor of $\mathbf{Set}$;
\item[-] the middle functor is the Cartesian product of the functors $E_{b}^{\varepsilon}:\mathbf{Seg}(\Omega) \to \mathbf{Set}$;
\item[-] and the leftmost functor is the obvious Cartesian diagonal functor.
\end{itemize}
\begin{equation}\label{eq:aligned_pedigrad}
\xymatrix@C+35pt{
\mathbf{Seg}(\Omega) \ar[r]^-{(\mathrm{id},\mathrm{id})} & \mathbf{Seg}(\Omega) \prod \mathbf{Seg}(\Omega) \ar[r]^-{E_{b}^{\varepsilon} \prod E_{b}^{\varepsilon}}& \mathbf{Set} \prod \mathbf{Set} \ar[r]^-{\times} & \mathbf{Set}
}
\end{equation}
\end{convention}

\begin{definition}[Sequence alignments]\label{conv:sequence_alignment_sample}
Let $b$ be an element in $\Omega$. We define a \emph{sequence alignment} over $\mathbf{2}E^{\varepsilon}_{b}$ as a triple $(\iota,T,\sigma)$ where $\iota$ is an inclusion functor $\iota:B \to \mathbf{Seg}(\Omega)$, where $T$ is a functor $B \to \mathbf{Set}$ and $\sigma$ is a natural monomorphism $T \Rightarrow \mathbf{2}E_b^{\varepsilon} \circ \iota$.
\end{definition}

\begin{example}[Sequence alignments]\label{exa:Sequence alignments}
The goal of this example is to illustrate the concepts of sequence alignments (Definition \ref{conv:sequence_alignment_sample}) by using the data given in section \ref{ssec:Main_example}.
Take $(\Omega,\preceq)$ to be the Boolean pre-ordered $\{0 \leq 1 \}$ and $(E,\varepsilon)$ to be the pointed set $\{\mathtt{A},\mathtt{C},\mathtt{G},\mathtt{T},\varepsilon\}$. For any given segment $\tau$ in $\mathbf{Seg}(\Omega)$, the set $\mathbf{2}E_b^{\varepsilon}(\tau)$ is equal to the product $E_b^{\varepsilon}(\tau) \times E_b^{\varepsilon}(\tau)$. The table shown below gives examples of elements in the set $\mathbf{2}E_b^{\varepsilon}(\tau)$ for the corresponding segment $\tau$ shown on the left-hand side, while $b$ is taken to be equal to $1$. As was done in previous examples, parentheses are added to the elements of $\mathbf{2}E_1^{\varepsilon}(\tau)$ for clarity.
\[
\begin{array}{|c|c|c|}
\hline
\cellcolor[gray]{0.8}\tau&\cellcolor[gray]{0.8}\mathbf{2}E_1^{\varepsilon}(\tau)\\
\hline
\!\xymatrix@C-30pt{(\bullet&\bullet&\bullet)&(\bullet&\bullet&\bullet)&(\bullet&\bullet&\bullet)}\!  & 
\!\!\!\!\begin{array}{c}
\xymatrix@-30pt{
(\mathtt{G}&\mathtt{G}&\varepsilon)&(\mathtt{G}&\mathtt{A}&\varepsilon)&(\mathtt{G}&\mathtt{G}&\mathtt{A})\\
(\mathtt{A}&\mathtt{C}&\mathtt{G})&(\mathtt{C}&\mathtt{C}&\mathtt{T})&(\mathtt{C}&\mathtt{T}&\mathtt{G})\\
}
\end{array}\!\!\!\!;\!\!\!\!\!\!
\quad
\begin{array}{c}
\xymatrix@-30pt{
(\varepsilon&\mathtt{A}&\mathtt{A})&(\mathtt{C}&\mathtt{A}&\mathtt{C})&(\mathtt{T}&\mathtt{T}&\mathtt{C})\\
(\mathtt{C}&\mathtt{C}&\mathtt{A})&(\mathtt{G}&\mathtt{G}&\mathtt{T})&(\mathtt{G}&\mathtt{A}&\varepsilon)\\
}
\end{array}\!\!\!\!;\!\!\!\!\!\!
\quad
\begin{array}{c}
\xymatrix@-30pt{
(\mathtt{G}&\mathtt{G}&\mathtt{A})&(\mathtt{C}&\mathtt{T}&\mathtt{C})&(\mathtt{G}&\mathtt{A}&\mathtt{T})\\
(\mathtt{T}&\mathtt{T}&\varepsilon)&(\mathtt{T}&\mathtt{C}&\mathtt{C})&(\varepsilon&\mathtt{A}&\mathtt{C})\\
}
\end{array}\!\!\!\!; \!\!\!
\quad
\textrm{etc.}\\
\hline
\!\begin{array}{c}
\xymatrix@C-30pt{(\circ&\circ&\circ)&(\bullet&\bullet&\bullet)&(\bullet&\bullet&\bullet)}\\
\xymatrix@C-30pt{(\bullet&\bullet&\bullet)&(\circ&\circ&\circ)&(\bullet&\bullet&\bullet)}\\
\xymatrix@C-30pt{(\bullet&\bullet&\bullet)&(\bullet&\bullet&\bullet)&(\circ&\circ&\circ)}
\end{array}
\! &
\!\!\!\!\begin{array}{c}
\xymatrix@-30pt{
(\mathtt{A}&\mathtt{G}&\mathtt{G})&(\mathtt{C}&\mathtt{G}&\mathtt{T})\\
(\mathtt{A}&\mathtt{T}&\mathtt{G})&(\mathtt{T}&\mathtt{C}&\mathtt{G})\\
}
\end{array}\!\!\!\!;\!\!\!\!\!\!
\quad
\begin{array}{c}
\xymatrix@-30pt{
(\mathtt{T}&\mathtt{A}&\mathtt{T})&(\mathtt{A}&\mathtt{A}&\mathtt{T})\\
(\mathtt{C}&\mathtt{T}&\mathtt{G})&(\mathtt{G}&\mathtt{T}&\varepsilon)\\
}
\end{array}\!\!\!\!;\!\!\!\!\!\!
\quad
\begin{array}{c}
\xymatrix@-30pt{
(\mathtt{C}&\mathtt{A}&\mathtt{A})&(\mathtt{A}&\mathtt{A}&\mathtt{C})\\
(\mathtt{C}&\mathtt{A}&\mathtt{C})&(\varepsilon&\mathtt{A}&\mathtt{C})\\
}
\end{array}\!\!\!\!; \!\!\!
\quad
\textrm{etc.}\\
\hline
\!\xymatrix@C-30pt{(\circ&\circ&\circ)&(\bullet&\bullet&\bullet)&(\circ&\circ&\circ)}\!  & 
\!\!\!\!\begin{array}{c}
\xymatrix@-30pt{
(\mathtt{G}&\mathtt{A}&\varepsilon)\\
(\mathtt{C}&\mathtt{C}&\mathtt{T})\\
}
\end{array}\!\!\!\!;\!\!\!\!\!\!
\quad
\begin{array}{c}
\xymatrix@-30pt{
(\mathtt{C}&\mathtt{A}&\mathtt{C})\\
(\mathtt{G}&\mathtt{G}&\mathtt{T})\\
}
\end{array}\!\!\!\!;\!\!\!\!\!\!
\quad
\begin{array}{c}
\xymatrix@-30pt{
(\mathtt{C}&\mathtt{T}&\mathtt{C})\\
(\mathtt{T}&\mathtt{C}&\mathtt{C})\\
}
\end{array}\!\!\!\!; \!\!\!
\quad
\textrm{etc.}\\
\hline
\end{array}
\]
Note that the tables describing the progeny of section \ref{ssec:Main_example} can be seen as a subpart of the functor $\mathbf{2}E_1^{\varepsilon}$ and, more specifically, as a sequence alignment $(\iota,T,\sigma)$ over $\mathbf{2}E_1^{\varepsilon}$.
To define this sequence alignment, we can take the functor $\iota:B \hookrightarrow \mathbf{Seg}(\Omega)$ to be the inclusion of categories whose domain $B$ is the singleton category consisting of the following segment, which we denote as $\mathtt{seg}_{15}$.
\[
\xymatrix@C-30pt{(\bullet)&(\bullet)&(\bullet)&(\bullet)&(\bullet)&(\bullet)&(\bullet)&(\bullet)&(\bullet)&(\bullet)&(\bullet)&(\bullet)&(\bullet)&(\bullet)&(\bullet)}
\]
Before describing $T$, note that, in the real world, the two alleles encoding a genotype cannot be distinguished as the \emph{top allele} and the \emph{bottom allele}. This means that one may want to take $T(\mathtt{seg}_{15})$ to be invariant under swapping of the alleles. This will be done later in the paper through the modeling of segregation events -- but since all the elements of the tables given in section \ref{ssec:Main_example} are distinct under swapping of the alleles, we do not need to address these concerns here.

To define our sequence alignment $T$, we take $T(\mathtt{seg}_{15})$ to be the subset of $E_1^{\varepsilon}(\mathtt{seg}_{15})$ that exactly contains the pair of alleles given in section \ref{ssec:Main_example}, which we recall in the following table.
\[
\begin{array}{|rrr|}
\hline
\multicolumn{3}{|c|}{\cellcolor[gray]{0.8}T(\mathtt{seg}_{15})}\\
\hline
\mathtt{p}_{1} = \!\!
\begin{array}{l}
\mathtt{aAtCgCtAtTtcaaT}\\
\mathtt{gAtCcCcAaGtgacT}\\
\end{array}
&\mathtt{p}_{5} = \!\!
\begin{array}{l}
\mathtt{gAtCcCtAtGtcaaT}\\
\mathtt{aAcAgTcAtTgcatT}\\
\end{array}
&\mathtt{p}_{9} = \!\!
\begin{array}{l}
\mathtt{aTtAgTcAaGtgaaT}\\
\mathtt{aTtCgTtGaTgcatT}\\
\end{array}\\
\hline
\mathtt{p}_{2} = \!\!
\begin{array}{l}
\mathtt{gTtCcCtGaGtgatT} \\
\mathtt{aAtAcCcAtTgcctC} \\
\end{array}
&\mathtt{p}_{6} = \!\!
\begin{array}{l}
\mathtt{gAtCcCtGaTgcctC}\\
\mathtt{gTtCcCtGaGtgaaT}\\
\end{array}
&\mathtt{p}_{10} = \!\!
\begin{array}{l}
\mathtt{aTtCgTcAaTgcacT} \\
\mathtt{gAtCcCtGaTgcctC} \\
\end{array}\\
\hline
\mathtt{p}_{3} = \!\!
\begin{array}{l}
\mathtt{aAtCgCtGaGtgaaT}\\
\mathtt{aAtAcCcAaGtgatT}\\
\end{array}
&\mathtt{p}_{7} = \!\!
\begin{array}{l}
\mathtt{gTtCcCtAtGtcaaT}\\
\mathtt{gAtCcCcAtTgcatT} \\
\end{array}
&\mathtt{p}_{11} = \!\!
\begin{array}{l}
\mathtt{aAcAgTcAaTgcatT}\\
\mathtt{aTtAgTcAaGtgaaT}\\
\end{array}\\
\hline
\mathtt{p}_{4} = \!\!
\begin{array}{l}
\mathtt{gTtCcCcAtTgcctC}\\
\mathtt{gAtCcCtAtTtcacT}\\
\end{array}
&\mathtt{p}_{8} =\!\!
\begin{array}{l}
\mathtt{aAcAgTtGaTgcctC}\\
\mathtt{gAtCcCtGaGtgaaT}\\
\end{array}
&\mathtt{p}_{12} =\!\!
\begin{array}{l}
\mathtt{gAtCcCcAaGtgatT}\\
\mathtt{aTtAgTcAaGtgacT}\\
\end{array}\\
\hline
\end{array}
\]
By definition, the functor $T:B \to \mathbf{Set}$ is equipped with an natural monomorphism $\sigma:T \Rightarrow \mathbf{2}E_1^{\varepsilon}$ and the resulting triple $(\iota,T,\sigma)$ defines an obvious sequence alignment.
\end{example}

\section{Using pedigrads in idempotent commutative monoids to model haplotypes}\label{sec:Pedigrads_in_idempotent_commutative_monoids}
The goal of this section is to refine the concept of environment functors, seen in section \ref{sec:Examples_of_pedigrads_in_sets}, to a more sophisticated environment in which we can carry out efficient computations on haplogroups. Specifically, we use sequence alignments, and their environment functors, to construct pedigrads in the category of idempotent commutative monoids. The elements belonging to the images of these pedigrads will be used to model haplogroups. The main difference between environment functors (equipped with sequence alignments) and the pedigrads that result form them lies in the semantics used to interpret the elements belonging to their respective images. Specifically, to account for homologous recombination and segregation events, we will define our pedigrads in a localization-like fashion by forcing certain limit-dependent properties to hold through coequalizer constructions (see section \ref{ssec:recombination_monoids}). Because the employed coequalizers gather various types of congruences, the resulting pedigrads may not have straightforward presentation in idempotent commutative monoids. To resolve these presentation problems, we introduce, in section \ref{ssec:Recombination_schemes_and_pedigrads}, the concept of recombination schemes. In particular, we provide a series of theorems that allow us to determine the presentability of pedigrads associated with recombination schemes. To prepare for these theorems, we will recall, from section \ref{ssec:ICMonoids} to \ref{ssec:Coequalizers_of_ic_monoids}, concepts associated with idempotent commutative monoids, and we will formalize, from section \ref{ssec:biology_algebraic-operations} to section \ref{ssec:recombination_chromologies}, the concept of homologous recombination and segregation event.

\subsection{Idempotent commutative monoids}\label{ssec:ICMonoids}
Recall that a \emph{monoid} is a set $M$ equipped with a binary operation $\star:M \times M \to M$ and a particular element $e \in M$ which must satisfy the following two axioms:
\begin{itemize}
\item[1)] (Associativity) for every $x,y,z \in M$, the equation $(x \star y) \star z = x \star (y \star z)$ holds; 
\item[2)] (Identity) for every $x \in M$, the equations $x \star e = x = e \star x$ hold; 
\end{itemize}
Below, we give the axioms defining idempotent and commutative monoids.

\begin{definition}[Idempotent]
A monoid $(M,\star,e)$ is said to be \emph{idempotent} if for every $x \in M$, the equation $x \star x = x$ holds.
\end{definition}

\begin{definition}[Commutative]
A monoid $(M,\star,e)$ is said to be \emph{commutative} if for every $x,y \in M$, the equation $x \star y = y \star x$ holds.
\end{definition}

\begin{convention}[Notation]
We will usually follow conventions used in the literature and denote an abstract commutative monoid as $(M,+_M,0_M)$, or sometimes as $(M,+,0)$ for conciseness. The binary operation $+_M$ will be called the \emph{addition} while the particular element $0_M$ will be called the \emph{zero element}.
\end{convention}

\begin{convention}[Naming]
For convenience, we will shorten the name \emph{idempotent commutative monoid} to \emph{ic-monoid}.
\end{convention}

\begin{example}[Generator]\label{exa:icmonoid_B_2}
The set $B_2 = \{0,1\}$ has an ic-monoid structure whose associated addition operation is described by the following Cayley table.
\[
\begin{array}{c|c|c}
\cellcolor[gray]{0.7}+&\cellcolor[gray]{0.8}0&\cellcolor[gray]{0.8}1\\
\hline
\cellcolor[gray]{0.8}0&0&1\\
\hline
\cellcolor[gray]{0.8}1&1&1
\end{array}
\]
Note that this addition mimics the $\mathsf{OR}$-operation acting on the Boolean values $\mathsf{True}$ and $\mathsf{False}$ (which model the values 0 and 1, respectively). In the sequel, the resulting idempotent commutative monoid will be referred to as a triple $(B_2,+,0)$ and called the \emph{Boolean ic-monoid}.
\end{example}

\begin{example}[Power sets]\label{exa:power_set_icmonoids}
Recall that the \emph{power set} of a set $S$ is the set of subsets of $S$, which we will denote as $\mathcal{P}(S)$. This set defines an ic-monoid whose addition operation is given by the union operation and whose zero element is the empty subset. For instance, if we take $S$ to be the set of elements $\{\texttt{x},\texttt{y}\}$, then the addition operation of $\mathcal{P}(S)$ is described by the following Cayley table.
\[
\begin{array}{c|c|c|c|c}
\cellcolor[gray]{0.7}+&\cellcolor[gray]{0.8}\cmemptyset&\cellcolor[gray]{0.8}\{\texttt{x}\}&\cellcolor[gray]{0.8}\{\texttt{y}\}&\cellcolor[gray]{0.8}\{\texttt{x},\texttt{y}\}\\
\hline
\cellcolor[gray]{0.8}\cmemptyset&\cmemptyset&\{\texttt{x}\}&\{\texttt{y}\}&\{\texttt{x},\texttt{y}\}\\
\hline
\cellcolor[gray]{0.8}\{\texttt{x}\}&\{\texttt{x}\}&\{\texttt{x}\}&\{\texttt{x},\texttt{y}\}&\{\texttt{x},\texttt{y}\}\\
\hline
\cellcolor[gray]{0.8}\{\texttt{y}\}&\{\texttt{y}\}&\{\texttt{x},\texttt{y}\}&\{\texttt{y}\}&\{\texttt{x},\texttt{y}\}\\
\hline
\cellcolor[gray]{0.8}\{\texttt{x},\texttt{y}\}&\{\texttt{x},\texttt{y}\}&\{\texttt{x},\texttt{y}\}&\{\texttt{x},\texttt{y}\}&\{\texttt{x},\texttt{y}\}\\
\end{array}
\]
\end{example}

\subsection{Category of idempotent commutative monoids}\label{ssec:Category_Icm}
Let $(M,+_M,0_M)$ and $(N,+_N,0_N)$ be two monoids. Recall that a \emph{morphism of monoids} from $(M,+_M,0_M)$ to $(N,+_N,0_N)$ is a function $f:M \to N$ of sets satisfying the following two axioms:
\begin{itemize}
\item[1)] for every $x,y \in M$, the equation $f(x +_M y) = f(x) +_N f(y)$ holds;
\item[2)] the equation  $f(0_M) = 0_N$ holds.
\end{itemize}

\begin{convention}[Category structure]
We shall denote by $\mathbf{Icm}$ the category whose objects are idempotent commutative monoids and whose arrows are the morphisms of monoids between them.
\end{convention}

\begin{example}[Yoneda correspondence]\label{exa:Yoneda_correspondence_icmonoids}
Any morphism of the form $(B_2,+,0) \to (M,+_M,0_M)$ in $\mathbf{Icm}$ amounts to give two mappings rules of the form $1 \mapsto x$ and $0 \mapsto 0_M$. In other words, giving such a morphism is equivalent to picking out an element $x \in M$.
\end{example}

\begin{convention}[Submonoids]\label{conv:submonoids}
For every monoid $(M,+_M,0_M)$, we will say that a subset $U \subseteq M$ is a \emph{submonoid} of $M$ if the element $0_M$ belongs to $U$ and the restriction of the monoid operation $+_M:M \times M \to M$ lands in the subset $U$ (\emph{i.e.} induces a function $U \times U \to U$. Note that for such a submonoid $U$, the associated inclusion $U \hookrightarrow M$ induces a morphism $(U,+_M,0_M) \to (M,+_M,0_M)$ in $\mathbf{Icm}$.
\end{convention}

\begin{example}[Generators]\label{exa:submonoids}
For every ic-monoid $(M,+_M,0_M)$ and every element $x \in M$, the subset $\{0_M,x\}$ defines an obvious submonoid of $M$ (note that $x+_M x = x$) that is isomorphic to the ic-monoid $B_2$ through the mapping rules $0_M \mapsto 0$ and $x \mapsto 1$.
\end{example}

\begin{remark}[Products]\label{rem:products}
The category $\mathbf{Icm}$ has all products. Indeed, for every set $A$, the product of a collection $\{M_i\}_{i \in A}$ of idempotent commutative monoids $(M_i,+_{M_i},0_{M_i})$ is given by the product monoid $\prod_{i \in A} M_i$ whose elements are collections $(m_i)_{i \in A}$ of elements $m_i \in M_i$ for every $i \in A$. The addition operation is given by the equation $(m_i)_{i \in A} + (m_i')_{i \in A} = (m_i+_{M_i}m_i')_{i \in A}$ and the zero element is given by the collection $(0_{M_i})_{i \in A}$. It is straightforward to check that this structure defines a product object in $\mathbf{Icm}$.
\end{remark}

\begin{convention}[Notation]\label{def:S-fold-product:ic-monoid}
For every ic-monoid $(M,+,0)$ and every set $A$, we will usually denote the product $\prod_{i \in A} M$ as $M^A$ when the cardinal of the set $A$ is unknown. However, if we take $A$ to be the set $[2]=\{1,2\}$, then the notation $M^A$ will preferentially be replaced with the other usual notation $M \times M$.
\end{convention}

\begin{example}[Power set as a functor]
Every function $f:S\to T$ gives rise to a morphism $\mathcal{P}(f):\mathcal{P}(S) \to \mathcal{P}(T)$ in $\mathbf{Icm}$ that maps a subset $U \subseteq S$ to the set $f(U)$ of images of $f$ on $U$. Since for every function $g:T \to R$, the equation $g(f(U)) = g \circ f(U)$ holds, the mapping $f \mapsto \mathcal{P}(f)$ is natural and hence defines a functor $\mathcal{P}:\mathbf{Set} \to \mathbf{Icm}$.
\end{example}

\begin{example}[Products]\label{exa:products:powers-of-B_2}
Let $S$ be a set. The ic-monoid $(\mathcal{P}(S),\cup,\cmemptyset)$ is isomorphic to the ic-monoid $B_2^S$ (see Convention \ref{def:S-fold-product:ic-monoid}) through the following mapping.
\[
\varphi:
\left(
\begin{array}{llll}
\mathcal{P}(S)&\to&B_2^S&\\
U&\mapsto&(\delta_{i,U})_{i \in S}&\\
\end{array}
\right)
\quad
\textrm{ where }
\quad
\delta_{i,U} =
\left\{
\begin{array}{ll}
1&\textrm{if }i \in U\\
0&\textrm{if }i \notin U\\
\end{array}
\right.
\]
It is straightforward to show that the function $\varphi$ defines a morphism in $\mathbf{Icm}$. Indeed, the equation $\delta_{i,\cmemptyset} = 0$ holds for every element $i \in S$, and we can show that an element $i \in S$ belongs to a union $U \cup V$ if, and only if, $i \in U$ or $i \in V$, which implies that the equation $\delta_{i ,U \cup V} = \delta_{i ,U } + \delta_{i ,V}$ holds (see Example \ref{exa:icmonoid_B_2}). The function $\varphi$ also defines an obvious bijection whose inverse is shown below.
\[
\varphi^{-1}:
\left(
\begin{array}{llll}
B_2^S&\to&\mathcal{P}(S)&\\
(m_i)_{i \in S}&\mapsto&\{i \in S~|~m_i = 1\}&\\
\end{array}
\right)
\]
The function $\varphi^{-1}$ induces a morphism in $\mathbf{Icm}$ because the equation $x+y = 1$ in $B_2$ is equivalent to the logical statement: $x = 1$ \emph{or} $y = 1$. Note that for every function $f:S \to T$, the composition $\varphi_T \circ \mathcal{P}(f) \circ \varphi_S^{-1}$ provides a morphism $B_2^S \to B_2^T$ in $\mathbf{Icm}$ that maps a tuple $(x_i)_{i \in S}$ to the tuple $(\sum_{i \in f^{-1}(j)} x_i)_{j \in T}$.
\end{example}

\begin{example}[Addition as a morphism]\label{exa:Addition_as_morphism}
Let $(M,+,0)$ be an object in $\mathbf{Icm}$. The function $\mathsf{sum}:M \times M \to M$ that maps a pair $(x,y)$ to the element $x+y$ in $M$ defines a morphism in $\mathbf{Icm}$. Indeed, we can verify that the identity $\mathsf{sum}(0,0) = 0$ holds and that the following equations are satisfied for every $(x,y) \in M$ and $(x',y') \in M$.
\begin{align*}
\mathsf{sum}((x,y)+(x',y')) &= \mathsf{sum}(x+x',y+y')\\
& = x+x'+y+y' \\
&= x+y + x'+y'\\
& = \mathsf{sum}(x,y) + \mathsf{sum}(x',y')
\end{align*}
Furthermore, we can show that the morphism $\mathsf{sum}:M \times M \to M$ is natural in $M$. Indeed, for every morphism $f:(M,+_{M},0_M) \to (N,+_{N},0_N)$ in $\mathbf{Icm}$, the equation given below, on the left, hold for every $(x,y) \in M$, which is equivalent to saying that the diagram given on the right commutes.
\[
\begin{array}{ll}
f(x) + f(y) = f(x+y)
\end{array}
\quad\quad\quad\quad\quad
\begin{array}{l}
\xymatrix{
M \times M \ar[r]^-{\mathsf{sum}}\ar[d]_{f \times f} & M \ar[d]^{f}\\
N \times N \ar[r]_-{\mathsf{sum}} & N
}
\end{array}
\]
\end{example}

\subsection{Universal construction of ic-monoids}\label{ssec:ICMonoids_universal_construction}
There is an adjunction of the form (\ref{adjunction_set_icm}) between sets and ic-monoids, for which the left adjoint $F$ maps a set $S$ to the free ic-monoid generated over $S$ (see the discussion below) while the right adjoint $U$ is the obvious structural forgetful functor.
\begin{equation}\label{adjunction_set_icm}
\xymatrix{
\mathbf{Set}\ar@<+1.2ex>[r]^-F\ar@{<-}@<-1.2ex>[r]_-U\ar@{}[r]|-{\bot}&\mathbf{Icm}
}
\end{equation}

For every set $S$, the ic-monoid $F(S)$ correspond to the set of finite subsets of $S$ for which the addition is given by the union operation and the zero element is given by the empty set. For every function $f:S \to T$, the image $F(f)$ maps every finite subset $U \subseteq S$ to the set $f(U) \subseteq T$ of images of $f$ on $U$.

\begin{example}[Power sets \emph{versus} free ic-monoids]\label{exa:free_ic_monoid}
For every set $S$, we have an inclusion $F(S) \subseteq \mathcal{P}(S)$ that induces a morphism in $\mathbf{Icm}$. If we take $S$ to be a finite set, then the image $F(S)$ is equal to the power set of $S$. For instance, the free idempotent commutative monoid $F(\{\texttt{x},\texttt{y}\})$ correspond to the monoid described in Example \ref{exa:power_set_icmonoids}.
\end{example}

For every set $S$, the unit $\eta_S:S \to UF(S)$ of adjunction (\ref{adjunction_set_icm}) sends every element $x$ in $S$ to the singleton $\{x\}$ in $F(S)$ while, for every ic-monoid $(M,+,0)$, the counit $\mu_S:FU(M,+,0) \to (M,+,0)$ of adjunction (\ref{adjunction_set_icm}) sends the empty set to 0 and sends every non-empty finite set $\{a_1,a_2,\dots,a_n\} \subseteq M$ to the following finite sum in $(M,+,0)$.
\begin{equation}\label{eq:ic_monoids_adjunction_representative}
\sum_{i=1}^n a_i = a_1+a_2 + \dots + a_n
\end{equation}

\begin{remark}[Representation]
Let $S$ be a set. If one identifies every singleton $\{a\}$ in $F(S)$ with the element $a$ in $S$, then a finite subset $\{a_1,a_2,\dots,a_n\}$ of $S$ can be represented as a finite sum of its elements, as shown in (\ref{eq:ic_monoids_adjunction_representative}).
\end{remark}

\subsection{Reminder on monomorphisms and epimorphisms}\label{ssec:Reminder_monomorphisms_epimorphisms}
In this section, we recall general facts on how to detect or construct epimorphisms and monomorphisms. These properties will be particularly useful to prove one of our main results (in Theorem \ref{theo:representable_pedigrad_E_b_varepsilon}). First, recall that a \emph{monomorphism} (in any category) is an arrow $m:A \to B$ such that for every pair of parallel arrows $f,g:X \rightrightarrows A$ for which the equation $m \circ f = m \circ g$ holds, the two arrows $f$ and $g$ must be equal.

\begin{proposition}\label{prop:characterization_mono}
If a category $\mathcal{C}$ has pullbacks, then an arrow $m:A \to B$ in $\mathcal{C}$ is a monomorphism if the pullback $p_1,p_2:P \rightrightarrows A$ of the arrow $m$ along itself is such that $p_1 = p_2$.
\end{proposition}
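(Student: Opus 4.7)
The plan is to unfold the universal property of the pullback and exploit the hypothesis $p_1 = p_2$ directly. The setup is the pullback square
\[
\xymatrix{
P \ar[r]^{p_2}\ar[d]_{p_1} & A \ar[d]^{m}\\
A \ar[r]_{m} & B
}
\]
in which the identity $m \circ p_1 = m \circ p_2$ holds by definition, and we additionally assume $p_1 = p_2$.

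To verify that $m$ is a monomorphism, I would start from an arbitrary parallel pair $f,g:X \rightrightarrows A$ satisfying $m \circ f = m \circ g$. Since the square above is a pullback and the outer square formed by $f$ and $g$ commutes (both composites equal $m \circ f = m \circ g$), the universal property of the pullback yields a unique morphism $h:X \to P$ for which the identities $p_1 \circ h = f$ and $p_2 \circ h = g$ hold.

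The final step is then immediate: substituting $p_1 = p_2$ in the previous two identities gives
\[
f \;=\; p_1 \circ h \;=\; p_2 \circ h \;=\; g,
\]
so $f$ equals $g$ and $m$ is a monomorphism.

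There is no real obstacle here; the proof is a direct consequence of the universal property, so the main task is just to arrange the argument cleanly. The only point worth emphasizing (for later use in Theorem \ref{theo:representable_pedigrad_E_b_varepsilon}) is that the criterion is purely local: checking monomorphy reduces to comparing the two projections of a single pullback, without ever having to quantify over all parallel pairs $(f,g)$.
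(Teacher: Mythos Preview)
Your proof is correct and follows essentially the same approach as the paper: take a parallel pair $f,g$ with $m\circ f = m\circ g$, use the universal property of the pullback to produce $h:X\to P$ with $p_1\circ h=f$ and $p_2\circ h=g$, and conclude $f=g$ from $p_1=p_2$.
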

\begin{proof}
Let $f,g:X \rightrightarrows A$ be a pair of parallel arrows for which the equation $m \circ f = m \circ g$ holds. Let us show that $f = g$. First, the universality of the pullback $P$ gives an arrow $h:X \to P$ for which the identities $f = p_1 \circ h$ and $g = p_2 \circ h$ hold. Then, because $p_1 = p_2$, we have $f= g$ and the statement follows.
\end{proof}

\begin{definition}[Relative monomorphisms]\label{def:relative-monomorphisms}
Let $I$ be an object in a category $\mathcal{C}$. A morphism $m:X \to Y$ in $\mathcal{C}$ will be said to be an \emph{$I$-monomorphism} if for every pair of arrows $f,g:I \to X$ in $\mathcal{C}$ for which the equation $m \circ f = m \circ g$ holds, the two arrows $f$ and $g$ are equal in $\mathcal{C}$.
\end{definition}

\begin{proposition}\label{prop:characterization_B2_mono}
In the category $\mathbf{Icm}$, an arrow is a monomorphism if and only if it is a $B_2$-monomorphism (see Example \ref{exa:icmonoid_B_2}).
\end{proposition}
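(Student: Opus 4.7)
The plan is to reduce the claim to a statement about underlying sets by invoking the Yoneda correspondence of Example \ref{exa:Yoneda_correspondence_icmonoids}. One direction is automatic: every monomorphism in $\mathbf{Icm}$ is, by definition, an $I$-monomorphism for every object $I$, so in particular a $B_2$-monomorphism. The work is in the converse direction, where the key observation is that arrows out of $B_2$ see precisely the elements of the codomain.

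For the converse, I would first show that a $B_2$-monomorphism $m:X\to Y$ in $\mathbf{Icm}$ is injective on underlying sets. Given $x,x'\in X$ with $m(x)=m(x')$, Example \ref{exa:Yoneda_correspondence_icmonoids} provides the two morphisms $\widehat{x},\widehat{x'}:B_2 \to X$ determined by $1\mapsto x$ and $1\mapsto x'$ (note that idempotence of $X$ is exactly what guarantees these assignments extend to morphisms of monoids). The compositions $m\circ\widehat{x}$ and $m\circ\widehat{x'}$ classify the elements $m(x)$ and $m(x')$ of $Y$, which coincide by assumption; hence $m\circ\widehat{x}=m\circ\widehat{x'}$, and the $B_2$-monomorphism hypothesis forces $\widehat{x}=\widehat{x'}$, i.e.\ $x=x'$.

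Then I would upgrade this set-theoretic injectivity into the full monomorphism property in $\mathbf{Icm}$. For any parallel pair $f,g:(A,+_A,0_A)\rightrightarrows X$ with $m\circ f=m\circ g$, evaluating at an arbitrary element $a\in A$ yields $m(f(a))=m(g(a))$, which by injectivity forces $f(a)=g(a)$; since $f$ and $g$ agree on all of $A$, they are equal as morphisms of monoids. This concludes the equivalence.

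I do not expect any real obstacle here: the argument is a clean application of the Yoneda correspondence recalled earlier, and the only subtlety worth flagging is the verification that the classifying map $1\mapsto x$ from $B_2$ actually respects the monoid structure, which is ensured by the idempotence axiom $x+x=x$ inside $X$. (One could alternatively route the argument through Proposition \ref{prop:characterization_mono} by computing the pullback of $m$ along itself, but the Yoneda-based proof above is shorter and more transparent.)
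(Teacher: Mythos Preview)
Your proof is correct and follows essentially the same approach as the paper: both use the Yoneda correspondence of Example~\ref{exa:Yoneda_correspondence_icmonoids} to show that a $B_2$-monomorphism is injective on underlying sets, and then observe that injectivity on underlying sets forces the monomorphism property in $\mathbf{Icm}$. Your write-up is more explicit (in particular you spell out the classifying maps $\widehat{x},\widehat{x'}$ and the idempotence check), but the structure and the key ingredients are identical.
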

\begin{proof}
By Definition \ref{def:relative-monomorphisms}, monomorphisms in $\mathbf{Icm}$ must be $B_2$-monomorphisms. Conversely, Example \ref{exa:Yoneda_correspondence_icmonoids} implies that $B_2$-monomorphisms of the form $f:(M,+_M,0_M) \to (N,+_N,0_N)$ are given by injective functions $f:M \to N$. Because injections are monomorphisms in $\mathbf{Set}$, we can use a diagrammatic reasoning to deduce that that injective morphisms in $\mathbf{Icm}$ are monomorphisms in $\mathbf{Icm}$.
\end{proof}

\begin{example}[Monomorphisms and encoding]\label{exa:monomorphism-encoding}
For every $S$ set, Remark \ref{exa:free_ic_monoid} provides a morphism $F(S) \to \mathcal{P}(S)$ in $\mathbf{Icm}$ and Example \ref{exa:products:powers-of-B_2} provides an isomorphism $\mathcal{P}(S) \cong B_2^S$ in $\mathbf{Icm}$. Since the morphism $F(S) \to \mathcal{P}(S)$ is defined by an injective function, it also defines a $B_2$-monomorphism (see Example \ref{exa:Yoneda_correspondence_icmonoids}) and hence a monomorphism (by Proposition \ref{prop:characterization_B2_mono}). The composition of the monomorphism $F(S) \to \mathcal{P}(S)$ with the isomorphism $\mathcal{P}(S) \cong B_2^S$ provides a monomorphism $F(S) \to B_2^S$ in $\mathbf{Icm}$ satisfying the following mapping rule.
\[
\varphi_S:
\left(
\begin{array}{llll}
F(S)&\to&B_2^S&\\
U&\mapsto&(\delta_{i,U})_{i \in S}&\\
\end{array}
\right)
\quad
\textrm{ where }
\quad
\delta_{i,U} =
\left\{
\begin{array}{ll}
1&\textrm{if }i \in U\\
0&\textrm{if }i \notin U\\
\end{array}
\right.
\]
It follows from Remark \ref{exa:free_ic_monoid} that if $S$ is finite, then the monomorphism $\varphi_S:F(S) \to B_2^S$ corresponds to the isomorphism $F(S) \cong B_2^S$ of Example \ref{exa:products:powers-of-B_2}. Note that, for every $S$ set, the monomorphism $\varphi_S:F(S) \to B_2^S$ gives an easy way to encode a free ic-monoid in a computer as a piece of binary code (see Example \ref{exa:icmonoid_B_2}). More generally, we can use this morphism to \emph{computerize} any  ic-monoid as its free presentation. This means that the ic-monoid is encoded up to its non-free relations. Then, these relations can be recovered as coequalizer constructions on the free presentation (see section \ref{ssec:Coequalizers_of_ic_monoids}).
\end{example}

Let us now recall the definition for epimorphisms. Specifically, an \emph{epimorphism} (in any category) is an arrow $e:B \to A$ such that for every pair of parallel arrows $f,g:A \rightrightarrows X$ for which the equation $f \circ e = g \circ e$ holds, the two arrows $f$ and $g$ must be equal.

\begin{definition}[Orthogonality]
A morphism $f:X \to Y$ in a category $\mathcal{C}$ will be said to be \emph{orthogonal} to an object $I$ in $\mathcal{C}$ if for every arrow $i:I \to Y$ in $\mathcal{C}$, there exists a dashed arrow (as shown below) making the following diagram commute in $\mathcal{C}$. The dashed arrow will be referred to as a \emph{lift} of the arrow $e$ \emph{along} the arrow $i$.
\[
\xymatrix@R-5pt{
&X\ar[d]^{e}\\
I\ar[r]_{i}\ar@{-->}[ru]&Y
}
\]
\end{definition}

\begin{proposition}\label{prop:characterization_epi}
Every morphism in $\mathbf{Icm}$ that is orthogonal with respect to the Boolean ic-monoid  $B_2$ (Example \ref{exa:icmonoid_B_2}) is an epimorphism. 
\end{proposition}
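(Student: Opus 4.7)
The approach is to translate orthogonality with respect to $B_2$ into the statement that the underlying function is surjective, and then to invoke the standard observation that surjective morphisms in a concrete category are epimorphisms.

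First, I would appeal to Example \ref{exa:Yoneda_correspondence_icmonoids} to identify, for any ic-monoid $(M,+,0)$, a morphism $B_2 \to M$ in $\mathbf{Icm}$ with the choice of the element $x \in M$ that is the image of $1$. Under this identification, saying that a morphism $e:(A,+_A,0_A) \to (B,+_B,0_B)$ is orthogonal to $B_2$ amounts to saying that for every $b \in B$, viewed as a morphism $b:B_2 \to B$, there exists a lift $\tilde{b}:B_2 \to A$ with $e \circ \tilde{b} = b$. Setting $a := \tilde{b}(1) \in A$, this gives $e(a) = b$, so $e$ is surjective on underlying sets.

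Second, I would verify that any such surjection is automatically an epimorphism in $\mathbf{Icm}$. Given a parallel pair $f,g: B \rightrightarrows X$ in $\mathbf{Icm}$ with $f \circ e = g \circ e$, and given an arbitrary $b \in B$, surjectivity produces some $a \in A$ with $e(a) = b$, whence $f(b) = f(e(a)) = g(e(a)) = g(b)$. Since $b$ was arbitrary, $f = g$, so $e$ is an epimorphism, as desired.

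The argument is essentially formal and presents no genuine obstacle; the only conceptual point is the observation that $B_2$ represents the forgetful functor $U:\mathbf{Icm} \to \mathbf{Set}$ (which is exactly the content of Example \ref{exa:Yoneda_correspondence_icmonoids}), so that $B_2$-orthogonality literally coincides with set-theoretic surjectivity, after which epimorphicity follows because $U$ is faithful.
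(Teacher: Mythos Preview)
Your proposal is correct and follows essentially the same approach as the paper: use Example~\ref{exa:Yoneda_correspondence_icmonoids} to reinterpret $B_2$-orthogonality as surjectivity of the underlying function, then observe that surjective monoid morphisms are epimorphisms in $\mathbf{Icm}$. Your write-up is in fact more explicit than the paper's, which leaves the last step as ``we can directly check''; otherwise the two arguments coincide.
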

\begin{proof}
We can use Example \ref{exa:Yoneda_correspondence_icmonoids} to see that if an arrow $e:X \to Y$ is orthogonal with respect to the Boolean ic-monoid $(B_2,+,0)$, then it is surjective: for every $y \in Y$, there exists $x \in X$ for which the identity $e(x) = y$ holds. Because surjections are epimorphisms in $\mathbf{Set}$, we can use a diagrammatic reasoning to deduce that surjective morphisms in $\mathbf{Icm}$ are epimorphisms in $\mathbf{Icm}$.
\end{proof}

\subsection{Coequalizers of ic-monoids}\label{ssec:Coequalizers_of_ic_monoids}
An introduction (or refresher) on coequalizers in $\mathbf{Set}$ can be found in \cite[def. 7.2.1.4]{SpivakBook} -- also see \cite{MacLane}. Here, we give a definition for coequalizers in $\mathbf{Icm}$. The main idea behind forming coequalizers in $\mathbf{Icm}$ is to force certain linear equations to hold in a given ic-monoid.

\begin{definition}[Coequalizers]
Let $f,g:X \rightrightarrows Y$ be a pair of morphisms in some category $\mathcal{C}$. A \emph{coequalizer} for the pair $(f,g)$ is an arrow $e:Y \to Q$ in $\mathcal{C}$ such that
\begin{itemize}
\item[-] the identity $e \circ f = e \circ g$ holds;
\item[-] for every morphism $h:X \to Q'$ for which the identity holds $h \circ f = h \circ g$, there exists a unique arrow $h':Q \to Q'$ making the following diagram commute.
\[
\xymatrix{
Y\ar[d]_{e}\ar[r]^{h'} & Q'\\
Q \ar[ru]_{h'}& 
}
\]
\end{itemize}
\end{definition}

\begin{convention}
We shall use conventional terminology and say that the following diagram commutes whenever the identity $e \circ f = e \circ g$ holds.
\[
\xymatrix{
X\ar@<-1ex>[r]_{g}\ar@<+1ex>[r]^{f}&Y\ar[r]^{e}&Q
}
\]
\end{convention}

It is well-known \cite{Golan,Pareigis} that the category $\mathbf{Icm}$ is isomorphic to the category of \emph{semimodules over the Boolean semiring}. While there does not seem to be any specific references describing coequalizers of idempotent commutative monoids, we can find in the literature an explicit description of coequalizers for Boolean semimodules \cite{Pareigis}. The remainder of this section focuses on giving an explicit description of coequalizers in $\mathbf{Icm}$ by translating the constructions given thereof.

\begin{definition}[Quotient]\label{def:quotient_icm}
For every pair of morphisms $f,g:(X,+_X,0_X) \rightrightarrows (Y,+_Y,0_Y)$ in $\mathbf{Icm}$, define the binary relation $R(f,g)$ on $Y$ containing the pair of elements $(m,m')$ in $Y\times Y$ such that for every ic-monoid $(Z,+_Z,0_Z)$ and morphism $h:(Y,+_Y,0_Y) \to (Z,+_Z,0_Z)$ in $\mathbf{Icm}$ for which the equation $h \circ f = h \circ g$ holds, the relation $h(m) = h(m')$ holds too.
\end{definition}

\begin{remark}[Equivalence relation]
For every pair of arrows $f,g:(X,+_X,0_X) \rightrightarrows (Y,+_Y,0_Y)$ in $\mathbf{Icm}$, it is direct to verify that the binary relation $R(f,g)$ on $Y$ is an equivalence relation. Below, we use the notation $[m]_{f,g}$ to denote the equivalence class of an element $m$ in $Y$ for this relation.
\end{remark}

\begin{proposition}\label{prop:quotient_map_icm}
Given the notations used in Definition \ref{def:quotient_icm}, the quotient set $Y/R(f,g)$ defines an ic-monoid for the addition $[m]_{f,g} + [m']_{f,g} := [m+_Ym']_{f,g}$ and the zero element $[0_Y]_{f,g}$. The quotient map $e:Y \to Y/R(f,g)$ then defines a morphism of monoids.
\end{proposition}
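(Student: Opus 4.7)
The plan is to verify in order: (i) the addition $[m]_{f,g}+[m']_{f,g}:=[m+_Ym']_{f,g}$ is well-defined, (ii) the three ic-monoid axioms (associativity, commutativity, idempotence) together with the unit law hold, and (iii) the set-theoretic quotient map $e$ is a morphism in $\mathbf{Icm}$.

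The main (and really the only non-trivial) obstacle is step (i): one must show that $R(f,g)$ is a congruence for $+_Y$, i.e.\ that $(m_1,m_1'),(m_2,m_2')\in R(f,g)$ implies $(m_1+_Ym_2,m_1'+_Ym_2')\in R(f,g)$. The argument is short: take any $h:(Y,+_Y,0_Y)\to(Z,+_Z,0_Z)$ with $h\circ f=h\circ g$. Since $h$ preserves $+$, one has
\[
h(m_1+_Ym_2)=h(m_1)+_Zh(m_2)=h(m_1')+_Zh(m_2')=h(m_1'+_Ym_2'),
\]
where the middle equality uses the two hypotheses on $(m_i,m_i')$ against the test morphism $h$. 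This is exactly the condition defining $R(f,g)$, so the pair $(m_1+_Ym_2,m_1'+_Ym_2')$ lies in $R(f,g)$; hence $+$ descends to a well-defined binary operation on $Y/R(f,g)$.

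For step (ii), each of associativity, commutativity, and idempotence is inherited directly from $(Y,+_Y,0_Y)$ by computing on chosen representatives: e.g.\ idempotence reads $[m]_{f,g}+[m]_{f,g}=[m+_Ym]_{f,g}=[m]_{f,g}$, and commutativity and associativity are entirely analogous. The unit law $[m]_{f,g}+[0_Y]_{f,g}=[m+_Y0_Y]_{f,g}=[m]_{f,g}$ (and symmetrically on the other side) gives $[0_Y]_{f,g}$ as the zero element. This establishes that $(Y/R(f,g),+,[0_Y]_{f,g})$ is an ic-monoid.

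For step (iii), the map $e:Y\to Y/R(f,g)$ sending $m\mapsto [m]_{f,g}$ satisfies $e(m+_Ym')=[m+_Ym']_{f,g}=[m]_{f,g}+[m']_{f,g}=e(m)+e(m')$ by the very definition of the quotient addition, and $e(0_Y)=[0_Y]_{f,g}$, which is the zero of the quotient. So $e$ preserves both the binary operation and the unit, and is therefore a morphism in $\mathbf{Icm}$, completing the proof.
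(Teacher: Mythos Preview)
Your proof is correct and follows essentially the same approach as the paper: both show well-definedness of the quotient addition by testing against an arbitrary $h$ with $h\circ f=h\circ g$ via the chain $h(m_1+_Ym_2)=h(m_1)+_Zh(m_2)=h(m_1')+_Zh(m_2')=h(m_1'+_Ym_2')$, and then note that the remaining ic-monoid axioms and the monoid-morphism property of $e$ follow directly from the definitions. You simply spell out steps (ii) and (iii) in more detail than the paper, which dismisses them as immediate.
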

\begin{proof}
To show that the quotient $Y/R(f,g)$ defines an ic-monoids, we only need to show that the addition $[m]_{f,g} + [m']_{f,g} := [m+_Ym']_{f,g}$ is well-defined -- all the other axioms directly follow from those of $(Y,+_Y,0_Y)$. In this respect, let $(m_1,m'_1)$ and $(m_2,m_2')$ be two pairs in $R(f,g)$. First, for every morphism $h:(Y,+_Y,0_Y) \to (Z,+_Z,0_Z)$ in $\mathbf{Icm}$ for which the equation $h \circ f = h \circ g$ holds, we have the relations $h(m_1) = h(m'_1)$ and $h(m_2) = h(m'_2)$. This implies that the following equations hold.
\[
h(m_1+_Ym_2) = h(m_1)+_Zh(m_2)= h(m_1')+_Zh(m_2') = h(m_1'+_Ym_2')
\]
The previous equations show that the pair $(m_1+_Ym_2,m'_1+_Ym'_2)$ is in $R(f,g)$. In other words, the structure $(Y/R(f,g),+,[0_Y]_{f,g})$ is well-defined as an ic-monoid. Also, note that the definition of the ic-monoid structure on $Y/R(f,g)$ makes it straightforward to verify that the quotient map $e:Y \to Y/R(f,g)$ defines a morphism of monoids.
\end{proof}

\begin{proposition}\label{prop:coequalizer:existence:Icm}
Let $f,g:X \rightrightarrows Y$ be a pair of morphisms in $\mathbf{Icm}$. The morphism  $e:Y \to Y/R(f,g)$ defines the coequalizer of $(f,g)$ in $\mathbf{Icm}$.
\end{proposition}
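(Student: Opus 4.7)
The plan is to verify directly the two conditions in the definition of a coequalizer for the quotient map $e:Y \to Y/R(f,g)$, using Proposition \ref{prop:quotient_map_icm} to handle the monoid structure for free. The conceptual point is that the relation $R(f,g)$ is defined, tautologically, as the intersection of all relations forced by morphisms out of $Y$ that coequalize $(f,g)$; so the universal property essentially unpacks the definition.

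First I would check that $e \circ f = e \circ g$. Given any $x \in X$, I must show $(f(x), g(x)) \in R(f,g)$. By Definition \ref{def:quotient_icm}, this amounts to showing that for every morphism $h:(Y,+_Y,0_Y) \to (Z,+_Z,0_Z)$ in $\mathbf{Icm}$ satisfying $h \circ f = h \circ g$, the equation $h(f(x)) = h(g(x))$ holds—which is immediate. Hence $[f(x)]_{f,g} = [g(x)]_{f,g}$ in $Y/R(f,g)$, and the first coequalizer axiom is established.

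Next I would address the universal property. Let $h:(Y,+_Y,0_Y) \to (Q',+_{Q'},0_{Q'})$ be a morphism in $\mathbf{Icm}$ with $h \circ f = h \circ g$. I define $h':Y/R(f,g) \to Q'$ by the rule $[m]_{f,g} \mapsto h(m)$. To see this is well-defined, note that if $(m, m') \in R(f,g)$, then by the very definition of $R(f,g)$ applied to the morphism $h$ itself (which does coequalize $(f,g)$), we obtain $h(m) = h(m')$. That $h'$ is a morphism of idempotent commutative monoids follows from the addition and zero element on $Y/R(f,g)$ being defined representative-wise (Proposition \ref{prop:quotient_map_icm}): $h'([m]_{f,g} + [m']_{f,g}) = h'([m+_Y m']_{f,g}) = h(m+_Y m') = h(m) +_{Q'} h(m') = h'([m]_{f,g}) +_{Q'} h'([m']_{f,g})$, and similarly $h'([0_Y]_{f,g}) = h(0_Y) = 0_{Q'}$. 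The factorization $h = h' \circ e$ holds by construction, and uniqueness of $h'$ is forced because $e$ is surjective as a set-map: any other factorization would have to agree with $h'$ on every class $[m]_{f,g} = e(m)$.

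The main obstacle I expect is not any one calculation but rather verifying that $R(f,g)$ is \emph{fine enough} to be preserved by the monoid operation while \emph{coarse enough} to admit the factorization $h'$. Both points are already essentially dealt with: the former is the content of Proposition \ref{prop:quotient_map_icm}, whose proof uses the fact that $R(f,g)$ was intersected over \emph{all} coequalizing morphisms (so stability under addition is automatic); the latter is exactly the tautological well-definedness argument above. Thus the proof reduces to unwinding the definition, and no genuinely new construction is required.
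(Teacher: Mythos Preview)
Your proof is correct and follows essentially the same approach as the paper's own proof, which is a two-sentence sketch: existence of the factorization follows from the definition of $R(f,g)$, and uniqueness from the surjectivity of $e$. Your version simply makes explicit the verifications (that $e\circ f = e\circ g$, that $h'$ is well-defined and a monoid morphism) that the paper leaves to the reader.
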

\begin{proof}
The existence property of the universal property directly follows from the way the relation $R(f,g)$ is defined (see Definition \ref{def:quotient_icm}) while its uniqueness property follows from the surjectiveness of the function $e:Y \to Y/R(f,g)$ as a quotient map of sets (Proposition \ref{prop:quotient_map_icm}).
\end{proof}

\begin{remark}[Orthogonality]\label{rem:characterization_epi_coequalizer_maps}
 Let $f,g:X \rightrightarrows Y$ be a pair of morphisms in $\mathbf{Icm}$. Since the quotient map of a set by an equivalence relation is surjective, the coequalizer $e:Y \to Y/R(f,g)$ of $(f,g)$ (see Proposition \ref{prop:quotient_map_icm}) is a surjection of sets. It then follows from Example \ref{exa:Yoneda_correspondence_icmonoids} that $e:Y \to Y/R(f,g)$ is orthogonal to the ic-monoid $(B_2,+,0)$.
\end{remark}

\begin{proposition}\label{prop:pullback:coequalizer:mono}
For every morphism $f:Y \to X$ in $\mathbf{Icm}$, denote by $Y_f$ the coequalizer of the pullback pair of $f$ along itself in $\mathbf{Icm}$ (see the leftmost and middle diagrams below). The arrow $f$ coequalizes the pullback pair and the resulting universal arrow $f':Y_f \to X$ is a monomorphism.
\[
\begin{array}{l}
\xymatrix{
R \ar[d]_{u_2} \ar[r]^-{u_1}  \ar@{}[rd]|<<<{\rotatebox[origin=c]{90}{\huge{\textrm{$\llcorner$}}}} &Y\ar[d]^{f}\\
Y\ar[r]_-{f}&X
}
\end{array}
\quad\textrm{and}\quad
\begin{array}{l}
\xymatrix{
R  \ar@<+1.2ex>[r]^-{u_1}\ar@<-1.2ex>[r]_-{u_1}& Y \ar@{-->}[r]^-{q}&Y_f
}
\end{array}
\quad\Rightarrow\quad
\begin{array}{l}
\xymatrix@C+10pt@R-25pt{
&&X\\
R  \ar@<+1.2ex>[r]^-{u_1}\ar@<-1.2ex>[r]_-{u_1}& Y \ar[rd]_-{q}\ar[ru]^-{f}&\\
&&Y_f\ar@{-->}[uu]_{f'}\\
}
\end{array}
\]
\end{proposition}
\begin{proof}
To show that the arrow $f':Y_f \to X$ is a monomorphism in $\mathbf{Icm}$, we first want to show that it is a $B_2$-monomorphism and then use Proposition \ref{prop:characterization_B2_mono}. Let $a,b:B_2 \rightrightarrows Y_f$ be two arrows in $\mathbf{Icm}$ for which the equation $f' \circ a = f' \circ b$ holds. First, by Remark \ref{rem:characterization_epi_coequalizer_maps}, the coequalizer $q:Y \to Y_f$ is orthogonal to $B_2$. Thus, the arrows $a$ and $b$ admits lifts $a':B_2 \to Y$ and $b':B_2 \to Y$ along the coequalizer arrow $q:Y \to Y_f$, respectively. By assumption on the arrows $a$ and $b$, the following series of identities holds (note that the definition of $f'$ gives us the equation $f = f' \circ q$).
\[
f \circ a' = f' \circ q \circ a' = f' \circ a = f' \circ b = f' \circ q \circ b' = f \circ b'
\]
Since the ic-monoid $R$ (see the statement) is the pullback of $f$ along itself, the previous series of identities implies that there is a canonical map $h:B_2 \to R$ making the following diagrams commute.
\[
\begin{array}{l}
\xymatrix{
B_2\ar[d]_{h}\ar[r]^{a'}&Y\\
R \ar[ru]_{u_{1}}&
}
\end{array}
\quad\quad\quad\quad\quad\quad\quad
\begin{array}{l}
\xymatrix{
B_2\ar[d]_{h}\ar[r]^{b'}&Y\\
R \ar[ru]_{u_{2}}&
}
\end{array}
\]
Post-composing the previous two diagrams with the coequalizer $q:Y \to Y_f$ of $u_{1}$ and $u_{2}$ should allow the reader to see that the following identities hold.
\[
a = q \circ a' = q \circ u_{1} \circ h = q \circ u_{2} \circ h = q \circ b' = b
\]
This shows that $f':Y_f \to X$ is a $B_2$-monomorphism and hence a monomorphism.
\end{proof}

\subsection{Encoding biological mechanisms as algebraic operations}\label{ssec:biology_algebraic-operations}
In previous sections, we have made the point that the techniques proposed in this paper will extend current genotype-based analysis methods to haplotype-based analysis methods. The main difference between genotypes and haplotypes lies in their relationship to groups of individuals: genotypes are defined at the individual level, while haplotypes are defined at the population level. Specifically, a \emph{genotype} refers to a set of pairs of nucleotides describing combinations of alleles possessed by an individual at specific loci. Meanwhile, a \emph{haplotype} is a set of alleles that tend to be inherited together. In other words, genotypes are used to characterized individuals while haplotypes are used to characterized populations. For the latter, the term \emph{haplogroup} is usually used to refer to a group of individuals sharing the same haplotype.
\[
\xymatrix@C+80pt{
\fbox{a haplogroup} \ar[r]^{\textrm{has a}} & \fbox{a haplotype} \ar[r]^{\textrm{which is of a set of}}& \fbox{genotypes}
}
\]
The relationships established above will motivate the formalism introduced in the present section. Specifically, as much as a population refers to a group of individuals, we will characterize haplogroups in terms of groups of genotypes.

The diagram shown below gives an example of a set of three haplogroups for three different haplotypes (for convenience, we use a single sequence of DNA, as opposed to pairs of alleles -- but the principle stays the same). In this diagram, the bottommost haplogroups define what is commonly referred to as \emph{sub-haplogroups} for the topmost haplogroup.
\begin{equation}\label{eq:diagram:haplogroups:haplotypes:illustration}
\xymatrix@R-10pt@C-75pt{
&\fbox{$\begin{array}{|c|}\hline\cellcolor[gray]{0.8}\textrm{haplotype }\\\cellcolor[gray]{0.8}\mathtt{ACGA}\texttt{-}\mathtt{TAG}\texttt{-}\\\hline\end{array}$}\quad\quad\quad\quad\quad\,~&
\fbox{$\begin{array}{c}
\cellcolor[gray]{0.8}\textrm{haplogroup X}\\
\dots\mathtt{ACGA}\underline{\mathtt{A}}\mathtt{TAG}\underline{\mathtt{T}}\dots\\
\dots\mathtt{ACGA}\underline{\mathtt{A}}\mathtt{TAG}\underline{\mathtt{C}}\dots\\
\dots\mathtt{ACGA}\underline{\mathtt{C}}\mathtt{TAG}\underline{\mathtt{T}}\dots\\
\dots\mathtt{ACGA}\underline{\mathtt{C}}\mathtt{TAG}\underline{\mathtt{C}}\dots\\
\end{array}$}
\ar[ld]\ar[rd]&&\\\fbox{$\begin{array}{|c|}\hline\cellcolor[gray]{0.8}\textrm{haplotype }\\\cellcolor[gray]{0.8}\mathtt{ACGAATAG}\texttt{-}\\\hline\end{array}$}\quad\quad\quad\quad\quad\,~&
\fbox{$\begin{array}{c}
\cellcolor[gray]{0.8}\textrm{haplogroup  XA}\\
\dots\mathtt{ACGAATAG}\underline{\mathtt{T}}\dots\\
\dots\mathtt{ACGAATAG}\underline{\mathtt{C}}\dots\\
\end{array}$}
&&
\fbox{$\begin{array}{c}
\cellcolor[gray]{0.8}\textrm{haplogroup XC}\\
\dots\mathtt{ACGACTAG}\underline{\mathtt{T}}\dots\\
\dots\mathtt{ACGACTAG}\underline{\mathtt{C}}\dots\\
\end{array}$}
&\,~\quad\quad\quad\quad\quad\fbox{$\begin{array}{|c|}\hline\cellcolor[gray]{0.8}\textrm{haplotype }\\\cellcolor[gray]{0.8}\mathtt{ACGACTAG}\texttt{-}\\\hline\end{array}$}\\
}
\end{equation}
In this paper, we formalize the concept of haplotypes, haplogroups and genotypes as follows.

\begin{definition}[Genotypes, haplotypes and haplogroups]\label{def:genotype_haplotype_haplogroup}
Let $(\Omega,\preceq)$ be a pre-ordered set and $X:\mathbf{Seg}(\Omega) \to \mathbf{Set}$ be a functor. For every cone $\rho:\Delta_A(\tau) \Rightarrow \theta$ (section \ref{ssec:cones}) in $\mathbf{Seg}(\Omega)$, the image of $\rho$ via the functor $FX:\mathbf{Seg}(\Omega) \to \mathbf{Icm}$ defines a cone $\Delta_A(FX(\tau)) \Rightarrow FX\theta$ in $\mathbf{Icm}$. The limit adjoint of this cone is a canonical arrow of the following form in $\mathbf{Icm}$. 
\begin{equation}\label{eq:limit_adjoint:def_haplotypes_hapologroups}
FX(\tau) \to \mathsf{lim}_AFX \circ \theta
\end{equation}
In the sequel, we shall call
\begin{itemize}
\item[1)] a \emph{$\rho$-genotype} an element in the set $X(\tau)$;
\item[2)] a \emph{$\rho$-haplotype} an element in the ic-monoid $\mathsf{lim}_AFX \circ \theta$;
\item[3)] a \emph{$\rho$-haplogroup} for a certain $\rho$-haplotype $x$ an element in the ic-monoid $FX(\tau)$ whose image via (\ref{eq:limit_adjoint:def_haplotypes_hapologroups}) is $x$;
\end{itemize}
\end{definition}

For instance, a cone that could be used to characterize ``haplogroup X'' in diagram (\ref{eq:diagram:haplogroups:haplotypes:illustration}), given above, is described by the following pair of obvious arrows in $\mathbf{Seg}(\{0,1\}|8)$.
\[
\begin{array}{c|c|c}
\cellcolor[gray]{0.8}\textrm{segment indexing}&\cellcolor[gray]{0.8}\textrm{cone}&\cellcolor[gray]{0.8}\textrm{segments indexing}\\
\cellcolor[gray]{0.8}\textrm{haplogroup X}&\cellcolor[gray]{0.8}\textrm{arrows}&\cellcolor[gray]{0.8}\textrm{the haplotype}\\
\hline
&&\xymatrix@C-30pt{(\bullet&\bullet&\bullet&\bullet)&(\circ)&(\circ&\circ&\circ)&(\circ)}\\
&\rotatebox[origin=c]{25}{$\longrightarrow$}&\\
\xymatrix@C-30pt{(\bullet&\bullet&\bullet&\bullet)&(\bullet)&(\bullet&\bullet&\bullet)&(\bullet)}&&\\
&\rotatebox[origin=c]{-25}{$\longrightarrow$}&\\
&& \xymatrix@C-30pt{(\circ&\circ&\circ&\circ)&(\circ)&(\bullet&\bullet&\bullet)&(\circ)}\\
\end{array}
\]
We leave the determination of the cones characterizing haplogroups XA and XC as an exercise to reader. More intuition is given in the following example.

\begin{example}[Genotypes, haplotypes and haplogroups]\label{exa:genotype_haplotype_haplogroup}
Let $(\Omega,\preceq)$ be the Boolean pre-ordered set $\{0 \leq 1\}$ and let $(E,\varepsilon)$ be our usual pointed set $\{\mathtt{A},\mathtt{C},\mathtt{G},\mathtt{T},\varepsilon\}$. Take $\rho:\Delta_{[3]}(\tau) \Rightarrow \theta$ to be the following cone in $\mathbf{Seg}(\Omega)$.
\[
\begin{array}{cccc}
&&\xymatrix@C-30pt{
(\bullet&\bullet&\bullet&\bullet&\bullet&\bullet)&(\circ&\circ&\circ&\circ&\circ&\circ)&(\circ&\circ&\circ)
}&\\
&\rotatebox[origin=c]{40}{$\longrightarrow$}&&\\
\xymatrix@C-30pt{(\bullet&\bullet&\bullet&\bullet&\bullet&\bullet)&(\bullet&\bullet&\bullet&\bullet&\bullet&\bullet)&(\bullet&\bullet&\bullet)} &\longrightarrow & \xymatrix@C-30pt{
(\circ&\circ&\circ&\circ&\circ&\circ)&(\bullet&\bullet&\bullet&\bullet&\bullet&\bullet)&(\circ&\circ&\circ)}&\\
&\rotatebox[origin=c]{-40}{$\longrightarrow$}&&\\
 && \xymatrix@C-30pt{
(\circ&\circ&\circ&\circ&\circ&\circ)&(\circ&\circ&\circ&\circ&\circ&\circ)&(\bullet&\bullet&\bullet)
}&\\
\end{array}
\]
The elements of $F\mathbf{2}E_1^{\varepsilon}(\tau)$ given in the left-hand side column of the following table are $\rho$-genotypes\footnote{As usual, parentheses are added to the DNA sequences for clarity} that we regard as their corresponding $\rho$-haplogroups. Their images through the canonical arrow $F\mathbf{2}E_1^{\varepsilon}(\tau) \to \prod_{i \in [3]} F\mathbf{2}E_1^{\varepsilon}\theta(i)$ are given in the right-hand side column -- they correspond to their $\rho$-haplotypes.
\[
\begin{array}{c|c}
\multicolumn{1}{c|}{\cellcolor[gray]{0.8}\textrm{in }F\mathbf{2}E_1^{\varepsilon}(\tau)}&\multicolumn{1}{c}{\cellcolor[gray]{0.8}\textrm{in }\prod_{i \in [3]} F\mathbf{2}E_1^{\varepsilon}\theta(i)}\\
\hline
\mathtt{p}_{5}=\begin{array}{l}
\mathtt{(gAtCcC)(tAtGtc)(aaT)}\\
\mathtt{(aAcAgT)(cAtTgc)(atT)}\\
\end{array}&
\left(
\begin{array}{l}
\mathtt{(gAtCcC)}\\
\mathtt{(aAcAgT)}\\
\end{array}
,
\begin{array}{l}
\mathtt{(tAtGtc)}\\
\mathtt{(cAtTgc)}\\
\end{array}
,
\begin{array}{l}
\mathtt{(aaT)}\\
\mathtt{(atT)}\\
\end{array}
\right)
\\
\hline
\mathtt{p}_{6}=
\begin{array}{l}
\mathtt{(gAtCcC)(tGaTgc)(ctC)}\\
\mathtt{(gTtCcC)(tGaGtg)(aaT)}\\
\end{array}&
\left(
\begin{array}{l}
\mathtt{(gAtCgC)}\\
\mathtt{(gTtCcC)}\\
\end{array}
,
\begin{array}{l}
\mathtt{(tGaTgc)}\\
\mathtt{(tGaGtg)}\\
\end{array}
,
\begin{array}{l}
\mathtt{(ctC)}\\
\mathtt{(aaT)}\\
\end{array}
\right)\\
\hline
\mathtt{p}_{7}=
\begin{array}{l}
\mathtt{(gTtCcC)(tAtGtc)(aaT)}\\
\mathtt{(gAtCcC)(cAtTgc)(atT)} \\
\end{array}&
\left(
\begin{array}{l}
\mathtt{(gTtCcC)}\\
\mathtt{(gAtCcC)}\\
\end{array}
,
\begin{array}{l}
\mathtt{(tAtGtc)}\\
\mathtt{(cAtTgc)}\\
\end{array}
,
\begin{array}{l}
\mathtt{(aaT)}\\
\mathtt{(atT)}\\
\end{array}
\right)\\
\hline
\mathtt{p}_{8}=
\begin{array}{l}
\mathtt{(aAcAgT)(tGaTgc)(ctC)}\\
\mathtt{(gAtCcC)(tGaGtg)(aaT)}\\
\end{array}&
\left(
\begin{array}{l}
\mathtt{(aAcAgT)}\\
\mathtt{(gAtCcC)}\\
\end{array}
,
\begin{array}{l}
\mathtt{(tGaTgc)}\\
\mathtt{(tGaGtg)}\\
\end{array}
,
\begin{array}{l}
\mathtt{(ctC)}\\
\mathtt{(aaT)}\\
\end{array}
\right)\\
\end{array}
\]
Since the canonical arrow $F\mathbf{2}E_1^{\varepsilon}(\tau) \to \prod_{i \in [3]} F\mathbf{2}E_1^{\varepsilon}\theta(i)$ is a morphism of ic-monoids, we deduce, from the previous table, that the element $\mathtt{p}_{5}+\mathtt{p}_{7}$ is a $\rho$-haplogroup of the following $\rho$-haplotype. 
\[
\left(
\begin{array}{l}
\mathtt{(gAtCcC)}\\
\mathtt{(aAcAgT)}\\
\end{array}
+
\begin{array}{l}
\mathtt{(gTtCcC)}\\
\mathtt{(gAtCcC)}\\
\end{array}
,
\begin{array}{l}
\mathtt{(tAtGtc)}\\
\mathtt{(cAtTgc)}\\
\end{array}
,
\begin{array}{l}
\mathtt{(aaT)}\\
\mathtt{(atT)}\\
\end{array}
\right)
\]
Similarly, the sum $\mathtt{p}_{6}+\mathtt{p}_{8}$ is a $\rho$-haplogroup for the following $\rho$-haplotype.
\[
\left(
\begin{array}{l}
\mathtt{(gAtCcC)}\\
\mathtt{(gTtCcC)}\\
\end{array}
+
\begin{array}{l}
\mathtt{(aAcAgT)}\\
\mathtt{(gAtCcC)}\\
\end{array}
,
\begin{array}{l}
\mathtt{(tGaTgc)}\\
\mathtt{(tGaGtg)}\\
\end{array}
,
\begin{array}{l}
\mathtt{(ctC)}\\
\mathtt{(aaT)}\\
\end{array}
\right)
\]
As a result, the element $\mathtt{p}_{5}+\mathtt{p}_{7}+ \mathtt{p}_{6}+\mathtt{p}_{8}$ is a $\rho$-haplogroup of the $\rho$-haplotype shown below -- this haplotype can be regarded as a sup-haplotype of the two haplotypes shown earlier.
\begin{equation}\label{eq:haplotypes-haplogroups:exa}
\left(
\begin{array}{c}
\begin{array}{l}
\mathtt{(gAtCcC)}\\
\mathtt{(aAcAgT)}\\
\end{array}
+
\begin{array}{l}
\mathtt{(gTtCcC)}\\
\mathtt{(gAtCcC)}\\
\end{array}\\
+ \begin{array}{l}
\mathtt{(gAtCcC)}\\
\mathtt{(gTtCcC)}\\
\end{array}
+
\begin{array}{l}
\mathtt{(aAcAgT)}\\
\mathtt{(gAtCcC)}\\
\end{array}
\end{array}
,
\begin{array}{l}
\mathtt{(tAtGtc)}\\
\mathtt{(cAtTgc)}\\
\end{array}
+
\begin{array}{l}
\mathtt{(tGaTgc)}\\
\mathtt{(tGaGtg)}\\
\end{array}
,
\begin{array}{l}
\mathtt{(aaT)}\\
\mathtt{(atT)}\\
\end{array}
+
\begin{array}{l}
\mathtt{(ctC)}\\
\mathtt{(aaT)}\\
\end{array}
\right)
\end{equation}
Using a similar reasoning, we can also show that the two $\rho$-haplogroups $\mathtt{p}_{5}+\mathtt{p}_{6}$ and $\mathtt{p}_{7}+\mathtt{p}_{8}$ have identical $\rho$-haplotypes. The reader who does the computation can see that this $\rho$-haplotype is the same as the $\rho$-haplotype of $\mathtt{p}_{5}+\mathtt{p}_{7}+ \mathtt{p}_{6}+\mathtt{p}_{8}$ up to a shuffling of the pairs of DNA segments -- this is discussed further below.
\end{example}

One thing that Example \ref{exa:genotype_haplotype_haplogroup} do not quite capture is the law of segregation (the second Mendelian law), which makes the inheritance of haplotypes more complex than a mere transmission of the pairs of alleles from the parents to their progeny. For instance, if we had previously implemented such a rule in Example \ref{exa:genotype_haplotype_haplogroup}, then we could have reduced the number of terms used in the leftmost sum of (\ref{eq:haplotypes-haplogroups:exa}), because two of the terms shown in this sum equates the two others after shuffling the top and bottom DNA sequences. Also, note that Example \ref{exa:genotype_haplotype_haplogroup} do not quite yet capture homologous recombination (the third Mendelian law), but the use of the limit adjoint, sending each pair of DNA segments to a decomposition into smaller segments, already allows us to compare haplogroups up to recombination (see below for more explanation). In subsequent sections, our goal will be to model segregation and recombination events as operations emerging from the pedigrad formalism.

Biologically, segregation occurs after homologous recombination, namely during the formation of sex cells. It has even been shown that the latter influence the former in specific ways \cite{Ottolini}, however, in this paper, we will not address this relationship and only describe the two processes independently. In this respect, we can describe homologous recombination as the process of cutting each homologous chromosomes into small parts, then shuffling and reconnecting these parts together to form a new pair of chromosomes. The resulting pair can be seen as a mixture of the original homologous chromosomes such that the genomic positions of each part are essentially preserved (see diagram below).
\[
\begin{array}{c}
\includegraphics[height=3.5cm]{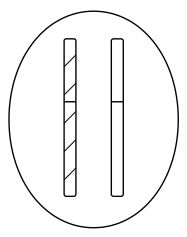}
\end{array}
\longrightarrow
\begin{array}{c}
\includegraphics[width=5cm]{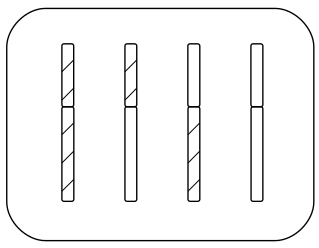}
\end{array}
\]
After homologous recombination, segregation occurs. The latter consists in separating each homologous chromosomes into separate haploid\footnote{\emph{i.e.} possessing one copy of each chromosome} sex cells.
\[
\left(
\begin{array}{l}
\mathtt{ATTAGCTACCTATAC}\\
\mathtt{ACTAGCTACATATGC}\\
\end{array}
\right)
\quad
\leadsto
\quad
\left(
\begin{array}{l}
\mathtt{ATTAGCTACCTATAC}\\
\end{array}
\right)
\quad\text{and}\quad
\left(
\begin{array}{l}
\mathtt{ACTAGCTACATATGC}\\
\end{array}
\right)
\]
Sex cells from different individuals are then meant to meet and merge to produce diploid\footnote{\emph{i.e.} possessing two copies of each chromosome} cells -- this is essentially what \emph{sexual reproduction} is. Intuitively, we could think of the second and third laws of inheritance as dual processes, in that they would visually describe ``horizontal'' and ``vertical'' shuffling operations, respectively.

Note that, over generations, these shuffling processes can lead to a substantial diversification of the genomic materials of a population. In this paper, we will capture this diversification processes through (1) the pulling back of limit adjoints of the form (\ref{eq:limit_adjoint:def_haplotypes_hapologroups}) and (2) the commutative monoid structure associated with the objects of $\mathbf{Icm}$. The definitions and remarks given in the rest of this section aims to model the law of segregation as a natural transformation in $\mathbf{Icm}$.

\begin{definition}[Law of segregation]\label{def:law-of_segregation}
Let $b$ be an element in $\Omega$. We will denote by $\mathsf{sgg}$ the natural transformation
$
\mathbf{2}E_b^{\varepsilon} \Rightarrow UFE_b^{\varepsilon}
$
in $[\mathbf{Seg}(\Omega),\mathbf{Set}]$ that is obtained, for every segment $\tau$ in $\mathbf{Seg}(\Omega)$, from the composition of the sequence of arrows displayed in (\ref{eq:first_mendelian_law}), where
\begin{itemize}
\item[-] the leftmost arrow is the product of two copies of the unit of adjunction (\ref{adjunction_set_icm});
\item[-] the middle arrow is the isomorphism making the right adjoint $U$ commute with products;
\item[-] the rightmost arrow is the image of the natural transformation of Remark \ref{exa:Addition_as_morphism} via $U$;
\end{itemize} 
\begin{equation}\label{eq:first_mendelian_law}
\xymatrix{
E_b^{\varepsilon}(\tau) \times E_b^{\varepsilon}(\tau) \ar[r]^-{\eta_{\tau} \times \eta_{\tau}} & UFE_b^{\varepsilon}(\tau) \times UFE_b^{\varepsilon}(\tau) \ar[r]^-{\cong} & U(FE_b^{\varepsilon}(\tau) \times FE_b^{\varepsilon}(\tau)) \ar[r]^-{U\mathsf{sum}} & UFE_b^{\varepsilon}(\tau)
}
\end{equation}
Note that the naturality of this arrow in $\tau$ is straightforward from the structures used.
\end{definition}

In the first paper of the present series (see \cite{Seqali}), right Kan extensions were used to extend sequence alignments on the whole category of segments. For the present article, we will use the dual notion, namely left Kan extensions.

\begin{remark}[Left Kan extensions]\label{rem:left_kan_extension}
The goal of this remark is to give a concise description of left Kan extensions for the pairs of functors associated with sequence alignments (see \cite{MacLane} for more detail on left Kan extensions).
Importantly, recall that left Kan extensions in $\mathbf{Set}$ can be defined in terms of colimits, which is the approach described below (also see \cite[Chap. X]{MacLane}). In addition, we will only consider left Kan extensions along the inclusion functors of sequence alignments. In this respect, let us consider an element $b$ in $\Omega$ and a sequence alignment $(\iota,T,\sigma)$ over $\mathbf{2}E_{b}^{\varepsilon}$ for which the inclusion $\iota$ is of the form $B \hookrightarrow \mathbf{Seg}(\Omega)$.

To start with, we shall recall a few definitions. For every object $\tau$ in $\mathbf{Seg}(\Omega)$, denote by $({\iota \downarrow \tau})$ the category whose objects are pairs $(\upsilon,f)$ where $\upsilon$ is an object in $B$ and $f$ is a morphism $\iota(\upsilon) \to \tau$ in $\mathbf{Seg}(\Omega)$ and whose arrows $(\upsilon,f) \to (\upsilon',f')$ are given by morphisms $g:\upsilon \to \upsilon'$ in $B$ that make the following square commute in $\mathbf{Seg}(\Omega)$.
\[
\xymatrix{
\tau\ar@{<-}[d]_{f}\ar@{=}[r]&\tau\ar@{<-}[d]^{f'}\\
\iota(\upsilon)\ar[r]_{\iota(g)}&\iota(\upsilon)
}
\]
The mapping $(\upsilon,f) \mapsto \upsilon$ extends to an obvious functor $\iota_{\tau}:({\iota \downarrow \tau}) \to B$ that we can be composed with the functor $T:B \to \mathbf{Set}$ to form the functor $T \circ \iota_{\tau}:({\iota \downarrow \tau}) \to \mathbf{Set}$. According to \cite[Chap. X]{MacLane}, the left Kan extension of $T$ along $\iota$ is the functor $\mathsf{Lan}_{\iota}T:\mathbf{Seg}(\Omega) \to \mathbf{Set}$ whose images are defined by the following colimit construction for every object $\tau$ in $\mathbf{Seg}(\Omega)$.
\begin{equation}\label{eq:formula_right_kan_extension}
\mathsf{Lan}_{\iota}T(\tau) := \mathsf{colim}_{({\iota \downarrow \tau})} T \circ \iota_{\tau}
\end{equation}
Let us now describe the images of $\mathsf{Lan}_{\iota}T$ on the arrows. First, recall that the functor $\iota_{\tau}:({\iota \downarrow \tau}) \to B$ is natural in $\tau$ over the opposite category $\mathbf{Seg}(\Omega)^{\mathrm{op}}$, which means that every morphism $h:\tau \to \tau'$ in $\mathbf{Seg}(\Omega)$ induces a functor $h^{*}:({\iota \downarrow \tau}) \to ({\iota \downarrow \tau'})$ for which the identity $\iota_{\tau}  = \iota_{\tau'} \circ h^{*}$ holds. Put differently, this means that the functor $h^{*}$ sends an object $(\upsilon,f)$ in $({\iota \downarrow \tau})$ to the object $(\upsilon, h \circ f)$ in $({\iota \downarrow \tau'})$. Then, the image of $\mathsf{Lan}_{\iota}T$ at the morphism $h:\tau \to \tau'$ is the comparison morphism induced by pre-composing the diagram of colimit (\ref{eq:formula_right_kan_extension}) with $h^{*}$ (as shown below).
\[
\xymatrix@C+20pt{
\mathsf{colim}_{({\iota \downarrow \tau'})} T \circ \iota_{\tau'} \circ h^{*} \ar[r]^-{\mathsf{Lan}_{\iota}T(h)} & \mathsf{colim}_{({\iota \downarrow \tau})} T \circ \iota_{\tau}
}
\]
It is straightforward to verify that the mapping $\mathsf{Lan}_{\iota}T$ defines a functor $\mathbf{Seg}(\Omega) \to \mathbf{Set}$.
\end{remark}

\begin{remark}[Second Mendelian law]\label{rem:Second_mendelian_law}
The goal of this remark is to combine the construction of Remark \ref{rem:left_kan_extension} with the morphism of Definition \ref{def:law-of_segregation} to construct a morphism in $[\mathbf{Seg}(\Omega),\mathbf{Icm}]$ that we can use to model the second Mendelian law for a given sequence alignment. Broadly, this morphism allows us to use sums of elements in ic-monoids to disentangle pairs of DNA segments, and thus model the segregation of the segments (we will see examples in section \ref{ssec:recombination_chromologies}).

To carry out our construction, we shall need to recall one more property about left Kan extensions, namely: for a given inclusion $\iota:B \hookrightarrow \mathbf{Seg}(\Omega)$, the functorial operation
\[
\mathsf{Lan}_{\iota}:[B,\mathbf{Set}] \to [\mathbf{Seg}(\Omega),\mathbf{Set}]
\]
defines a left adjoint for the pre-composition operation $F \mapsto F \circ \iota$, which is a functor of the form $[\mathbf{Seg}(\Omega),\mathbf{Set}] \to [B,\mathbf{Set}]$ (see \cite[Chapter X]{MacLane}). The counit of the adjunction at a functor $X:\mathbf{Seg}(\Omega)\to \mathbf{Set}$, call it $\nu_X:\mathsf{Lan}_{\iota}(X \circ \iota) \Rightarrow X$, is then defined by the following mapping rule where $(\upsilon,f) \in (\iota \downarrow \tau)$ and $x \in X(\upsilon)$.
\[
\nu_X:\left(
\begin{array}{cccl}
\mathsf{colim}_{({\iota \downarrow \tau})} X \circ \iota \circ \iota_{\tau}& \to & X(\tau)\\
(x,(\upsilon,f)) &\mapsto& X(f)(x)
\end{array}
\right)
\]
It follows from the adjointness property, any morphism $f:T \Rightarrow S \circ \iota$ in $[B,\mathbf{Set}]$ gives rise to a morphism $\mathsf{Lan}_{\iota}T \Rightarrow S$ in $[\mathbf{seg}(\Omega),\mathbf{Set}]$ given by the composite transformation $\nu_{S} \circ \mathsf{Lan}_{\iota}(f)$, which will be denoted as $f^{*}$ for conciseness.

In the context of this paper, we want to use the adjointness property of the left Kan extension on sequence alignments. Specifically, for every element $b \in \Omega$ and sequence alignment $(\iota,T,\sigma)$ over $\mathbf{2}E_{b}^{\varepsilon}$, the adjointness property gives us the leftmost arrow of (\ref{eq:span_phenotypic_expression}) in $[\mathbf{Seg}(\Omega),\mathbf{Set}]$, which we can also send in $[\mathbf{Seg}(\Omega),\mathbf{Icm}]$ by using the left-adjoint functor $F:\mathbf{Set} \to \mathbf{Icm}$ (as shown on the right).
\begin{equation}\label{eq:span_phenotypic_expression}
\begin{array}{l}
\xymatrix{
\mathsf{Lan}_{\iota} T \ar@{=>}[r]^-{\sigma^{*}}&\mathbf{2}E_{b}^{\varepsilon}
}
\end{array}
\quad\quad\begin{array}{l}\Rightarrow\end{array}\quad\quad
\begin{array}{l}
\xymatrix{
F\mathsf{Lan}_{\iota} T \ar@{=>}[r]^-{F\sigma^{*}}&F\mathbf{2}E_{b}^{\varepsilon}
}
\end{array}
\end{equation}
If we now focus on the construction of Definition \ref{def:law-of_segregation}, we can use the universal property of adjunction (\ref{adjunction_set_icm}) to show that the morphism $\mathsf{sgg}:\mathbf{2}E_b^{\varepsilon} \Rightarrow UFE_b^{\varepsilon}$ extends to a morphism $\mathsf{fml}:F\mathbf{2}E_b^{\varepsilon} \Rightarrow FE_b^{\varepsilon}$ (the name $\mathsf{fml}$ here stands for \emph{first Mendelian law}) in $[\mathbf{Seg}(\Omega),\mathbf{Icm}]$.
\[
\xymatrix{
\mathbf{2}E_b^{\varepsilon}\ar@{=>}[r]^-{\mathsf{sgg}}\ar@{=>}[d]_-{\eta\mathbf{2}E_b^{\varepsilon}}&UFE_b^{\varepsilon}\\
UF\mathbf{2}E_b^{\varepsilon}\ar@{==>}[ru]_-{U(\mathsf{fml})}&
}
\]
Then, we can use $\mathsf{fml}:F\mathbf{2}E_b^{\varepsilon} \Rightarrow FE_b^{\varepsilon}$ to extend the rightmost arrow of (\ref{eq:span_phenotypic_expression}) as follows.
\[
\xymatrix{
F\mathsf{Lan}_{\iota} T \ar@{=>}[r]^-{F\sigma^{*}}&F\mathbf{2}E_{b}^{\varepsilon}\ar@{=>}[r]^{\mathsf{fml}}&FE_b^{\varepsilon}
}
\]
The previous arrow models Mendel's law of segregation for our sequence alignment $T$: sums of pairs of DNA sequences are sent to sums of (sums of) DNA sequences. Later, we shall use the previous arrow to coequalize the object $FE_b^{\varepsilon}$ with respect to certain congruences (\emph{i.e.} equivalence relations in ic-monoids), which we define below, in section \ref{ssec:recombination_chromologies}.
\end{remark}

\subsection{Recombination chromologies}\label{ssec:recombination_chromologies}
We shall speak of a \emph{recombination chromology} to refer to a chromology $(\Omega,D)$ such that, for every non-negative integer $n$, the associated set $D[n]$ is finite and only contains wide spans (see Definition \ref{def:wide_spans}).

\begin{example}[Examples and non-example]
Let $\Omega$ denote the pre-ordered set $\{0 \leq 1\}$. The following diagram, living in the pre-order category of homologous segments $\mathbf{Seg}(\Omega)$, is an example of a cone that we can consider to compose a recombination chromology.
\[
\begin{array}{ccc}
&&\xymatrix@C-30pt{
(\circ&\circ&\circ)&(\circ&\circ)&(\bullet&\bullet&\bullet)&(\circ&\circ&\circ&\circ)
}\\
&\rotatebox[origin=c]{45}{$\longrightarrow$}&\\
\xymatrix@C-30pt{
(\bullet&\bullet&\bullet)&(\bullet&\bullet)&(\bullet&\bullet&\bullet)&(\bullet&\bullet&\bullet&\bullet)
} &\longrightarrow & \xymatrix@C-30pt{
(\circ&\circ&\circ)&(\circ&\circ)&(\circ&\circ&\circ)&(\bullet&\bullet&\bullet&\bullet)}
\\
&\rotatebox[origin=c]{-45}{$\longrightarrow$}&\\
 && \xymatrix@C-30pt{
(\bullet&\bullet&\bullet)&(\bullet&\bullet)&(\circ&\circ&\circ)&(\circ&\circ&\circ&\circ&)
}\\
\end{array}
\]
From a biological point of view, this type of cone could be used to specify the genomic decomposition of chromosomes in a recombination event -- each bracketed region is meant to be shuffled (horizontally).

The following diagram, living in the category of quasi-homologous segments $\mathbf{Seg}(\Omega\,|\,12)$ is another example.
\[
\begin{array}{ccc}
&&\xymatrix@C-30pt{
(\circ&\circ&\circ)&(\circ&\circ)&(\bullet&\bullet&\bullet)&(\circ&\circ&\circ&\circ)
}\\
&\rotatebox[origin=c]{45}{$\longrightarrow$}&\\
\xymatrix@C-30pt{
(\bullet)&(\bullet)&(\bullet)&(\bullet&\bullet)&(\bullet&\bullet)&(\bullet)&(\bullet&\bullet)&(\bullet&\bullet)
} &\longrightarrow & \xymatrix@C-30pt{
(\circ&\circ&\circ)&(\circ&\circ)&(\circ&\circ&\circ)&(\bullet&\bullet&\bullet&\bullet)
}\\
&\rotatebox[origin=c]{-45}{$\longrightarrow$}&\\
 && \xymatrix@C-30pt{
(\bullet&\bullet&\bullet)&(\bullet&\bullet)&(\circ&\circ&\circ)&(\circ&\circ&\circ&\circ&)
}\\
\end{array}
\]
The difference between the very first cone and the one given above is that the latter specifies a decomposition that comes from a more refined topology. This type of cones can be useful if we want to relate different topologies through the source of a collection of cones. For example, these cones can  encode various decompositions associated with recombination events through generations -- each recombination would use its own associated decomposition (\emph{i.e.} encoded as a topology for a segment) in the targets of the cone.

Below, we give an example of a cone that is not suitable for defining a recombination chromology. Indeed, the small category on which this cone is defined is a cospan $A=\{\cdot \rightarrow \cdot \leftarrow \cdot\}$, while it should be a discrete category (\emph{i.e.} a set).
\[
\begin{array}{ccccc}
&&\xymatrix@C-30pt{
(\bullet&\bullet&\bullet&\bullet)&(\circ&\circ&\circ)
}&&\\
&\rotatebox[origin=c]{45}{$\longrightarrow$}&&\rotatebox[origin=c]{-45}{$\longrightarrow$}&\\
\xymatrix@C-30pt{
(\bullet&\bullet&\bullet&\bullet)&(\bullet&\bullet&\bullet)
} & & &&\xymatrix@C-30pt{
(\circ&\circ&\circ&\circ)&(\circ&\circ&\circ)
}\\
&\rotatebox[origin=c]{-45}{$\longrightarrow$}&&\rotatebox[origin=c]{45}{$\longrightarrow$}&\\
 && \xymatrix@C-30pt{
(\circ&\circ&\circ&\circ)&(\bullet&\bullet&\bullet)
}&&\\
\end{array}
\]
\end{example}

\begin{convention}[Notation]\label{conv:definition_pi_S}
Let $(\Omega,D)$ be a recombination chromology and $X$ be a functor $\mathbf{Seg}(\Omega) \to \mathbf{Icm}$. For every wide span $\rho:\Delta_{[k]}(\tau) \Rightarrow \theta$ in $(\Omega,D)$, the limit adjoint of the cone $X(\rho):\Delta_{[k]}(X(\tau)) \Rightarrow X\theta$ in $\mathbf{Icm}$ is a product adjoint induced by the product structure defined in Remark \ref{rem:products}. We will denote this product adjoint as follows.
\[
X[\rho]:X(\tau) \to \prod_{i \in [k]} X\theta(i)
\]
\end{convention}

\begin{definition}[Recombination congruences]\label{def:Recomb_congruences}
Let $(\Omega,D)$ be a recombination chromology and $X$ be a functor $\mathbf{Seg}(\Omega) \to \mathbf{Icm}$. For every wide span $\rho:\Delta_{[k]}(\tau) \Rightarrow \theta$ in $(\Omega,D)$, we will denote by $G(X,\rho)$ the pullback of the arrow $X[\rho]$ along itself (see below) and call the resulting pair of arrows $G(X,\rho) \rightrightarrows X(\tau)$ the \emph{recombination congruence of $X$ on $\rho$}.
\[
\xymatrix{
G(X,\rho)\ar@{}[rd]|<<<{\rotatebox[origin=c]{90}{\huge{\textrm{$\llcorner$}}}}\ar[r]^{\mathsf{prj}_1}\ar[d]_{\mathsf{prj}_2}&X(\tau)\ar[d]^{X[\rho]}\\
X(\tau)\ar[r]_-{X[\rho]}&*+!L(.7){\prod_{i \in [k]} X\theta(i)}
}
\]
\end{definition}

\begin{example}[Recombination congruences]\label{exa:Relative_definition_families}
The goal of this example is to give specific instances of elements belonging to recombination congruences. We shall illustrate our example by using a functor resulting from the composition of an environment functors with the free functor $F:\mathbf{Set} \to \mathbf{Icm}$. Let $(\Omega,\preceq)$ be the Boolean pre-ordered set $\{0 \leq 1\}$ and let $(E,\varepsilon)$ be our usual pointed set $\{\mathtt{A},\mathtt{C},\mathtt{G},\mathtt{T},\varepsilon\}$. Take $\rho:\Delta_{[3]}(\tau) \Rightarrow \theta$ to be the wide span of homologous segments used in Example \ref{exa:genotype_haplotype_haplogroup} (its arrows are displayed below).
\[
\begin{array}{ll}
\rho_{1}:\xymatrix@C-30pt@R-20pt{
(\bullet&\bullet&\bullet&\bullet&\bullet&\bullet)&(\bullet&\bullet&\bullet&\bullet&\bullet&\bullet)&(\bullet&\bullet&\bullet)\ar[rr]
&\quad\quad\quad&
(\bullet&\bullet&\bullet&\bullet&\bullet&\bullet)&(\circ&\circ&\circ&\circ&\circ&\circ)&(\circ&\circ&\circ)
}&\theta(1)\\
\rho_{2}:\xymatrix@C-30pt@R-20pt{
(\bullet&\bullet&\bullet&\bullet&\bullet&\bullet)&(\bullet&\bullet&\bullet&\bullet&\bullet&\bullet)&(\bullet&\bullet&\bullet)\ar[rr]
&\quad\quad\quad&
(\circ&\circ&\circ&\circ&\circ&\circ)&(\bullet&\bullet&\bullet&\bullet&\bullet&\bullet)&(\circ&\circ&\circ)
}&\theta(2)\\
\rho_{3}:\xymatrix@C-30pt@R-20pt{
(\bullet&\bullet&\bullet&\bullet&\bullet&\bullet)&(\bullet&\bullet&\bullet&\bullet&\bullet&\bullet)&(\bullet&\bullet&\bullet)\ar[rr]
&\quad\quad\quad&
(\circ&\circ&\circ&\circ&\circ&\circ)&(\circ&\circ&\circ&\circ&\circ&\circ)&(\bullet&\bullet&\bullet)
}&\theta(3)\\
\end{array}
\]
The idea behind the notion of recombination congruence is that the ic-monoid $G(FE_1^{\varepsilon},\rho)$ contains the pairs of elements of $FE_1^{\varepsilon}(\rho)$ that are the same up to homologous recombination (for the corresponding topology associated with the cone $\rho$).
For instance, a consequence of Example \ref{exa:genotype_haplotype_haplogroup} is that the pair $(\mathsf{fml}_{\tau}(\mathtt{p}_{5}+\mathtt{p}_{6}),\mathsf{fml}_{\tau}(\mathtt{p}_{7}+\mathtt{p}_{8}))$ is an element of the ic-monoid $G(FE_1^{\varepsilon},\rho)$. Indeed, recall that the example concluded by observing that the pair $(\mathtt{p}_{5}+\mathtt{p}_{6},\mathtt{p}_{7}+\mathtt{p}_{8})$ is in the pullback $G(F\mathbf{2}E_1^{\varepsilon},\rho)$. From this, one can use the universality of pullbacks and the naturality of the morphism $\mathsf{fml}:F\mathbf{2}E_1^{\varepsilon} \Rightarrow FE_1^{\varepsilon}$ (Remark \ref{rem:Second_mendelian_law}) to show that this pair is sent to the pullback $G(FE_1^{\varepsilon},\rho)$ through the mapping rule $(x,y) \mapsto (\mathsf{fml}_{\tau}(x),\mathsf{fml}_{\tau}y))$. To discuss more complex scenarios, we shall now focus on the elements $\mathtt{p}_{8}$, $\mathtt{p}_{9}$, $\mathtt{p}_{10}$, $\mathtt{p}_{11}$, and $\mathtt{p}_{12}$. To do so, we can first compute the images of these elements through the first law of Mendelian inheritance (see Example \ref{exa:Sequence alignments}). The resulting $\rho$-haplogroups are listed in the table below.
\[
\begin{array}{l|c}
\cellcolor[gray]{0.8}&\cellcolor[gray]{0.8}\textrm{in }FE_1^{\varepsilon}(\tau)\\
\hline
\mathsf{fml}_{\tau}(\mathtt{p}_{8})&\mathtt{(aAcAgT)(tGaTgc)(ctC)} + \mathtt{(gAtCcC)(tGaGtg)(aaT)}\\
\mathsf{fml}_{\tau}(\mathtt{p}_{9})&\mathtt{(aTtAgT)(cAaGtg)(aaT)} + \mathtt{(aTtCgT)(tGaTgc)(atT)}\\
\mathsf{fml}_{\tau}(\mathtt{p}_{10})&\mathtt{(aTtCgT)(cAaTgc)(acT)} + \mathtt{(gAtCcC)(tGaTgc)(ctC)}\\
\mathsf{fml}_{\tau}(\mathtt{p}_{11})&\mathtt{(aAcAgT)(cAaTgc)(atT)} +  \mathtt{(aTtAgT)(cAaGtg)(aaT)}\\
\mathsf{fml}_{\tau}(\mathtt{p}_{12})& \mathtt{(gAtCcC)(cAaGtg)(atT)} + \mathtt{(aTtAgT)(cAaGtg)(acT)}
\end{array}
\]
We can use the previous table of $\rho$-haplogroups to compute the corresponding $\rho$-haplotypes (as  images of the canonical arrow $FE_1^{\varepsilon}[\rho]$ defined in Convention \ref{conv:definition_pi_S}), namely the following list of tuples.
\[
\begin{array}{l|c}
\cellcolor[gray]{0.8}&\multicolumn{1}{c|}{\cellcolor[gray]{0.8}\textrm{in }\prod_{i \in [3]} FE_1^{\varepsilon}\theta(i)}\\
\hline
p_{8}&\big(\,\mathtt{aAcAgT} + \mathtt{gAtCcC}\,,\,\mathtt{tGaTgc} + \mathtt{tGaGtg} \,,\,\mathtt{ctC} + \mathtt{aaT}\,\big)\\
p_{9}&\big(\,\mathtt{aTtAgT} + \mathtt{aTtCgT}\,,\,\mathtt{cAaGtg} + \mathtt{tGaTgc} \,,\,\mathtt{aaT} + \mathtt{atT}\,\big)\\
p_{10}&\big(\,\mathtt{aTtCgT} + \mathtt{gAtCcC}\,,\, \mathtt{cAaTgc} + \mathtt{tGaTgc}\,,\,\mathtt{acT} + \mathtt{ctC}\,\big)\\
p_{11}&\big(\,\mathtt{aAcAgT} + \mathtt{aTtAgT}\,,\,\mathtt{cAaTgc} + \mathtt{cAaGtg}\,,\,\mathtt{atT} + \mathtt{aaT}\,\big)\\
p_{12}&\big(\,\mathtt{gAtCcC} + \mathtt{aTtAgT}\,,\,\mathtt{cAaGtg}+\mathtt{cAaGtg}\,,\,\mathtt{atT} + \mathtt{acT}\,\big)\\
\end{array}
\]
Contrary to what was deduced for the elements $\mathtt{p}_{5}$, $\mathtt{p}_{6}$, $\mathtt{p}_{7}$, $\mathtt{p}_{8}$, we cannot use the previous table to find equations that would contain completely different terms on each side. Instead, the present situation hints toward the necessity to look at equations with repeated terms on each side (see below). For instance, we can show that the following three tuples of $\rho$-haplotypes belong to the ic-monoid $G(FE_1^{\varepsilon},\rho)$ due to the corresponding equations shown on the right.
\[
\begin{array}{lll}
(\mathsf{fml}_{\tau}(\mathtt{p}_{8}+\mathtt{p}_{9}+\mathtt{p}_{10})\,\,,&\mathsf{fml}_{\tau}(\mathtt{p}_{8}+\mathtt{p}_{9}+\mathtt{p}_{10}+\mathtt{p}_{11})),&\textrm{since }p_{8}+p_{9}+p_{10}=p_{8}+p_{9}+p_{10}+p_{11}\\
(\mathsf{fml}_{\tau}(\mathtt{p}_{10}+\mathtt{p}_{11})\,\,,&\mathsf{fml}_{\tau}(\mathtt{p}_{10}+\mathtt{p}_{11}+\mathtt{p}_{9})),&\textrm{since }p_{10}+p_{11}=p_{10}+p_{11}+p_{9}\\
(\mathsf{fml}_{\tau}(\mathtt{p}_{10}+\mathtt{p}_{11})\,\,,&\mathsf{fml}_{\tau}(\mathtt{p}_{10}+\mathtt{p}_{11}+\mathtt{p}_{12})),&\textrm{since }p_{10}+p_{11}=p_{10}+p_{11}+p_{12}\\
\end{array}
\]
Here, it is important to note that, within the scope of modeling/studying homologous recombination, it makes sense to look for pairs of the form $(t,t+x)$ in $G(FE_1^{\varepsilon},\rho)$ because they inform us that the $\rho$-haplogroup $t$ has the potential to \emph{beget} the $\rho$-haplogroup $x$. This is much less restrictive than establishing that some $\rho$-haplogroup $t$ can completely recover the whole $\rho$-haplotype of some other $\rho$-haplogroup $x$, presumably made of completely different $\rho$-genotypes. In fact, this second scenario can only be expected when the available data account for a large amount of recombination events. In practice, this is almost never the case, which is why finding pairs of the form $(t,t+x)$ in $G(FE_1^{\varepsilon},\rho)$ is more likely to happen.

The astute reader might have noticed that one cannot form a $\rho$-haplogroup made of the $\rho$-haplogroup $\mathsf{fml}_{\tau}(\mathtt{p}_{9})$, $\mathsf{fml}_{\tau}(\mathtt{p}_{10})$, $\mathsf{fml}_{\tau}(\mathtt{p}_{11})$, and $\mathsf{fml}_{\tau}(\mathtt{p}_{12})$ such that the resulting $\rho$-haplogroup begets the $\rho$-haplogroup $\mathsf{fml}_{\tau}(\mathtt{p}_{8})$. This suggests that the canonical arrow $FE_1^{\varepsilon}[\rho]$ (Convention \ref{conv:definition_pi_S}), or more precisely the cone $\rho$, is not adequate to explain the potential relationships that may exist between the $\rho$-genotype $\mathtt{p}_8$ and the $\rho$-genotypes $\mathtt{p}_{9}$, $\mathtt{p}_{10}$, $\mathtt{p}_{11}$, $\mathtt{p}_{12}$. From here on, we can choose between two approaches: either we relate the $\rho$-genotype $\mathtt{p}_8$ to so-far unconsidered $\rho$-genotypes, or we repeat the previous analysis with different cone $\rho'$. In this respect, we shall propose the following cone $\rho':\Delta_{[3]}(\tau') \Rightarrow \theta'$, for which the pair $(\mathsf{fml}_{\tau}(\mathtt{p}_{10}+\mathtt{p}_{11}),\mathsf{fml}_{\tau}(\mathtt{p}_{10}+\mathtt{p}_{11}+\mathtt{p}_{9}))$ belongs to $G(FE_1^{\varepsilon},\rho')$.
\[
\begin{array}{ll}
\rho_{1}':\xymatrix@C-30pt@R-20pt{
(\bullet&\bullet&\bullet&\bullet&\bullet&\bullet)&(\bullet&\bullet&\bullet)&(\bullet&\bullet&\bullet&\bullet&\bullet&\bullet)\ar[rr]
&\quad\quad\quad&
(\bullet&\bullet&\bullet&\bullet&\bullet&\bullet)&(\circ&\circ&\circ)&(\circ&\circ&\circ&\circ&\circ&\circ)
}&\theta'(1)\\
\rho_{2}':\xymatrix@C-30pt@R-20pt{
(\bullet&\bullet&\bullet&\bullet&\bullet&\bullet)&(\bullet&\bullet&\bullet)&(\bullet&\bullet&\bullet&\bullet&\bullet&\bullet)\ar[rr]
&\quad\quad\quad&
(\circ&\circ&\circ&\circ&\circ&\circ)&(\bullet&\bullet&\bullet)&(\circ&\circ&\circ&\circ&\circ&\circ)
}&\theta'(2)\\
\rho_{3}':\xymatrix@C-30pt@R-20pt{
(\bullet&\bullet&\bullet&\bullet&\bullet&\bullet)&(\bullet&\bullet&\bullet)&(\bullet&\bullet&\bullet&\bullet&\bullet&\bullet)\ar[rr]
&\quad\quad\quad&
(\circ&\circ&\circ&\circ&\circ&\circ)&(\circ&\circ&\circ)&(\bullet&\bullet&\bullet&\bullet&\bullet&\bullet)
}&\theta'(3)\\
\end{array}
\]
Specifically, we can verify the previous claim as follows. First, we compute the $\rho'$-haplotypes
of the $\rho'$-haplogroups induced by the elements $\mathsf{fml}_{\tau'}(\mathtt{p}_{8})$, $\mathsf{fml}_{\tau'}(\mathtt{p}_{9})$, $\mathsf{fml}_{\tau'}(\mathtt{p}_{10})$, $\mathsf{fml}_{\tau'}(\mathtt{p}_{11})$, and $\mathsf{fml}_{\tau'}(\mathtt{p}_{12})$ in a table, as shown below. Then, we use this table to observe that the $\rho'$-haplotype $p_{10}+p_{11}$ is equal to the $\rho'$-haplotype $p_{10}+p_{11}+p_{8}$. This means that the $\rho'$-haplogroup $\mathsf{fml}_{\tau'}(\mathtt{p}_{10}+\mathtt{p}_{11})$ begets the $\rho'$-haplogroup $\mathsf{fml}_{\tau'}(\mathtt{p}_{8})$.
\[
\begin{array}{c|c}
\cellcolor[gray]{0.8}&\multicolumn{1}{c|}{\cellcolor[gray]{0.8}\textrm{in }\prod_{i \in [3]} FE_1^{\varepsilon}\theta'(i)}\\
\hline
p_{8}&\big(\,\mathtt{aAcAgT} + \mathtt{gAtCcC}\,,\,\mathtt{tGa} \,,\,\mathtt{TgcctC} + \mathtt{GtgaaT}\,\big)\\
p_{9}&\big(\,\mathtt{aTtAgT} + \mathtt{aTtCgT}\,,\,\mathtt{cAa} + \mathtt{tGa} \,,\,\mathtt{GtgaaT} + \mathtt{TgcatT}\,\big)\\
p_{10}&\big(\,\mathtt{aTtCgT} + \mathtt{gAtCcC}\,,\, \mathtt{cAa} + \mathtt{tGa}\,,\,\mathtt{TgcacT} + \mathtt{TgcctC}\,\big)\\
p_{11}&\big(\,\mathtt{aAcAgT} + \mathtt{aTtAgT}\,,\,\mathtt{cAa}\,,\,\mathtt{TgcatT} + \mathtt{GtgaaT}\,\big)\\
p_{12}&\big(\,\mathtt{gAtCcC} + \mathtt{aTtAgT}\,,\,\mathtt{cAa}\,,\,\mathtt{GtgatT} + \mathtt{GtgacT}\,\big)\\
\end{array}
\]

In conclusion, this example demonstrates how the canonical arrows of the form shown in Convention \ref{conv:definition_pi_S} can be used as reasoning tools to identify regions at which \emph{crossovers} cannot have occurred (see the references given below). Specifically, this is done by looking at the cones for which two $\rho$-haplogroups of the form $t$ and $t+x$ have different $\rho$-haplotypes. However, one challenge in using this tool is that one needs to significantly restrict the set of cones on which we want to use it for reasonable computation times. Luckily for us, the recombination \emph{hot spots} at which crossovers occur in the human genome do not change much, and only their occurrence frequency does  \cite{Kauppi,Jeffreys}. For instance, there may be a noticeable difference in male and female regarding the frequency at which certain hot spots are used, but the hot spots will essentially be the same for both genders. Yet, recombination topologies will still vary from generation to generation as they are specific to each individual.
\end{example}

The kinship relations discussed in Example \ref{exa:Relative_definition_families} motivates the introduction of a formal binary relations, which we introduce in Definition \ref{def:begetting}. This relation is given in preparation for the content of section \ref{ssec:recombination_monoids}, in which we define quotients of ic-monoids by using recombination congruences. After Definition \ref{def:begetting}, we show in Remark \ref{rem:begetting} that the kinship relation is a preorder relation.

\begin{definition}[Kinship]\label{def:begetting}
Let $(M,+,0)$ be an ic-monoid. For every pair $(x,y)$ of elements in $M$, we will say that \emph{$y$ begets $x$}, and write $x \triangleright y$, whenever the equation $y = x + y$ holds in $M$.
\end{definition}

\begin{remark}[Preorder]\label{rem:begetting}
For every ic-monoid $(M,+,0)$, it is straightforward to verify that the relation $\triangleright$ defines a pre-order relation on $M$. Indeed, the relation $\triangleright$ is reflexive because $M$ is idempotent ($y = y+y$ for every $y \in M$), and it is transitive because $M$ is associative: if $x = z+x$ and $y = x +y$, then we have the equations $y = (z+x)+y = z + (x+y) = z + y$.
\end{remark}

Example \ref{exa:Relative_definition_families} has shown that it is possible to detect `good' cones (for a given dataset) by looking at the haplotypes generated by these cones. These cones should ideally be gathered within a recombination chromology. In section \ref{ssec:recombination_monoids}, we see how one can use such a chromology to give a meaning to the sentence ``up to recombination''.

\subsection{Recombination monoids}\label{ssec:recombination_monoids}
The goal of the present section is to define quotients of functors $\mathbf{Seg}(\Omega) \to \mathbf{Icm}$ with respect to families of recombination congruences (Definition \ref{def:Recomb_congruences}). We start the section with a discussion (Remark \ref{rem:functorial_quotients}) on how to form functorial quotients.

\begin{remark}[Functorial quotients]\label{rem:functorial_quotients}
Let $(\Omega,\preceq)$ be a pre-ordered set, $(\Omega,D)$ be a recombination chromology and $X$ be a functor $\mathbf{Seg}(\Omega) \to \mathbf{Icm}$. To quotient the functor $X$ with respect to its recombination congruences $G(X,\rho) \rightrightarrows X(\tau)$ over its cones $\rho:\Delta_A(\tau) \Rightarrow \theta$ in $D$, we could consider a coequalizer, call it $Q(\tau)$, of the coproduct of all its recombination congruences over $D$ (see below) -- the coequalizer diagram is given by the pair of arrows induced by the coproduct adjoints of the collections of projections $G(X,\rho) \to X(\tau)$.
\[
\xymatrix@C+18pt{
\mathop{\coprod}\limits_{\rho \in D} G(X,\rho)\ar@<-1ex>[r]_-{ \oplus_{\rho} \mathsf{prj}_2}\ar@<+1ex>[r]^-{\oplus_{\rho} \mathsf{prj}_1}& X(\tau)
}
\]
Because the resulting mapping $\tau \mapsto Q(\tau)$ is unlikely functorial, we force this functoriality by adding more pairs to the previous coproduct. Specifically, we consider the pair of arrows resulting from the composition of the arrows shown in (\ref{eq:coequalizer_mod_pedigrad}): the coproduct is, this time, taken over the finite set of pairs $(\rho,f)$ where $\rho$ is a cone of the form $\Delta_A(\upsilon) \Rightarrow \theta$ in $D$, $f$ is an arrow $\upsilon \to \tau$ in $\mathbf{Seg}(\Omega)$, and the arrow $\oplus_{\rho,f}X(f)$ is thecoproduct adjoint of the collection $(X(f))_{\rho,f}$.
\begin{equation}\label{eq:coequalizer_mod_pedigrad}
\xymatrix@C+18pt{
\mathop{\coprod}\limits_{\rho,f:\upsilon \to \tau} G(X,\rho)\ar@<-1ex>[r]_-{\coprod_{\rho,f}\mathsf{prj}_2}\ar@<+1ex>[r]^-{\coprod_{\rho,f}\mathsf{prj}_1}&\mathop{\coprod}\limits_{\rho,f:\upsilon \to \tau}  X(\upsilon) \ar[rr]^-{\oplus_{\rho,f}X(f)}&& X(\tau)
}
\end{equation}
Every coequalizer $Q(\tau)$ of the previous pair in $\mathbf{Icm}$ gives is a mapping $\tau \mapsto Q(\tau)$ that extends into a functor $\mathbf{Seg}(\Omega) \to \mathbf{Icm}$. The images of this functor on the arrows of $\mathbf{Seg}(\Omega)$ are induced as follows. For every morphism $g:\tau \to \tau'$, we have diagram (\ref{eq:coequalizer_mod_pedigrad:morphism:functoriality}), which induces an arrow $Q(g):Q(\tau) \to Q(\tau')$ in $\mathbf{Icm}$. By universality of coequalizers, we can verify that the pair of mapping rules $\tau \mapsto Q(\tau)$ (on objects) and $g \mapsto Q(g)$ (on arrows) induces a functor $\mathbf{Seg}(\Omega) \to \mathbf{Icm}$.
\begin{equation}\label{eq:coequalizer_mod_pedigrad:morphism:functoriality}
\xymatrix@C+18pt{
\mathop{\coprod}\limits_{\rho,f:\upsilon \to \tau} G(X,\rho)\ar[d]_{\subseteq}\ar@<-1ex>[r]_-{\coprod\mathsf{prj}_2}\ar@<+1ex>[r]^-{\coprod\mathsf{prj}_1}&\mathop{\coprod}\limits_{\rho,f:\upsilon \to \tau}  X(\upsilon) \ar[rr]^-{\mathop{\oplus}\limits_{\rho,f}X(f)}\ar[d]_{\subseteq}\ar@{..>}[rrd]|{\mathop{\oplus}\limits_{\rho,f}X(g \circ f)}&& X(\tau)\ar[d]^{X(g)}\\
\mathop{\coprod}\limits_{\rho,f:\upsilon \to \tau'} G(X,\rho)\ar@<-1ex>[r]_-{\coprod\mathsf{prj}_2}\ar@<+1ex>[r]^-{\coprod\mathsf{prj}_1}&\mathop{\coprod}\limits_{\rho,f:\upsilon \to \tau'}  X(\upsilon) \ar[rr]_-{\mathop{\oplus}\limits_{\rho,f}X(f)}&& X(\tau')
}
\end{equation}
\end{remark}

\begin{definition}[Recombination monoids]\label{def:Canonical_pedigrads_in_semimodules}
For every recombination chromology $(\Omega,D)$ and object $\tau$ in $\mathbf{Seg}(\Omega)$, we will denote as $DX(\tau)$ the coequalizer of (\ref{eq:coequalizer_mod_pedigrad}). The functor $DX:\mathbf{Seg}(\Omega) \to \mathbf{Icm}$ that results from forming these coequalizers will be called the \emph{recombination monoid of $(\Omega,D)$ over $X$}
\end{definition}

\begin{convention}[Coequalizer map]\label{conv:coequalizer_map}
For every functor $X:\mathbf{Seg}(\Omega) \to \mathbf{Icm}$, the natural transformation $X \Rightarrow DX$ in $[\mathbf{Seg}(\Omega),\mathbf{Icm}]$ induced by the collection of coequalizer morphisms $X(\tau) \to DX(\tau)$ (see diagram (\ref{eq:coequalizer_mod_pedigrad}) will be denoted as $q_{X}$.
\end{convention}

\begin{example}[Recombination monoids]\label{exa:Recombination_semimodule_for_DNA}
The goal of this example is to give specific instances of elements belonging to recombination monoids. In particular, this example focuses on exposing the relationships that exist between these elements. As with Example \ref{exa:Relative_definition_families}, we shall illustrate our example by using a functor resulting from the composition of an environment functor with the free functor $F:\mathbf{Set} \to \mathbf{Icm}$.  Let $(E,\varepsilon)$ be our usual pointed set $\{\mathtt{A},\mathtt{C},\mathtt{G},\mathtt{T},\varepsilon\}$ and take $(\Omega,\preceq)$ to be the Boolean pre-ordered set $\{0 \leq 1\}$. Recall that, in Example \ref{exa:Relative_definition_families}, we showed how choosing certain cones $\rho:\Delta_{[k]}(\tau) \Rightarrow \theta$ could change the way the elements of $FE_1^{\varepsilon}(\tau)$ are compared through the canonical arrow $FE_1^{\varepsilon}[\rho]:FE_1^{\varepsilon}(\tau) \to \prod_{i \in [k]} FE_1^{\varepsilon}\theta(i)$. In the case of the recombination monoids, these comparisons are realized as equations.

If we suppose that the cone $\rho:\Delta_{[3]}(\tau) \Rightarrow \theta$ given in Example \ref{exa:Relative_definition_families} is one of the cones contained in $D$, then the coequalizer map $q_{FE_{1}^{\varepsilon}}:FE_{1}^{\varepsilon}(\tau) \to DFE_{1}^{\varepsilon}(\tau)$ will identify the two $\rho$-haplogroups $\mathsf{fml}_{\tau}(\mathtt{p}_{5}+\mathtt{p}_{6})$ and $\mathsf{fml}_{\tau}(\mathtt{p}_{7}+\mathtt{p}_{8})$ as a single element in $DFE_{1}^{\varepsilon}(\tau)$.  Since the morphism $\mathsf{fml}$ takes its values in $\mathbf{Icm}$, the following identity holds in $DFE_{1}^{\varepsilon}(\tau)$.
\[
\mathsf{fml}_{\tau}(\mathtt{p}_{5}+\mathtt{p}_{6}) = \mathsf{fml}_{\tau}(\mathtt{p}_{7}+\mathtt{p}_{8})
\]
Similarly, Example \ref{exa:Relative_definition_families} has shown that the following identities hold in $DFE_{1}^{\varepsilon}(\tau)$ (which can equivalently be expressed as kinship relations, as shown on the right -- see Definition \ref{def:begetting}).
\[
\begin{array}{ll}
\mathsf{fml}_{\tau}(\mathtt{p}_{8}+\mathtt{p}_{9}+\mathtt{p}_{10})=\mathsf{fml}_{\tau}(\mathtt{p}_{8}+\mathtt{p}_{9}+\mathtt{p}_{10}+\mathtt{p}_{11}) &\Leftrightarrow\quad\quad\mathsf{fml}_{\tau}(\mathtt{p}_{8}+\mathtt{p}_{9}+\mathtt{p}_{10}) \triangleright \mathsf{fml}_{\tau}(\mathtt{p}_{11})\\
\mathsf{fml}_{\tau}(\mathtt{p}_{10}+\mathtt{p}_{11})=\mathsf{fml}_{\tau}(\mathtt{p}_{10}+\mathtt{p}_{11}+\mathtt{p}_{9})& \Leftrightarrow\quad\quad\mathsf{fml}_{\tau}(\mathtt{p}_{10}+\mathtt{p}_{11})\triangleright \mathsf{fml}_{\tau}(\mathtt{p}_{9})\\
\mathsf{fml}_{\tau}(\mathtt{p}_{10}+\mathtt{p}_{11})=\mathsf{fml}_{\tau}(\mathtt{p}_{10}+\mathtt{p}_{11}+\mathtt{p}_{12})& \Leftrightarrow\quad\quad\mathsf{fml}_{\tau}(\mathtt{p}_{10}+\mathtt{p}_{11})\triangleright \mathsf{fml}_{\tau}(\mathtt{p}_{12})\\
\end{array}
\]
Meanwhile, the previous equations do not hold in $FE_{1}^{\varepsilon}(\tau)$. For instance, by using the equations contained in the topmost table of Example \ref{exa:Relative_definition_families}, we can show that the two $\rho$-haplogroups $\mathsf{fml}_{\tau}(\mathtt{p}_{8})+\mathsf{fml}_{\tau}(\mathtt{p}_{9})+\mathsf{fml}_{\tau}(\mathtt{p}_{10})$ and $\mathsf{fml}_{\tau}(\mathtt{p}_{8})+\mathsf{fml}_{\tau}(\mathtt{p}_{9})+\mathsf{fml}_{\tau}(\mathtt{p}_{10})+\mathsf{fml}_{\tau}(\mathtt{p}_{11})$ are distinct in $FE_{1}^{\varepsilon}(\tau)$.

If we now suppose that the cone $\rho':\Delta_{[3]}(\tau') \Rightarrow \theta'$ given in Example \ref{exa:Relative_definition_families} is another cone of $D$, then the coequalizer map $q_{FE_{1}^{\varepsilon}}:FE_{1}^{\varepsilon}(\tau') \to DFE_{1}^{\varepsilon}(\tau')$ will identify the two elements $\mathsf{fml}_{\tau'}(\mathtt{p}_{10}+\mathtt{p}_{11})$ and $\mathsf{fml}_{\tau'}(\mathtt{p}_{10}+\mathtt{p}_{11}+\mathtt{p}_{8})$ as a single element in $DFE_{1}^{\varepsilon}(\tau')$, thus making the relation $\mathsf{fml}_{\tau'}(\mathtt{p}_{10}+\mathtt{p}_{11})  \triangleright \mathsf{fml}_{\tau'}(\mathtt{p}_{8})$ hold in $DFE_{1}^{\varepsilon}(\tau')$. Here again, we can verify that the relation $\mathsf{fml}_{\tau'}(\mathtt{p}_{10}+\mathtt{p}_{11})  \triangleright \mathsf{fml}_{\tau'}(\mathtt{p}_{8})$ does not hold in $FE_{1}^{\varepsilon}(\tau')$.

To conclude, this example has demonstrated how every cone $\varrho:\Delta_{[k]}(\upsilon) \Rightarrow \vartheta$ in $D$ will force the two components of a pair contained in $G(FE_{1}^{\varepsilon},\varrho)$ to be identified in the recombination monoid $DFE_{1}^{\varepsilon}(\upsilon)$ through the coequalizer of diagram (\ref{eq:coequalizer_mod_pedigrad}).
\end{example}

\begin{remark}[Mendelian monoids]\label{rem:D_ET_preciting_phenotypes}
The goal of this remark is to refine the construction of Definition \ref{def:Canonical_pedigrads_in_semimodules} to sequence alignments. In particular, this means that the equations forced by the coequalizers will involve sums of \emph{pairs} of DNA sequences, as opposed to unpaired DNA sequences, as was illustrated in Example \ref{exa:Recombination_semimodule_for_DNA} and Example \ref{exa:Relative_definition_families}.
Let $(\Omega,\preceq)$ be a pre-ordered set, let $(\Omega,D)$ be a recombination chromology, let $b$ be an element of $\Omega$, let $(E,\varepsilon)$ be a pointed set and let $(\iota, T,\sigma)$ be a sequence alignment over $\mathbf{2}E_{b}^{\varepsilon}$. Consider the arrow constructed at the end of Remark \ref{rem:Second_mendelian_law}, namely the composition of the following pair of arrows.
\[
\xymatrix{
F\mathsf{Lan}_{\iota} T \ar@{=>}[r]^-{F\sigma^{*}}&F\mathbf{2}E_{b}^{\varepsilon}\ar@{=>}[r]^-{\mathsf{fml}}&FE_b^{\varepsilon}
}
\]
Then, the composition of the resulting arrow with the coequalizer map $q_{FE_b^{\varepsilon}}:FE_b^{\varepsilon} \Rightarrow DFE_b^{\varepsilon}$ gives us the composite arrow $F\mathsf{Lan}_{\iota} T \Rightarrow DFE_b^{\varepsilon}$ shown below in (\ref{eq:span_recombination_complete}) -- we denote this arrow as $\mathsf{rec}$ (which stands for \emph{recombination}).
\begin{equation}\label{eq:span_recombination_complete}
\xymatrix{
F\mathsf{Lan}_{\iota} T \ar@{==>}@/^2pc/[rrr]^{\mathsf{rec}}\ar@{=>}[r]^-{F\sigma^{*}}&F\mathbf{2}E_{b}^{\varepsilon}\ar@{=>}[r]^-{\mathsf{fml}}&FE_b^{\varepsilon}\ar@{=>}[r]^-{q_{FE_b^{\varepsilon}}}&DFE_b^{\varepsilon}
}
\end{equation}
We shall now use arrow (\ref{eq:span_recombination_complete}) with a pullback construction. First, note that that the category $\mathbf{Icm}$ has pullbacks because it has products and coequalizers (Remark \ref{rem:products} and Proposition \ref{prop:coequalizer:existence:Icm}). As a result, the functor category $[\mathbf{Seg}(\Omega),\mathbf{Icm}]$ also has products, coequalizers and pullbacks, which are all computed pointwise (\emph{i.e.} computed in $\mathbf{Icm}$ relative to each argument in $\mathbf{Seg}(\Omega)$).

This being established, let us now denote by $R_D^T$ the pullback of $\mathsf{rec}:F\mathsf{Lan}_{\iota} T \Rightarrow DFE_b^{\varepsilon}$ along itself in $[\mathbf{Seg}(\Omega),\mathbf{Icm}]$ (see the diagram on the left-hand side of (\ref{eq:span_recombination_complete:pullback:matrix-algebra}).
\begin{equation}\label{eq:span_recombination_complete:pullback:matrix-algebra}
\begin{array}{l}
\xymatrix{
R_D^T \ar@{=>}[d]_{y_2} \ar@{=>}[r]^-{y_1}  \ar@{}[rd]|<<<{\rotatebox[origin=c]{90}{\huge{\textrm{$\llcorner$}}}} &F\mathsf{Lan}_{\iota} T\ar@{=>}[d]^{\mathsf{rec}}\\
F\mathsf{Lan}_{\iota} T \ar@{=>}[r]_-{\mathsf{rec}}&DFE_b^{\varepsilon}
}
\end{array}
\quad\quad
\quad\quad
\begin{array}{l}
\xymatrix@R-25pt{
&&DFE_b^{\varepsilon}\\
R_D^T  \ar@<+1.2ex>@{=>}[r]^-{y_1}\ar@<-1.2ex>@{=>}[r]_-{y_2}& F\mathsf{Lan}_{\iota} T \ar@{==>}[rd]^-{r}\ar@{=>}[ru]^-{\mathsf{rec}}&\\
&&D_{E}T\ar@{=>}[uu]_{\mathsf{sub}}\\
}
\end{array}
\end{equation}
If we coequalize the resulting pullback pair $(y_1,y_2)$, we obtain a coequalizer $r:F\mathsf{Lan}_{\iota} T \Rightarrow D_{E}T$, as shown on the right-hand side of (\ref{eq:span_recombination_complete:pullback:matrix-algebra}). Since the arrow $\mathsf{rec}:F\mathsf{Lan}_{\iota} T \Rightarrow DFE_b^{\varepsilon}$ also coequalizes the pullback pair $(y_1,y_2)$, the coequalizer arrow $r$ factorizes the arrow $\mathsf{rec}$ through a universal arrow $\mathsf{sub}:D_ET \Rightarrow DFE_b^{\varepsilon}$ (see the leftmost diagram of (\ref{eq:span_recombination_complete:pullback:matrix-algebra})). Since, for every segment $\tau$ in $\mathbf{Seg}(\Omega)$, the object $R_D^T(\tau)$ defines a pullback of the arrow $\mathsf{rec}_{\tau}$ along itself in $\mathbf{Icm}$ and the object $D_{E}T(\tau)$ is a coequalizer for the pair $y_{1,\tau},y_{2,\tau}:R^T_D(\tau)\rightrightarrows F\mathsf{Lan}_{\iota} T(\tau)$ in $\mathbf{Icm}$, it follows from Proposition \ref{prop:pullback:coequalizer:mono} that the arrow $\mathsf{sub}_{\tau}:D_ET(\tau) \to DFE_b^{\varepsilon}(\tau)$ is a monomorphism in $\mathbf{Icm}$.
\end{remark}

\begin{example}[Mendelian monoids]\label{exa:D_ET_preciting_phenotypes}
The goal of this example is to give specific instances of elements belonging to the images of the coequalizer functor constructed in Remark \ref{rem:D_ET_preciting_phenotypes}. To do so, we shall build on the analyses done in Example \ref{exa:Recombination_semimodule_for_DNA}. In this respect, we will keep the same notations, assumptions and equations established thereof.

Recall that that the first equation presented in Example \ref{exa:Recombination_semimodule_for_DNA} was $\mathsf{fml}_{\tau}(\mathtt{p}_{5}+\mathtt{p}_{6}) = \mathsf{fml}_{\tau}(\mathtt{p}_{7}+\mathtt{p}_{8})$. It follows from the constructions made in Remark \ref{rem:D_ET_preciting_phenotypes} that this equation is equivalent to the following equation $DFE_1^{\varepsilon}(\tau)$, where $\mathtt{p}_{5}$, $\mathtt{p}_{6}$, $\mathtt{p}_{7}$ and $\mathtt{p}_{8}$ are the elements of $F\mathsf{Lan}_{\iota} T(\tau)$ given in Example \ref{exa:Sequence alignments}.
\[
\mathsf{rec}_{\tau}(\mathtt{p}_{5}+\mathtt{p}_{6}) = \mathsf{rec}_{\tau}(\mathtt{p}_{7}+\mathtt{p}_{8})
\]
Therefore, the pair $(\mathtt{p}_{5}+\mathtt{p}_{6},\mathtt{p}_{7}+\mathtt{p}_{8})$ belongs to the pullback $R_D^T(\tau)$, which implies that the equation $\mathtt{p}_{5}+\mathtt{p}_{6}=\mathtt{p}_{7}+\mathtt{p}_{8}$ holds in the coequalizer object $D_{E}T(\tau)$. Similarly, the relations shown below on the left (all holding in $DFE_1^{\varepsilon}(\tau)$) lead to the corresponding right-hand side relations in $D_{E}T(\tau)$ (which are expressed as kinships relationships.
\[
\begin{array}{lll}
\mathsf{fml}_{\tau}(\mathtt{p}_{8}+\mathtt{p}_{9}+\mathtt{p}_{10}) \triangleright \mathsf{fml}_{\tau}(\mathtt{p}_{11}) &\quad\quad\Rightarrow\quad\quad& \mathtt{p}_{8}+\mathtt{p}_{9}+\mathtt{p}_{10} \triangleright \mathtt{p}_{11}\\
\mathsf{fml}_{\tau}(\mathtt{p}_{10}+\mathtt{p}_{11})\triangleright \mathsf{fml}_{\tau}(\mathtt{p}_{9}) & \quad\quad\Rightarrow\quad\quad & \mathtt{p}_{8}+\mathtt{p}_{9}+\mathtt{p}_{10} \triangleright \mathtt{p}_{9}\\
\mathsf{fml}_{\tau}(\mathtt{p}_{10}+\mathtt{p}_{11})\triangleright \mathsf{fml}_{\tau}(\mathtt{p}_{12}) & \quad\quad\Rightarrow\quad\quad & \mathtt{p}_{10}+\mathtt{p}_{11} \triangleright \mathtt{p}_{12}\\
\end{array}
\]
Finally, the kinship relation $\mathsf{fml}_{\tau'}(\mathtt{p}_{10}+\mathtt{p}_{11}) \triangleright \mathsf{fml}_{\tau'}(\mathtt{p}_{8})$, which was found in $DFE_1^{\varepsilon}(\tau')$, gives rise to the following kinship relation in $D_{E}T(\tau')$.
\[
\begin{array}{ccccc}
\mathtt{p}_{10}&+&\mathtt{p}_{11}&\triangleright&\mathtt{p}_{8}\\
\rotatebox[origin=c]{-90}{$
\begin{array}{l}
\mathtt{(aTtCgT)(cAa)(TgcacT)}\\
\fbox{$\mathtt{gAtCcC}$}\,\fbox{$\mathtt{tGa}$}\,\fbox{$\mathtt{TgcctC}$}\\
\end{array}$}
&+&
\rotatebox[origin=c]{-90}{$
\begin{array}{l}
\fbox{$\mathtt{aAcAgT}$}\,\mathtt{(cAa)(TgcatT)}\\
\mathtt{(aTtAgT)(cAa)}\fbox{$\mathtt{GtgaaT}$}\\
\end{array}$}
&\triangleright&
\rotatebox[origin=c]{-90}{$
\begin{array}{l}
\fbox{$\mathtt{aAcAgT}$}\,\fbox{$\mathtt{tGa}$}\,\fbox{$\mathtt{TgcctC}$}\\
\fbox{$\mathtt{gAtCcC}$}\,\fbox{$\mathtt{tGa}$}\,\fbox{$\mathtt{GtgaaT}$}\\
\end{array}$}
\end{array}
\]
Notice how the boxed DNA segments associated with the elements $\mathtt{p}_{10}$ and $\mathtt{p}_{11}$ can be rearranged into the DNA segments associated with $\mathtt{p}_{11}$. This is precisely the type of operations that we have gained in comparison to those considered in Example \ref{exa:Recombination_semimodule_for_DNA}: the main difference between the two functors $DFE_1^{\varepsilon}$ and $D_{E}T$ is the addition of the first Mendelian law, which allows us to shuffle the pair the alleles between each other. This would not have been possible if we had only considered the recombination congruences associated with the underlying functor $F\mathbf{2}E_{b}^{\varepsilon}$ of the sequence alignment $(\iota,T,\sigma)$ (see Example \ref{exa:genotype_haplotype_haplogroup}).
\end{example}

\begin{proposition}\label{prop:pedigrad:fix_points_for_D}
Let $(\Omega,\preceq)$ be a pre-ordered set, let $(\Omega,D)$ be a recombination chromology, let $b$ be an element of $\Omega$, let $(E,\varepsilon)$ be a pointed set and let $(\iota, T,\sigma)$ be a sequence alignment over $\mathbf{2}E_{b}^{\varepsilon}$. The natural transformation $q_{D_ET}:D_ET \Rightarrow D(D_ET)$ is an isomorphism in $[\mathbf{Seg}(\Omega),\mathbf{Icm}]$.
\end{proposition}
\begin{proof}
We prove the statement by showing that the two pairs of arrows associated with the coequalizer construction of $D(D_ET)$ are equal -- thus making the coequalizer construction trivial (see diagram (\ref{eq:coequalizer_mod_pedigrad} and replace $X$ with $D_ET$). First, for every cone $\rho:\Delta_{[k]}(\tau) \Rightarrow \theta$ in $D$, the natural transformation $\mathsf{sub}:D_ET \Rightarrow DFE_b^{\varepsilon}$ of Remark \ref{rem:D_ET_preciting_phenotypes} allows us to construct a natural transformation of pullback squares between the congruence relation $G(D_ET,\rho) \rightrightarrows D_ET(\tau)$ of $D_ET$ and the congruence relation $G(DFE_b^{\varepsilon},\rho) \rightrightarrows DFE_b^{\varepsilon}(\tau)$ of $DFE_b^{\varepsilon}$. Further, the naturality of $\mathsf{sub}:D_ET \Rightarrow DFE_b^{\varepsilon}$ allows us the construct a natural transformation between the version of diagram (\ref{eq:coequalizer_mod_pedigrad}) for $D_ET$ and the version for $DFE_b^{\varepsilon}$, as shown below.
\[
\xymatrix@C+18pt{
\mathop{\coprod}\limits_{\rho,f:\upsilon \to \tau} G(D_ET,\rho)\ar[d]_{}\ar@<-1ex>[r]_-{\coprod\mathsf{prj}_2}\ar@<+1ex>[r]^-{\coprod\mathsf{prj}_1}&\mathop{\coprod}\limits_{\rho,f:\upsilon \to \tau}  D_ET(\upsilon) \ar[rr]^-{\mathop{\oplus}\limits_{\rho,f}D_ET(f)}\ar[d]_{\coprod\mathsf{sub}_{\upsilon}}&& D_ET(\tau)\ar[d]^{\mathsf{sub}_{\tau}}\\
\mathop{\coprod}\limits_{\rho,f:\upsilon \to \tau'} G(DFE_b^{\varepsilon},\rho)\ar@<-1ex>[r]_-{\coprod\mathsf{prj}_2}\ar@<+1ex>[r]^-{\coprod\mathsf{prj}_1}&\mathop{\coprod}\limits_{\rho,f:\upsilon \to \tau'}  DFE_b^{\varepsilon}(\upsilon) \ar[rr]_-{\mathop{\oplus}\limits_{\rho,f}DFE_b^{\varepsilon}(f)}&& DFE_b^{\varepsilon}(\tau')
}
\]
Since the bottom pair of arrows are equal (since, by design, recombination congruences hold in the images of the coequalizer $DFE_b^{\varepsilon}$) and the rightmost arrow $\mathsf{sub}_{\tau}:D_ET(\tau) \to DFE_b^{\varepsilon}(\tau)$ is a monomorphism (see the end of Remark \ref{rem:D_ET_preciting_phenotypes}), it follows that the top pair of arrows are also equal.
\end{proof}

\subsection{Recombination schemes}\label{ssec:Recombination_schemes_and_pedigrads}
The goal of this section is to determine a set of conditions for which a recombination monoid, as given in Definition \ref{def:Canonical_pedigrads_in_semimodules}, is a pedigrad in a certain logical system of monomorphisms (see Corollary \ref{cor:D_ET_is_a_mon_pedigrad}). This will allow us to deduce strategies about how haplotypes can be encoded in a given context of recombination events.

\begin{definition}[Logical system]
We will denote by $\mathcal{W}^{\mathrm{mon}}$ the class of wide spans $\mathbf{S}=\{X \to F_i\}_{i \in [k]}$ in $\mathbf{Icm}$ whose product adjoint arrows $X \to \prod_{i \in [k]}F_i$ is a monomorphism in $\mathbf{Icm}$.
\end{definition}

\begin{remark}[Homologous recombination]
A $\mathcal{W}^{\mathrm{mon}}$-pedigrad is a functor whose recombination congruences are actually realized in its images. Another way to put it is to say that a $\mathcal{W}^{\mathrm{mon}}$-pedigrad is a functor in which homologous recombination is modeled (see Example \ref{exa:Recombination_semimodule_for_DNA}).
\end{remark}

\begin{definition}[Irreducibility]\label{def:irreducible_object}
Let $(\Omega,D)$ be a recombination chromology and $X$ be a functor $\mathbf{Seg}(\Omega) \to \mathbf{Set}$. An object $\tau$ in $\mathbf{Seg}(\Omega)$ will be said to be \emph{irreducible} for the triple $(\Omega,D,X)$ if for every arrow $f:\upsilon \to \tau$ in $\mathbf{Seg}(\Omega)$, the arrow $X(f):X(\upsilon) \to X(\tau)$ coequalizes the following pair of arrows for every cone $\rho:\Delta_A(\upsilon) \Rightarrow \theta$ in $D$.
\begin{equation}\label{eq:Irreducibility}
\xymatrix@C+18pt{
G(X,\rho)\ar@<-1ex>[r]_-{\mathsf{prj}_2}\ar@<+1ex>[r]^-{\mathsf{prj}_1}& X(\upsilon) 
}
\end{equation}
\end{definition}

\begin{remark}[Coequalizing arrows]\label{rem:Coequalizing_arrows}
By Definition \ref{def:Recomb_congruences}, the pair of arrows given in (\ref{eq:Irreducibility}) is coequalized by each canonical arrow $X(\rho_i):X(\tau) \to X(\theta(j))$ obtained from each component $\rho_i$ of the cone $\rho$. Indeed, note that, for every $j \in [k]$, the composition of the arrow $X[\rho]:X(\upsilon) \to \prod_{i \in [k]} X \circ \theta(i)$ with the product projection
\[
\prod_{i \in [k]} X \circ \theta(i) \to X \circ \theta(j)
\]
is equal to the morphism $X(\rho_j):X(\tau) \to X(\theta(j))$. Because the arrow $X[\rho]$ coequalizes the pair in (\ref{eq:Irreducibility}) (according to Definition \ref{def:Recomb_congruences}), we deduce that, for every $i \in [k]$, the arrow $X(\rho_i)$ coequalizes the pair given in (\ref{eq:Irreducibility}) for the corresponding cone $\rho$.
\end{remark}

\begin{example}[Coequalizing arrows]\label{exa:Coequalizing_arrows}
Let us consider the same setting as the one used in Example \ref{exa:Relative_definition_families}. We shall illustrate the concept of irreducibility by using the cone $\rho$ given thereof. 

First, Remark \ref{rem:Coequalizing_arrows} implies that the recombination congruence $G(FE_1^{\varepsilon},\rho) \rightrightarrows FE_1^{\varepsilon}(\tau)$ is coequalized by the image of the following arrows via the functor $FE_1^{\varepsilon}:\mathbf{Seg}(\Omega) \to \mathbf{Icm}$.
\[
\begin{array}{l}
\rho_{1}:\xymatrix@C-30pt@R-20pt{
(\bullet&\bullet&\bullet&\bullet&\bullet&\bullet)&(\bullet&\bullet&\bullet&\bullet&\bullet&\bullet)&(\bullet&\bullet&\bullet)\ar[rr]
&\quad\quad\quad&
(\bullet&\bullet&\bullet&\bullet&\bullet&\bullet)&(\circ&\circ&\circ&\circ&\circ&\circ)&(\circ&\circ&\circ)
}\\
\rho_{2}:\xymatrix@C-30pt@R-20pt{
(\bullet&\bullet&\bullet&\bullet&\bullet&\bullet)&(\bullet&\bullet&\bullet&\bullet&\bullet&\bullet)&(\bullet&\bullet&\bullet)\ar[rr]
&\quad\quad\quad&
(\circ&\circ&\circ&\circ&\circ&\circ)&(\bullet&\bullet&\bullet&\bullet&\bullet&\bullet)&(\circ&\circ&\circ)
}\\
\rho_{3}:\xymatrix@C-30pt@R-20pt{
(\bullet&\bullet&\bullet&\bullet&\bullet&\bullet)&(\bullet&\bullet&\bullet&\bullet&\bullet&\bullet)&(\bullet&\bullet&\bullet)\ar[rr]
&\quad\quad\quad&
(\circ&\circ&\circ&\circ&\circ&\circ)&(\circ&\circ&\circ&\circ&\circ&\circ)&(\bullet&\bullet&\bullet)
}
\end{array}
\]
For example, we can easily verify (see the table below) that these arrows send the pair of elements $\mathsf{fml}_{\tau}(\mathtt{p}_{5}+\mathtt{p}_{6})$ and $\mathsf{fml}_{\tau}(\mathtt{p}_{7}+\mathtt{p}_{8})$ to the same elements in $FE_1^{\varepsilon}(\rho_{1})$, $FE_1^{\varepsilon}(\rho_{2})$ and $FE_1^{\varepsilon}(\rho_{3})$, respectively.
\[
\begin{array}{|l|c|}
\hline
\cellcolor[gray]{0.8}&\cellcolor[gray]{0.8}\textrm{Images of $\mathsf{fml}_{\tau}(\mathtt{p}_{5}+\mathtt{p}_{6})$ or $\mathsf{fml}_{\tau}(\mathtt{p}_{7}+\mathtt{p}_{8})$}\\
\hline
\cellcolor[gray]{0.8}\textrm{via }FE_b^{\varepsilon}(\rho_{1})& \mathtt{gAtCcC} + \mathtt{aAcAgT}+\mathtt{gAtCcC}+\mathtt{gTtCcC}\\
\hline
\cellcolor[gray]{0.8}\textrm{via }FE_b^{\varepsilon}(\rho_{2})& \mathtt{tAtGtc}+ \mathtt{cAtTgc} + \mathtt{tGaTgc}+ \mathtt{tGaGtg}\\
\hline
\cellcolor[gray]{0.8}\textrm{via }FE_b^{\varepsilon}(\rho_{3})&
\mathtt{aaT} + \mathtt{atT} +\mathtt{ctC} +  \mathtt{aaT}\\
\hline
\end{array}
\]
This suggests that the codomains of the arrows $\rho_{1}$, $\rho_{2}$ and $\rho_{3}$ are good candidates for being irreducible objects with respect to a triple of the form $(\{0,1\},D,FE_{1}^{\varepsilon})$. This phenomenon will be discussed in Proposition \ref{prop:compare_cone_irreducible} and Remark \ref{rem:single_cone_irreducible}. Note that the irreducibility of the domains of the arrows of a cone cannot be determined solely by the shape of its associated arrows and also depends on the other cones of the underlying chromology (see Proposition \ref{prop:compare_cone_irreducible}).
\end{example}

\begin{proposition}[Irreducibility]\label{prop:compare_cone_irreducible}
Let $(\Omega,D)$ be a recombination chromology and $X$ be a functor $\mathbf{Seg}(\Omega) \to \mathbf{Icm}$. Let $\rho:\Delta_{[k]}(\tau) \Rightarrow \theta$ be a cone in $D$ and take an element $i \in [k]$ for which the following property is satisfied:
\begin{itemize}
\item[$(\ast)$] for every integer $n \geq 0$ and every cone $\varrho:\Delta_{[q]}(\upsilon) \Rightarrow \vartheta$ in $D[n]$, if there exists an arrow $\upsilon \to \theta(i)$ in $\mathbf{Seg}(\Omega)$, then there exists an element $j \in [q]$ such that there is an arrow $\vartheta(j) \to \theta(i)$ in $\mathbf{Seg}(\Omega\,|\,n)$.
\end{itemize}
It follows that the object $\theta(i)$ in $\mathbf{Seg}(\Omega)$ is irreducible for $(\Omega,D,X)$.
\end{proposition}
\begin{proof}
Let $\rho:\Delta_{[k]}(\tau) \Rightarrow \theta$ be a cone in $D$ as described in the statement. Let us take $i \in [k]$ and let $f:\upsilon \to \theta(i)$ be an arrow in $\mathbf{Seg}(\Omega)$. Let us show that, for every non-negative integer $n$ and every cone $\varrho:\Delta_{[q]}(\upsilon) \Rightarrow \vartheta$ in $D[n]$, the image $X(f):X(\upsilon) \to X(\theta(i))$ coequalizes the coequalizer $G(X,\varrho) \rightrightarrows X(\upsilon)$. By assumption, the arrow $f:\upsilon \to \theta(i)$ implies that there exists $j \in [q]$ such that there is an arrow $t:\vartheta(j) \to \theta(i)$ in $\mathbf{Seg}(\Omega\,|\,n)$. Hence, the composite $t \circ \varrho(j)$ and the arrow $f$ are two arrows of the form $\upsilon \to \theta(i)$. Because the arrow $\varrho(j):\upsilon \to \vartheta(j)$ belongs to $\mathsf{Seg}(\Omega\,|\,n)$, the composite $t \circ \varrho(j)$ also belongs to $\mathsf{Seg}(\Omega\,|\,n)$. By Lemma \ref{lem:quasi_homologous_preordered_category}, the arrow $t \circ \varrho(j)$ must be the only arrow of type $\upsilon \to \vartheta(j)$ in $\mathsf{Seg}(\Omega)$, which means that we have $t \circ \varrho(j) = f$.
By Remark \ref{rem:Coequalizing_arrows}, this means that the image $X(f):X(\upsilon) \to X(\vartheta(i))$ coequalizes the pair $G(X,\varrho) \rightrightarrows X(\upsilon)$. This shows the irreducibly of $\theta(i)$.
\end{proof}

\begin{remark}[A single cone]\label{rem:single_cone_irreducible}
Let $(\Omega,D)$ be a recombination chromology. If the chromology $D$ contains a single cone $\rho$, then this cone satisfies property $(\ast)$, stated in Proposition \ref{prop:compare_cone_irreducible}. Indeed, for every other cone $\varrho:\Delta_{[q]}(\upsilon) \Rightarrow \vartheta$ in $D$, we have the equality $\varrho = \rho$, hence the equality $\vartheta(i) = \theta(i)$ for every $i \in [k]$, which implies property $(\ast)$.
\end{remark}

\begin{proposition}\label{prop:irreducible_coequalizer_map_isomorphism}
Let $(\Omega,D)$ be a recombination chromology and $X$ be a functor $\mathbf{Seg}(\Omega) \to \mathbf{Icm}$. For every object $\tau$ in $\mathbf{Seg}(\Omega)$ that is irreducible for the triple $(\Omega,D,X)$, the coequalizer map $q_{X}:X(\tau) \to DX(\tau)$ (Convention \ref{conv:coequalizer_map})  associated with the recombination monoid over $X$ (Definition \ref{def:Canonical_pedigrads_in_semimodules}) is an isomorphism.
\end{proposition}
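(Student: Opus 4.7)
The plan is to invoke the universal property of the coequalizer $DX(\tau)$ directly and show that, under irreducibility, the identity $\mathrm{id}_{X(\tau)}$ itself already satisfies the coequalizing condition, forcing $q_X$ to be an isomorphism.

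First, I would unwind the definition from diagram (\ref{eq:coequalizer_mod_pedigrad}): the object $DX(\tau)$ is the coequalizer in $\mathbf{Icm}$ of the parallel pair of composite morphisms
\[
\bigl(\oplus_{\rho,f} X(f)\bigr) \circ \bigl(\coprod_{\rho,f} \mathsf{prj}_j\bigr) \colon \coprod_{\rho,f} G(X,\rho) \rightrightarrows X(\tau) \qquad (j = 1,2),
\]
where the coproduct is indexed by pairs $(\rho,f)$ consisting of a cone $\rho:\Delta_A(\upsilon) \Rightarrow \theta$ in $D$ together with an arrow $f:\upsilon \to \tau$ in $\mathbf{Seg}(\Omega)$. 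The map $q_X:X(\tau) \to DX(\tau)$ is the universal arrow coequalizing this pair.

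Next, I would use the hypothesis of irreducibility. By Definition \ref{def:irreducible_object}, for every such pair $(\rho,f)$ the component $X(f):X(\upsilon) \to X(\tau)$ already coequalizes $\mathsf{prj}_1,\mathsf{prj}_2:G(X,\rho)\rightrightarrows X(\upsilon)$. Assembling these component identities across the coproduct and postcomposing with the universal arrow $\oplus_{\rho,f}X(f)$, we get the equality
\[
\bigl(\oplus_{\rho,f} X(f)\bigr) \circ \bigl(\coprod_{\rho,f} \mathsf{prj}_1\bigr) \;=\; \bigl(\oplus_{\rho,f} X(f)\bigr) \circ \bigl(\coprod_{\rho,f} \mathsf{prj}_2\bigr)
\]
as morphisms from $\coprod_{\rho,f} G(X,\rho)$ to $X(\tau)$. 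Hence $\mathrm{id}_{X(\tau)}$ is itself a morphism that coequalizes the parallel pair whose coequalizer defines $DX(\tau)$.

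Finally, by the universal property of $q_X:X(\tau) \to DX(\tau)$, there exists a unique $r:DX(\tau) \to X(\tau)$ with $r \circ q_X = \mathrm{id}_{X(\tau)}$. A standard uniqueness argument then closes the loop: the identities $q_X \circ r \circ q_X = q_X = \mathrm{id}_{DX(\tau)} \circ q_X$ both factor $q_X$ through itself, so by the uniqueness clause of the universal property we obtain $q_X \circ r = \mathrm{id}_{DX(\tau)}$, showing $q_X$ is an isomorphism. I do not anticipate any real obstacle here, since the argument is a direct bookkeeping exercise with universal properties; the substantive work has already been done in isolating the notion of irreducibility (Definition \ref{def:irreducible_object}), which was precisely engineered so that this implication becomes immediate.
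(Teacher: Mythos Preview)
Your proof is correct and is exactly the argument the paper has in mind: the paper's proof is the one-liner ``By Definition \ref{def:irreducible_object} and universality of coequalizer (\ref{eq:coequalizer_mod_pedigrad}),'' and you have simply unpacked that sentence. The only minor remark is that your final uniqueness step can be phrased more directly by noting that $q_X$, being a coequalizer, is an epimorphism, so $q_X \circ r \circ q_X = q_X$ immediately yields $q_X \circ r = \mathrm{id}_{DX(\tau)}$.
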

\begin{proof}
By Definition \ref{def:irreducible_object} and the universality of the coequalizers shown in (\ref{eq:coequalizer_mod_pedigrad}).
\end{proof}

\begin{definition}[Contexts]\label{def:recombination-context}
Let $(\Omega,D)$ be a recombination chromology and $X$ be a functor $\mathbf{Seg}(\Omega) \to \mathbf{Icm}$. For every subchromology $(\Omega,D') \subseteq (\Omega,D)$ (Convention \ref{conv:subchromologies}), we will say that the triple $(\Omega,D,X)$ defines a \emph{$D'$-context} if for every cone $\rho:\Delta_{[k]}(\tau) \Rightarrow \theta$ in $D'$ and every object $i \in [k]$, the object $\theta(i)$ is irreducible for the triple $(\Omega,D,X)$ (Definition \ref{def:irreducible_object}).
\end{definition}

\begin{remark}[Contexts]\label{rem:recombination-context}
Proposition \ref{prop:compare_cone_irreducible} suggests that we can construct recombination chromologies $(\Omega,D)$ that are $D$-contexts for any arbitrary functor $\mathbf{Seg}(\Omega) \to \mathbf{Icm}$. Indeed, if we let $(\Omega,D)$ denote a recombination chromology whose cones $\rho:\Delta_{[k]}(\tau) \Rightarrow \theta$ satisfy property $(\ast)$ for every element $i \in [k]$ (as stated in Proposition \ref{prop:compare_cone_irreducible}), then every functor $X:\mathbf{Seg}(\Omega) \to \mathbf{Icm}$ gives rise to a $D$-context $(\Omega,D,X)$.
\end{remark}

The following theorem shows that the recombination monoid resulting from a given context is a $\mathcal{W}^{\mathrm{mon}}$-pedigrad for the subchromology of the context.

\begin{theorem}\label{theo:representable_pedigrad_E_b_varepsilon}
Let $(\Omega,D,X)$ be a $D'$-context. For every cone $\rho:\Delta_{[k]}(\tau) \Rightarrow \theta$ in $D'$, the product adjoint $\iota:DX(\tau) \to \prod_{i \in [k]}DX(\theta(i))$ is a monomorphism in $\mathbf{Icm}$.
\end{theorem}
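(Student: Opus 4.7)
The plan is to establish injectivity of $\iota$ and then invoke Proposition \ref{prop:characterization_B2_mono}, which reduces monomorphy in $\mathbf{Icm}$ to injectivity on underlying sets.

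The first step is to exploit the recombination scheme hypothesis: each $\theta(i)$ is irreducible, so Proposition \ref{prop:irreducible_coequalizer_map_isomorphism} makes $q_X: X(\theta(i)) \to DX(\theta(i))$ an isomorphism for every $i \in [k]$. Taking products gives an isomorphism $\prod_{i \in [k]} DX(\theta(i)) \cong \prod_{i \in [k]} X(\theta(i))$, and a routine check of definitions shows that the composite
\[
\xymatrix@C+10pt{
X(\tau) \ar[r]^-{q_X} & DX(\tau) \ar[r]^-{\iota} & \prod_{i \in [k]} DX(\theta(i)) \ar[r]^-{\cong} & \prod_{i \in [k]} X(\theta(i))
}
\]
coincides with the limit adjoint $X[\rho]$ of Convention \ref{conv:definition_pi_S}.

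Next, given $[x], [y] \in DX(\tau)$ with $\iota([x]) = \iota([y])$, I lift to representatives $x, y \in X(\tau)$ using surjectivity of the coequalizer map $q_X$ (see Proposition \ref{prop:quotient_map_icm}). Commutativity of the diagram above then forces $X[\rho](x) = X[\rho](y)$, which by Definition \ref{def:Recomb_congruences} places the pair $(x,y)$ in the pullback $G(X, \rho)$. Specialising diagram (\ref{eq:coequalizer_mod_pedigrad}) to the summand indexed by the pair $(\rho, \mathrm{id}_\tau)$ then yields $q_X(x) = q_X(y)$, i.e.\ $[x] = [y]$.

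The argument is essentially a universal-property chase. The only care needed is in verifying the first-step diagram, where naturality of $q_X$ along the legs $\rho_i: \tau \to \theta(i)$ identifies the composite above with $X[\rho]$. The conceptual point is that irreducibility of each $\theta(i)$ means no extra identifications occur at the codomain, so fibres of $\iota$ can only come from identifications already present in $G(X, \rho)$ at the source, which are precisely the ones the coequalizer (\ref{eq:coequalizer_mod_pedigrad}) quotients out.
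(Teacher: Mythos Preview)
Your proof is correct and follows essentially the same strategy as the paper's: both use that $q_X$ is an isomorphism at each $\theta(i)$ (via irreducibility and Proposition \ref{prop:irreducible_coequalizer_map_isomorphism}), that $q_X$ is surjective at $\tau$, and that $q_X$ coequalizes the recombination congruence $G(X,\rho)\rightrightarrows X(\tau)$. The only cosmetic difference is that the paper phrases the argument via the kernel-pair characterisation of monomorphisms (Proposition \ref{prop:characterization_mono}), constructing an epimorphism $\lambda:G(X,\rho)\to P$ onto the pullback of $\iota$ along itself and deducing $p_1=p_2$, whereas you verify injectivity directly on elements and invoke Proposition \ref{prop:characterization_B2_mono}; the content is the same.
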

\begin{proof}
By Definition \ref{def:Recomb_congruences}, the pullback of two copies of the canonical arrow $X[\rho]$ (see Convention \ref{conv:definition_pi_S}) is the recombination congruence $G(X,\rho)\rightrightarrows X(\tau)$. If we denote by $p_1,p_2:P \rightrightarrows DX(\tau)$ the pullback of two copies of the product adjoint $\iota$ (see statement), the naturality of the coequalizer map $q_X:X \to DX$ gives us an arrow $\lambda:X(\rho) \to P$ making the following diagram commute.
\begin{equation}\label{eq:proof_mono_pedigrad}
\xymatrix@-10pt{
&G(X,\rho)\ar[dd]|\hole_<<<<<{\mathsf{prj}_2}\ar[ld]_{\mathsf{prj}_1}\ar@{-->}[rr]^{\lambda}&&P\ar[dd]_{p_2}\ar[ld]_{p_1}\\
X(\tau)\ar[rr]^>>>>>>>>{q_X}\ar[dd]_{X[\rho]}&&DX(\tau)\ar[dd]^>>>>>>{\iota}&\\
&X(\tau)\ar[ld]_{X[\rho]}\ar[rr]|\hole^<<<<<<<<<{q_X}&&DX(\tau)\ar[ld]^{\iota}\\
\prod_{i \in [k]}X(\theta(i))\ar[rr]_{\cong}^{\prod_i q_{X}}&&\prod_{i \in [k]}DX(\theta(i))&
}
\end{equation}

Let us show that $\lambda$ is an epimorphism in $\mathbf{Icm}$ by showing it is orthogonal with respect to the Boolean ic-monoid $B_2$ (see Proposition \ref{prop:characterization_epi}). 
First, because $(\Omega,D,X)$ is a $D'$-context and the cone $\rho$ belongs to the chromology $D'$, Proposition \ref{prop:irreducible_coequalizer_map_isomorphism} implies that the bottom front arrow of diagram (\ref{eq:proof_mono_pedigrad}) is an isomorphism (as shown by the symbol $\cong$ in the diagram).
Second, because $q_X:X(\tau) \to DX(\tau)$ is a coequalizer map, it is  orthogonal with respect to the Boolean ic-monoid $B_2$ (Remark \ref{rem:characterization_epi_coequalizer_maps}).
These two facts imply that, for every arrow $x:B_2 \to P$, the composite arrows $p_1 \circ x:B_2 \to DX(\tau)$ and  $p_2 \circ x:B_2 \to DX(\tau)$ admit lifts $h_1:B_2 \to X(\tau)$ and $h_2:B_2 \to X(\tau)$ along $q_X$ that make the following diagram commute.
\[
\xymatrix{
B_2\ar[r]^{h_1}\ar[d]_{h_2}&X(\tau)\ar[d]^{X[\rho]}\\
X(\tau)\ar[r]_{X[\rho]}&*+!L(.7){\prod_{i \in [k]}X(\theta(i))}
}
\]
Since $G(X,\rho)$ is the pullback of $X[\rho]$ along itself, the previous diagram and the universality of the pullback $P$ give us an arrow $h:B_2 \to G(X,\rho)$ for which the equation $\lambda \circ h = x$ holds. In other words, the arrow $\lambda$ is orthogonal to the Boolean ic-monoid $B_2$ and is hence an epimorphism by Proposition \ref{prop:characterization_epi}.

Now, because the equation $q_X \circ \mathsf{prj}_1 = q_X \circ \mathsf{prj}_2$ holds (by definition of $q_X$), and because we showed that $\lambda$ is an epimorphism, the two arrows $p_1,p_2:P \rightrightarrows DX(\tau)$ must be equal (see diagram (\ref{eq:proof_mono_pedigrad})). Because this pair of arrows is also the pullback of two copies of $\iota$, the arrow $\iota$ is a monomorphism (Proposition \ref{prop:characterization_mono}).
\end{proof}

The following result explains, in abstract terms, why Corollary \ref{cor:D_ET_is_a_mon_pedigrad} (our main result) holds.

\begin{theorem}\label{theo:morphism_to_mon_pedigrad}
Let $(\Omega,D)$ be a chromology and $X:\mathbf{Seg}(\Omega) \to \mathbf{Icm}$ be a $\mathcal{W}^{\mathrm{mon}}$-pedigrad for $(\Omega,D)$. For every morphism $f:Y \Rightarrow X$ in $[\mathbf{Seg}(\Omega),\mathbf{Icm}]$, denote as $Y_f$ the coequalizer of the pullback pair of $f$ along itself. The resulting functor $Y_f:\mathbf{Seg}(\Omega) \to \mathbf{Icm}$ is a $\mathcal{W}^{\mathrm{mon}}$-pedigrad for $(\Omega,D)$ (see the diagrams below).
\[
\begin{array}{l}
\xymatrix{
R \ar@{=>}[d]_{u_2} \ar@{=>}[r]^-{u_1}  \ar@{}[rd]|<<<{\rotatebox[origin=c]{90}{\huge{\textrm{$\llcorner$}}}} &Y\ar@{=>}[d]^{f}\\
Y\ar@{=>}[r]_-{f}&X
}
\end{array}
\quad\quad\quad\quad\quad\quad\quad
\begin{array}{l}
\xymatrix{
R  \ar@<+1.2ex>@{=>}[r]^-{u_1}\ar@<-1.2ex>@{=>}[r]_-{u_1}& Y \ar@{==>}[r]^-{q}&Y_f
}
\end{array}
\]
\end{theorem}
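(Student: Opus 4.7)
The plan is to identify the coequalizer $Y_X$ with the regular image of $f$ in the functor category, and then transfer monicity of $X[\rho]$ to $Y_X[\rho]$ across a naturality square.

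First I would observe that limits and colimits in $[\mathbf{Seg}(\Omega),\mathbf{Icm}]$ are computed pointwise, so for every segment $\tau$ the component $q_\tau : Y(\tau) \to Y_X(\tau)$ is the coequalizer in $\mathbf{Icm}$ of the kernel pair of $f_\tau$. Since the identity $f \circ u_1 = f \circ u_2$ holds by definition of the pullback, the universal property of $q$ yields a unique morphism $m : Y_X \Rightarrow X$ with $m \circ q = f$. I would then argue that $m_\tau : Y_X(\tau) \to X(\tau)$ is a monomorphism in $\mathbf{Icm}$ for every $\tau$: by Proposition \ref{prop:characterization_B2_mono} this reduces to injectivity, and the standard description of $Y_X(\tau)$ as the quotient of $Y(\tau)$ by the congruence ``$y \sim y'$ iff $f_\tau(y) = f_\tau(y')$'' makes the induced map $Y_X(\tau) \to X(\tau)$ injective by construction.

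Next, fix a cone $\rho : \Delta_{[k]}(\tau) \Rightarrow \theta$ in $D$. By naturality of $m$ and the universal property of products (Remark \ref{rem:products}), the square
\[
\xymatrix{
Y_X(\tau) \ar[r]^-{Y_X[\rho]} \ar[d]_{m_\tau} & \prod_{i \in [k]} Y_X(\theta(i)) \ar[d]^{\prod_i m_{\theta(i)}} \\
X(\tau) \ar[r]_-{X[\rho]} & \prod_{i \in [k]} X(\theta(i))
}
\]
commutes. The right-hand vertical arrow is a product of monomorphisms, hence itself a monomorphism (products preserve monos in any category with products). The bottom arrow $X[\rho]$ is a monomorphism because $X$ is a $\mathcal{W}^{\mathrm{mon}}$-pedigrad. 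Hence the composite $X[\rho] \circ m_\tau = \bigl(\prod_i m_{\theta(i)}\bigr) \circ Y_X[\rho]$ is a monomorphism, and the standard cancellation property for monomorphisms (if $g \circ f$ is monic, then so is $f$) forces $Y_X[\rho]$ to be a monomorphism, which is precisely the statement that $Y_X$ is a $\mathcal{W}^{\mathrm{mon}}$-pedigrad for $(\Omega,D)$.

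The main obstacle will be the first paragraph: verifying that the comparison arrow $m$ is levelwise monic. This is essentially the observation that $\mathbf{Icm}$ admits regular epi/mono factorizations, a standard fact about varieties of algebras but one that is not isolated as a proposition in the text. I would handle it by hand using Proposition \ref{prop:characterization_B2_mono} as indicated above, computing the coequalizer explicitly via the quotient construction of section \ref{ssec:Coequalizers_of_ic_monoids}; the remainder of the argument is then a short diagram chase.
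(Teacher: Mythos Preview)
Your proof is correct and follows the same architecture as the paper's: factor $f$ through the coequalizer as $f = m \circ q$, show each $m_\tau$ is a monomorphism, and then read off the monicity of $Y_X[\rho]$ from the naturality square via the cancellation property. The only difference is tactical, in how the monicity of $m_\tau$ is established. The paper runs an explicit $B_2$-lifting argument: given $a,b:B_2 \rightrightarrows Y_X(\tau)$ with $m_\tau \circ a = m_\tau \circ b$, it lifts $a,b$ along the surjective coequalizer $q_\tau$ (Remark \ref{rem:characterization_epi_coequalizer_maps}), lands the lifts in the kernel pair $R(\tau)$, and concludes $a=b$. You instead identify $Y_X(\tau)$ directly with $Y(\tau)$ modulo the relation $y \sim y' \Leftrightarrow f_\tau(y)=f_\tau(y')$, which makes injectivity of $m_\tau$ immediate; this is legitimate because, in the notation of Definition \ref{def:quotient_icm}, one has $R(u_{1\tau},u_{2\tau}) = \ker f_\tau$ (the inclusion $\supseteq$ is the kernel-pair property, and $\subseteq$ follows by taking $h=f_\tau$, which itself coequalizes $(u_{1\tau},u_{2\tau})$). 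Your route is slightly more conceptual (it is essentially the regular-epi/mono factorization in a variety), while the paper's stays entirely within the lemmas it has already isolated; either is fine.
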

\begin{proof}
Since the morphism $f: Y \Rightarrow X$ coequalizes the pair $(u_1,u_2)$, there exists a unique morphism $f':Y_f \Rightarrow X$ in $[\mathbf{Seg}(\Omega),\mathbf{Icm}]$ for which the equation $f = f' \circ q$ holds. Our goal is to show that, for every cone $\rho:\Delta_A(\tau) \Rightarrow \theta$ in $D$, the limit adjoint $Y_f(\tau) \to  \mathsf{lim}_{A}Y_f \circ \theta$ is a monomorphism. To do so, notice that the natural transformation $f':Y_f \Rightarrow X$ gives us the following commutative diagram.
\[
\xymatrix@C+10pt{
Y_f(\tau) \ar[r]^{f'_{\tau}}\ar[d] &X(\tau) \ar[d]\\
\mathsf{lim}_{A}Y_f \circ \theta \ar[r]_{\mathsf{lim}_Af'_{\theta}}& \mathsf{lim}_{A}X \circ \theta
}
\]
Since the arrow $X(\tau) \to  \mathsf{lim}_{A}X \circ \theta$ is a monomorphism by assumption on $X$ (\emph{i.e.} it is a $\mathcal{W}^{\mathrm{mon}}$-pedigrad) and since the arrow $f':Y_f \Rightarrow X$ is a pointwise monomorphism (by Proposition \ref{prop:pullback:coequalizer:mono}) and limits preserve monomorphisms, the universal property of monomorphisms (section \ref{ssec:Reminder_monomorphisms_epimorphisms}) implies that the arrow $Y_f(\tau) \to  \mathsf{lim}_{A}Y_f \circ \theta$ must be a monomorphism.
\end{proof}

\begin{corollary}\label{cor:D_ET_is_a_mon_pedigrad}
Let $(E,\varepsilon)$ be a pointed set, let $(\Omega,D)$ be a recombination chromology and let $b$ be an element in $\Omega$. Suppose that either the triple $(\Omega,D,FE_{b}^{\varepsilon})$ or the triple $(\Omega,D,D_ET)$ defines a $D'$-context for some subchromology $(\Omega,D') \subseteq (\Omega,D)$. For every sequence alignment $(\iota,T,\sigma)$ over $\mathbf{2}E_{b}^{\varepsilon}$, the functor $D_ET$ (see Remark \ref{rem:D_ET_preciting_phenotypes}) is a $\mathcal{W}^{\mathrm{mon}}$-pedigrad for $(\Omega,D')$.
\end{corollary}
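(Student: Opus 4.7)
The plan is to derive the statement by chaining together Theorem \ref{theo:representable_pedigrad_E_b_varepsilon} and Theorem \ref{theo:morphism_to_mon_pedigrad} through the construction of $D_ET$ recalled in Remark \ref{rem:D_ET_preciting_phenotypes}. The first theorem will supply a $\mathcal{W}^{\mathrm{mon}}$-pedigrad into which $F\mathsf{Lan}_{\iota}T$ maps via $\mathsf{rec}$, and the second will transport this property down to the coequalizer $D_ET$.

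First, I would use the hypothesis that $(\Omega,D,E_b^{\varepsilon})$ is a recombination scheme to show that the functor $DFE_b^{\varepsilon}:\mathbf{Seg}(\Omega)\to \mathbf{Icm}$ is a $\mathcal{W}^{\mathrm{mon}}$-pedigrad for $(\Omega,D)$. Since the recombination congruences are defined in $\mathbf{Icm}$ and are insensitive to whether one starts from $E_b^{\varepsilon}$ or from its image $FE_b^{\varepsilon}$ under the free adjunction (\ref{adjunction_set_icm}) -- the singleton subsets generating $FE_b^{\varepsilon}$ detect the same coequalizing behavior on cones as the underlying sets do -- the triple $(\Omega, D, FE_b^{\varepsilon})$ inherits the recombination scheme structure. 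Applying Theorem \ref{theo:representable_pedigrad_E_b_varepsilon} to this triple then yields, for every cone $\rho:\Delta_{[k]}(\tau) \Rightarrow \theta$ in $D$, that the canonical arrow $DFE_b^{\varepsilon}(\tau) \to \prod_{i \in [k]} DFE_b^{\varepsilon}(\theta(i))$ is a monomorphism in $\mathbf{Icm}$, which is exactly the defining property of a $\mathcal{W}^{\mathrm{mon}}$-pedigrad.

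Next, I would observe that $D_ET$ is constructed in Remark \ref{rem:D_ET_preciting_phenotypes} as the coequalizer in $[\mathbf{Seg}(\Omega),\mathbf{Icm}]$ of the pullback pair $y_1,y_2:R_D^T \rightrightarrows F\mathsf{Lan}_{\iota}T$ of the morphism $\mathsf{rec}:F\mathsf{Lan}_{\iota}T \Rightarrow DFE_b^{\varepsilon}$ along itself. This is precisely the situation addressed by Theorem \ref{theo:morphism_to_mon_pedigrad}, taking $Y := F\mathsf{Lan}_{\iota}T$, $X := DFE_b^{\varepsilon}$ and $f := \mathsf{rec}$. Since $X$ has just been shown to be a $\mathcal{W}^{\mathrm{mon}}$-pedigrad, the theorem produces directly that the coequalizer -- namely $D_ET$ -- is a $\mathcal{W}^{\mathrm{mon}}$-pedigrad for $(\Omega,D)$, which concludes the argument.

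The proof is essentially a formal deduction, so I expect no substantial combinatorial obstacle. The only delicate point will be the identification between the hypothesis of the corollary, stated using $E_b^{\varepsilon}:\mathbf{Seg}(\Omega)\to\mathbf{Set}$, and the input of Theorem \ref{theo:representable_pedigrad_E_b_varepsilon}, which is phrased for $\mathbf{Icm}$-valued functors; as noted above, this transfer is granted by the fact that the unit of adjunction (\ref{adjunction_set_icm}) preserves the coequalizing data that governs irreducibility in Definition \ref{def:irreducible_object}, so that irreducibility of the objects $\theta(i)$ for $(\Omega,D,E_b^{\varepsilon})$ automatically carries over to irreducibility for $(\Omega,D,FE_b^{\varepsilon})$.
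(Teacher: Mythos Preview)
Your proposal is correct and follows essentially the same route as the paper's proof: invoke Theorem \ref{theo:representable_pedigrad_E_b_varepsilon} to see that $DFE_b^{\varepsilon}$ is a $\mathcal{W}^{\mathrm{mon}}$-pedigrad, then apply Theorem \ref{theo:morphism_to_mon_pedigrad} to the morphism $\mathsf{rec}:F\mathsf{Lan}_{\iota}T \Rightarrow DFE_b^{\varepsilon}$ and its coequalizer $D_ET$ from Remark \ref{rem:D_ET_preciting_phenotypes}. You additionally flag the type discrepancy between the $\mathbf{Set}$-valued functor $E_b^{\varepsilon}$ in the hypothesis and the $\mathbf{Icm}$-valued input required by Theorem \ref{theo:representable_pedigrad_E_b_varepsilon}; the paper's proof does not comment on this and applies the theorem as if the recombination-scheme hypothesis were directly for $FE_b^{\varepsilon}$, so your extra remark is a reasonable clarification rather than a divergence in strategy.
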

\begin{proof}
This proof takes different approaches depending on whether the triple $(\Omega,D,FE_{b}^{\varepsilon})$ or the triple $(\Omega,D,D_ET)$ defines a $D'$-context. Nevertheless, all our approaches rely on Theorem \ref{theo:representable_pedigrad_E_b_varepsilon}.

Suppose that the triple $(\Omega,D,FE_{b}^{\varepsilon})$ defines a $D'$-context. First, recall (from Remark \ref{rem:D_ET_preciting_phenotypes}) that the object $D_ET$ is the coequalizer of the pullback pair resulting from the pullback of the morphism $\mathsf{rec}:F\mathsf{lan}_{\iota}T \Rightarrow DFE_{b}^{\varepsilon}$ along itself. Second, it follows from Theorem \ref{theo:representable_pedigrad_E_b_varepsilon} that the codomain of this morphism $\mathsf{rec}$ is a $\mathcal{W}^{\mathrm{mon}}$-pedigrad for $(\Omega,D')$. These two facts, coupled with Theorem \ref{theo:morphism_to_mon_pedigrad}, imply that $D_ET$ is a $\mathcal{W}^{\mathrm{mon}}$-pedigrad for $(\Omega,D')$.

On the other hand, if the triple $(\Omega,D,D_ET)$ defines a $D'$-context, then the statement follows from Proposition \ref{prop:pedigrad:fix_points_for_D} and Theorem \ref{theo:representable_pedigrad_E_b_varepsilon}. Indeed, these results imply that the functor $D_ET$ is isomorphic to the functor $D(D_ET)$, which is also a $\mathcal{W}^{\mathrm{mon}}$-pedigrad for $(\Omega,D')$.
\end{proof}

\begin{remark}[Pedigrads and encodings]\label{rem:local-property:free-ic-monoids:alignments}
This example discusses the feedback and learning dynamics that can result from using context structures and pedigrad structures together. The point being made here is that pedigrads and contexts are to work together to form a calculus framework. Importantly, we will emphasize the fact that, although contexts give rise to pedigrad functors (see Theorem \ref{theo:morphism_to_mon_pedigrad}), a lack of context structure may not necessarily imply a lack of pedigrad structure. This subtle difference puts pedigrad at center of our analytical framework to study haplotypes.

Let $(E,\varepsilon)$ be a pointed set, let $(\Omega,D)$ and $(\Omega,D')$ be two recombination chromologies such that $(\Omega,D') \subseteq (\Omega,D)$ and let $b$ be an element in $\Omega$ for which the triple $(\Omega,D,FE_{b}^{\varepsilon})$ defines a $D'$-context. For every sequence alignment $(\iota,T,\sigma)$ over $\mathbf{2}E_{b}^{\varepsilon}$ and cone $\rho:\Delta_{[k]}(\tau) \Rightarrow \theta$ in $D'$, note that the functor $D_ET$ is equipped, for every $i \in [k]$, with a canonical monomorphism
\begin{equation}\label{eq:local-property:free-ic-monoids:alignments:composed}
D_ET(\theta(i)) \hookrightarrow  B_2^{E_b^{\varepsilon}(\theta(i))}
\end{equation}
resulting from the composition of the three monomorphisms given in (\ref{eq:local-property:free-ic-monoids:alignments:total}), where
\begin{itemize}
\item[-] the rightmost monomorphism is the one described in Remark \ref{rem:D_ET_preciting_phenotypes};
\item[-] the middle monomorphism results from Proposition \ref{prop:irreducible_coequalizer_map_isomorphism} and Definition \ref{def:recombination-context} as the triple $(\Omega,D,FE_{b}^{\varepsilon})$ defines a $D'$-context;
\item[-] and the leftmost monomorphism is the one described in Example \ref{exa:monomorphism-encoding}.
\end{itemize}
\begin{equation}\label{eq:local-property:free-ic-monoids:alignments:total}
\xymatrix@C+35pt{
D_ET(\theta(i)) \ar[r]^-{\mathsf{sub}_{\theta(i)}} & DFE_b^{\varepsilon}(\theta(i)) \ar[r]^-{q^{-1}_{FE_b^{\varepsilon}(\theta(i))}} & FE_b^{\varepsilon}(\theta(i)) \ar[r]^-{\varphi_{E_b^{\varepsilon}(\theta(i))}} & B_2^{E_b^{\varepsilon}(\theta(i))}
}
\end{equation}
As suggested in Example \ref{exa:monomorphism-encoding}, monomorphism (\ref{eq:local-property:free-ic-monoids:alignments:composed}) essentially gives a way to encode the ic-monoids $D_ET(\theta(i))$ in a computer. It follows from Corollary \ref{cor:D_ET_is_a_mon_pedigrad} that we can use these encodings to give an encoding for the object $D_ET(\tau)$: specifically, the fact that $D_ET$ is a $\mathcal{W}^{\mathrm{mon}}$-pedigrad for $(\Omega,D')$ gives the following monomorphism.
\[
\textstyle D_ET[\rho]:\textstyle D_ET(\tau) \to \prod_{i \in [k]} D_ET(\theta(i))
\]
By using monomorphism  (\ref{eq:local-property:free-ic-monoids:alignments:composed}), we can construct the following monomorphism, which gives a way to encode the elements of the ic-monoid $D_ET(\tau)$.
\begin{equation}\label{eq:local-property:free-ic-monoids:alignments:1}
D_ET(\tau) \hookrightarrow \prod_{i \in [k]} D_ET(\theta(i)) \hookrightarrow  \prod_{i \in [k]} B_2^{E_b^{\varepsilon}(\theta(i))} \cong B_2^{E_b^{\varepsilon}(\theta(1))+E_b^{\varepsilon}(\theta(2))+\dots+E_b^{\varepsilon}(\theta(k))}
\end{equation}
This encoding morphism is essentially the ``extra structure'' that a $\mathcal{W}^{\mathrm{mon}}$-pedigrad is meant to provide. To better appreciate this detail, remember that the object $D_ET(\tau)$ is a quotient of the free ic-monoid
\begin{equation}\label{eq:local-property:free-ic-monoids:alignments:2}
F(\mathsf{Lan}_{\iota}F(\tau)) \cong B_2^{\mathsf{Lan}_{\iota}F(\tau)},
\end{equation}
and it would usually be a non-trivial exercise to determine whether such a quotient can be injected into a free ic-monoid. Hence, the functor structure of $D_ET$ allows us to reduce the ambiguous encoding associated with a quotient of the ic-monoid shown in (\ref{eq:local-property:free-ic-monoids:alignments:2}) to the more straightforward encoding shown in (\ref{eq:local-property:free-ic-monoids:alignments:1}).

This being said, the triple $(\Omega,D,FE_{b}^{\varepsilon})$ may not always define a $D'$-context for a given subchromology $(\Omega,D')$, and the main motivation for using our pedigrad framework is to determine whether we can find some sequence of recombination subchromologies (see below) that decomposes $D'$ to permit further analysis.
\[
(D^0,\Omega)  \subseteq (D^1 ,\Omega) \subseteq \dots \subseteq (D^n ,\Omega) = (D ,\Omega)
\]
Note that such a sequence would give us a functorial filtration in $[\mathbf{Seg}(\Omega),\mathbf{Icm}]$ as shown below (this sequence results from the universality of the construction shown in Remark \ref{rem:functorial_quotients}).
\[
D^0_ET \Rightarrow D^1_ET \Rightarrow \dots \Rightarrow D^n_ET = D_ET
\]
For every pair $(j,k)$ of integers $j \leq k \leq n$, it is then of interest to determine whether the triple $(\Omega,D^k,D^k_ET)$ is a $D^j$-context. Given that all the cones of the chromology $D^k$ are `forced' to interact between each other through the recombination congruences realized into equations within the functor $D_E^kT$, the fact that the triple $(\Omega,D^k,D^k_ET)$ is a $D^j$-context means that the cones of $D^k$ integrate the information contained in the cones of $D^j$ through the pedigrad property. To understand what this means (at a practical level) for the encoding morphisms shown in (\ref{eq:local-property:free-ic-monoids:alignments:1}), let us take a cone $\varrho:\Delta_{[q]}(\upsilon) \Rightarrow \vartheta$ in $D^j$ and let us consider the resulting diagram of encoding morphisms shown below.
\[
\xymatrix{
D^{0}_ET(\upsilon)\ar[r]\ar[d]^{D^0_ET[\rho]}&\dots \ar[r]&D^j_ET(\upsilon)\ar[r]\ar[d]_{D^j_ET[\rho]}&D^{j+1}_ET(\upsilon)\ar[r]\ar[d]_{D^{j+1}_ET[\rho]}&\dots \ar[r]&D^k_ET(\upsilon)\ar[d]_{D^k_ET[\rho]}\\
\prod_iD^{0}_ET(\vartheta(i))\ar[r]&\dots \ar[r]&\prod_iD^j_ET(\vartheta(i))\ar[r]&\prod_iD^{j+1}_ET(\vartheta(i))\ar[r]&\dots \ar[r]&\prod_iD^k_ET(\vartheta(i))\\
}
\]
When the triple $(\Omega,D^k,D^k_ET)$ defines a $D^j$-context, Proposition \ref{prop:irreducible_coequalizer_map_isomorphism} suggests that the horizontal arrows $D^l_ET(\vartheta(i)) \to D^{l+1}_ET(\vartheta(i))$ and $D^{l+1}_ET(\upsilon) \to D^l_ET(\upsilon)$ of the previous diagram are isomorphisms for every integer $k-1 \geq l \geq j$. In other words, the encodings provided by the cones of $D^j$ should not change when passing from the pedigrad $D^j_ET$ to the pedigrad $D^k_ET$. If the triple $(\Omega,D^k,D^k_ET)$ did not define a $D^j$-context, then the encoding morphisms $D^{l}_ET[\rho]$ could change through the filtration. Although the encodings may change, note that the pedigrad property can still hold. In this case, the encoding of the objects $\prod_iD^l_ET(\upsilon)$ would still be guaranteed to be compatible with those of the objects $D^l_ET(\vartheta(i))$ (see Example \ref{exa:pedigrads_and_encodings}).

In practice, a filtration such as the one presented above gives us a framework to compare encoding morphisms cone by cone and understand how much information is changed through the filtration. In practice, we can do this by looking for irreducible objects through property $(\ast)$ stated in Proposition \ref{prop:compare_cone_irreducible}.
\end{remark}

\begin{example}[Pedigrads and encodings]\label{exa:pedigrads_and_encodings}
The goal of this remark is to illustrate the discussion of Remark \ref{rem:local-property:free-ic-monoids:alignments}. Let $(\Omega,\preceq)$ be the Boolean pre-ordered set $\{0 \leq 1\}$ and let $(E,\varepsilon)$ be our usual pointed set $\{\mathtt{A},\mathtt{C},\mathtt{G},\mathtt{T},\varepsilon\}$. Take $\rho:\Delta_{[3]}(\tau) \Rightarrow \theta$ and $\rho':\Delta_{[3]}(\tau') \Rightarrow \theta'$ to be the two wide spans of homologous segments given in Example \ref{exa:Relative_definition_families}. Let us take $(\Omega,D)$ to be the recombination chromology that contains the cones $\rho$ and $\rho'$ (as shown below).
\[
D[n] =
\left\{
\begin{array}{ll}
\{\rho,\rho'\} &  \textrm{ if $n = 15$.}\\
\cmemptyset & \textrm{ if $n \neq 15$.}
\end{array}
\right.
\]
Because the topologies used to define the cones $\rho$ and $\rho'$ are not ``mappable'' into each other (\emph{i.e.} they cannot factorize one another), the two cones $\rho$ and $\rho'$ satisfy property $(\ast)$ of Proposition \ref{prop:compare_cone_irreducible} for $D$.
Let now $(\iota,T,\sigma)$ be the sequence alignment over $\mathbf{2}E_1^{\varepsilon}$ defined in Example \ref{exa:Sequence alignments}. As explained in Remark \ref{rem:recombination-context}, the two triples $(\Omega,D,FE_b^{\varepsilon})$ and $(\Omega,D,D_ET)$ define two $D$-contexts. By Proposition \ref{cor:D_ET_is_a_mon_pedigrad}, this means that the recombination monoid $D_ET$ is a $\mathcal{W}^{\mathrm{mon}}$-pedigrad for $(\Omega,D)$. Let us discuss the content of Remark \ref{rem:local-property:free-ic-monoids:alignments} from the point of view of the chromology $(\Omega,D)$. First, note that, although Remark \ref{rem:local-property:free-ic-monoids:alignments} gives the monomorphism shown in (\ref{eq:local-property:free-ic-monoids:alignments:composed}) for every cone $\varrho:\Delta_{[3]}(\upsilon) \Rightarrow \vartheta$ in $D$, we can in practice find a subset
\[
S_i(\varrho) \subseteq E_1^{\varepsilon}(\vartheta(i)) = \mathbf{Set}(\mathsf{Tr}_1(\vartheta(i)),E)
\]
and a monomorphism $D_ET(\vartheta(i)) \to B_2^{S_i(\varrho)}$ through which monomorphism (\ref{eq:local-property:free-ic-monoids:alignments:composed}) factorizes.

In practice, the subset $S_i(\varrho)$ is the set of the images of the two canonical functions $T(\vartheta(i)) \to E_1^{\varepsilon}(\vartheta(i))$ resulting from composing the function $\sigma:T(\vartheta(i)) \to \mathbf{2}E_1^{\varepsilon}(\vartheta(i))$ with one of the Cartesian projections $\mathbf{2}E_1^{\varepsilon}(\vartheta(i)) \to E_1^{\varepsilon}(\vartheta(i))$. We can make this construction more explicit if we order the set $E$ according to the alphabetic order, where $\varepsilon$ is taken to be minimal (as shown below).
\[
\{\varepsilon<\mathtt{A}<\mathtt{C}<\mathtt{G}<\mathtt{T}\}
\]
In this case, the set $S_i(\varrho)$ inherits a natural lexicographic order induced by the previous ordered set. This lexicographic order then allows us to encode the elements of $D_ET(\vartheta(i))$ as tuples of elements in $B_2$ (see the tables below). To construct these tuples for the cone $\rho$, first decompose each pair of DNA sequences given in the table at the end of Example \ref{exa:Sequence alignments} according to the arrows of $\rho$, as was done in Example \ref{exa:genotype_haplotype_haplogroup}, and take each bracketed DNA segments to be the elements of the sets $S_i(\varrho)$ for the corresponding domains of $\rho$, as given below.
\[
\begin{array}{l}
S_1(\rho) =\{\mathtt{aAcAgT}<\mathtt{aAtAcC}<\mathtt{aAtCgC}<\mathtt{aTtAgT}<\mathtt{aTtCgT}<\mathtt{gAtCcC}<\mathtt{gTtCcC}\}\\
S_2(\rho) =\{\mathtt{cAaGtg}<\mathtt{cAaTgc}<\mathtt{cAtTgc}<\mathtt{tAtGtc}<\mathtt{tAtTtc}<\mathtt{tGaGtg}<\mathtt{tGaTgc}\}\\
S_3(\rho) =\{\mathtt{aaT}<\mathtt{acT}<\mathtt{atT}<\mathtt{ctC}\}\\
\end{array}
\]
This being done, we can now turn the table shown at the end Example \ref{exa:Sequence alignments} into a table of $S_i(\rho)$-indexed tuples with coefficients in $B_2$ for every $i \in [3]$. Specifically, the table given below represents mappings of the generating elements of $D_ET(\tau)$ to these tuples through the morphism represented in (\ref{eq:local-property:free-ic-monoids:alignments:1}). Note that because each individual, in the sequence alignment $T$, possesses at most two alleles for each segment $\theta(i)$, the corresponding $S_i(\rho)$-indexed tuple contains at most two symbols $\mathtt{1}$.
\[
\begin{array}{|rrr|}
\hline
\multicolumn{3}{|c|}{\cellcolor[gray]{0.8}D_ET(\tau)  \to B_2^{S_1(\rho)}|B_2^{S_2(\rho)}|B_2^{S_3(\rho)}}\\
\hline
\mathtt{p}_{1} \mapsto
\mathtt{0010010}|\mathtt{1000100}|\mathtt{1100}
&\mathtt{p}_{5} \mapsto
\mathtt{1000010}|\mathtt{0011000}|\mathtt{1010}
&\mathtt{p}_{9} \mapsto
\mathtt{0001100}|\mathtt{1000001}|\mathtt{1010}\\
\hline
\mathtt{p}_{2} \mapsto
\mathtt{0100001}|\mathtt{0010010}|\mathtt{0011}
&\mathtt{p}_{6} \mapsto
\mathtt{0000011}|\mathtt{0000011}|\mathtt{1001}
&\mathtt{p}_{10} \mapsto
\mathtt{0000110}|\mathtt{0100001}|\mathtt{0101}\\
\hline
\mathtt{p}_{3} \mapsto
\mathtt{0110000}|\mathtt{1000010}|\mathtt{1010}
&\mathtt{p}_{7} \mapsto
\mathtt{0000011}|\mathtt{0011000}|\mathtt{1010}
&\mathtt{p}_{11} \mapsto
\mathtt{1001000}|\mathtt{1100000}|\mathtt{1010}\\
\hline
\mathtt{p}_{4} \mapsto
\mathtt{0000011}|\mathtt{0010100}|\mathtt{0101}
&\mathtt{p}_{8} \mapsto
\mathtt{1000010}|\mathtt{0000011}|\mathtt{1001}
&\mathtt{p}_{12} \mapsto
\mathtt{0001010}|\mathtt{1000000}|\mathtt{0110}\\
\hline
\end{array}
\]
The encoding provided in the previous table gives an easy way to assess whether certain relationships exist between the elements of $D_ET(\tau)$. For instance, by simply looking at the tuples indexed by the set $S_2(\rho)$ (\emph{i.e.} the middle sequence of $\mathtt{0}$s and $\mathtt{1}$s), we can directly see that the image of the element $\mathtt{p}_{8}$ cannot satisfy any kinship relation with any of the images of the elements $\mathtt{p}_{1}$, $\mathtt{p}_{4}$, $\mathtt{p}_{5}$, $\mathtt{p}_{7}$, $\mathtt{p}_{9}$, $\mathtt{p}_{10}$, $\mathtt{p}_{11}$, and $\mathtt{p}_{12}$. Indeed, the middle sequence associated with the image of $\mathtt{p}_{8}$ (namely, $\mathtt{0000011}$) possesses the value $\mathtt{1}$ at its sixth component, but this is not the case for the other images listed previously. If the product adjoint
\[
D_ET[\rho]:D_ET(\tau) \to D_ET(\theta(1)) \times D_ET(\theta(2)) \times D_ET(\theta(3))
\]
turns out to be a monomorphism, then we can also assert that these relationships hold for the elements of the ic-monoid $D_ET(\tau)$ (as opposed to only being satisfied for the tuples in $B_2$).

Of course, we can proceed similarly for the other cone $\rho'$. By using the table given in Example \ref{exa:Sequence alignments}, we determine that we have the following ordered sets for the cone $\rho'$.
\[
\begin{array}{l}
S_1(\rho') =S_1(\rho)\\
S_2(\rho') =\{\mathtt{cAa}<\mathtt{cAt}<\mathtt{tAt}<\mathtt{tGa}\}\\
S_3(\rho') =\left\{\begin{array}{c}\mathtt{GtcaaT}<\mathtt{GtgaaT}<\mathtt{GtgacT}<\mathtt{GtgatT}<\mathtt{TgcatT}\\<\mathtt{TgcacT}<\mathtt{TgcctC}<\mathtt{TtcaaT}<\mathtt{TtcacT}\end{array}\right\}\\
\end{array}
\]
Then we can turn the table shown in Example \ref{exa:Sequence alignments} into a table of $S_i(\rho')$-indexed tuples with coefficients in $B_2$ for every $i \in [3]$. Since the sets $S_1(\rho')$ and $S_1(\rho)$ are equal, we only show the tuples indexed by the sets $S_2(\rho')$ and $S_3(\rho')$ in the table below.
\[
\begin{array}{|rrr|}
\hline
\multicolumn{3}{|c|}{\cellcolor[gray]{0.8}D_ET(\tau')  \to B_2^{S_2(\rho')}|B_2^{S_3(\rho')}}\\
\hline
\mathtt{p}_{1} \mapsto \mathtt{1010}|\mathtt{001000010}
&\mathtt{p}_{5} \mapsto \mathtt{0110}|\mathtt{100010000}
&\mathtt{p}_{9} \mapsto \mathtt{1001}|\mathtt{010010000}\\
\hline
\mathtt{p}_{2} \mapsto \mathtt{0101}|\mathtt{000100100}
&\mathtt{p}_{6} \mapsto \mathtt{0001}|\mathtt{010000100}
&\mathtt{p}_{10} \mapsto \mathtt{1001}|\mathtt{000001100}\\
\hline
\mathtt{p}_{3} \mapsto \mathtt{1001}|\mathtt{010100000}
&\mathtt{p}_{7} \mapsto \mathtt{0110}|\mathtt{100010000}
&\mathtt{p}_{11} \mapsto \mathtt{1000}|\mathtt{010010000}\\
\hline
\mathtt{p}_{4} \mapsto \mathtt{0110}|\mathtt{000000101}
&\mathtt{p}_{8} \mapsto \mathtt{0001}|\mathtt{010000100}
&\mathtt{p}_{12} \mapsto \mathtt{1000}|\mathtt{001100000}\\
\hline
\end{array}
\]
This time we can see that that the image of the element $\mathtt{p}_{8}$ can be begotten by the images of the elements $\mathtt{p}_{10}$ and $\mathtt{p}_{11}$. If we could subsequently show that the product adjoint $D_ET[\rho']$ is a monomorphism, then such a relationship would also hold in $D_ET(\tau')$.

In other words, one usually want to use tables as constructed above to find equations linking the elements living in the images of the functor $D_ET$. These equations would first be deduced at the level of the haplotypes and then transferred to the haplogroups by using the universal property of $\mathcal{W}^{\mathrm{mon}}$-pedigrads.

We finish this example by addressing such transfer questions (inherent in pedigrad structures) through a discussion of the different scenarios described in Remark \ref{rem:local-property:free-ic-monoids:alignments} (see more below). To make our discussion relevant to the scenarios described thereof, we will add a new cone $\rho^{\prime\prime}$ to our current example. In this respect, let us consider a cone $\rho^{\prime\prime}:\Delta_{[4]}(\tau^{\prime\prime}) \Rightarrow \theta^{\prime\prime}$ in $\mathbf{Seg}(\{0,1\}\,|\,15)$ as follows.
\[
\begin{array}{ll}
\rho_{1}^{\prime \prime}:\xymatrix@C-30pt@R-20pt{
(\bullet&\bullet&\bullet)&(\bullet&\bullet&\bullet)&(\bullet&\bullet&\bullet)&(\bullet&\bullet&\bullet&\bullet&\bullet&\bullet)\ar[rr]
&\quad\quad\quad&
(\bullet&\bullet&\bullet)&(\circ&\circ&\circ)&(\circ&\circ&\circ)&(\circ&\circ&\circ&\circ&\circ&\circ)
}&\theta^{\prime\prime}(1)\\
\rho_{2}^{\prime \prime}:\xymatrix@C-30pt@R-20pt{
(\bullet&\bullet&\bullet)&(\bullet&\bullet&\bullet)&(\bullet&\bullet&\bullet)&(\bullet&\bullet&\bullet&\bullet&\bullet&\bullet)\ar[rr]
&\quad\quad\quad&
(\circ&\circ&\circ)&(\bullet&\bullet&\bullet)&(\circ&\circ&\circ)&(\circ&\circ&\circ&\circ&\circ&\circ)
}&\theta^{\prime\prime}(2)\\
\rho_{3}^{\prime \prime}:\xymatrix@C-30pt@R-20pt{
(\bullet&\bullet&\bullet)&(\bullet&\bullet&\bullet)&(\bullet&\bullet&\bullet)&(\bullet&\bullet&\bullet&\bullet&\bullet&\bullet)\ar[rr]
&\quad\quad\quad&
(\circ&\circ&\circ)&(\circ&\circ&\circ)&(\bullet&\bullet&\bullet)&(\circ&\circ&\circ&\circ&\circ&\circ)
}&\theta^{\prime\prime}(3)\\
\rho_{4}^{\prime \prime}:\xymatrix@C-30pt@R-20pt{
(\bullet&\bullet&\bullet)&(\bullet&\bullet&\bullet)&(\bullet&\bullet&\bullet)&(\bullet&\bullet&\bullet&\bullet&\bullet&\bullet)\ar[rr]
&\quad\quad\quad&
(\circ&\circ&\circ)&(\circ&\circ&\circ)&(\circ&\circ&\circ)&(\bullet&\bullet&\bullet&\bullet&\bullet&\bullet)
}&\theta^{\prime\prime}(4)
\end{array}
\]
To understand the effect of adding the cone $\rho^{\prime\prime}$ to the chromology $D$, we want to proceed carefully and consider the two recombination chromologies $D^1$ and $D^2$ defined below.
\[
D^1[n] =
\left\{
\begin{array}{ll}
\{\rho,\rho^{\prime\prime}\} &  \textrm{ if $n = 15$.}\\
\cmemptyset & \textrm{ if $n \neq 15$.}
\end{array}
\right.
\quad\quad\quad
D^2[n] =
\left\{
\begin{array}{ll}
\{\rho,\rho',\rho^{\prime\prime}\} &  \textrm{ if $n = 15$.}\\
\cmemptyset & \textrm{ if $n \neq 15$.}
\end{array}
\right.
\]
From now on, our goal will be to determine
\begin{itemize}
\item[(1)] whether the functor $D_E^2T$ defines a $\mathcal{W}^{\mathrm{mon}}$-pedigrad for $(\Omega,D^1)$ or $(\Omega,D^2)$ through the use Corollary \ref{cor:D_ET_is_a_mon_pedigrad} or Theorem \ref{theo:morphism_to_mon_pedigrad}, and
\item[(2)] whether these pedigrads are endowed with unambiguous encodings (\emph{i.e.} free presentations for their images).
\end{itemize}
If for now we restrict ourselves to only using Corollary \ref{cor:D_ET_is_a_mon_pedigrad}, then we need to determine whether the triples $(\Omega,D^2,FE_1^{\varepsilon})$ and $(\Omega,D^2,D^2_ET)$ defines $D^1$- and $D^2$-contexts.

Our first strategy will be to use Proposition \ref{prop:compare_cone_irreducible}. In this respect, observe that the topology of $\rho^{\prime\prime}$ can be mapped to the topology of $\rho'$ (see the cone $\rho$ displayed in Example \ref{exa:Relative_definition_families}). However, the topology of $\rho^{\prime\prime}$ cannot be mapped to the topology of $\rho$ and the topologies of $\rho$ and $\rho'$ cannot be mapped to that of $\rho^{\prime\prime}$. Recall that we also had previously established that the topologies of $\rho$ and $\rho'$ were not mappable into each other. As a result, there is no morphism of the form $\tau \to \theta^{\prime\prime}(i)$, $\tau' \to \theta^{\prime\prime}(i)$, $\tau' \to \theta(i)$, and $\tau^{\prime\prime} \to \theta(i)$. Proposition \ref{prop:compare_cone_irreducible} then implies that the triples $(\Omega,D^2,FE_1^{\varepsilon})$ and $(\Omega,D^2,D^2_ET)$ are $D^1$-contexts.

Our next step is to determine whether the triple $(\Omega,D^2,FE_1^{\varepsilon})$ or $(\Omega,D^2,D^2_ET)$ defines a $D^2$-context. First, note that we have arrows $\theta'(2) \to \theta^{\prime\prime}(3)$ and $\theta'(3) \to \theta^{\prime\prime}(4)$, which by Proposition \ref{prop:compare_cone_irreducible} show that the segments $\theta'(2)$ and $\theta'(3)$ are irreducible for the two triples $(\Omega,D^2,FE_1^{\varepsilon})$ and $(\Omega,D^2,D^2_ET)$. Unfortunately, we cannot use Proposition \ref{prop:compare_cone_irreducible} on the object $\theta'(1)$. To determine whether this object is irreducible, we could directly use Definition \ref{def:irreducible_object} by determining whether the morphism $FE_1^{\varepsilon}(\tau^{\prime\prime}) \to FE_1^{\varepsilon}(\theta'(1))$ coequalizes the recombination congruence $G(FE_1^{\varepsilon},\rho^{\prime\prime}) \rightrightarrows FE_1^{\varepsilon}(\tau^{\prime\prime})$, but this fails as the functor $FE_1^{\varepsilon}$ is endowed with too much freeness. Indeed, the morphism
\[
q_{FE_b^{\varepsilon}(\theta'(1))}:FE_b^{\varepsilon}(\theta'(1)) \to D^2FE_b^{\varepsilon}(\theta'(1))
\]
is not an isomorphism, which means that the encoding method constructed in (\ref{eq:local-property:free-ic-monoids:alignments:total}) does not hold. Needless to say that the alternative option, which is to show that the morphism $D_E^2T(\tau^{\prime\prime}) \to D_E^2T(\theta'(1))$ coequalizes the recombination congruence $G(D_E^2T,\rho^{\prime\prime}) \rightrightarrows D_E^2T(\tau^{\prime\prime})$, is a cumbersome exercise (since the functor $D_E^2T$ results from applying various universal constructions). In other words, we are facing a situation where the pedigrad property does come along with straightforward encodings.

Yet, we can still try to show that $D^2_ET$ is a $\mathcal{W}^{\mathrm{mon}}$-pedigrad for $(\Omega,D^2)$ -- at least this would help us clarify the encoding of the pedigrad to some extent. For this, we shall use Theorem \ref{theo:morphism_to_mon_pedigrad}. Specifically, we can show that the functor $D^2FE_1^{\varepsilon}$ is a $\mathcal{W}^{\mathrm{mon}}$-pedigrad for $(\Omega,D^2)$, which will imply that so is $D_E^2T$. In this respect, let us show that $(\Omega,D^2,D^2FE_1^{\varepsilon})$ is a $D^2$-context\footnote{Note that this is not the same thing as showing that the triple $(\Omega,D^2,D^2_ET)$ is a $D^2$-context.}. First, recall from our previous discussion that the triple $(\Omega,D^2,D^2FE_1^{\varepsilon})$ is a $D^1$-context and that the segments $\theta'(2)$ and $\theta'(3)$ are irreducible for the triple $(\Omega,D^2,D^2FE_1^{\varepsilon})$.
It now suffices to show that $\theta'(1)$ is irreducible for the triple $(\Omega,D^2,D^2FE_1^{\varepsilon})$ by showing that the morphism $D^2FE_1^{\varepsilon}(\tau^{\prime\prime}) \to D^2FE_1^{\varepsilon}(\theta'(1))$ coequalizes the recombination congruence $G(D^2FE_1^{\varepsilon},\rho^{\prime\prime}) \rightrightarrows D^2FE_1^{\varepsilon}(\tau^{\prime\prime})$.

By Remark \ref{rem:functorial_quotients}, the object $D^2FE_1^{\varepsilon}(\theta'(1))$ is a coequalizer of the form shown in (\ref{eq:coequalizer_mod_pedigrad}), and we can use our previous discussion to show that this coequalizer is indexed by only two pairs (which results in diagram (\ref{eq:pedigrads:coequalizer:two_cones}) below), namely the pairs $(\rho,f)$ and $(\rho^{\prime\prime},g)$, where $f$ and $g$ are the unique morphisms $f:\tau\to \theta'(1)$ and $g:\tau^{\prime\prime}\to \theta'(1)$ in $\mathbf{Seg}(\Omega\,|\,15)$.
\begin{equation}\label{eq:pedigrads:coequalizer:two_cones}
G(FE_1^{\varepsilon},\rho^{\prime\prime}) \coprod G(FE_1^{\varepsilon},\rho) \rightrightarrows FE_1^{\varepsilon}(\tau^{\prime\prime}) \coprod FE_1^{\varepsilon}(\tau) \mathop{\longrightarrow}\limits^{f\oplus g} FE_1^{\varepsilon}(\theta'(1))
\end{equation}
Since the arrow $f:\tau\to \theta'(1)$ belongs to $\mathbf{Seg}(\Omega\,|\,15)$, it follows from Lemma \ref{lem:quasi_homologous_preordered_category} that the arrow $f$ is equal to the arrow $\rho_i:\tau\to \theta'(1)$, whose image via $FE_1^{\varepsilon}$ equalizes the recombination congruence $G(FE_1^{\varepsilon},\rho) \rightrightarrows FE_1^{\varepsilon}(\tau)$ (see Remark \ref{rem:Coequalizing_arrows}). In essence, this means that $D^2FE_1^{\varepsilon}(\theta'(1))$ is the coequalizer of the pair shown below.
\[
G(FE_1^{\varepsilon},\rho^{\prime\prime}) \rightrightarrows FE_1^{\varepsilon}(\tau^{\prime\prime}) \to FE_1^{\varepsilon}(\theta'(1))
\]
We can apply a similar reasoning to show that the object $D^2FE_1^{\varepsilon}(\tau^{\prime\prime})$ is also the coequalizer of the pair $G(FE_1^{\varepsilon},\rho^{\prime\prime}) \rightrightarrows FE_1^{\varepsilon}(\tau^{\prime\prime})$. Finally, it follows from diagrammatic calculations using the universality of coequalizers that the morphism $D^2FE_1^{\varepsilon}(\tau^{\prime\prime}) \to D^2FE_1^{\varepsilon}(\theta'(1))$ coequalizes the recombination congruence $G(D^2FE_1^{\varepsilon},\rho^{\prime\prime}) \rightrightarrows D^2FE_1^{\varepsilon}(\tau^{\prime\prime})$. This shows that  the object $\theta'(1)$ is irreducible for the triple $(\Omega,D^2,D^2FE_1^{\varepsilon})$, which, by our previous discussion, implies that $D_E^2T$ is a $\mathcal{W}^{\mathrm{mon}}$-pedigrad for $(\Omega,D^2)$.

Note that our conclusion about the object $\theta'(1)$ could have been different if the number of cones indexing coequalizer (\ref{eq:pedigrads:coequalizer:two_cones}) had been more important. In such a situation, we would have needed to determine the set of equations holding in the ic-monoid $D^2FE_1^{\varepsilon}(\theta'(1))$. To address such scenarios, the goal of section \ref{sec:solving_our_problem} is to present an algorithm for that purpose.
\end{example}

\section{Solving our problem and finding causal variants}\label{sec:solving_our_problem}
From a practical standpoint, the goal of the present section is to show how the formalism developed in previous sections can be used as a computational framework to discover kinship relations between sets of individuals. In addition, we will explain how this formalism can be used to learn more information about genotype-phenotype associations.
From a theoretical standpoint, the goal of this section is to develop an explicit algorithm to solve the word problem in recombination schemes (see section \ref{ssec:Recombination_schemes_and_pedigrads} above). This effort should be put in perspective of the fact that, even though the word problem is decidable for commutative semigroups, finding a general explicit algorithm for it is practically untractable (see \cite{Mayr}). To some extent, our results also cover the resolution of the word problem in well-studied cases such as in one-relation monoids \cite{Nyberg-Brodda-1} and special monoids \cite{Nyberg-Brodda-2}. Overall, the theorems provided in this section represent an effort toward tackling the word problem in general ic-monoids with an explicit algorithm.

In this respect, we shall develop a set of linear-algebra-inspired techniques to solve systems of linear equations in ic-monoids, or in fact, construct what one could see as free resolutions for ic-monoids. To define systems of linear equations in ic-monoids, we shall first introduce, in section \ref{ssec:Number_systems_for_icmonoids}, a formal substraction operation for ic-monoids. Note that this construction is different from generating a group structure, which cannot be done on ic-monoids without generating the zero group. Instead, we shall embded a given ic-monoid $M$ into its product $M \times M$ such that the formal pairings $(x,y)$ represent terms of the form $x-y$. Since these pairings will not be subject to any quotient, we will see them more as elements augmenting the ic-monoid $M$ in much the same way as rational numbers of the form $x/y = (x,y)$ augments natural numbers. To better control the algebraic properties of these ``fractions'', we will encode them as polynomials -- this will be done in section \ref{ssec:idempotent_commutative_semirings} and section \ref{ssec:formal_series-polynomials}. The resulting polynomials will be indexed by the elements of the ic-monoid and will take their values in a semiring of the form $\{0,\uparrow,\downarrow,\updownarrow\}$ -- for instance the polynomial $\uparrow \!\cdot \,X^x + \downarrow \!\cdot \, X^y$ will represent the fraction $x/y = (x,y)$. In section \ref{ssec:modulo-tensors} and section \ref{ssec:Multiplicative-atomic-structures}, we define an equivalence relation on polynomials to see them more as tensored elements of the form $\uparrow \! \otimes \, x + \downarrow \! \otimes \,y$, and hence introduce linearity properties on the exponent of the polynomials (see Theorem \ref{theo:tensor-congruence:semiring-compatiblity}). Finally, from section \ref{ssec:action-semirings} to section \ref{ssec:Calculus_and_algorithm}, we develop linear algebra techniques to find sets $I$ and $J$ of indices for which equations of the form $\sum_{i\in I} \! \uparrow \!\otimes\, x_i + \sum_{j \in J} \!\downarrow \!\otimes\, xj= \updownarrow \! \otimes \,z$ hold, meaning that the equation $\sum_{i\in I}  x_i = \sum_{j \in J}  x_j$ holds in $M$.

\subsection{Rationals for idempotent commutative monoids}\label{ssec:Number_systems_for_icmonoids}
The goal of this section is to introduce a formal subtraction operation for ic-monoids. For practical reasons, this formal subtraction operation is better developed as a multiplicative operation, as opposed to using an actual subtraction notation. In section \ref{ssec:idempotent_commutative_semirings}, we will encode these formal substractions in terms of polynomials in a semiring structure.

\begin{definition}[Rationals]
We shall denote by $\mathbb{Q}$ the functor $\mathbf{Icm} \to \mathbf{Icm}$ mapping every object $M$ in $\mathbf{Icm}$ to the Cartesian product $M \times M$ in $\mathbf{Icm}$ and sending a morphism $M \to N$ in $\mathbf{Icm}$ to the canonical morphism $M \times M \to N \times N$ associated with the mapping rule $(x,y) \mapsto (f(x),f(y))$. We will call the functor $\mathbb{Q}$ the \emph{rational endofunctor} and, for every ic-monoid $(M,+,0)$, we will call the image $\mathbb{Q}(M)$ the \emph{ic-monoid of symbolic rationals of $M$}. Every element in $\mathbb{Q}(M)$ will be called a \emph{symbolic rational}.
\end{definition}

\begin{convention}[Rationals]\label{conv:rationals:ic-monoids}
Let $(M,+,0)$ be an object in $\mathbf{Icm}$. We shall refer to every element $(x,y) \in \mathbb{Q}(M)$ by the following notations.
\[
\left[x/y\right]
\quad\quad\quad\quad\quad\quad
\left[\frac{x}{y}\right]
\]
We will also denote the monoid operation associated with $\mathbb{Q}(M)$ as $\star$ (implicitly seen as a multiplication) and we will denote the neutral element $\left[0/0\right]$ of $\mathbb{Q}(M)$ as $\mathbbm{1}_{M}$. The expressions shown below give a series of equalities resulting from these notations for every pair $(x,y) \in \mathbb{Q}(M)$ and pair $(x',y') \in \mathbb{Q}(M)$.
\[
\left[\frac{x}{y}\right]\star \left[\frac{x'}{y'}\right] = \left[\frac{x+x'}{y+y'}\right] = \left[\frac{x'}{y}\right]\star\left[\frac{x}{y'}\right] = \left[\frac{x+x'}{0}\right]\star\left[\frac{0}{y+y'}\right]
\]
For every element $x \in M$, we will also use the following notations.
\[
\frac{\mathbf{1}}{x} = \left[\frac{0}{x}\right] = \mathbf{1}/x
\quad\quad\quad\quad\quad\quad\quad\quad\quad
\mathbf{0}(x) = \left[\frac{x}{x}\right]
\]
\end{convention}

\begin{definition}[Rationals]\label{def:rationals:ic-monoids}
Let $(M,+,0)$ be an object in $\mathbf{Icm}$. It is straightforward to verify that the mapping rule $x \mapsto [x/0] $ induces a monomorphism
\[
\mathbf{1}:M \to \mathbb{Q}(M)
\]
in $\mathbf{Icm}$. It is also straightforward to check that the morphism $M \to \mathbb{Q}(M)$ is natural in $M$, which means that for every morphism $f:M \to N$ in $\mathbf{Icm}$, the following diagram commutes.
\[
\xymatrix{
M \ar[r]^-{\mathbf{1}}\ar[d]_{f}& \mathbb{Q}(M)\ar[d]^{\mathbb{Q}(f)}\\
N \ar[r]_-{\mathbf{1}}& \mathbb{Q}(N)
}
\]
From now on, we will use this monomorphism to see every element $x \in M$ as an element of $\mathbb{Q}(M)$ through the monomorphism $\mathbf{1}:M \to \mathbb{Q}(M)$. In other words, for every $x \in M$, we shall identify (or simply denote) the element $\mathbf{1}(x)$ in $\mathbb{Q}(M)$ as $x$. Specifically, this means that we have the following Cayley table for the monoid operation of $\mathbb{Q}(M)$ for every element $x \in M$.
\[
\begin{array}{c|c|c|c|c}
\cellcolor[gray]{0.7}\star&\cellcolor[gray]{0.8}\mathbbm{1}_{M}&\cellcolor[gray]{0.8}x&\cellcolor[gray]{0.8}\mathbf{1}/x&\cellcolor[gray]{0.8}\mathbf{0}(x)\\
\hline
\cellcolor[gray]{0.8}\mathbbm{1}_{M}&\mathbbm{1}_{M}&x&\mathbf{1}/x&\mathbf{0}(y)\\
\hline
\cellcolor[gray]{0.8}x&x&x&\mathbf{0}(x)&\mathbf{0}(x)\\
\hline
\cellcolor[gray]{0.8}\mathbf{1}/x&\mathbf{1}/x&\mathbf{0}(x)&\mathbf{1}/x&\mathbf{0}(x)\\
\hline
\cellcolor[gray]{0.8}\mathbf{0}(y)&\mathbf{0}(y)&\mathbf{0}(x)&\mathbf{0}(x)&\mathbf{0}(x)\\
\end{array}
\]
\end{definition}

\begin{convention}[Symmetry]\label{conv:symmetry}
Let $(M,+,0)$ be an object in $\mathbf{Icm}$. There is a symmetry isomorphism $\gamma:\mathbb{Q}(M) \to \mathbb{Q}(M)$ in $\mathbf{Icm}$, which maps a pair $(x,y) \in \mathbb{Q}(M)$ to the pair $\gamma(x,y) = (y,x) \in \mathbb{Q}(M)$ and makes diagram (\ref{eq:rationals:symmetry:diagram}) commute, where the symbols $\mathbf{0}$ and $\cdot/\mathbf{1}$ denote the obvious morphisms associated with the mapping rules $x \mapsto \mathbf{0}(x)$ and $x \mapsto \mathbf{1}/x$, respectively.
\begin{equation}\label{eq:rationals:symmetry:diagram}
\xymatrix@C+20pt{
M \ar[r]^-{\mathbf{0}}\ar[rd]_-{\mathbf{0}}&  \mathbb{Q}(M) \ar[d]^{\gamma}&M \ar[l]_-{\mathbf{1}}\ar[ld]^-{\cdot/\mathbf{1}}\\
&\mathbb{Q}(M)& \\
}
\end{equation}
For every pair $(x,y) \in \mathbb{Q}(M)$, we will use the notation $\left[x/y\right]^{-1}$ to refer to the symbolic rational $\left[y/x\right]$. As a result, for every element $z = [x/y] \in \mathbb{Q}(M)$, we have the series of equations $z\star z^{-1} = \left[(x+y)/(y+x)\right] = \mathbf{0}(x+y)$. In addition, for every element $z = [x/y] \in \mathbb{Q}(M)$, we will denote the sum $x+y$ as $\mathsf{sum}(z)$ (see Example \ref{exa:Addition_as_morphism}). This means that for every element $z \in \mathbb{Q}(M)$, we have the formula $z\star z^{-1} = \mathbf{0}(\mathsf{sum}(z))$.
\end{convention}

\subsection{Idempotent commutative semirings}\label{ssec:idempotent_commutative_semirings}
Recall that a \emph{semiring} \cite{Golan} is a set $R$ equipped with two binary operations $\star_1:R \times R \to R$ and $\star_2:R \times R \to R$ and two particular elements $e_1 \in R$ and $e_2 \in R$ such that the triple $(R,\star_1,e_1)$ defines a commutative monoid, the triple $(R,\star_2,e_2)$ defines a monoid and the following axioms are satisfied:
\begin{itemize}
\item[3)] (Distributivity) for every $x,y,z \in R$, the equation $x \star_2 (y \star_1 z) =  (x \star_2 y) \star_1 (x \star_2 z)$ holds;
\item[4)] (Distributivity) for every $x,y,z \in R$, the equation $(x \star_1 y) \star_2 z =  (x \star_2 z) \star_1 (y \star_2 z)$ holds;
\item[5)] (Annihilation) for every $x \in M$, the equations $x \star_2 e_1 = e_1 = e_1 \star_2 x$ hold.
\end{itemize}
Below, we give the axioms defining idempotent and commutative semirings.

\begin{definition}[Idempotent]
A semiring $(R,\star_1,\star_2,e_1,e_2)$ is said to be \emph{idempotent} if the commutative monoid $(R,\star_1,e_1)$ is idempotent (\emph{i.e.} an object of $\mathbf{Icm}$).
\end{definition}

\begin{definition}[Commutative]
A semiring $(R,\star_1,\star_2,e_1,e_2)$ is said to be \emph{commutative} if the monoid $(R,\star_2,e_2)$ is commutative.
\end{definition}

\begin{convention}[Notation]
We will usually follow conventions used in the literature and denote an abstract idempotent commutative semiring as $(R,+_R,\cdot_R,0_R,1_R)$, or sometimes as $(R,+,\cdot,0,1)$ for conciseness. The binary operation $+_R$ will be called the \emph{addition}, the binary operation $\cdot_R$ will be called the \emph{multiplication}, the element $0_R$ will be called the \emph{zero element} and the element $1_R$ will be called the \emph{unit element}.
\end{convention}

\begin{convention}[Naming]
For convenience, we will shorten the name \emph{idempotent commutative semiring} to \emph{ic-semiring}.
\end{convention}

\begin{example}\label{exa:semi-group:Z3Z}
The set $\overline{\mathbb{Z}}_3 =\{0,\uparrow,\downarrow,\updownarrow\}$ is equipped with an ic-semiring structure $(\overline{\mathbb{Z}}_3,+,\cdot,0,\uparrow)$ whose addition and multiplication are described (and defined) by the left and right Cayley tables of (\ref{eq:semi-group:Z3Z:tables}), respectively.
\begin{equation}\label{eq:semi-group:Z3Z:tables}
\begin{array}{c|c|c|c|c}
\cellcolor[gray]{0.7}+&\cellcolor[gray]{0.8}0&\cellcolor[gray]{0.8}\uparrow&\cellcolor[gray]{0.8}\downarrow&\cellcolor[gray]{0.8}\updownarrow\\
\hline
\cellcolor[gray]{0.8}0&0&\uparrow&\downarrow&\updownarrow\\
\hline
\cellcolor[gray]{0.8}\uparrow&\uparrow&\uparrow&\updownarrow&\updownarrow\\
\hline
\cellcolor[gray]{0.8}\downarrow&\downarrow&\updownarrow&\downarrow&\updownarrow\\
\hline
\cellcolor[gray]{0.8}\updownarrow&\updownarrow&\updownarrow&\updownarrow&\updownarrow\\
\end{array}
\quad\quad\quad\quad\quad\quad\quad\quad\quad\quad
\begin{array}{c|c|c|c|c}
\cellcolor[gray]{0.7}\cdot&\cellcolor[gray]{0.8}0&\cellcolor[gray]{0.8}\uparrow&\cellcolor[gray]{0.8}\downarrow&\cellcolor[gray]{0.8}\updownarrow\\
\hline
\cellcolor[gray]{0.8}0&0&0&0&0\\
\hline
\cellcolor[gray]{0.8}\uparrow&0&\uparrow&\downarrow&\updownarrow\\
\hline
\cellcolor[gray]{0.8}\downarrow&0&\downarrow&\uparrow&\updownarrow\\
\hline
\cellcolor[gray]{0.8}\updownarrow&0&\updownarrow&\updownarrow&\updownarrow\\
\end{array}
\end{equation}
It may be helpful to think of the elements $0$, $\uparrow$ and $\downarrow$ as the integers $0$, $+1$ and $-1$ for the multiplicative operation on the right. For example, we have the equations $\downarrow \cdot \downarrow = \uparrow$ and $\uparrow \cdot \uparrow = \uparrow$. Similarly, it may be helpful to think of the addition operation as a graphical operation that merges symbols together, where $0$ represents the empty symbol.

To show that $(\overline{\mathbb{Z}}_3,+,\cdot,0,\uparrow)$ defines a semiring, we need to find a combinatorial presentation for $\overline{\mathbb{Z}}_3$ in which the addition and multiplication trivially satisfy the axioms for semiring. Specifically, we can encode $\overline{\mathbb{Z}}_3$ as the set $\{0,1\} \times \{0,1\}$ and take the representatives $0 = (0,0)$, $\uparrow = (1,0)$, $\downarrow = (0,1)$ and $\uparrow = (1,1)$. If we see the element $0$ and $1$ as Boolean values, then we can verify that the addition and multiplication of $\overline{\mathbb{Z}}_3$ satisfy the following formulas for every $x = (x_0,x_1) \in \overline{\mathbb{Z}}_3$ and every $y = (y_0,y_1) \in \overline{\mathbb{Z}}_3$.
\[
\def\arraystretch{1.2}
\left\{
\begin{array}{ll}
x + y &= (x_0 \,\mathsf{or}\, y_0, x_1 \,\mathsf{or}\, y_1) \\
x \cdot y &= \big((x_0 \,\mathsf{and}\, y_0) \,\mathsf{or} \,(x_1 \,\mathsf{and} \,y_1), (x_1\, \mathsf{and} \,y_0) \,\mathsf{or} \,(x_0 \,\mathsf{and} \,y_1)\big)
\end{array}
\right.
\]
It follows from these formulas that the tuple $(\overline{\mathbb{Z}}_3,+,\cdot,0,\uparrow)$ defines an ic-semiring. The details are left to the reader as an exercise\footnote{Note that the formulas for $x+y$ and $x \cdot y$ are very similar to the formulas for the addition and the multiplication in the polynomial ring $\mathbb{Z}[X]/(X^2-1)$ if we replace $\mathsf{or}$ and $\mathsf{and}$ with the addition and multiplication for integers.}.
\end{example}

\begin{definition}[Action]\label{def:action:rationals}
Let $(M,+,0)$ be an object in $\mathbf{Icm}$. We will denote by $\odot$ the function $\overline{\mathbb{Z}}_3 \times \mathbb{Q}(M) \to \mathbb{Q}(M)$ defined by the mapping rule $(f,x) \mapsto f \odot x$, where $f \odot x$ satisfies the following equations.
\[
f \odot x =
\left\{
\begin{array}{ll}
\mathbbm{1}_{M}&\textrm{if } f = 0\\
x&\textrm{if } f = \uparrow\\
x^{-1}&\textrm{if } f = \downarrow\\
x\star x^{-1}&\textrm{if } f = \updownarrow\\
\end{array}
\right.
\]
Note that for every $f \in \overline{\mathbb{Z}}_3$, the equation $f \odot \mathbbm{1}_{M} = \mathbbm{1}_{M}$ holds. By Convention \ref{conv:symmetry}, we also have the equation $\updownarrow \!\odot\, x = \mathbf{0}(\mathsf{sum}(x))$. See Remark \ref{rem:semi-direct-product} for more discussion regarding the properties satisfied by the operation $\odot$.
\end{definition}

\begin{remark}[Properties]\label{rem:semi-direct-product}
Let $(M,+,0)$ be an object in $\mathbf{Icm}$. By using the idempotency property of $M$, it is straightforward to verify that the following relations hold for every $(x,y) \in \mathbb{Q}(M)$ and pair $(f,g) \in \overline{\mathbb{Z}}_3 \times \overline{\mathbb{Z}}_3$.
\[
(g\cdot f) \odot x =  g \odot (f \odot x) \quad\quad\quad\quad\quad f \odot (x\star y) = (f \odot x)\star (f \odot y)
\]
However, it is less straightforward to show that the following equation holds.
\[
(g + f) \odot x =  (g \odot x)\star (f \odot x)
\]
The proof for this equation is shown by the following equality of Cayley tables, in which we display the values for $g$ anf $f$ in the horizontal and vertical axes, respectively (note that, we use the idempotency property of $M$ to compute the values in the tables).
\[
\begin{array}{c}
\begin{array}{c|c|c|c|c}
\cellcolor[gray]{0.7}(g+f) \odot x &\cellcolor[gray]{0.8}0&\cellcolor[gray]{0.8}\uparrow&\cellcolor[gray]{0.8}\downarrow&\cellcolor[gray]{0.8}\updownarrow\\
\hline
\cellcolor[gray]{0.8}0&\mathbbm{1}_{M}&x&x^{-1}&x\star x^{-1}\\
\hline
\cellcolor[gray]{0.8}\uparrow&x&x&x\star x^{-1}&x\star x^{-1}\\
\hline
\cellcolor[gray]{0.8}\downarrow&x^{-1}&x\star x^{-1}&x^{-1}&x\star x^{-1}\\
\hline
\cellcolor[gray]{0.8}\updownarrow&x\star x^{-1}&x\star x^{-1}&x\star x^{-1}&x\star x^{-1}\\
\end{array}\\
\\
\rotatebox[origin=c]{90}{=}\\
\\
\begin{array}{c|c|c|c|c}
\cellcolor[gray]{0.7}(g\odot x)\star (f\odot x)  &\cellcolor[gray]{0.8}0&\cellcolor[gray]{0.8}\uparrow&\cellcolor[gray]{0.8}\downarrow&\cellcolor[gray]{0.8}\updownarrow\\
\hline
\cellcolor[gray]{0.8}0&\mathbbm{1}_{M}&x&x^{-1}&x\star x^{-1}\\
\hline
\cellcolor[gray]{0.8}\uparrow&x&x&x\star x^{-1}&x\star x^{-1}\\
\hline
\cellcolor[gray]{0.8}\downarrow&x^{-1}&x\star x^{-1}&x^{-1}&x\star x^{-1}\\
\hline
\cellcolor[gray]{0.8}\updownarrow&x\star x^{-1}&x\star x^{-1}&x\star x^{-1}&x\star x^{-1}\\
\end{array}
\end{array}
\]
In section \ref{ssec:action-semirings}, we will formalize the properties of the operation $\odot$ under a concept called an \emph{action of an ic-semiring on an ic-monoid}.
\end{remark}

\subsection{Formal series and polynomials}\label{ssec:formal_series-polynomials}
In this section, we define series and polynomials taking their coefficients in semirings and whose exponents run over the elements of ic-monoids. Much of the content produced in this section is inspired from the usual treatment of poylnomials in rings. In this respect, for every ic-semiring $(R,+_R,\cdot_R,0_R,1_R)$ and every ic-monoid $(M,+_M,0_M)$, we shall denote by $R[[M]]$ the product $\prod_{m \in M} R$ in $\mathbf{Icm}$ and call it the \emph{ic-monoid of formal series in $R$ over $M$}. For every tuple $r=(r_m)_m$ in $R[[M]]$, every morphism $f:M \to N$ in $\mathbf{Icm}$, and every element $n \in N$, we define the following element in $R$.
\[
r[f]_n := \textstyle \sum_{m \in f^{-1}(n)} r_m
\]
We denote by $r[f]$ the tuple $(r[f]_n)_{n \in N}$ in $R[[N]]$ and by $R[[f]]$ the morphism $R[[M]] \to R[[N]]$ in $\mathbf{Icm}$ resulting from the mapping rule $r \mapsto r[f]$. It follows from usual properties on fibers of functions that the mapping rule $f \mapsto R[[f]]$ induces a functor $\mathbf{Icm} \to \mathbf{Icm}$.

\begin{definition}[Support]\label{def:support}
Let $(R,+_R,\cdot_R,0_R,1_R)$ be an ic-semiring and $(M,+_M,0_M)$ be an ic-monoid. For every tuple $r = (r_m)_{m} \in R[[M]]$, we define the \emph{support} of $r$ as the following set.
\[
\mathsf{Sup}(r):= \{m \in M~|~r_m \neq 0\}
\]
\end{definition}

\begin{proposition}[Sums to zero]\label{prop:sum_to_zero}
Let $(M,+,0)$ be an ic-monoid. For every pair $(x,y)$ of elements in $M$, the identity $x+y = 0$ holds if, and only if, the identities $x = 0$ and $y = 0$ hold.
\end{proposition}
\begin{proof}
If the equations $x = 0$ and $y = 0$ holds, then so does the equation $x + y = 0$. To show the converse, observe that if the identity $x+y = 0$ holds, then the idempotency property in $M$ implies that we have $x = x + (x+y) = x + y = 0$. Similarly, we deduce that $y = 0$.
\end{proof}

\begin{proposition}\label{prop:support:sum}
Let $(R,+_R,\cdot_R,0_R,1_R)$ be an ic-semiring and $(M,+_M,0_M)$ be an ic-monoid. For every pair $(r,r') \in R[[M]] \times R[[M]]$, the equation $\mathsf{Sup}(r+r') = \mathsf{Sup}(r) \cup \mathsf{Sup}(r')$ holds.
\end{proposition}
\begin{proof}
By the contrapose of Proposition \ref{prop:sum_to_zero}, for every pair $(r_1,r_2)$ of elements in $R$, the following equivalence holds: the inequality $r_1 \neq 0_R$ or $r_2 \neq 0_R$ holds if, and only if, the inequality $r_1+r_2 \neq 0_R$ holds. Since the addition of $R[[M]]$ is componentwise, this implies that, for every pair $(r,r')$ of elements in $R[[M]]$, the identity $\mathsf{Sup}(r+r') = \mathsf{Sup}(r) \cup \mathsf{Sup}(r')$ must hold.
\end{proof}

\begin{convention}[Polynomials]\label{conv:polynomials:def}
For every ic-semiring $(R,+_R,\cdot_R,0_R,1_R)$ and every ic-monoid $(M,+_M,0_M)$, we will denote by $R[M]$ the subset of $R[[M]]$ containing the tuples $r$ for which the set $\mathsf{Sup}(r)$ is finite.
It follows from Proposition \ref{prop:support:sum} that if two elements $r$ and $r'$ belong to $R[M]$, then so does their sum $r+r'$. Since we also have $\mathsf{Sup}(0) = \cmemptyset$ for the particular element $0 \in R[[M]]$, we deduce that the subset $R[M]$ defines a submonoid of the ic-monoid $R[[M]]$, and is hence an object of $\mathbf{Icm}$.
\end{convention}

\begin{remark}[Polynomials]\label{rem:monomials}
Let $(R,+_R,\cdot_R,0_R,1_R)$ be an ic-semiring and $(M,+_M,0_M)$ be an ic-monoid. For every element $d \in M$, if we let $X^d$ denote the element $(r_{m})_{m \in M}$ of $R[M]$ such that $r_m = 0_R$ if $m \neq d$ and $r_m = 1_R$ if $m = d$, then we can show that each element of $R[M]$ is a finite sum of elements of the form $X^d$. Specifically, for every element $r = (r_{m})_{m \in M} \in R[M]$, we can express the element $r$ as the following finite sum.
\[
r = \textstyle \sum_{m \in \mathsf{Sup}(r)} r_m X^m
\]
Conversely, for every finite subset $S \subseteq M$ and function $a:S \to R$, the sum $\sum_{m \in S} a(m) X^m$ defines an element of $R[M]$ whose support is included in $S$.
\end{remark}

\begin{proposition}\label{prop:polynomials:semiring:functorial}
Let $(R,+_R,\cdot_R,0_R,1_R)$ be an ic-semiring. The mapping $M \mapsto R[M]$, defined for every ic-monoid $M$, induces a functor $R[\,\cdot\,]:\mathbf{Icm}\to \mathbf{Icm}$ whose images on the morphisms are the restrictions of the images of the functor $R[[\,\cdot\,]]:\mathbf{Icm} \to \mathbf{Icm}$.
\end{proposition}
\begin{proof}
For every morphism $f:M \to N$ in $\mathbf{Icm}$, the functor $R[[\,\cdot\,]]:\mathbf{Icm}^{\mathrm{op}} \to \mathbf{Icm}$ sends the morphism $f$ to the morphism $R[[f]]:R[[M]] \to R[[N]]$ that maps a tuple $r$ to the tuple $r[f]$. Let us show that if $r \in R[M]$, then $r[f] \in R[N]$. In this respect, suppose that $r \in R[M]$ and let us show that the set $\mathsf{Sup}(r[f])$. To do so, let us define the following subset of $N$.
\[
S = \{n \in N~|~f^{-1}(n) \cap \mathsf{Sup}(r)\}
\]
Since the fibers of the function $f:M \to N$ form a partition (\emph{i.e} a disjoint union) of $M$ and since $\mathsf{Sup}(r)$ is finite, the set $S$ must be finite. According to the definition of $r[f]$, the inequality $r[f]_n \neq 0$ implies that there exists $m \in f^{-1}(n)$ such that $r_m \neq 0$. Hence, we have the inclusion $\mathsf{Sup}(r[f]) \subseteq S$ and since $S$ is finite, the element $r[f]$ belongs to $R[M]$. This shows that the restriction of the morphism $R[[f]]$ on $R[N]$ lands in $R[M]$. If we denote this restriction as $R[f]:R[N] \to R[M]$, then the functoriality of $R[[\,\cdot\,]]$ implies that the mapping $f \mapsto R[f]$ is functorial.
\end{proof}

\begin{remark}[Finiteness]
For every ic-semiring $(R,+_R,\cdot_R,0_R,1_R)$ and ic-monoid $(M,+_M,0_M)$, the ic-monoid $R[[M]]$ is equal to the ic-monoid $R[M]$ whenever $M$ is finite.
\end{remark}

\begin{definition}[Coefficients]\label{def:product-coefficients}
Let $(R,+_R,\cdot_R,0_R,1_R)$ be an ic-semiring and $(M,+_M,0_M)$ be an ic-monoid. For every pair $(r,r') \in R[[M]] \times R[[M]]$, we denote by $\mathsf{Sup}(r,r')$ the set
\[
\{m+_M m' ~|~m \in \mathsf{Sup}(r), m' \in \mathsf{Sup}(r')\}
\]
and for every $d \in \mathsf{Sup}(r,r')$, we define the \emph{$d$-th product coefficient of $(r,r')$} as the following triple sum.
\[
r \boxdot_d r' :=  \sum_{d_1 \in \mathsf{Sup}(r)} \sum_{d_2 \in \mathsf{Sup}(r')}\sum_{d_1+_M d_2 = d} r_{d_1} \cdot_R r'_{d_2}
\]
\end{definition}

\begin{remark}[Supports]
Let $(R,+_R,\cdot_R,0_R,1_R)$ be an ic-semiring and $(M,+_M,0_M)$ be an ic-monoid. For every triple $(r,s,t)$ of elements in $R[[M]]$, Proposition \ref{prop:support:sum} implies that the following equations hold.
\[
\mathsf{Sup}(r,s+t) = \{m+_M m' ~|~m \in \mathsf{Sup}(r), m' \in \mathsf{Sup}(s) \cup \mathsf{Sup}(t)\} = \mathsf{Sup}(r,s) \cup \mathsf{Sup}(r,t)
\]
Since the operation $+_M$ is commutative, we also have the equation $\mathsf{Sup}(r,s) = \mathsf{Sup}(s,r)$.
\end{remark}

\begin{proposition}\label{prop:polynomials:semiring}
Let $(R,+_R,\cdot_R,0_R,1_R)$ be an ic-semiring and $(M,+_M,0_M)$ be an ic-monoid. The ic-monoid $R[M]$ can be equipped with an ic-semiring structure whose unit is given by $X^{0_M}$ and whose multiplication, say given by a map $\boxdot:R[M]\times R[M] \to R[M]$, is defined by the following equation for every pair $(r,r') \in  R[M]\times R[M]$.
\[
r \boxdot r' :=  \sum_{d \in \mathsf{Sup}(r,r')} (r \boxdot_{d} r') X^{d}
\]
\end{proposition}
\begin{proof}
This is a usual result on polynomials with coefficients in ring-like structures. Hence, we will gloss over certain details. First, we know that $R[M]$ is an ic-monoid. It follows from Definition \ref{def:product-coefficients} that each product coefficient define a commutative operation, namely we have $r \boxdot_d r' = r' \boxdot_d r$ for every $d \in \mathsf{Sup}(r,r')$. This implies that the multiplication $\boxdot$ is also commutative. Let us show that $X^{0_M}$ defines a unit element for $R[M]$. First, because we have the equation $\mathsf{Sup}(X^{0_M}) = \{0_M\}$, Definition \ref{def:product-coefficients} implies that we have the equation $\mathsf{Sup}(r,X^{0_M}) = \mathsf{Sup}(r)$. In other words, we have the equation $r \boxdot_{d} X^{0_M} = r_{d}$ for every $d \in \mathsf{Sup}(r)$ and hence the equation $r \boxdot X^{0_M} = r$. Let us now show the associativity of the operation $\boxdot$. Specifically, the fact that the operation $\boxdot$ is associative follows from the following equations.
\begin{align*}
r \boxdot_d (s \boxdot t) &= \sum_{d_1 \in \mathsf{Sup}(r)} \sum_{d_2 \in \mathsf{Sup}(s \boxdot t)}\sum_{d_1+ d_2 = d} r_{d_1} \cdot (s \boxdot_{d_2} t)\\
&= \sum_{d_1 \in \mathsf{Sup}(r)} \sum_{d_2 \in \mathsf{Sup}(s , t)}\sum_{d_1+ d_2 = d} r_{d_1} \cdot \Big(\sum_{e_1 \in \mathsf{Sup}(s)} \sum_{e_2 \in \mathsf{Sup}(t)}\sum_{e_1+ e_2 = d_2} s_{e_1} \cdot t_{e_2}\Big)\\
&= \sum_{d_1 \in \mathsf{Sup}(r)} \sum_{e_1 \in \mathsf{Sup}(s)} \sum_{e_2 \in \mathsf{Sup}(t)} \sum_{d_1+ e_1+ e_2 = d} r_{d_1} \cdot s_{e_1} \cdot t_{e_2}\\
&= \sum_{e_2 \in \mathsf{Sup}(t)} \sum_{e_3 \in \mathsf{Sup}(r,s)}\sum_{e_2+ e_3 = d} t_{e_2} \cdot \Big(\sum_{d_1 \in \mathsf{Sup}(r)} \sum_{e_1 \in \mathsf{Sup}(s)}\sum_{d_1+ e_1 = e_3} r_{d_1} \cdot s_{e_1}\Big)\\
& = (r \boxdot s) \boxdot_d t
\end{align*}
Because the multiplication of $R$ is distributive and the addition of $R[M]$ is defined componentwise, we can use the formula shown in Definition \ref{def:product-coefficients} to show that, for every triple $(r,s,t)$ of elements in $R[M]$ and every element $d \in \mathsf{Sup}(r,s+t) = \mathsf{Sup}(r,s) \cup \mathsf{Sup}(r,t)$, the following equation holds
\[
r \boxdot_d (s+t) =  r \boxdot_d s+ r \boxdot_d t
\]
This implies that the operation $\boxdot$ is distributive. Finally, we can check that the equation $r \boxdot_d 0_R = 0_R$ holds for every $r \in R[M]$, because the element $0_R$ annihilates the other elements of $R$ with respect to the multiplication of $R$. This shows that $R[M]$ is an ic-semiring.
\end{proof}

\subsection{Tensor congruence}\label{ssec:modulo-tensors}
The present section aims to define a congruence (see Definition \ref{def:tensor-congruence}) on the ic-semiring structure of Proposition \ref{prop:polynomials:semiring}. We will define this construction as the pullback of a certain morphism of ic-monoids (Convention \ref{conv:tensor-congruence}) along itself. Importantly, we will use this congruence as a way to mimic properties that are characteristic of tensorial structures (see Example \ref{exa:modulo:formula:bar_s_R} and Remark \ref{rem:conclusion-tensor-congruence}).  The resulting tensor-like structure has a straightforward digital implementation through the semirings of polynomials introduced in section \ref{ssec:formal_series-polynomials}.

\begin{convention}[Atomic subsets]\label{conv:atomic-elements}
Let $(M,+,0)$ be an ic-monoid. A subset $A \subseteq M$ will be said to be \emph{atomic} in $M$ if it is non-empty, finite and for every pair $(x,y)$ of elements in $M$, the following statements are equivalent:
\begin{itemize}
\item[1)] the relation $x+y \in A$ holds;
\item[2)] one of the two relations $x \in A$ or $y \in A$ must hold.
\end{itemize}
We will denote by $\mathsf{Atom}(M)$ the set of atomic subsets of $M$.
\end{convention}

\begin{example}[Atomic subsets]\label{exa:atomic_subsets:Z3}
Let us show that the two subsets $\{\uparrow,\updownarrow\}$ and $\{\downarrow,\updownarrow\}$ of $\overline{\mathbb{Z}}_3$ are atomic subsets of the ic-monoid $(\overline{\mathbb{Z}}_3,+,0)$. Because the proofs of each statements is very similar, we will let $r_1$ and $r_2$ be the two distinct elements of the set $\{\uparrow, \downarrow\}$. We shall let $A$ denote the set $\{r_1,\updownarrow\}$ and show that $A$ is atomic. In this respect, let $(x,y)$ be a pair of elements in $\overline{\mathbb{Z}}_3$.
Suppose that the relation $x+y \in A$ holds. Since the equation $\overline{\mathbb{Z}}_3 = A \cup \{r_2,0\}$ holds, we have either $x \in A$ or $x \in \{r_2,0\}$. If $x \notin A$, then we have either $x = 0$ or $x = r_2$. If $x = 0$, then the relation $x+y \in A$ implies that the element $y$ must be in $A$. If $x = r_2$, then Example \ref{exa:semi-group:Z3Z} implies that the equation $y = r_1$ must hold so that we have the equation $r_1+r_2 =\, \updownarrow$. This shows that, in any case, one of the two relations $x \in A$ or $y \in A$ must hold.
Conversely, suppose that $x \in A$ or $y \in A$. If $x \in A$, then the following Cayley table shows that the relation $x+y \in A$ holds.
\[
\begin{array}{c|c|c|c|c}
\cellcolor[gray]{0.7} y =  &\cellcolor[gray]{0.8}0&\cellcolor[gray]{0.8}r_1&\cellcolor[gray]{0.8}r_2&\cellcolor[gray]{0.8}\updownarrow\\
\hline
\cellcolor[gray]{0.8}x+y \in  &A&A&\{\updownarrow\} \subsetneq A&\{\updownarrow\}\subsetneq A\\
\end{array}
\]
Symmetrically, this shows that if $y \in A$, then $x+y \in A$. This shows that the set $\mathsf{Atom}(\overline{\mathbb{Z}}_3)$ contains the two subsets $\{\uparrow,\updownarrow\}$ and $\{\downarrow,\updownarrow\}$ of $\overline{\mathbb{Z}}_3$.
\end{example}

\begin{convention}[Atomic subsets]
We shall denote the two atomic subsets $\{\uparrow,\updownarrow\}$ and $\{\downarrow,\updownarrow\}$ of $\overline{\mathbb{Z}}_3$ (see Example \ref{exa:atomic_subsets:Z3}) as $\Uparrow$ and $\Downarrow$, respectively.
\end{convention}

\begin{definition}[Coefficient sets]\label{def:coefficient-sets}
Let $(R,+_R,\cdot_R,0_R,1_R)$ be an ic-semiring and $(M,+_M,0_M)$ be an ic-monoid. For every atomic subset $A \subseteq R$ and every tuple $s = (s_m)_m \in R[M]$, we define the \emph{$A$-coefficient set of $s$} as the finite set $\chi_A(s):=\{m \in \mathsf{Sup}(s)~|~s_m \in A\}$.
\end{definition}

\begin{example}[Coefficient sets]\label{exa:coefficient-sets}
Let $(M,+_M,0_M)$ be an ic-monoid and consider the following polynomial in $\overline{\mathbb{Z}}_3[M]$ for a given tuple $(m_1,m_2,m_3,m_4,m_5)$ of elements in $M$.
\[
s = \uparrow\! X^{m_1} + \downarrow\!  X^{m_2} + \uparrow\!  X^{m_3} + \updownarrow\!  X^{m_4} + \updownarrow\!  X^{m_5}
\]
According to Example \ref{exa:atomic_subsets:Z3}, the two sets $\Uparrow = \{\uparrow,\updownarrow\}$ and $\Downarrow = \{\downarrow,\updownarrow\}$ are atomic subsets of $\overline{\mathbb{Z}}_3$. We shall use these sets to illustrate the concept defined in Definition \ref{def:coefficient-sets}. Specifically, we can compute the following coefficient sets for $s$.
\begin{align*}
\chi_{\Uparrow}(s) &=\{m \in \mathsf{Sup}(s)~|~s_{m} \in \Uparrow\}\\
&=\{m \in \{m_1,m_2,m_3,m_4,m_5\}~|~s_{m} \in \{\uparrow,\updownarrow\}\}\\
&=\{m_1,m_3,m_4,m_5\}\\
\chi_{\Downarrow}(s) &=\{m \in \mathsf{Sup}(s)~|~s_{m} \in \Downarrow\}\\
&=\{m \in \{m_1,m_2,m_3,m_4,m_5\}~|~s_{m} \in \{\downarrow,\updownarrow\}\}\\
&=\{m_2,m_4,m_5\}
\end{align*}
Note that the intersection $\chi_{\Uparrow}(s) \cap \chi_{\Downarrow}(s)$ is equal to the elements of $M$ that index the coefficient $\updownarrow$ in the expression of $s$. If we take $s' = \uparrow \!X^{m_1}+\uparrow \!X^{m_2}$, then we have the following equations $\chi_{\Uparrow}(s) = \{m_1,m_2\}$ and $\chi_{\Uparrow}(s) = \cmemptyset$ and the intersection $\chi_{\Uparrow}(s) \cap \chi_{\Downarrow}(s)$ is empty.
\end{example}

\begin{definition}[Kappa]\label{def:kappa}
Let $(R,+_R,\cdot_R,0_R,1_R)$ be an ic-semiring and $(M,+_M,0_M)$ be an ic-monoid. For every atomic subset $A \subseteq R$ and every tuple $s = (s_m)_m \in R[M]$, we denote by $\kappa_A(s)$ the finite sum $\sum_{m \in \chi_A(s)} m$ in $M$.
\end{definition}

\begin{example}[Kappa]\label{exa:kappa}
Let $(M,+_M,0_M)$ be an ic-monoid and consider the following polynomial in $\overline{\mathbb{Z}}_3[M]$ for a given tuple $(m_1,m_2,m_3,m_4,m_5)$ of elements in $M$.
\[
s = \uparrow\! X^{m_1} + \downarrow\!  X^{m_2} + \uparrow\!  X^{m_3} + \updownarrow\!  X^{m_4} + \updownarrow\!  X^{m_5}
\]
According to Example \ref{exa:coefficient-sets}, we have the following equalities.
\[
\chi_{\Uparrow}(s) = m_1+m_3+m_4+m_5
\quad\quad
\chi_{\Downarrow}(s) = m_2+m_4+m_5
\]
In we now take $s' = \uparrow \!X^{m_1}+\uparrow \!X^{m_2}$, then Example \ref{exa:coefficient-sets} implies that we have the following equations (where a sum over an empty set is zero).
\[
\chi_{\Uparrow}(s') = m_1+m_2
\quad\quad\quad\quad\quad\quad\quad\quad
\chi_{\Downarrow}(s') = 0
\]
In the sequel, we will mainly rely on the previous equations and those described in to Example \ref{exa:coefficient-sets} illustrate new concepts.
\end{example}

\begin{proposition}\label{prop:extend-atomic:large-sums}
Let $(M,+,0)$ be an ic-monoid. For every element $A \in \mathsf{Atom}(M)$ and every finite collection $(x_i)_{i \in [n]}$ of elements in $M$, the relation $\sum_{i =1}^n x_i \in A$ holds if, and only if, there exists $i \in [n]$ such that the relation $x_i \in A$ holds.
\end{proposition}
\begin{proof}
The statement follows from an induction on $n$. The case $n = 1$ is obvious and the case $n=2$ directly follows from Convention \ref{conv:atomic-elements}. The case for $n>2$ is shown as follows. Since $A \in \mathsf{Atom}(M)$, the relation $\sum_{i =1}^n x_i \in A$ holds if, and only if, we have $x_n \in A$ or $\sum_{i =1}^{n-1} x_i \in A$. If $x_n \notin A$, then $\sum_{i =1}^{n-1} x_i \in A$ and the induction shows that there exists $i \in [n-1]$ such that $x_i \in A$. Conversely, if there exists $i \in [n]$ such that the relation $x_i \in A$ holds, then Convention \ref{conv:atomic-elements} implies that the element $\sum_{i =1}^n x_i = x_i + \sum_{j \in [n] \backslash\{i\}} x_i$ is in $A$.
\end{proof}

\begin{definition}[Non-vanishing atomic element]
For every ic-monoid $(M,+,0)$ and every atomic subset $A$ of $M$, we will say that $A$ is \emph{non-vanishing} if $x \in A$ implies that $x \neq 0$.
\end{definition}

\begin{definition}[Atomic structures]\label{def:atomic-structure}
For every ic-monoid $(M,+,0)$, we define an \emph{atomic structure on $M$} as a pair $(\Omega,\rho)$ where
\begin{itemize}
\item[1)] $\Omega$ is a subset of $\mathsf{Atom}(M)$ whose elements are all non-vanishing atomic subsets of $M$;
\item[2)] $\rho$ is an injective function $\Omega \to M$
\end{itemize}
such that the following equivalence is satisfied for every element $A \in \Omega$.
\[
m \in A \cap \rho(\Omega) \quad\quad \Leftrightarrow \quad\quad \rho^{-1}(m)=\{A\}
\]
\end{definition}

\begin{proposition}[Atomic structures]\label{prop:atomic-structure:restriction}
Let $(M,+,0)$ be an ic-monoid and let $(\Omega,\rho)$ an atomic structure on $M$. For every subset $\Omega' \subseteq \Omega$, the following equivalence holds  for every element $A \in \Omega$.
\[
m \in A \cap \rho(\Omega') \quad\quad \Leftrightarrow \quad\quad \rho^{-1}(m)=\{A\}\textrm{ and }A \in \Omega'
\]
\end{proposition}
\begin{proof}
If we have $m \in A \cap \rho(\Omega')$, then we also have $m \in A \cap \rho(\Omega)$, and since $(\Omega,\rho)$ is an atomic structure on $M$, this means that $\rho^{-1}(m)=\{A\}$ and $A \in \Omega$ (Definition \ref{def:atomic-structure}). Because $m \in \rho(\Omega')$ and $\rho^{-1}(m)=\{A\}$, the element $A$ must be in $\Omega'$, which means that the two relations $\rho^{-1}(m)=\{A\}$ and $A \in \Omega'$ are satisfied. Conversely, if we have $\rho^{-1}(m)=\{A\}$ and $A \in \Omega'$, then we must have $m \in A \cap \rho(\Omega)$ because $\Omega'$ is a subset of $\Omega$. This shows that we have $m \in A$ and we can show that $m$ actually belongs to $\rho(\Omega')$ because $m = \rho(A)$ and $A \in \Omega'$.
\end{proof}

\begin{example}[Atomic structure]\label{exa:atomic-structure}
The subset $\Omega := \{\Uparrow,\Downarrow\}$ of $\mathsf{Atom}(\overline{\mathbb{Z}}_3)$ defines an atomic structure on $\overline{\mathbb{Z}}_3$ when we equip it with an injective function $\rho:\Omega \to \overline{\mathbb{Z}}_3$ that maps $\Uparrow$ to $\uparrow$ and maps $\Downarrow$ to $\downarrow$. In this case, the image of $\rho$ is as follows.
\[
\rho(\Omega) = \{\uparrow,\downarrow\}
\]
If an element $A \in \Omega$ intersect with $\rho(\Omega)$ such that there exists an element $r \in A \cap \rho(\Omega)$, then we can easily verify that the set $A$ is necessarily of the form $\{r,\updownarrow\}$ such that the equation $\rho^{-1}(r) = \{A\}$ is satisfied.
Conversely, if there exists an element $r \in \overline{\mathbb{Z}}_3$ such that $\rho^{-1}(r) = \{A\}$ for some element $A \in \Omega$, then we can verify that the relation $r \in A$ holds since we have:
\[
\rho^{-1}(r) = \left\{
\begin{array}{ll}
\{\{\uparrow,\updownarrow\}\}&\textrm{if }r = \, \uparrow\\
\{\{\downarrow,\updownarrow\}\}&\textrm{if }r = \, \downarrow.\\
\end{array}
\right.
\]
This shows that the equivalence $m \in A \cap \rho(\Omega) \Leftrightarrow \rho^{-1}(m)=\{A\}$ is satisfied.
\end{example}

The following proposition refers to the counit $\mu$ of the adjunction (\ref{adjunction_set_icm}), which maps a finite subset of an ic-monoid to the sum of its elements in the ic-monoid. If the finite subset is empty, then that sum is equal to the zero element.

\begin{proposition}\label{prop:kappa-morphism}
Let $(R,+_R,\cdot_R,0_R,1_R)$ be an ic-semiring and $(M,+_M,0_M)$ be an ic-monoid. For every atomic structure $(\Omega,\rho)$ on $R$ and every element $A \in \Omega$, the mapping rule $s \mapsto \chi_A(s)$ defines a morphism $\chi_A:R[M] \to FU(M)$ in $\mathbf{Icm}$ and the mapping rule $s \mapsto \kappa_A(s)$ defines a morphism $\kappa_A:R[M] \to M$ in $\mathbf{Icm}$ such that the equation $\kappa_A = \mu_M \circ \chi_A$ holds.
\end{proposition}
\begin{proof}
The second statement follows from the first one because the morphism $\kappa_A:R[M] \to M$ would result from composing the morphism $R[M] \to FU(M)$ with the counit $FU(M) \to M$ of adjunction (\ref{adjunction_set_icm}). Let us show the first statement. First, note that the element $\chi_A(s)$ is a finite subsets of $M$ and hence defines an element in $FU(M)$. To show that $\chi_A$ is a morphism in $\mathbf{Icm}$, first note that the equation $\chi_A(0_RX^{0_M}) = \cmemptyset$ holds because the set $\mathsf{Sup}(0)$ is empty. Now, because $A$ is a non-vanishing atomic subset of $M$ and the operator $\mathsf{Sup}$ sends sums in $M$ to sums in $(FU(M),\cup,\cmemptyset)$ (see Proposition \ref{prop:support:sum}), we have the following equations for every pair $(s,s')$ of elements in $R[M]$.
\begin{align*}
\chi_A(s+s') &= \{m \in \mathsf{Sup}(s+s')~|~s_m+_Rs_m' \in A\}  \\
&= \{m \in \mathsf{Sup}(s)\cup \mathsf{Sup}(s')~|~s_m \in A \textrm{ or }s_m' \in A\}\\
&= \{m \in \mathsf{Sup}(s)\cup \mathsf{Sup}(s')~|~s_m \in A\textrm{ (with }m \in \mathsf{Sup}(s))\textrm{ or }s_m' \in A\textrm{ (with }m \in \mathsf{Sup}(s'))\}\\
& = \chi_A(s) \cup \chi_A(s')
\end{align*}
This shows that for every $A \in \mathsf{Atom}(R)$, the mapping rule $s \mapsto \chi_A(s)$ defines a morphism $R[M] \to FU(M)$ in $\mathbf{Icm}$.
\end{proof}

\begin{convention}[Atomic elements]\label{conv:atoms:wrt_RM}
Let $(R,+_R,\cdot_R,0_R,1_R)$ be an ic-semiring and $(M,+_M,0_M)$ be an ic-monoid. For every element $s \in R[M]$ and every atomic structure $(\Omega,\rho)$ on $R$, we will denote by $\Omega[s]$ the set $\{A \in \Omega~|~\chi_A(s) \neq \cmemptyset\}$.
\end{convention}

\begin{remark}[Atomic elements]\label{rem:atoms:wrt_RM}
Let $(R,+_R,\cdot_R,0_R,1_R)$ be an ic-semiring and $(M,+_M,0_M)$ be an ic-monoid. For every element $s \in R[M]$ and every atomic structure $(\Omega,\rho)$ on $R$, the equation $\kappa_A(s) = 0_M$ holds for every $A \in \mathsf{Atom}(R)\backslash \Omega[s]$ -- because we have $\chi_A(s) = \cmemptyset$.
\end{remark}

\begin{example}[Atomic elements]\label{exa:atomic-elements-Omega}
Let $(M,+_M,0_M)$ be an ic-monoid and consider the following polynomials in $\overline{\mathbb{Z}}_3[M]$ for a given tuple $(m_1,m_2,m_3,m_4,m_5)$ of elements in $M$.
\[
s = \uparrow\! X^{m_1} + \downarrow\!  X^{m_2} + \uparrow\!  X^{m_3} + \updownarrow\!  X^{m_4} + \updownarrow\!  X^{m_5}
\quad\quad\quad
s' = \uparrow \!X^{m_1}+\uparrow \!X^{m_2}
\]
According to Example \ref{exa:coefficient-sets}, we have the equalities $\Omega[s] = \{\Uparrow,\Downarrow\}$ and $\Omega[s'] = \{\Uparrow\}$.
\end{example}

\begin{proposition}\label{prop:Atom:finite}
Let $(R,+_R,\cdot_R,0_R,1_R)$ be an ic-semiring and $(M,+_M,0_M)$ be an ic-monoid. For every element $s \in R[M]$ and every atomic structure $(\Omega,\rho)$ on $R$, the set $\Omega[s]$ is finite.
\end{proposition}
\begin{proof}
It follows from Definition \ref{def:coefficient-sets} that that the inclusion $\bigcup_{A \in \mathsf{Atom}(R)} \chi_A(s) \subseteq \mathsf{Sup}(s)$ holds. Since  $s \in R[M]$, the set $\mathsf{Sup}(s)$ is finite and since the union $\bigcup_{A \in \mathsf{Atom}(R)} \chi_A(s)$ is disjoint (see Definition \ref{def:coefficient-sets}), the set of atomic elements $A \in \mathsf{Atom}(R)$ such that $\chi_A(s) \neq \cmemptyset$ must be finite.
\end{proof}

\begin{proposition}\label{prop:atom-of-elements:compatible-with-addition}
Let $(R,+_R,\cdot_R,0_R,1_R)$ be an ic-semiring and $(M,+_M,0_M)$ be an ic-monoid. For every atomic structure $(\Omega,\rho)$ on $R$, the mapping rule $s \mapsto \Omega[s]$ defines a morphism $\Omega[\cdot]:R[M] \to F(\Omega)$ in $\mathbf{Icm}$
\end{proposition}
\begin{proof}
Let us first show that the zero element is preserved. For every element $A \in \mathsf{Atom}(R)$, the set $\chi_A(0) = \{m \in \mathsf{Sup}(0)~|~0_R \in A\}$ is empty because the set $\mathsf{Sup}(0)$ is empty. By Definition \ref{conv:atoms:wrt_RM}, this means that the set $\Omega[0]$ is empty. Let us now show that the addition is preserved.
First, recall that, for every pair $(X,Y)$ of sets, if the inequality $X \cup Y \neq \cmemptyset$ holds, then we must have $X \neq \cmemptyset$ or $Y \neq \cmemptyset$. This implication and Proposition \ref{prop:kappa-morphism} allow us to show that the following equations hold for every pair $(s,s')$ of elements in $R[M]$.
\begin{align*}
\Omega[s+s'] &= \{A \in \Omega~|~\chi_A(s+s') \neq \cmemptyset \}  \\
&= \{A \in \Omega~|~\chi_A(s)\cup \chi_A(s') \neq \cmemptyset\} \\
&= \{A \in \Omega~|~\chi_A(s)\neq \cmemptyset \textrm{ or }\chi_A(s') \neq \cmemptyset\} \\
&= \Omega[s] \cup \Omega[s']
\end{align*}
The previous equalities and the equation $\Omega[0] = \cmemptyset$ (shown earlier) prove the statement.
\end{proof}

\begin{convention}[Tensor congruence morphism]\label{conv:tensor-congruence}
Let $(R,+_R,\cdot_R,0_R,1_R)$ be an ic-semiring and $(M,+_M,0_M)$ be an ic-monoid. For every atomic structure $(\Omega,\rho)$ on $R$, we will denote by $\Omega \otimes \kappa$ the morphism $R[M] \to F(\Omega) \times M^{\Omega}$ in $\mathbf{Icm}$ defined by the mapping rule $s \mapsto (\Omega[s],(\kappa_A(s))_{A \in \Omega})$.
\end{convention}

\begin{definition}[Tensor congruence]\label{def:tensor-congruence}
Let $(R,+_R,\cdot_R,0_R,1_R)$ be an ic-semiring and $(M,+_M,0_M)$ be an ic-monoid. For every atomic structure $(\Omega,\rho)$ on $R$, we define the \emph{$\Omega$-tensor congruence} as the congruence $v_1,v_2:\Omega(R,M) \rightrightarrows R[M]$ that results from pulling back the morphism $\Omega \otimes \kappa:R[M] \to F(\Omega) \times M^{\Omega}$ along itself, as shown below.
\[
\xymatrix{
\Omega(R,M) \ar@{}[rd]|<<<{\rotatebox[origin=c]{90}{\huge{\textrm{$\llcorner$}}}} \ar[r]^{v_1}\ar[d]_{v_2} & R[M]\ar[d]^{\Omega \otimes \kappa}\\
R[M] \ar[r]_{\Omega \otimes \kappa} & *+!L(.5){F(\Omega) \times M^{\Omega}}
}
\]
For every pair $(s,s')$ of elements in $R[M]$, we will write $s \equiv s'\,(\mathsf{wrt}\,\Omega)$ whenever $(s,s') \in \Omega(R,M)$. The binary relation $\cdot \equiv \cdot\,(\mathsf{wrt}\,\Omega)$ defines an equivalence relation on $R[M]$.
\end{definition}

\begin{convention}[Representatives]\label{conv:modulo:formula:bar_s_R}
Let $(R,+_R,\cdot_R,0_R,1_R)$ be an ic-semiring and $(M,+_M,0_M)$ be an ic-monoid. For every atomic structure $(\Omega,\rho)$ on $R$ and every element $s \in R[M]$, we will denote as $|s|_{\Omega}$ the following sum in $R[M]$.
\[
\textstyle \sum_{A \in \Omega[s]}  \rho(A) X^{\kappa_A(s)}
\]
The previous sum is well-defined because the set $\Omega[s]$ is finite (Proposition \ref{prop:Atom:finite}).
\end{convention}

\begin{example}[Representatives]\label{exa:modulo:formula:bar_s_R}
Let $(M,+_M,0_M)$ be an ic-monoid and consider the following polynomials in $\overline{\mathbb{Z}}_3[M]$ for a given tuple $(m_1,m_2,m_3,m_4,m_5)$ of elements in $M$.
\[
s = \uparrow\! X^{m_1} + \downarrow\!  X^{m_2} + \uparrow\!  X^{m_3} + \updownarrow\!  X^{m_4} + \updownarrow\!  X^{m_5}
\quad\quad\quad
s' = \uparrow \!X^{m_1}+\uparrow \!X^{m_2}
\]
According to Example \ref{exa:atomic-elements-Omega}, Example \ref{exa:kappa} and Example \ref{exa:atomic-structure}, we have the following equalities.
\[
|s|_{\Omega} = \uparrow\! X^{m_1+m_3+m_4+m_5}+ \downarrow\! X^{m_2+m_4+m_5}
\quad\quad
|s|_{\Omega} = \uparrow\! X^{m_1+m_2}
\]
The astute reader may have noticed that the coefficients of $s$ and $s'$ in $\overline{\mathbb{Z}}_3[M]$ determine the expressions of $|s|_{\Omega}$ and $|s'|_{\Omega}$. This is explained in more detail in Remark \ref{rem:representation-bar-s-bar-omega} below.
\end{example}

\begin{remark}[Reformulation]\label{rem:representation-bar-s-bar-omega}
Let $(R,+_R,\cdot_R,0_R,1_R)$ be an ic-semiring and $(M,+_M,0_M)$ be an ic-monoid. For every atomic structure $(\Omega,\rho)$ on $R$ and every element $s \in R[M]$, we shall let $\delta_{s}$ denote the function $\rho(\Omega[s]) \to M$ that sends an element $r$ in $\rho(\Omega[s])$ to the element $\kappa_A(s) \in M$ where $A$ is the unique element of $\Omega[s]$ such that $\rho^{-1}(r) = \{A\}$ (by definition of the set $\rho(\Omega[s])$). With these notations, it follows from Convention \ref{conv:modulo:formula:bar_s_R} that the following formula holds.
\begin{equation}\label{eq:representation-bar-s-bar-omega:rem}
|s|_{\Omega} = \sum_{r \in \rho(\Omega[s])}  r X^{\delta_{s}(r)}
\end{equation}
By Definition \ref{def:atomic-structure} and Convention \ref{conv:atoms:wrt_RM}, the set $\rho(\Omega[s])$ is a subset of $R\backslash\{0_R\}$ for every element $s \in R[M]$. We shall expand further on expression (\ref{eq:representation-bar-s-bar-omega:rem}) in Remark \ref{rem:modulo-tensor:fix-points}.
\end{remark}

The logical implication stated in Proposition \ref{prop:tensor_congruence:implies:bar_R} will be shown to be a logical equivalence in Remark \ref{rem:modulo-tensor:fix-points}. To prepare for this equivalence, we will use Remark to discuss general facts about the morphism defined in Convention \ref{conv:tensor-congruence}.

\begin{proposition}\label{prop:tensor_congruence:implies:bar_R}
Let $(R,+_R,\cdot_R,0_R,1_R)$ be an ic-semiring and $(M,+_M,0_M)$ be an ic-monoid. For every atomic structure $(\Omega,\rho)$ on $R$ and every pair $(s,s')$ of elements in $R[M]$, if the relation $s \equiv s'\,(\mathsf{wrt}\,\Omega)$ holds, then so does the equality $|s|_{\Omega} = |s'|_{\Omega}$.
\end{proposition}
\begin{proof}
By Definition \ref{def:tensor-congruence}, if the relation $s \equiv s'\,(\mathsf{wrt}\,\Omega)$ holds, then so does the equation $\Omega \otimes \kappa(s) =  \Omega \otimes \kappa(s')$. By Convention \ref{conv:tensor-congruence}, Convention \ref{conv:modulo:formula:bar_s_R}, and Remark \ref{rem:atoms:wrt_RM} we can show that this means that the equation $\sum_{A \in \Omega[s]}  \rho(A) X^{\kappa_A(s)} = \sum_{A \in \Omega[s']}  \rho(A) X^{\kappa_A(s')}$ holds.
\end{proof}

\begin{remark}[Canonical expressions]\label{rem:modulo-tensor:fix-points}
The goal of this remark is to expand on the discussion started in Remark \ref{rem:representation-bar-s-bar-omega} and propose reformulations of the parameters used in the formula provided in Convention \ref{conv:modulo:formula:bar_s_R}. In particular, we prove the formulas shown in (\ref{eq:modulo-tensor:fix-points:1}) below.

Let $(R,+_R,\cdot_R,0_R,1_R)$ be an atomic ic-semiring and $(M,+_M,0_M)$ be an ic-monoid.
For every finite subset $O \subseteq R \backslash \{0_R\}$ and every function $d:O \to M$, if we express an element $x = (x_m)_{m \in M}$ of $R[M]$ as a sum as shown on the left-hand side of (\ref{eq:modulo-tensor:fix-points:0}), then the equation shown on the right-hand side must hold for every element $m \in M$.
\begin{equation}\label{eq:modulo-tensor:fix-points:0}
x = \textstyle \sum_{r \in O} r X^{d(r)}
\quad\quad\quad\quad
\Rightarrow
\quad\quad\quad\quad
x_m = \sum_{r \in d^{-1}(m)} r 
\end{equation}
This means that, for every element $A \in \mathsf{Atom}(R)$, the $A$-coefficient set $\chi_{A}(x)$ has the following specification.
\begin{align*}
\chi_{A}(x) &= \textstyle \{m~|~\sum_{v \in d^{-1}(m)} v \in A\} & \\
& = \{m~|~\textrm{there exists }v \in d^{-1}(m)\textrm{ such that }v \in A\} &(\textrm{Proposition  \ref{prop:extend-atomic:large-sums}})\\
& = d(A)
\end{align*}
It follows from Proposition \ref{prop:kappa-morphism} ($\kappa_A = \mu \circ \chi_A$) and Convention \ref{conv:atoms:wrt_RM} ($\Omega[x] = \{A \in \Omega~|~\chi_A(x) \neq \cmemptyset\}$) that the following identities hold for every element $A \in \mathsf{Atom}(R)$.
\begin{equation}\label{eq:modulo-tensor:fix-points:1}
\kappa_{A}(x) = \mu (d(A)) \quad\quad\quad\quad\quad
\Omega[x] = \{A \in \Omega~|~A \cap O \neq \cmemptyset \}
\end{equation}
We shall use the formulas of (\ref{eq:modulo-tensor:fix-points:1}) in Proposition \ref{prop:modulo-tensor:fix-points} to show that the implication stated in Proposition \ref{prop:tensor_congruence:implies:bar_R} is an equivalence.
\end{remark}

\begin{proposition}\label{prop:modulo-tensor:fix-points}
Let $(R,+_R,\cdot_R,0_R,1_R)$ be an ic-semiring and $(M,+_M,0_M)$ be an ic-monoid. For every atomic structure $(\Omega,\rho)$ on $R$ and every pair $(s,s')$ of elements in $R[M]$, the relation $s \equiv s'\,(\mathsf{wrt}\,\Omega)$ holds if, and only if, the equality $|s|_{\Omega} = |s'|_{\Omega}$ holds. In addition, for every element $s \in R[M]$, the relation $s \equiv |s|_R\,(\mathsf{wrt}\,\Omega)$ holds.
\end{proposition}
\begin{proof}
The direct implication of the statement is shown in Proposition \ref{prop:tensor_congruence:implies:bar_R}. We shall now prove the opposite direction. In this respect, let $(s,s')$ be a pair of elements in $R[M]$ for which the equation $|s|_{\Omega} = |s'|_{\Omega}$ holds.
It follows from Remark \ref{rem:representation-bar-s-bar-omega} and Remark \ref{rem:modulo-tensor:fix-points} that the following equations hold for every element $s \in R[M]$ and every element $A \in \Omega$.
\begin{align*}
\kappa_A(|s|_{\Omega}) & = \mu (\delta_s(A)) & (\textrm{Rem. \ref{rem:representation-bar-s-bar-omega} and Rem. \ref{rem:modulo-tensor:fix-points}})\\
& = \textstyle \sum_{r \in A \cap \rho(\Omega[s])} \delta_s(r)&(\textrm{see $\mu$ in sec. \ref{ssec:ICMonoids_universal_construction}})\\
& = \textstyle  \sum_{r \in A \cap \rho(\Omega[s])} \kappa_A(s) & (\textrm{Prop. \ref{prop:atomic-structure:restriction} and Rem. \ref{rem:representation-bar-s-bar-omega}})\\
& = \kappa_A(s)&(\textrm{idempotency})
\end{align*}
Similarly, we deduce from Remark \ref{rem:representation-bar-s-bar-omega} and Remark \ref{rem:modulo-tensor:fix-points} that the following equations for every element $s \in R[M]$.
\begin{align*}
\Omega[|s|_{\Omega}] &= \{A \in \Omega~|~A \cap \rho(\Omega[s]) \neq \cmemptyset\} & (\textrm{Rem. \ref{rem:representation-bar-s-bar-omega} and Rem. \ref{rem:modulo-tensor:fix-points}})\\
&= \{A \in \Omega~|~\exists r \in A \cap \rho(\Omega[s])\} &\\
 &= \{A \in \Omega~|~\exists r \in R:\rho^{-1}(r)=\{A\}\textrm{ and }A \in \Omega[s]\} & (\textrm{Proposition \ref{prop:atomic-structure:restriction}})\\
 &= \{A \in \Omega~|~A \in \Omega[s]\} & (\textrm{take $r= \rho(A)$})\\
 & = \Omega[s]
\end{align*}
By Convention \ref{conv:tensor-congruence} and Definition \ref{def:tensor-congruence}, this means that the relation $s \equiv |s|_R\,(\mathsf{wrt}\,\Omega)$ holds.  Similarly, we deduce that the relation $s' \equiv |s'|_R\,(\mathsf{wrt}\,\Omega)$ holds. Since the equation $|s|_{\Omega} = |s'|_{\Omega}$ holds and the relation $\cdot \equiv \cdot\,(\mathsf{wrt}\,\Omega)$ is an equivalence (see Definition \ref{def:tensor-congruence}), the relation $s \equiv s'\,(\mathsf{wrt}\,\Omega)$ holds.
\end{proof}

\begin{remark}[Conclusion]\label{rem:conclusion-tensor-congruence}
Let $(R,+_R,\cdot_R,0_R,1_R)$ be an ic-semiring and $(M,+_M,0_M)$ be an ic-monoid. For every atomic structure $(\Omega,\rho)$ on $R$, the $\Omega$-tensor congruence on $R[M]$ (Definition \ref{def:tensor-congruence}) mimics a tensor structure by providing a way to canonically regroup the monomials of the element of $R[M]$ with respect to their associated value in $R$. To give an example, the following equation mimic the bilinearity of tensors with respect to their second variable for every $r_0 \in \rho(\Omega)$.
\begin{equation}\label{eq:rem:modulo-tensor}
\textstyle s = r_0X^{m_1} + r_0X^{m_2} \quad \Rightarrow \quad |s|_{\Omega} = r_0X^{m_1+m_2}
\end{equation}
More specifically, the previous equation should be compared to an equation of the form
\[
r_0 \otimes m_1 + r_0 \otimes m_1 = r_0 \otimes (m_1+m_2),
\]
which usually holds in tensors of the form $R \otimes M$. The difference between equipping the ic-monoid $R[M]$ with the relation $\cdot \equiv \cdot\,(\mathsf{wrt}\,\Omega)$ and considering an actual tensor structure is that the exponent $m_1+m_2$ shown in (\ref{eq:rem:modulo-tensor}) is computed modulo some other elements in $R$.
\end{remark}

\subsection{Multiplicative atomic structures}\label{ssec:Multiplicative-atomic-structures} The goal of this section is to determine conditions for which the tensor congruence is compatible with the semiring of polynomials on which it is defined (see Theorem \ref{theo:tensor-congruence:semiring-compatiblity}). To understand how the tensor congruence acts on the multiplications of the semiring structure, we shall first introduce the notion of \emph{multiplicative} atomic structure (Definition \ref{def:semiring:integral}). We then use this type of atomic structure to show that the tensor congruence is compatible with the multiplication (see Proposition \ref{prop:kappa:almost-multiplicative-morphism} and Proposition \ref{prop:atom-of-elements:compatible-with-boxdot}).

\begin{definition}[Multiplicative atomic structures]\label{def:semiring:integral}
Let $(R,+,\cdot,0,1)$ be an ic-semiring. An atomic structure $(\Omega,\rho)$ on $R$ will be said to be \emph{multiplicative} if, for every element $A \in \Omega$, it is equipped with a function $\mathsf{Inv}_A:\Omega \to \Omega$ such that, for every pair $(t,t') \in R \times R$ and every element $A \in \Omega$, the following implication holds.
\[
t \cdot t' \in A \quad\Leftrightarrow\quad \exists A' \in \Omega:\,t \in A'\textrm{ and }t' \in \mathsf{Inv}_A(A')
\]
\end{definition}

\begin{example}[Multiplicative atomic structure] \label{exa:Z3:multiplicative}
The goal of this example is to define the multiplicative structure on $(\Omega,\rho)$ defined in Example \ref{exa:atomic-structure}, on the ic-semiring $\overline{\mathbb{Z}}_3$. For every $A \in \Omega$, let us define a function $\mathsf{Inv}_A:\Omega \to \Omega$ according to the following mapping rules for every parameter $A \in \{\Uparrow,\Downarrow\}$.
\[
\mathsf{Inv}_{\Uparrow}:
\left(
\begin{array}{lll}
\Uparrow &\mapsto& \Uparrow\\
\Downarrow &\mapsto &\Downarrow
\end{array}
\right)
\quad\quad\quad
\mathsf{Inv}_{\Downarrow}:
\left(
\begin{array}{lll}
\Uparrow &\mapsto& \Downarrow\\
\Downarrow &\mapsto &\Uparrow
\end{array}
\right)
\]
It is straightforward to verify that the function $\mathsf{Inv}_A:\Omega \to \Omega$ is a bijection whose inverse is itself (in other words, this is an involution on $\Omega$). According to the Cayley table for the multiplication provided in Example \ref{exa:semi-group:Z3Z}, we can verify that the following equivalences hold.
\[
\left\{
\begin{array}{lll}
t \cdot t' \in \{\uparrow,\updownarrow\}
&\Leftrightarrow&
\textrm{($t \in \{\uparrow,\updownarrow\}$ and $t' \in \{\uparrow,\updownarrow\}$) or ($t \in \{\downarrow,\updownarrow\}$ and $t' \in \{\downarrow,\updownarrow\}$)} \\
t \cdot t' \in \{\downarrow,\updownarrow\}
&\Leftrightarrow&
\textrm{($t \in \{\uparrow,\updownarrow\}$ and $t' \in \{\downarrow,\updownarrow\}$) or ($t \in \{\uparrow,\updownarrow\}$ and $t' \in \{\downarrow,\updownarrow\}$)}
\end{array}
\right.
\]
The direct directions of the previous equivalences show that if $t \cdot t' \in A$, then there exists $A' \in \Omega$ such that $t \in A'$ and $t' \in \mathsf{Inv}_A(A')$. Conversely, the opposite implications  show that for every $A \in \Omega$ and every $A' \in \Omega$, if $t \in A'$ and $t' \in \mathsf{Inv}_A(A')$, then $t \cdot t' \in A$. In other words, the atomic structure $(\Omega,\rho)$ is multiplicative when equipped with the involution $\mathsf{Inv}_A:\Omega \to \Omega$ defined earlier.
\end{example}

\begin{convention}[Modified kappa]\label{conv:modified-kappa}
Let $(R,+_R,\cdot_R,0_R,1_R)$ be an ic-semiring and $(M,+_M,0_M)$ be an ic-monoid. For every multiplicative atomic structure $(\Omega,\rho,\mathsf{Inv})$ on $R$, every element $A \in \Omega$ and every element $s \in R[M]$, we will denote by
\begin{itemize}
\item[1)] $\widehat{\kappa}_A(s)$ the sum $\sum_{A' \in \Omega} \kappa_{A'}(s)$ in $M$;
\item[2)] $\widecheck{\kappa}_A(s)$ the sum $\sum_{A' \in \Omega} \kappa_{\mathsf{Inv}_A(A')}(s)$ in $M$;
\end{itemize}
If for every $A \in \Omega$, the function $\mathsf{Inv}_A$ is a bijection, then the identity $\widehat{\kappa}_A(s) = \widecheck{\kappa}_A(s)$ holds.
\end{convention}

\begin{proposition}\label{prop:kappa:almost-multiplicative-morphism}
Let $(R,+_R,\cdot_R,0_R,1_R)$ be an atomic ic-semiring and $(M,+_M,0_M)$ be an ic-monoid. For every multiplicative atomic structure $(\Omega,\rho,\mathsf{Inv})$ on $R$, every element $A \in \Omega$ and every pair $(s,s')$ of elements in $R[M]$, the following equations hold.
\[
\def\arraystretch{1.4}
\begin{array}{ll}
\kappa_A(1_RX^{0_M}) &= 0_M\\
\kappa_A(s \boxdot s') &= \widehat{\kappa}_{A}(s) +_M \widecheck{\kappa}_{A}(s')\\
\chi_A(s \boxdot s') &= \bigcup_{A' \in \Omega}\{d_1+_Md_2~|~ d_1 \in \chi_{A'}(s)\textrm{ and } d_2 \in \chi_{\mathsf{Inv}_A(A')}(s')\}
\end{array}
\]
\end{proposition}
\begin{proof}
We start with the topmost equation of the statement, namely $\kappa_A(1_RX^{0_M}) = 0_M$ for any $A \in \Omega$. This equation is a direct consequence of Remark \ref{rem:modulo-tensor:fix-points}, and more specifically, a consequence of the leftmost equation of (\ref{eq:modulo-tensor:fix-points:1}). Let us now show the bottommost equation given in the statement. We will then use this equation to show the equation $\kappa_A(s \boxdot s') = \widehat{\kappa}_{A}(s) +_M \widecheck{\kappa}_{A}(s')$ shown in the middle. To start with, notice that we can reformulate the specification of the set $\chi_A(s \boxdot s')$ by using the formula given in Definition \ref{def:product-coefficients}, as shown below.
\begin{align*}
\chi_A(s \boxdot s') &= \{d\in \mathsf{Sup}(s \boxdot s')~|~s \boxdot_d s' \in A\}  \\
&= \left\{d \in \mathsf{Sup}(s,s')~\left|~ \sum_{d_1 \in \mathsf{Sup}(s)} \sum_{d_2 \in \mathsf{Sup}(s')}\sum_{d_1+_M d_2 = d} s_{d_1} \cdot_R s'_{d_2} \in A\right.\right\}
\end{align*}
Then, we can use Proposition \ref{prop:extend-atomic:large-sums} to reformulate the previous formula as follows.
\[
\chi_A(s \boxdot s') = \{d_1+_M d_2~|~ \textrm{for every }(d_1,d_2) \in \mathsf{Sup}(s) \times \mathsf{Sup}(s'):\, s_{d_1} \cdot_R s'_{d_2} \in A\}
\]
Because the atomic structure $(\Omega,\rho)$ on $R$ is multiplicative, the relation $s_{d_1} \cdot_R s'_{d_2} \in A$ is equivalent to the existence of an element $A' \in \Omega$ such that $s_{d_1} \in A'$ and $s'_{d_2} \in \mathsf{Inv}_A(A')$. This can be reformulated into the two relations $d_1 \in \chi_{A'}(s)$ and $d_2 \in \chi_{\mathsf{Inv}_A(A')}(s')$. Hence, we deduce that the following reformulation holds -- this shows the bottommost equation of the statement.
\[
\chi_A(s \boxdot s') = \bigcup_{A' \in \Omega}\{d_1+_Md_2~|~ d_1 \in \chi_{A'}(s)\textrm{ and } d_2 \in \chi_{\mathsf{Inv}_A(A')}(s')\}
\]
By applying the counit morphism $\mu:FU(M) \to M$ of the adjunction $F \dashv U$ (see diagram (\ref{adjunction_set_icm})) on the previous unions gives the following equation (see Proposition \ref{prop:kappa-morphism}).
\[
\kappa_A(s \boxdot s') =  \sum_{A' \in \Omega}(\kappa_{A'}(s)+_M\kappa_{\mathsf{Inv}_A(A')}(s'))
\
\]
By using Convention \ref{conv:modified-kappa}, we can reformulate the previous equation as $\kappa_A(s \boxdot s') =  \widehat{\kappa}_{A}(s) +_M \widecheck{\kappa}_{A}(s')$. This finishes the proof of the statement.
\end{proof}

\begin{proposition}\label{prop:atom-of-elements:compatible-with-boxdot}
Let $(R,+_R,\cdot_R,0_R,1_R)$ be an atomic ic-semiring and $(M,+_M,0_M)$ be an ic-monoid. For every multiplicative atomic structure $(\Omega,\rho,\mathsf{Inv})$ on $R$, and every pair $(s,s')$ of elements in $R[M]$, the following equation holds.
\[
\Omega[s \boxdot s'] = \{A \in \Omega~|~\Omega[s] \cap \mathsf{Inv}_A^{-1}(\Omega[s']) \neq \cmemptyset\}
\]
\end{proposition}
\begin{proof}
Recall that, for every pair $(X,Y)$ of sets, if the inequality $X \cup Y \neq \cmemptyset$ holds, then we must have $X \neq \cmemptyset$ or $Y \neq \cmemptyset$. By applying this principle on the terms of the expression of $\chi_r(s \boxdot s')$ given in Proposition \ref{prop:kappa:almost-multiplicative-morphism}, we deduce that the following equations hold.
\begin{align*}
\Omega[s \boxdot s']  &= \{A \in \Omega~|~\chi_A(s \boxdot s') \neq \cmemptyset \}  \\
&= \{A \in \Omega~|~\exists A' \in \Omega:\,\chi_{A'}(s)\neq \cmemptyset\textrm{ and }\chi_{\mathsf{Inv}_A(A')}(s')\neq \cmemptyset\} \\
&= \{A \in \Omega~|~\Omega[s] \cap \mathsf{Inv}_A^{-1}(\Omega[s']) \neq \cmemptyset\}
\end{align*}
This implies the statement.
\end{proof}

\begin{definition}[Morphisms of semirings]\label{def:morphism_of_semrirings}
Let $(R,+_R,\cdot_R,0_R,1_R)$ and $(S,+_S,\cdot_S,0_S,1_S)$ be two semirings. We define a \emph{morphism $(R,+_R,\cdot_R,0_R,1_R) \to (S,+_S,\cdot_S,0_S,1_S)$ of semirings} \cite{Golan}  as a function $f:R \to S$ such that $f$ induces a morphism $(R,+_R,0_R) \to (S,+_S,0_S)$ of (commutative) monoids and a morphism $(R,\cdot_R,1_R) \to (S,\cdot_S,1_S)$ of monoids.
\end{definition}

\begin{theorem}[Semiring]\label{theo:tensor-congruence:semiring-compatiblity}
Let $(R,+_R,\cdot_R,0_R,1_R)$ be an atomic ic-semiring and $(M,+_M,0_M)$ be an ic-monoid. For every multiplicative atomic structure $(\Omega,\rho,\mathsf{Inv})$ on $R$, the ic-monoid $\Omega(R,M)$ defines an ic-semiring whose
\begin{itemize}
\item[1)] unit element is the pair $(1,1)$ of two copies of the unit element $1 \in R[M]$;
\item[2)] multiplication is defined by the mapping rule $((s_1,s_1'),(s_2,s_2')) \mapsto (s_1 \boxdot s_2,s_1' \boxdot s_2')$.
\end{itemize}
Additionally, the two projections $v_1:\Omega(R,M) \to R[M]$ and $v_2:\Omega(R,M) \to R[M]$ associated with the $\Omega$-tensor congruence are morphisms of semirings.
\end{theorem}
\begin{proof}
The statement is straightforward once we show that the mapping rule $((s_1,s_1'),(s_2,s_2')) \mapsto (s_1 \boxdot s_2,s_1' \boxdot s_2')$ is a stable in $\Omega(R,M)$. In other words, the statement will follow if we can show that the following implication holds.
\[
(s_1,s_1') \in \Omega(R,M)\textrm{ and }(s_2,s_2') \in \Omega(R,M)\quad\Rightarrow\quad
(s_1 \boxdot s_2,s_1' \boxdot s_2') \in \Omega(R,M)
\]
In this respect, let $(s_1,s_1')$ and $(s_2,s_2')$ be two elements in $\Omega(R,M)$. By Definition \ref{def:tensor-congruence}, this means that we have the following equations.
\[
\left\{
\begin{array}{ll}
\Omega[s_1] = \Omega[s_1']&\\
\kappa_A(s_1) = \kappa_A(s_1')&\textrm{ for every }A \in \Omega\\
\end{array}
\right.
\quad
\left\{
\begin{array}{ll}
\Omega[s_2] = \Omega[s_2']&\\
\kappa_A(s_2) = \kappa_A(s_2')&\textrm{ for every }A \in \Omega\\
\end{array}
\right.
\]
It follows from Convention \ref{conv:modified-kappa} and the previous relations that the following equations hold.
\[
\left\{
\begin{array}{ll}
\widehat{\kappa}_A(s_1) = \widehat{\kappa}_A(s_1')&\textrm{ for every }A \in \Omega\\
\widehat{\kappa}_A(s_2) = \widehat{\kappa}_A(s_2')&\textrm{ for every }A \in \Omega\\
\end{array}
\right.
\quad\quad\quad
\left\{
\begin{array}{ll}
\widecheck{\kappa}_A(s_1) = \widecheck{\kappa}_A(s_1')&\textrm{ for every }A \in \Omega\\
\widecheck{\kappa}_A(s_2) = \widecheck{\kappa}_A(s_2')&\textrm{ for every }A \in \Omega\\
\end{array}
\right.
\]
Finally, we can use the set of equations shown above with Proposition \ref{prop:kappa:almost-multiplicative-morphism} and Proposition \ref{prop:atom-of-elements:compatible-with-boxdot} to show that the following equations hold.
\[
\left\{
\begin{array}{ll}
\Omega[s_1 \boxdot s_2] = \Omega[s_1' \boxdot s_2']&\\
\kappa_A(s_1 \boxdot s_2) = \kappa_A(s_1' \boxdot s_2')&\textrm{ for every }A \in \Omega\\
\end{array}
\right.
\]
We conclude that the pair $(s_1 \boxdot s_2,s_1' \boxdot s_2')$ belongs to $\Omega(R,M)$. This finishes the proof of the statement since all the other axioms for the semiring structure of $\Omega(R,M)$ follow directly from the semiring structure of $\Omega(R,M)$.
\end{proof}

\subsection{Actions of semirings}\label{ssec:action-semirings}
The goal of this section is to formalize the properties shown in Remark \ref{rem:semi-direct-product} for the function defined in Definition \ref{def:action:rationals} as an ``action morphism'' (see Definition \ref{def:action-semi-groups}). Note that such a morphism is neither a morphism of ic-monoids nor a morphism of ic-semirings, but possesses very similar properties (see Remark \ref{rem:action-as-morphism:ic-monoids}). The main result of this section is given in Proposition \ref{prop:alpha:alpha:alpha:boxdot:diagram}, in which we relate an action of an ic-semiring on an ic-monoid to the underlying semiring of polynomials (defined in the ic-semiring over the ic-monoid).

\begin{definition}[Action]\label{def:action-semi-groups}
We define an \emph{action} of a semiring $(R,+_R,\cdot_R,0_R,1_R)$ on a commutative monoid $(M,+_M,0_M)$ is a function $m:R \times M \to M$ satisfying the following equations for every pair $(f,g) \in R \times R$ and every pair $(x,y) \in M \times M$.
\[
\left\{
\begin{array}{lll}
\alpha(f,0_M) &=& 0_M \\
\alpha(0_R,x) &=& 0_M \\
\alpha(g\cdot_R f,x) &=& \alpha(g,\alpha(f,x))\\
 &=& \alpha(f,\alpha(g,x))\\
\end{array}
\right.
\quad\quad\quad\quad
\left\{
\begin{array}{lll}
\alpha(f,x+_M y) &=& \alpha(f,x) +_M \alpha(f,y)\\
\alpha(g+_R f,x) &=& \alpha(g,x) +_M \alpha(f,x)\\
\end{array}
\right.
\]
\end{definition}

\begin{example}[Rationals]\label{exa:action:rationals:ic-monoids}
Let $(M,+,0)$ be an object in $\mathbf{Icm}$. According to Remark \ref{rem:semi-direct-product}, the function $\odot:\overline{\mathbb{Z}}_3 \times \mathbb{Q}(M) \to \mathbb{Q}(M)$ defined in Definition \ref{def:action:rationals} constitutes an action of the ic-semiring $\overline{\mathbb{Z}}_3$ on the ic-monoid $\mathbb{Q}(M)$.
\end{example}

\begin{example}[Semirings]
For every commutative semiring $(R,+,\cdot,0,1)$, the multiplication operation $\mathsf{mul}:R \times R \to R$ defined by the mapping rule $(x,y) \mapsto x \cdot y$ is an action of the semiring $(R,+,\cdot,0,1)$ on its underlying commutative monoid $(R,+,0)$ because:
\begin{itemize}
\item[-] the element $0$ annihilates every element of $R$ with respect to the multiplication;
\item[-] the multiplication is associative and commutative;
\item[-] the multiplication is distributive with respect to the addition.
\end{itemize}
See the beginning of section \ref{ssec:idempotent_commutative_semirings} for a definition of semirings.
\end{example}

\begin{example}[Morphisms]\label{exa:morphism:ic-semiring:action}
For every morphism $f:(R,+_R,\cdot_R,0_R,1_R) \to (S,+_S,\cdot_S,0_S,1_S)$ of semirings, the function $R \times S \to S$ defined by the mapping rule $(r,s) \mapsto f(r) \cdot s$ is an action of the semiring $R$ on the underlying commutative monoid $(S,+_S,0_S)$. Indeed, for every pair $(r,r') \in R \times R$ and every pair $(s,s') \in S \times S'$, we have:
\[
\left\{
\def\arraystretch{1.4}
\begin{array}{ll}
f(r) \cdot_S 0_S = 0_S& (\textrm{annihilation})\\
f(0_R) \cdot_S s = 0_S \cdot_S s = 0_S& (\textrm{annihilation})\\
f(r \cdot_R r') \cdot_S s = f(r) \cdot_S f(r') \cdot_S s & (\textrm{associativity})\\
f(r) \cdot_S (s+s') = f(r) \cdot_S s +  f(r) \cdot_S s'& (\textrm{distributivity})\\
f(r +_R r') \cdot_S s = (f(r) +_S f(r')) \cdot_S s = (f(r) \cdot_S s) +_S (f(r')\cdot_S s) &  (\textrm{distributivity})\\
\end{array}
\right.
\]
The previous equations correspond to those displayed in Definition \ref{def:action-semi-groups}.
\end{example}

\begin{remark}[Actions as morphisms of ic-monoids]\label{rem:action-as-morphism:ic-monoids}
Let $\alpha:R \times M \to R$ be action of an ic-semiring $(R,+_R,\cdot_R,0_R,1_R)$ on an ic-monoid $(M,+_M,0_M)$. We can define a function $\widetilde{\alpha}:R[M] \to M$ whose mapping rule is defined as follows.
\[
\left(
\begin{array}{ccc}
R[M]& \to & M\\
r = \sum_{m \in \mathsf{Sup}(r)}r_mX^{m} &\mapsto &\sum_{m \in \mathsf{Sup}(r)}\alpha(r_m,m)
\end{array}
\right)
\]
We can show that the function $\widetilde{\alpha}$ induces a morphism $R[M] \to M$ in $\mathbf{Icm}$. Indeed, it follows from Proposition \ref{prop:support:sum}, Definition \ref{def:action-semi-groups} and the point-wise definition of the addition of $R[M]$ that the following equation holds for every pair $(r,r') \in R[M] \times R[M]$.
\begin{equation}\label{eq:action-as-morphism:ic-monoids:RM}
\widetilde{\alpha}(r+r') = \sum_{m \in \mathsf{Sup}(r) \cup \mathsf{Sup}(r')}(\alpha(r_m,m)+_M\alpha(r_m',m))
\end{equation}
Now, observe that if $m \in \mathsf{Sup}(r)$ and $m \notin \mathsf{Sup}(r')$, then the equation $r_m' = 0_R$ holds. This means that the equation $\alpha(r_m',m) = 0_M$ holds. Similarly, if $m \notin \mathsf{Sup}(r)$ and $m \in \mathsf{Sup}(r')$, then $\alpha(r_m,m)  = 0_M$. As a result, expression (\ref{eq:action-as-morphism:ic-monoids:RM}) can be reformulated as follows.
\[
\widetilde{\alpha}(s+s') = \sum_{m \in \mathsf{Sup}(r)}\alpha(r_m,m)+_M\sum_{m \in \mathsf{Sup}(r')}\alpha(r_m',m)
\]
In other words, the equation $\widetilde{\alpha}(r+r') = \widetilde{\alpha}(r) + \widetilde{\alpha}(r')$ holds. It is also direct to observe that the equation $\widetilde{\alpha}(0) = 0_M$ holds. As a result, the function $\widetilde{\alpha}$ induces a morphism $(R[M],+,0) \to (M,+_M,0_M)$ in $\mathbf{Icm}$.
\end{remark}

The following proposition should be read with the following fact in mind: every ic-monoid $M$ can be seen as a submonoid of its associated ic-monoid $\mathbb{Q}(M)$ through the monomorphism $\mathbf{1}:M \to \mathbb{Q}(M)$. This means that the ic-semiring $\overline{\mathbb{Z}}_3[M]$ can be seen as a submonoid of the ic-monoid $\overline{\mathbb{Z}}_3[\mathbb{Q}(M)]$ through the induced monomorphism $\overline{\mathbb{Z}}_3[\mathbf{1}]:\overline{\mathbb{Z}}_3[M] \to \overline{\mathbb{Z}}_3[\mathbb{Q}(M)]$.

\begin{proposition}\label{prop:YAMC}
Let $(M,+_M,0_M)$ be an ic-monoid. For every element $s \in \overline{\mathbb{Z}}_3[M]$, the equation $\widetilde{\odot}(s) = \widetilde{\odot}(|s|_{\Omega})$ holds, where $\widetilde{\odot}$ denotes the morphism $\overline{\mathbb{Z}}_3[\mathbb{Q}(M)] \to \mathbb{Q}(M)$ in $\mathbf{Icm}$ induced from the action $\odot:\overline{\mathbb{Z}}_3 \times \mathbb{Q}(M) \to \mathbb{Q}(M)$.
\end{proposition}
\begin{proof}
Let $s$ be an element in $\overline{\mathbb{Z}}_3[M]$ and let $(\Omega,\rho)$ denote the atomic structure defined in Example \ref{exa:atomic-structure}. Note that the element $s$ can be seen as an element of $\overline{\mathbb{Z}}_3[\mathbb{Q}(M)]$. Now, for every element $r \in \overline{\mathbb{Z}}_3$, let us denote the set $\{m \in \mathsf{Sup}(s)~|~s_m = r\}$ as $K_r(s)$. As a result of this notation, we have the following formula.
\begin{equation}\label{eq:YAMC:K-formula}
s = \textstyle \sum_{r \in \overline{\mathbb{Z}}_3}  \sum_{m \in K_r(s)} rX^m
\end{equation}
Observe that the collection consisting of the sets $K_r(s)$, for all $r \in \overline{\mathbb{Z}}_3$, forms a partition of $\mathsf{Sup}(s)$.  Also, note that, for every $A \in \Omega$, the equation $K_{\rho(A)}(s) \cup K_{\updownarrow}(s) = \chi_{A}(s)$ holds.
This being established, we can use formula (\ref{eq:YAMC:K-formula}) and the construction of Remark \ref{rem:action-as-morphism:ic-monoids} to reformulate the expression of $\widetilde{\odot}(s)$ as follows -- we use the symbol $\sum$ to denote an large sum in $(\mathbb{Q}(M),\star,\mathbbm{1}_M)$.
\begin{align*}
\widetilde{\odot}(s) &= \textstyle \sum_{r \in \overline{\mathbb{Z}}_3}  \sum_{m \in K_r(s)} \widetilde{\odot}(r,m) &(\textrm{by linearity})\\
&= \textstyle \sum_{r \in \overline{\mathbb{Z}}_3} \widetilde{\odot}\big(r,\sum_{m \in K_r(s)} m\big) & (\textrm{Remark \ref{rem:action-as-morphism:ic-monoids}})\\
&= \textstyle \mathbbm{1}_{M} \star \widetilde{\odot}\big(\!\uparrow,\sum_{m \in K_{\uparrow}(s)} m\big)   \star \widetilde{\odot}\big(\!\downarrow, \sum_{m \in K_{\downarrow}(s)} m\big) \star  \widetilde{\odot}\big(\!\updownarrow, \sum_{m \in K_{\updownarrow}(s)} m\big) & (
\overline{\mathbb{Z}}_3 = \{0,\uparrow,\downarrow,\updownarrow\})
\end{align*}
If we use the formulas and relations established in Remark \ref{conv:rationals:ic-monoids}, we can reformulate the previous equality as follows.
\begin{align*}
\widetilde{\odot}(s) & = \left[\frac{\sum_{m \in K_{\uparrow}(s)} m+\sum_{m \in K_{\updownarrow}(s)} m}{\sum_{m \in K_{\downarrow}(s)} m+\sum_{m \in K_{\updownarrow}(s)} m}\right]&\left(\begin{array}{c}\textrm{Definition \ref{def:action:rationals}}\\\textrm{and }K_r(s) \subseteq M\end{array}\right)\\
& = \left[\frac{\kappa_{\Uparrow}(s)}{\kappa_{\Downarrow}(s)}\right]&(K_{\rho(A)}(s) \cup K_{\updownarrow}(s) = \chi_{A}(s))&\\
& = \widetilde{\odot}(\uparrow,\kappa_{\Uparrow}(s))  \star \widetilde{\odot}(\downarrow, \kappa_{\Downarrow}(s))&(\textrm{Definition \ref{def:action:rationals}})\\
& = \textstyle \sum_{A \in \Omega[s]}\widetilde{\odot}(\rho(A),\kappa_{A}(s)) & (\kappa_A(s) \neq 0_M \Rightarrow A \in \Omega[s])
\end{align*}
By Convention \ref{conv:modulo:formula:bar_s_R} and Remark \ref{rem:action-as-morphism:ic-monoids}, the last equation leads to the identity $\widetilde{\odot}(s) = \widetilde{\odot}(|s|_{\Omega})$.
\end{proof}

\begin{proposition}[Action]\label{prop:polynomials:action:R_on_RM}
Let $(R,+_R,\cdot_R,0_R,1_R)$ be an ic-semiring and $(M,+_M,0_M)$ be an ic-monoid. The multiplication $\boxdot$ of the ic-semiring $R[M]$ induces an action $\boxdot:R \times R[M] \to R[M]$ through the mapping rule $(r,s) \mapsto (r X^{0_M}) \boxdot s$.
\end{proposition}
\begin{proof}
Since the semiring $R[M]$ is commutative (Proposition \ref{prop:polynomials:semiring}), this statement is a direct consequence of Example \ref{exa:morphism:ic-semiring:action} -- provided that we can show that the mapping rule $r \mapsto r X^{0_M}$ defines a morphism $R \to R[M]$ of semirings (Definition \ref{def:morphism_of_semrirings}). To show this, first observe that the equations $0_R X^{0_M} = 0$ and $1_R X^{0_M} = 1$ hold. Then, we can verify that the following equations are satisfied for every pair $(r,r') \in R \times R$.
\[
(r\cdot_R r') X^{0_M} =  (r X^{0_M}) \boxdot (r' X^{0_M})
\quad\quad\quad\quad\quad
(r+_R r') X^{0_M} =  (r X^{0_M}) + (r' X^{0_M})
\]
The previous equations show that the mapping rule $r \mapsto r X^{0_M}$ induces a morphism of semirings. As a result, we can apply the scenario described in Example \ref{exa:morphism:ic-semiring:action}, which proves the statement.
\end{proof}

\begin{proposition}\label{prop:alpha:alpha:alpha:boxdot:diagram}
Let $\alpha:R \times M \to R$ be action of an ic-semiring $(R,+_R,\cdot_R,0_R,1_R)$ on an ic-monoid $(M,+_M,0_M)$. The following diagram of functions commutes.
\[
\xymatrix@C-10pt{
*+!R(.5){ R \times R[M]} \ar[d]_{\mathsf{id}_R \times \widetilde{\alpha}} \ar[r]^-{\boxdot}& R[M] \ar[d]^{\widetilde{\alpha}}\\
*+!R(.5){R \times M}\ar[r]_-{\alpha}&M
}
\]
\end{proposition}
\begin{proof}
For every $r \in R$ and every $s = \sum_{m \in \mathsf{Sup}(s)} s_m X^m$, Proposition \ref{prop:polynomials:action:R_on_RM} gives us the following reformulations of the action $r \boxdot s$.
\[
r \boxdot s = \textstyle (rX^{0_M}) \boxdot \sum_{m\in \mathsf{Sup}(s)}s_mX^{m}= \sum_{m\in \mathsf{Sup}(s)}(r \cdot_R s_m)X^{m}
\]
The previous reformulation of $r \boxdot s$ allow us to deduce the following reformulation of $\widetilde{\alpha}(r \boxdot s)$.
\[
\widetilde{\alpha}(r \boxdot s) = \textstyle \sum_{m\in \mathsf{Sup}(s)} \alpha(r \cdot s_m,m) = \alpha(r,\sum_{m\in \mathsf{Sup}(s)} \alpha(s_m,m)) = \alpha(r,\widetilde{\alpha}(s))
\]
The resulting equation $\widetilde{\alpha}(r \boxdot s) = \alpha(r,\widetilde{\alpha}(s))$ implies the statement.
\end{proof}

\subsection{Linear algebra}\label{ssec:linear algebra}
The goal of this section is to develop a type of linear algebra that will allow us to resolve systems of linear equations in ic-monoids. Most of our development is inspired from the standard linear algebra using rows, columns (Definition \ref{def:col-row}) and scalar products (Convention \ref{def:hadamard:scalar-product}) to define matrix arithmetics (see Convention \ref{conv:matrix_products} and Remark \ref{rem:linearity:matrix-product}). The main results of this section focus on the associative and homomorphic properties of matrix products (see Proposition \ref{prop:associative-product-matrix} and Proposition \ref{prop:applying-morphism:on-matrix:alpha_tilde}).

For now on, we shall replace the notations of ic-semirings $(R,+_R,\cdot_R,0_R,1_R)$ and ic-monoids $(M,+_M,0_M)$ with their non-indexed versions, namely the notations $(R,+,\cdot,0,1)$ and $(M,+,0)$.

\begin{definition}[Vector algebra]\label{def:vectors}
Let $(M,+,0)$ be an object in $\mathbf{Icm}$. For every positive integer $n$, we denote by $\mathbf{V}_{n}(M)$ the set of $n$-tuples of elements in $M$. Since $M$ is an object in $\mathbf{Icm}$, the set $\mathbf{V}_{n}(M)$ can be viewed as the product $\textstyle \prod_{i \in [n]} M$ in $\mathbf{Icm}$ (see Remark \ref{rem:products}).
\end{definition}

\begin{definition}[Formal sum]\label{def:semigroups:formal-sum}
Let $(M,+,0)$ be an object in $\mathbf{Icm}$. For every positive integer $n$, we denote by the symbol $\sum$ the canonical morphism $\mathbf{V}_{n}(M) \to M$ that sums all the coefficients of a tuple in $M$, namely the function.
\[
\sum:
\left(
\begin{array}{rll}
\mathbf{V}_{n}(M) &\to& M\\
(x_i)_{i\in [n]} && x_1 + x_2 + \dots + x_n
\end{array}
\right)
\]
We shall denote each image $\sum((x_i)_{i\in [n]})$ more conveniently as $\sum_{i \in [n]}x_i$.
\end{definition}

\begin{definition}[Vector products]\label{def:hadamard:vector-product}
Let $\alpha:R \times M \to R$ be an action of a semiring $(R,+,\cdot,0,1)$ on a commutative monoid $(M,+,0)$. For every positive integer $n$, we denote by $\mathbf{V}_{n}(\alpha)$ the obvious function $\mathbf{V}_{n}(R) \times \mathbf{V}_{n}(M) \to \mathbf{V}_{n}(M)$ resulting form the action structure of the function $\alpha$, namely the following function.
\[
\mathbf{V}_{n}(\alpha):
\left(
\begin{array}{rll}
\mathbf{V}_{n}(R) \times \mathbf{V}_{n}(M) &\to& \mathbf{V}_{n}(M)\\
((r_i)_{i\in [n]},(x_i)_{i\in [n]}) && (\alpha(r_i,x_i))_{i\in [n]})
\end{array}
\right)
\]
For every element $s \in \mathbf{V}_{n}(R)$ and element $y \in \mathbf{V}_{n}(M)$, the two mapping rules $x \mapsto \mathbf{V}_{n}(\alpha)(s,x)$ and $r \mapsto \mathbf{V}_{n}(\alpha)(r,y)$ induce morphisms $\mathbf{V}_{n}(M) \to \mathbf{V}_{n}(M)$ and $\mathbf{V}_{n}(R) \to \mathbf{V}_{n}(M)$ in $\mathbf{Icm}$.
\end{definition}

\begin{convention}[Scalar products]\label{def:hadamard:scalar-product}
Let $\alpha:R \times M \to R$ be an action of a semiring $(R,+,\cdot,0,1)$ on a commutative monoid $(M,+,0)$. For every positive integer $n$, we denote the composition of the morphism $\mathbf{V}_{n}(\alpha)$ (Definition \ref{def:hadamard:vector-product}) with the morphism $\sum$ (Definition \ref{def:semigroups:formal-sum}) as follows.
\[
\langle\cdot,\cdot\rangle^{\alpha}:
\left(
\begin{array}{ccc}
\mathbf{V}_{n}(R) \times \mathbf{V}_{n}(M) &\to& M\\
(x,y) &\mapsto& \langle x,y\rangle^{\alpha}
\end{array}
\right)
\]
According to the previous construction, for every pair $(x,y)$ of $n$-tuples $x = (x_i)_{i\in [n]} \in \mathbf{V}_{n}(R)$ and $y = (y_i)_{i\in [n]} \in \mathbf{V}_{n}(M)$, the following equation holds.
\[
\langle x,y\rangle^{\alpha} = \textstyle \sum_{i \in [n]} \alpha(x_i , y_i)
\]
Because the sum used in the previous formula is mostly formal, we will keep this notation even when the underlying ic-monoid is multiplicative (\emph{e.g.} as with ic-monoids of rationals).
\end{convention}

\begin{definition}[Matrix algebra]\label{def:matrices}
Let $(M,+,0)$ be an object in $\mathbf{Icm}$. For every pair $(n,m)$ of positive integers, we denote by $\mathbf{M}_{n,m}(M)$ the set of $(n \times m)$-matrices of elements in $M$. Since $M$ is an object in $\mathbf{Icm}$, the set $\mathbf{M}_{n,m}(M)$ can be viewed as the product $\textstyle \prod_{(i,j) \in [n]\times[m]} M$ in $\mathbf{Icm}$.
\end{definition}

\begin{convention}[Transpose]\label{conv:transpose}
Let $(M,+,0)$ be an object in $\mathbf{Icm}$. For every pair $(n,m)$ of positive integers and every $(n \times m)$-matrix $A = (a_{i,j})_{i,j} \in \mathbf{M}_{n,m}(M)$, we will denote by $A^t$ the transpose matrix  $(a_{i,j})_{j,i}$ in $\mathbf{M}_{m,n}(M)$. It is straightforward to verify that the transpose operation $A \mapsto A^t$ is an involution, which means that the identity $(A^t)^t = A$ holds.
\end{convention}

\begin{definition}[Columns and rows]\label{def:col-row}
Let $(M,+,0)$ be an object in $\mathbf{Icm}$. For every pair $(n,m)$ of positive integers and every matrix $A = (a_{i,j})_{i,j}$ in $\mathbf{M}_{n,m}(M)$, we define the following elements of $\mathbf{V}_n(M)$ and $\mathbf{V}_m(M)$ for every $i \in [n]$ and every $j \in [n]$.
\[
\mathsf{col}_j(A) := (a_{i,j})_{i \in [n]} \in \mathbf{V}_n(M)
\quad\quad\quad\quad
\mathsf{row}_i(A) := (a_{i,j})_{j \in [m]} \in \mathbf{V}_m(M)
\]
Note that the matrix $A$ can be viewed as either an $n$-tuple $(\mathsf{row}_i(A))_{i \in [n]}$ or as an $m$-tuple $(\mathsf{col}_j(A))_{j \in [m]}$.
\end{definition}

\begin{proposition}\label{prop:col-row:morphisms}
Let $(M,+,0)$ be an object in $\mathbf{Icm}$. For every pair $(n,m)$ of positive integers and every pair $(i,j) \in [n] \times [n]$, the two mapping rules $A \mapsto \mathsf{col}_j(A)$ and $A \mapsto \mathsf{row}_i(A)$ induce morphisms $\mathbf{M}_{n,m}(M) \to \mathbf{V}_n(M)$ and $\mathbf{M}_{n,m}(M) \to \mathbf{V}_n(M)$ in $\mathbf{Icm}$.
\end{proposition}
\begin{proof}
This is a straightforward consequence of the equations used in Definition \ref{def:col-row}.
\end{proof}

\begin{proposition}\label{prop:transpose:col-row}
Let $(M,+,0)$ be an object in $\mathbf{Icm}$. For every pair $(n,m)$ of positive integers and every matrix $A = (a_{i,j})_{i,j}$ in $\mathbf{M}_{n,m}(M)$, the equations $\mathsf{row}_i(A) = \mathsf{col}_i(A^t)$ and $\mathsf{col}_j(A) = \mathsf{row}_j(A^t)$ hold for every $i \in [n]$ and every $j \in [n]$.
\end{proposition}
\begin{proof}
This directly follows from Convention \ref{conv:transpose} and Definition \ref{def:col-row}.
\end{proof}

\begin{convention}[Matrix arithmetics]\label{conv:matrix_products}
Let $\alpha:R \times M \to R$ be an action of a semiring $(R,+,\cdot,0,1)$ on a commutative monoid $(M,+,0)$ and let $(n,m,p)$ be a triple of positive integers:
\begin{itemize}
\item[1)] for every matrix $A=(a_{i,j})_{i,j} \in \mathbf{M}_{n,m}(R)$ and every matrix $B=(b_{i,j})_{i,j}\in \mathbf{M}_{m,p}(M)$, we define the \emph{direct matrix product} $A \underline{\alpha} B$ as the following matrix in $\mathbf{M}_{n,p}(M)$.
\[
(A \underline{\alpha} B)_{i,j} := \langle\mathsf{row}_i(A),\mathsf{col}_j(B)\rangle^{\alpha}  = \textstyle \sum_{k \in [m]} \alpha(a_{i,k},b_{k,j})
\]
\item[2)] for every matrix $A=(a_{i,j})_{i,j} \in \mathbf{M}_{n,m}(M)$ and every matrix $B=(b_{i,j})_{i,j}\in \mathbf{M}_{m,p}(R)$, the \emph{reverse matrix product} $A\overline{\alpha} B$ as the following matrix in $\mathbf{M}_{n,p}(M)$.
\[
(A\overline{\alpha} B)_{i,j} := \langle\mathsf{col}_j(B),\mathsf{row}_i(A)\rangle^{\alpha} =  \textstyle \sum_{k \in [m]} \alpha(b_{k,j},a_{i,k})
\]
\end{itemize}
Note that if the semiring $(R,+,\cdot,0,1)$ is commutative and we take $(M,+,0)$ to be the commutative monoid $(R,+,0)$, then the identity $A \underline{\cdot} B = A\overline{\cdot} B$ holds. In this case, we will denote the resulting product matrix as $AB$.
\end{convention}

\begin{remark}[Linearity]\label{rem:linearity:matrix-product}
This remark discusses the (bi)linear properties of the matrix product defined in Convention \ref{conv:matrix_products}. Let $\alpha:R \times M \to R$ be an action of a semiring $(R,+,\cdot,0,1)$ on a commutative monoid $(M,+,0)$ and let $(n,m,p)$ be a triple of positive integers. It follows from Proposition \ref{prop:col-row:morphisms} and the constructions given in Definition \ref{def:hadamard:vector-product} and Convention \ref{def:hadamard:scalar-product} that, for every matrix $A_0 \in \mathbf{M}_{n,m}(R)$, $B_0 \in \mathbf{M}_{m,p}(M)$, $A_1 \in \mathbf{M}_{n,m}(M)$, and $B_1 \in \mathbf{M}_{m,p}(R)$, the mapping rules $B \mapsto A_0 \underline{\alpha} B$, $A \mapsto A \underline{\alpha} B_0$, $B \mapsto A_1 \overline{\alpha} B$, and $A \mapsto A \overline{\alpha} B_1$ each induce morphisms in $\mathbf{Icm}$ of the following types.
\[
\def\arraystretch{1.2}
\left\{
\begin{array}{ll}
\underline{\alpha}:\mathbf{M}_{n,m}(M)  \to \mathbf{M}_{n,p}(M)&
\underline{\alpha}:\mathbf{M}_{m,p}(R) \to \mathbf{M}_{n,p}(M)\\
\overline{\alpha}:\mathbf{M}_{n,m}(R)  \to \mathbf{M}_{n,p}(M)&
\overline{\alpha}: \mathbf{M}_{m,p}(M) \to \mathbf{M}_{n,p}(M)
\end{array}
\right.
\]
From now on, we will use these linearity properties implicitly without reference to this remark.
\end{remark}

\begin{proposition}\label{prop:transpose-product-matrix}
Let $\alpha:R \times M \to R$ be an action of a semiring $(R,+,\cdot,0,1)$ on a commutative monoid $(M,+,0)$. For every triple $(n,m,p)$ of positive integers, every matrix $A \in \mathbf{M}_{n,m}(R)$ and every matrix $B\in \mathbf{M}_{m,p}(M)$, the following equation holds.
\[
(B\overline{\alpha} A)^t = A^t\underline{\alpha}B^t
\]
\end{proposition}
\begin{proof}
Denote $A = (a_{i,j})_{i,j} \in \mathbf{M}_{n,m}(R)$ and $B=(b_{i,j})_{i,j}\in \mathbf{M}_{m,p}(M)$.
For every $i \in [n]$ and every $j \in [n]$, we can show that the following equations hold.
\begin{align*}
((B\overline{\alpha} A)^t)_{i,j} & = (B\overline{\alpha} A)_{j,i}&(\textrm{Convention \ref{conv:transpose}})\\
&= \langle\mathsf{col}_i(A),\mathsf{row}_j(B)\rangle^{\alpha}&(\textrm{Convention \ref{conv:matrix_products}})\\
&= \langle\mathsf{row}_i(A^t),\mathsf{col}_j(B^t)\rangle^{\alpha}&(\textrm{Proposition \ref{prop:transpose:col-row}})\\
&= (A^t\underline{\alpha}B^t)_{i,j}&(\textrm{Convention \ref{conv:matrix_products}})
\end{align*}
The previous identities show that the two matrices $(B\overline{\alpha} A)^t$ and $A^t\underline{\alpha}B^t$ are equal.
\end{proof}

\begin{proposition}\label{prop:associative-product-matrix}
Let $\alpha:R \times M \to R$ be an action of a commutative semiring $(R,+,\cdot,0,1)$ on a commutative monoid $(M,+,0)$. For every quadruple $(n,m,p,q)$ of positive integers, the following associativity properties hold.
\[
\def\arraystretch{1.5}
\left\{
\begin{array}{llccc}
\forall A \in \mathbf{M}_{n,m}(R), B\in \mathbf{M}_{m,p}(R), C \in \mathbf{M}_{p,q}(M) & \Rightarrow & (AB) \underline{\alpha} C &=& A\underline{\alpha}(B \underline{\alpha} C)\\
\forall A \in \mathbf{M}_{n,m}(R), B\in \mathbf{M}_{m,p}(M),C \in \mathbf{M}_{p,q}(R)  & \Rightarrow & (A\underline{\alpha} B) \overline{\alpha} C &=& A\underline{\alpha}(B \overline{\alpha} C)\\
\forall A \in \mathbf{M}_{n,m}(M), B\in \mathbf{M}_{m,p}(R),C \in \mathbf{M}_{p,q}(R) & \Rightarrow & (A \overline{\alpha} B) \overline{\alpha} C &=& A \overline{\alpha} (B C)\\
\end{array}
\right.
\]
\end{proposition}
\begin{proof}
We shall denote $A = (a_{u,k})_{u,k} \in \mathbf{M}_{n,m}(R)$, $B=(b_{k,j})_{k,j}\in \mathbf{M}_{m,p}(M)$ and $C=(c_{j,v})_{j,v}\in \mathbf{M}_{p,q}(M)$. The bottommost equation of the statement is a direct consequence of the topmost equation according to the correspondence established in Proposition \ref{prop:transpose-product-matrix} via the transpose involution $X \mapsto X^t$. To show the topmost equation shown in the statement, let us observe that for every pair of matrices $A \in \mathbf{M}_{n,m}(R)$ and $B\in \mathbf{M}_{m,p}(R)$ and every $u \in [n]$, the following equations hold.
\begin{equation}\label{eq:associativity:part1}
\mathsf{row}_u(AB) = ((AB)_{u,j})_{j \in [p]} = (\langle\mathsf{row}_u(A),\mathsf{col}_j(B)\rangle^{\mathsf{mul}})_{j \in [p]} =\big(\textstyle\sum_{k \in [m]}a_{u,k}\cdot b_{k,j}\big)_{j \in [p]}
\end{equation}
It also follows from Definition \ref{def:col-row} that, for every matrix $C \in \mathbf{M}_{p,q}(M)$ and every $v \in [q]$, we have the equation $\mathsf{col}_{v}(C) =  (c_{j,v})_{j \in [p]}$. Hence, we deduce that the following equations hold for every $u \in [n]$ and every $v \in [q]$.
\begin{align*}
((AB) \underline{\alpha} C)_{u,v} & = \langle\mathsf{row}_u(AB),\mathsf{col}_{v}(C)\rangle^{\alpha} & (\textrm{Convention \ref{conv:matrix_products}})\\
& = \sum_{j \in [p]} \alpha(\sum_{k \in [m]}a_{u,k}\cdot b_{k,j},c_{j,v}) & (\textrm{Equation (\ref{eq:associativity:part1}))})\\
& = \sum_{j \in [p]} \sum_{k \in [m]} \alpha(a_{u,k}\cdot b_{k,j},c_{j,v}) & (\textrm{Definition \ref{def:action-semi-groups}})\\
& = \sum_{k \in [m]} \sum_{j \in [p]} \alpha(a_{u,k}, \alpha( b_{k,j},c_{j,v})) & (\textrm{Definition \ref{def:action-semi-groups}})\\
& =  \sum_{k \in [m]} \alpha(a_{u,k}, \sum_{j \in [p]} \alpha( b_{k,j},c_{j,v})) & (\textrm{Definition \ref{def:action-semi-groups}})\\
& =  \sum_{k \in [m]} \alpha(a_{u,k}, \mathsf{col}_v(B\underline{\alpha}C)) & (\textrm{Definition \ref{def:action-semi-groups}})\\
& = \langle\mathsf{row}_u(A),\mathsf{col}_v(B\underline{\alpha}C)\rangle^{\alpha} &
\end{align*}
The previous series of equations show that the coefficient of the matrix $(AB) \underline{\alpha} C$ are the same as those of $A\overline{\alpha} (B \underline{\alpha} C)$.
To show the middle equation of the statement, let us observe that, for every pair of matrices $A \in \mathbf{M}_{n,m}(R)$ and $B\in \mathbf{M}_{m,p}(B)$ and every $u \in [n]$, the following equations hold.
\begin{equation}\label{eq:associativity:part2}
\mathsf{row}_u(A\underline{\alpha} B) = ((A\underline{\alpha} B)_{u,j})_{j \in [p]} = (\langle\mathsf{row}_u(A),\mathsf{col}_j(B)\rangle^{\alpha})_{j \in [p]} =\big(\textstyle\sum_{k \in [m]}\alpha(a_{u,k},b_{k,j})\big)_{j \in [p]}
\end{equation}
Similarly, for every matrix $C \in \mathbf{M}_{p,q}(M)$ and every $v \in [q]$, we have the following equations.
\begin{equation}\label{eq:associativity:part3}
\mathsf{col}_v(B\overline{\alpha}C) = ((B\overline{\alpha}C)_{k,v})_{k \in [m]} = (\langle\mathsf{col}_v(C),\mathsf{row}_k(B)\rangle^{\alpha})_{k \in [m]} =\big(\textstyle\sum_{j \in [p]} \alpha(c_{j,v},b_{k,j})\big)_{k \in [m]}
\end{equation}
We then deduce that the following equations holds for every $u \in [n]$ and every $v \in [q]$.
\begin{align*}
((A\underline{\alpha} B) \overline{\alpha} C)_{u,v} & = \langle\mathsf{col}_v(C),\mathsf{row}_{u}(A\underline{\alpha}B)\rangle^{\alpha} & (\textrm{Convention \ref{conv:matrix_products}})\\
& = \sum_{j \in [p]} \alpha(c_{j,v},\sum_{k \in [m]}\alpha(a_{u,k},b_{k,j})) & (\textrm{Equation (\ref{eq:associativity:part2})})\\
& = \sum_{j \in [p]} \sum_{k \in [m]} \alpha(c_{j,v},\alpha(a_{u,k},b_{k,j})) & (\textrm{Definition \ref{def:action-semi-groups}})\\
& = \sum_{j \in [p]} \sum_{k \in [m]} \alpha(c_{j,v} \cdot a_{u,k},b_{k,j})) & (\textrm{Definition \ref{def:action-semi-groups}})\\
& =  \sum_{k \in [m]} \sum_{j \in [p]}   \alpha(a_{u,k}, \alpha(c_{j,v},b_{k,j})) & (\textrm{Definition \ref{def:action-semi-groups}})\\
& =  \sum_{k \in [m]}  \alpha(a_{u,k}, \sum_{j \in [p]} \alpha(c_{j,v},b_{k,j})) & (\textrm{Definition \ref{def:action-semi-groups}})\\
& = \langle\mathsf{row}_u(A),\mathsf{col}_v(B\overline{\alpha}C)\rangle^{\alpha} & (\textrm{Equation (\ref{eq:associativity:part3})})
\end{align*}
The previous series of equations show that the coefficient of the matrix $(A\underline{\alpha} B) \overline{\alpha} C$ are the same as those of $A\underline{\alpha} (B \overline{\alpha} C)$. This finishes the proof of the statement.
\end{proof}

\begin{convention}[Applying functions]\label{conv:applying-morphism:on-matrix}
Let $(M,+,0)$ and $(N,+,0)$ be two objects in $\mathbf{Icm}$. For every function $f:M \to N$, every pair $(n,m)$ of positive integers, and every matrix $A = (a_{i,j})_{i,j} \in \mathbf{M}_{n,m}(M)$, we will denote as $f(A)$ the matrix resulting from applying the function $f:M \to N$ on each coefficient of $A$, namely the matrix $(f(a_{i,j}))_{i,j}$ in $\mathbf{M}_{n,m}(N)$.
\end{convention}

\begin{proposition}[Homomorphism]\label{prop:applying-morphism:on-matrix:alpha_tilde}
Let $\alpha:R \times M \to R$ be an action of an ic-semiring $(R,+,\cdot,0,1)$ on an ic-monoid $(M,+,0)$. For every triple $(n,m,p)$ of positive integers, every matrix $A \in \mathbf{M}_{n,m}(R[M])$ and every matrix $B \in \mathbf{M}_{m,p}(R)$, the following equation holds.
\[
\widetilde{\alpha}(A\, \overline{\boxdot}\, B) = \widetilde{\alpha}(A) \,\overline{\alpha}\, B
\]
\end{proposition}
\begin{proof}
First, for every pair $(i,j) \in [n] \times [m]$, let us recall that the $(i,j)$-coefficient of the matrix $A\overline{\boxdot} B$ satisfies the following equations.
\begin{equation}\label{eq:applying-alpha:on-matrix:prep}
(A\,\overline{\boxdot}\, B)_{i,j} = \langle\mathsf{col}_j(B),\mathsf{row}_i(A)\rangle^{\boxdot} =  \textstyle \sum_{k \in [m]} b_{k,j} \boxdot a_{i,k}
\end{equation}
From this, we deduce that the $(i,j)$-coefficient of the matrix $\widetilde{\alpha}(A\overline{\alpha} B)$ is as follows.
\begin{align*}
(\widetilde{\alpha}(A\,\overline{\boxdot}\, B))_{i,j} & = \widetilde{\alpha}((A\,\overline{\boxdot} \,B)_{i,j}) &(\textrm{Convention \ref{conv:applying-morphism:on-matrix}})\\
& = \widetilde{\alpha}(\sum_{k \in [m]} b_{k,j} \boxdot a_{i,k}) &(\textrm{Equation \ref{eq:applying-alpha:on-matrix:prep}})\\
& = \sum_{k \in [m]} \widetilde{\alpha}( b_{k,j} \boxdot a_{i,k}) & (\textrm{Remark \ref{rem:action-as-morphism:ic-monoids}})\\
& = \sum_{k \in [m]} \alpha( b_{k,j}, \widetilde{\alpha}(a_{i,k})) & (\textrm{Proposition \ref{prop:alpha:alpha:alpha:boxdot:diagram}})\\
& = \langle\mathsf{col}_j(B),\mathsf{row}_i(\widetilde{\alpha}(A))\rangle^{\alpha} = (\widetilde{\alpha}(A) \,\overline{\alpha}\, B)_{i,j}&
\end{align*}
These equations show that the matrix $\widetilde{\alpha}(A \,\overline{\boxdot}\, B)$ is equal to the matrix
$\widetilde{\alpha}(A) \,\overline{\alpha}\, B$.
\end{proof}

\subsection{Skew linear algebra}\label{secc:skew:linear-algebra}
The goal of this section is to define a calculus environment in which the questions raised in the introduction of this paper can be formulated. Subsequently, in section \ref{secc:up-down:linear-algebra}, we shall introduce a slightly more familiar environment in which calculations can be carried over in the same fashion as in standard linear algebra. The definitions and propositions stated in the present section all relate to the ic-monoid structure $(\mathbb{Q}(M),\star,\mathbbm{1}_M)$ defined in Convention \ref{conv:rationals:ic-monoids} and the action $\odot:\overline{\mathbb{Z}}_3 \times \mathbb{Q}(M) \to  \mathbb{Q}(M)$ described in Example \ref{def:action:rationals} for any ic-monoid $(M,+,0)$.

\begin{definition}[Skew congruence]\label{def:action-modulo}
Let $(M,+_M,0_M)$ be an ic-monoid. For every pair $(m_1,m_2)$ of elements in $\mathbb{Q}(M)$, we use the expression $m_1 \Rrightarrow m_2$ to mean that there exists $m \in \mathbb{Q}(M)$ such that the relation $m_1 = m_2 \star (\updownarrow \!\odot\, m)$ holds in $\mathbb{Q}(M)$.
\end{definition}

\begin{proposition}
Let $(M,+_M,0_M)$ be an ic-monoid. The relation $\cdot \Rrightarrow  \cdot$ defines a pre-order relation on the elements of the ic-monoid $\mathbb{Q}(M)$.
\end{proposition}
\begin{proof}
First, the relation $\Rrightarrow$ is reflexive because we have the equations $m = m \star ( \updownarrow \!\odot\, \mathbbm{1}_M)$. Let us now show the transitive property. In this respect, let $(m_1,m_2,m_3)$ be a triple of elements of $\mathbb{Q}(M)$ such that the two relations $m_1 \Rrightarrow m_2$ and $m_2 \Rrightarrow m_3$ hold. Let us prove that the relation $m_1 \Rrightarrow m_3$ holds. By Definition \ref{def:action-modulo}, there exists a pair $(m,m')$ of elements in $\mathbb{Q}(M)$ such that $m_1 = m_2 \star (\updownarrow \!\odot\, m)$ and $m_2 = m_3 \star ( \updownarrow \!\odot\, m')$. As a result, we have the following equations.
\[
m_1 = m_2 \star (\updownarrow \!\odot\, m) = m_3 \star ( \updownarrow \!\odot\, m') \star ( \updownarrow \!\odot\, m) = m_3 \star ( \updownarrow \!\odot\, (m'+m))
\]
This shows that the relation $m_1 \Rrightarrow m_3$ holds and that the binary relation $\Rrightarrow  $ is transitive. Hence, the relation $\Rrightarrow$ is a pre-order.
\end{proof}

\begin{convention}[Skew equivalence]\label{conv:skew-equality}
Let $(M,+_M,0_M)$ be an ic-monoid. For every pair $(n,m)$ of positive integers and every pair $A = (a_{i,j})_{i,j}$ and $B=(b_{i,j})_{i,j}$ of matrices in $\mathbf{M}_{n,m}(\mathbb{Q}(M))$, we define the relation $A \Rrightarrow B$ if, and only if, for every pair $(i,j) \in [n] \times [m]$, the relation $a_{i,j} \Rrightarrow b_{i,j}$ holds in $\mathbb{Q}(M)$.
\end{convention}

\begin{remark}[Skew equivalence]\label{rem:skew-equality:reformulation}
The goal of this remark is to reformulate the definition of the relation $\Rrightarrow$ given in Definition \ref{def:action-modulo}. Specifically, we can show that, for every ic-monoid $(M,+,0)$, every pair $(n,m)$ of positive integers, and every pair $A = (a_{i,j})_{i,j}$ and $B=(b_{i,j})_{i,j}$ of matrices in $\mathbf{M}_{n,m}(\mathbb{Q}(M))$, the relation $A \Rrightarrow B$ holds if, and only if, there exists an element $\lambda_{i,j} \in M$ for which the following relation holds in $\mathbb{Q}(M)$.
\begin{equation}\label{eq:skew-equality:reformulation}
a_{i,j} = \mathbf{0}(\lambda_{i,j}) \star b_{i,j}
\end{equation}
To show this, first recall (from Definition \ref{def:action-modulo}) that the relation $A \Rrightarrow B$ holds if, and only if, for every pair $(i,j) \in [n] \times [m]$, there exists an element $c_{i,j} \in \mathbb{Q}(M)$ for which the equality $a_{i,j} =  b_{i,j} \star (\updownarrow \!\odot \,c_{i,j})$ holds. By Convention \ref{conv:symmetry}, we can show that the latter is equivalent to the equality $a_{i,j} =  b_{i,j} \star \mathbf{0}(\mathsf{sum}(c_{i,j}))$ for every pair $(i,j) \in [n] \times [m]$. Equation (\ref{eq:skew-equality:reformulation}) follows if we take $\lambda_{i,j} = \mathsf{sum}(c_{i,j})$ for every pair $(i,j) \in [n] \times [m]$.
\end{remark}

\begin{convention}[Indices]\label{conv:indices}
Let $n$ be a positive integer. For every matrix $Z = (z_{k,1})_{k,1}$ in $\mathbf{M}_{n,1}(\overline{\mathbb{Z}}_3)$ and every element $r\in \overline{\mathbb{Z}}_3$, we will denote as $\mathsf{Ind}_Z(r)$ the subset $\{k~|~z_{k,1} = r\}$ of $[n]$. Because the mapping rule $k \mapsto z_{k1}$ defines a function $[n] \to \overline{\mathbb{Z}}_3$ whose fiber at an element $r\in \overline{\mathbb{Z}}_3$ corresponds to the set $\mathsf{Ind}_Z(r)$, the following equation holds.
\[
[n] = \mathsf{Ind}_Z(0) \cup \mathsf{Ind}_Z(\uparrow) \cup \mathsf{Ind}_Z(\downarrow) \cup \mathsf{Ind}_Z(\updownarrow)
\]
More specifically, the previous union forms a partition (\emph{i.e.} a disjoint union) of the set $[n]$.
\end{convention}

\begin{remark}[Skew systems of equations]\label{rem:unification}
The goal of this remark is establish a correspondence between systems of linear equations and the relation defined in Convention \ref{conv:skew-equality}. In particular, we want to understand this relation of Convention \ref{conv:skew-equality} from the viewpoint of standard linear algebra operations.

Let $(M,+,0)$ be an ic-monoid and $A = (a_{i,j})_{i,j}$ be a matrix in $\mathbf{M}_{n,m}(M)$. Let $\mathbf{1}(A)$ be the matrix resulting from applying the morphism $\mathbf{1}:M \to \mathbb{Q}(M)$ on each coefficient of $A$ (see Convention \ref{conv:applying-morphism:on-matrix} and Definition \ref{def:rationals:ic-monoids}). Hence, the matrix $\mathbf{1}(A)$ is in $\mathbf{M}_{n,m}(\mathbb{Q}(M))$. Let now $Z = (z_{i,1})_{i,1}$ be a matrix in $\mathbf{M}_{n,1}(\overline{\mathbb{Z}}_3)$ satisfying a relation of the form $\mathbf{1}(A) \overline{\odot} Z \Rrightarrow 0$ in $\mathbf{M}_{m,1}(\mathbb{Q}(M))$. By Remark \ref{rem:skew-equality:reformulation}, this means that, for every pair $i \in [n]$, there exists an element $\lambda_{i} \in M$ such that the following equation holds.
\begin{equation}\label{eq:unification:1}
\sum_{k \in [m]} (z_{k,1} \odot a_{i,k}) = \mathbf{0}(\lambda_{i})
\end{equation}
We can reformulate equation (\ref{eq:unification:1}) by only involving the coefficient of $A$. To do so, let us define the following elements in $M$.
\[
u_i = \sum_{k \in \mathsf{Ind}_X(\updownarrow)} a_{i,k} + \sum_{k \in \mathsf{Ind}_X(\uparrow)} a_{i,k}
\quad\quad\quad
d_i = \sum_{k \in \mathsf{Ind}_X(\updownarrow)} a_{i,k} + \sum_{k \in \mathsf{Ind}_X(\downarrow)} a_{i,k}
\]
It follows from  Definition \ref{def:action:rationals} that the element $\sum_{k \in [m]} (x_{k,1} \odot a_{i,k})$ is equal to the rational $\left[u_i/d_i\right]$. As a result, equation (\ref{eq:unification:1}) can be reformulate as the following two equations.
\begin{equation}\label{eq:unification:2}
\sum_{k \in \mathsf{Ind}_X(\updownarrow)} a_{i,k} + \sum_{k \in \mathsf{Ind}_X(\uparrow)} a_{i,k} = \lambda_i = \sum_{k \in \mathsf{Ind}_X(\updownarrow)} a_{i,k} + \sum_{k \in \mathsf{Ind}_X(\downarrow)} a_{i,k}
\end{equation}
Since equation (\ref{eq:unification:2}) hold for every $i \in [n]$, we deduce that, for every matrix $A \in \mathbf{M}_{n,m}(M)$, any relation of the form $\mathbf{1}(A) \overline{\odot} Z \Rrightarrow 0$ in $\mathbf{M}_{m,1}(\mathbb{Q}(M))$ is equivalent to the following equation.
\begin{equation}\label{eq:unification:3}
\sum_{k \in \mathsf{Ind}_X(\updownarrow)} \mathsf{col}_k(A) + \sum_{k \in \mathsf{Ind}_X(\uparrow)} \mathsf{col}_k(A) = \sum_{k \in \mathsf{Ind}_X(\updownarrow)} \mathsf{col}_k(A) + \sum_{k \in \mathsf{Ind}_X(\downarrow)} \mathsf{col}_k(A)
\end{equation}
Equation (\ref{eq:unification:3}) should be compared to the types of equations discussed in the introduction of this paper (see equation (\ref{eq:intro:icm-equations:haplotypes(recomb)})). We explain this link further in Remark \ref{rem:unification:2}.
\end{remark}

\begin{convention}[Balanced vectors]
Let $n$ be a positive integer. We shall say that a matrix vector $Z = (z_{k,1})_{k,1}$ in $\mathbf{M}_{n,1}(\overline{\mathbb{Z}}_3)$ is \emph{balanced} if it satisfies the following equivalence.
\[
\mathsf{Ind}_Z(\uparrow) = \cmemptyset \quad\quad \Leftrightarrow \quad\quad \mathsf{Ind}_Z(\downarrow) = \cmemptyset
\]
The set of balanced matrix vectors in $\mathbf{M}_{n,1}(\overline{\mathbb{Z}}_3)$ will be denoted as $\mathbf{Bal}_n$
\end{convention}

\begin{convention}[Skew null space]\label{conv:skew-null-space}
Let $(M,+_M,0_M)$ be an ic-monoid. For every pair $(n,m)$ of positive integers and every matrix $A \in \mathbf{M}_{n,m}(M)$, we define the \emph{skew null space of $A$} as the following set.
\[
\mathsf{SNull}(A) = \{Z \in \mathbf{Bal}_m~|~\mathbf{1}(A) \,\overline{\odot}\, Z \Rrightarrow  0 \textrm{ in }\mathbf{M}_{n,1}(\mathbb{Q}(M))\}
\]
\end{convention}

The following remark prepares toward the introduction and development of a general method to find linear relationships linking any given set of elements in an ic-monoid. In particular, this method will address the resolution of equations of the form discussed in the introduction of this paper -- see equation (\ref{eq:intro:icm-equations:haplotypes(recomb)}).

\begin{remark}[Finding linear equations]\label{rem:unification:2}
The goal of this remark is to refine Remark \ref{rem:unification} and pinpoint a context in which the elements of a skew null space (Convention \ref{conv:skew-null-space}) can be described in terms of linear equations in the underlying ic-monoid, as opposed to linear equations involvingd columns of matrices (as in Remark \ref{rem:unification}).

Let $(M_i)_{i \in [n]}$ be a finite collection of ic-monoids $M_i$ and let $M$ denote the product $\prod_{i \in [n]} M_i$ in $\mathbf{Icm}$. We shall denote by $\pi_i$ the projection morphism $M \to M_i$. Note that the ic-monoid $M_i$ can be seen as a submonoid of $M$ through the monomorphism $x \mapsto (\delta_{i,k}(x))_{k \in [n]}$ where we define $\delta_{i,k}(x)$ as follows for every pair $(i,k)$ of elements in $[n]$.
\[
\delta_{i,k}(x) =
\left\{
\begin{array}{ll}
x&\textrm{if }k = i\\
0_{M_i}&\textrm{if }k \neq i\\
\end{array}
\right.
\]
For every finite collection $e = (e_{j})_{j \in [m]}$ of elements in $M$, let us denote by $e^i_j$ the element $\pi_i(e_j)$ sent to the ic-monoid $M$ through the monomorphism $M_i \hookrightarrow M$ (see above). Let us denote by $\mathsf{A}(e)$ the resulting matrix $(e^i_j)_{i,j}$ in $\mathbf{M}_{n,m}(M)$. Note that because $M =\prod_{i \in [n]} M_i$, the vector
\[
\mathsf{col}_j(\mathsf{A}(e))  = (\pi_i(e_j))_{i \in [n]}
\]
can be identified with the element $e_j$ of the ic-monoid $M$. In particular, any linear equation satisfied by the vectors $\mathsf{col}_j(\mathsf{A}(e))$, for every $j \in [m]$, is also satisfied by the vectors $e_j$, for every $j \in [m]$, through the obvious correspondence $\mathsf{col}_j(\mathsf{A}(e)) \mapsto e_j$. As a result, it follows from Remark \ref{rem:unification}, and more specifically equation (\ref{eq:unification:3}), that for every element $Z \in \mathsf{SNull}(\mathsf{A}(e))$, the following equation holds.
\[
\sum_{k \in \mathsf{Ind}_Z(\updownarrow)} e_k + \sum_{k \in \mathsf{Ind}_Z(\uparrow)} e_k = \sum_{k \in \mathsf{Ind}_Z(\updownarrow)} e_k + \sum_{k \in \mathsf{Ind}_Z(\downarrow)} e_k
\]
Conversely, let us consider a triple $(U_0,U_1,U_{-1})$ of disjoint subsets of $[n]$ for which the following relations hold.
\begin{equation}\label{eq:unification:linear-relation:recovered}
\def\arraystretch{1.5}
\left\{
\begin{array}{l}
U_1 = \cmemptyset\quad\Leftrightarrow\quad U_{-1} = \cmemptyset\\
\sum_{k \in U_0} e_k + \sum_{k \in U_1} e_k = \sum_{k \in U_0} e_k + \sum_{k \in U_{-1}} e_k
\end{array}
\right.
\end{equation}
It follows from the equivalence of (\ref{eq:unification:linear-relation:recovered}) (top row) that we can define a matrix $Z = (z_{i,1})_{i,1}$ in $\mathbf{Bal}_{m}$ whose coefficients satisfy the following relations for every $i \in [m]$.
\[
z_{i,1} =
\left\{
\begin{array}{ll}
\updownarrow&\textrm{if }i \in U_0\\
\uparrow&\textrm{if }i \in U_1\\
\downarrow&\textrm{if }i \in U_{-1}\\
0&\textrm{otherwise.}\\
\end{array}
\right.
\]
By using the previous equations, as well as Definition \ref{def:action:rationals} and equation (\ref{eq:unification:linear-relation:recovered}), we can show that the product $\mathbf{1}(\mathsf{A}(e)) \overline{\odot} Z$ is equal to a matrix $Y = (y_{i})_i \in\mathbf{M}_{n,1}(\mathbb{Q}(M))$ whose coefficients satisfy the following equations for every $i \in [n]$.
\[
y_i = \left[ \frac{\sum_{k \in U_0} e_k^i + \sum_{k \in U_1} e_k^i}{\sum_{k \in U_0} e_k^i + \sum_{k \in U_{-1}} e_k^i} \right] = \mathbf{0}(\sum_{k \in U_0} e_k^i + \sum_{k \in U_1} e_k^i)
\]
It follows from Remark \ref{rem:skew-equality:reformulation} that the relation $\mathbf{1}(\mathsf{A}(e))\overline{\odot} Z \Rrightarrow 0$ holds in ic-monoid $\mathbf{M}_{n,1}(\mathbb{Q}(M))$. Since the matrix $Z$ belongs to $\mathbf{Bal}_{m}$, it is also an element of $\mathsf{SNull}(\mathsf{A}(e))$. In other words, the present remark shows that there is a bijection of the following form.
\[
\mathsf{SNull}(\mathsf{A}(e)) \cong \{(U_0,U_1,U_{-1})~|~\textrm{such that (\ref{eq:unification:linear-relation:recovered}) is satisfied.}\}
\]
\end{remark}

\subsection{Up-down linear algebra}\label{secc:up-down:linear-algebra}
The formulation of the problem stated through the concept of skew null spaces (in Convention \ref{conv:skew-null-space}) is not practical because the action used for the matrix product does not allow us to take advantage of the methods of standard linear algebra to find solutions for it. If we want to create opportunities to do so, it is necessary to embed the underlying \emph{acted-on} ic-monoid into the underlying \emph{acting} ic-semiring such that the action used for the matrix product is induced by the semiring multiplication.
To this end, the present section will provide a preliminary step to the reformulation of \emph{skew} null spaces (Convention \ref{conv:skew-null-space}) into \emph{relative} null spaces (see Convention \ref{conv:relative-null-space} below). In this respect, the main objective of this section is to find a context skew null spaces and relative null spaces define the same type of problems (see Theorem \ref{theo:correspondence:null-spaces:1} and Theorem \ref{theo:correspondence:null-spaces:2}).

\begin{definition}[Canceling matrix]\label{def:weakly-canceling-porperty}
Let $(M,+,0)$ be an ic-monoid and let $(n,m)$ be a pair of positive integers. We will say that a matrix $A = (a_{i,j})$ in $\mathbf{M}_{n,m}(\overline{\mathbb{Z}}_3[M])$ is \emph{canceling} if for every $(i,j) \in [n] \times [m]$, there exists an element $\lambda_{i,j} \in \overline{\mathbb{Z}}_3[M]$ such that equation $a_{i,j} = \,\updownarrow \!\boxdot\, \lambda_{i,j}$ holds (see Proposition \ref{prop:polynomials:action:R_on_RM}). We shall denote by $\mathbf{Can}_{n,m}(M)$ the set of canceling matrices in $\mathbf{M}_{n,m}(\overline{\mathbb{Z}}_3[M])$.
\end{definition}

\begin{proposition}[Canceling property]\label{prop:cancleling-property:skew-relation:matrix}
Let $(M,+,0)$ be an ic-monoid and let $(n,m)$ be a pair of positive integers. For every matrix $A \in \mathbf{Can}_{n,m}(M)$, the relation $\widetilde{\odot}(A) \Rrightarrow 0$ holds.
\end{proposition}
\begin{proof}
Let us denote the matrix $A$ as $(a_{i,j})_{i,j}$. By Definition \ref{def:weakly-canceling-porperty}, for every pair $(i,j) \in [n] \times [m]$, there exists an element $\lambda_{i,j} \in \overline{\mathbb{Z}}_3[M]$ such that the equation $a_{i,j} = \, \updownarrow \! \boxdot \, \lambda_{i,j}$ holds. By Proposition \ref{prop:alpha:alpha:alpha:boxdot:diagram} and Definition \ref{def:action:rationals}, we have the following equations.
\[
\widetilde{\odot}(a_{i,j}) = \widetilde{\odot}(\updownarrow \! \boxdot \, \lambda_{i,j}) = \updownarrow \! \odot \, \widetilde{\odot}(\lambda_{i,j}) = \mathbf{0}(\widetilde{\odot}(\lambda_{i,j}))
\]
By Remark \ref{rem:skew-equality:reformulation}, this means that the relation $\widetilde{\odot}(A) \Rrightarrow 0$ holds.
\end{proof}

\begin{convention}[Equivalence]\label{conv:skew-equality-matrix}
Let $(R,+,\cdot,0,1)$ be an ic-semiring and let $(M,+,0)$ be an ic-monoid. For every multiplicative atomic structure $(\Omega,\rho,\mathsf{Inv})$ on $R$, every pair $(n,m)$ of positive integers and every pair $A = (a_{i,j})_{i,j}$ and $B=(b_{i,j})_{i,j}$ of matrices in $\mathbf{M}_{n,m}(R[M])$, we define the relation $A \equiv B\,(\mathsf{wrt}\,\Omega)$ if, and only if, for every pair $(i,j) \in [n] \times [m]$, the relation $a_{i,j} \equiv b_{i,j} \,(\mathsf{wrt}\,\Omega)$ holds in $R[M]$.
\end{convention}

\begin{remark}[Equivalence relation]
Let $(R,+,\cdot,0,1)$ be an integral ic-semiring, let $(M,+,0)$ be an ic-monoid and let $(n,m)$ be a pair of positive integers. It follows from the properties of the $\Omega$-tensor congruence (Definition \ref{def:tensor-congruence}) that the relation $\cdot \equiv \cdot\,(\mathsf{wrt}\,\Omega)$ defines an equivalence relation on $\mathbf{M}_{n,m}(R[M])$.
\end{remark}

\begin{convention}[Relative null space]\label{conv:relative-null-space}
Let $(M,+_M,0_M)$ be an ic-monoid and let $(\Omega,\rho,\mathsf{Inv})$ be the multiplicative atomic structure on $\overline{\mathbb{Z}}_3$ defined in Example \ref{exa:Z3:multiplicative}. For every pair $(n,m)$ of positive integers and every matrix $A \in \mathbf{M}_{n,m}(\overline{\mathbb{Z}}_3[M])$, we define the \emph{relative null space of $A$} as the following set.
\[
\mathsf{RNull}(A) = \{Z \in \mathbf{Bal}_{m}~|~\exists Y \in \mathbf{Can}_{n,1}(M):A \,\overline{\boxdot} \,Z \equiv Y \,(\mathsf{wrt}\,\Omega)\}
\]
\end{convention}

\begin{convention}[Canonical embedding]\label{conv:canonical-embedding:emb}
Let $(M,+_M,0_M)$ be an ic-monoid. We shall denote as $\mathsf{emb}$ the function that maps every element $m \in M$ to the element $\uparrow \! X^{\mathbf{1}(m)} \in \overline{\mathbb{Z}}_3[\mathbb{Q}(M)]$. For convenience,  we shall follow Convention \ref{def:rationals:ic-monoids} and identify $\mathbf{1}(m)$ with the element $m$ itself such that we will write $\mathsf{emb}(m) = \uparrow \! X^{m}$.
\end{convention}

The following remark uses the notion of coefficient sets introduced in Definition \ref{def:coefficient-sets}. Hence, we suggest the reader to review the examples of section \ref{ssec:modulo-tensors} before working on the content of Remark \ref{rem:tensor-congurence:emb} below.

\begin{remark}[Tensor congruence]\label{rem:tensor-congurence:emb}
The goal of this remark is to show that the function $\mathsf{emb}$, defined in Convention \ref{conv:canonical-embedding:emb}, possesses homomorphic properties, which will be important for the conclusion of this paper (section \ref{ssec:Calculus_and_algorithm}).
Let $(M,+_M,0_M)$ be an ic-monoid and let $(\Omega,\rho,\mathsf{Inv})$ be the multiplicative atomic structure on $\overline{\mathbb{Z}}_3$ defined in Example \ref{exa:Z3:multiplicative}. For every element $m \in M$, we have the identities $\chi_{\Uparrow}(\mathsf{emb}(m)) = \{m\}$ and $\chi_{\Downarrow}(\mathsf{emb}(m)) = \cmemptyset$, which give us the expression $|\mathsf{emb}(m)|_{\Omega} = \uparrow X^{m}$ (see Convention \ref{conv:modulo:formula:bar_s_R}). On the other hand, we have the following identities.
\[
\chi_{\Uparrow}(\mathsf{emb}(m)+\mathsf{emb}(m')) = \chi_{\Uparrow}(\uparrow X^{m} + \uparrow X^{m'}) =\left\{
\begin{array}{ll}
\{m\} &\textrm{if }m= m'\\
\{m,m'\} &\textrm{if }m \neq m'\\
\end{array}
\right.
\]
This means that the equation $\chi_{\Uparrow}(\mathsf{emb}(m)+\mathsf{emb}(m')) = \{m,m'\}$ holds in general, and it is straightforward to show that $\chi_{\Downarrow}(\mathsf{emb}(m)+\mathsf{emb}(m')) = \cmemptyset$. As a result, we deduce that the equation $|\mathsf{emb}(m)+\mathsf{emb}(m')|_{\Omega} = \uparrow X^{m+m'}$ holds (see Convention \ref{conv:modulo:formula:bar_s_R}). Since the beginning of this remark has also provided the equation $|\mathsf{emb}(m+m')|_{\Omega} = \uparrow\!X^{m+m'}$, we deduce that the equation $|\mathsf{emb}(m)+\mathsf{emb}(m')|_{\Omega} = |\mathsf{emb}(m+m')|_{\Omega}$ holds. By using Proposition \ref{prop:modulo-tensor:fix-points}, this equation can be rephrased as the following relation.
\[
\mathsf{emb}(m)+\mathsf{emb}(m') \equiv \mathsf{emb}(m+m')\,(\mathsf{wrt}\,\Omega)
\]
Even though the function $\mathsf{emb}$ is compatible with the addition operation (via the previous relation), it does not define a morphism of ic-monoids, because the element $\mathsf{emb}(0) = \uparrow\!X^{0_M}$ is not equivalent to the zero element $0X^{0_M} \in \overline{\mathbb{Z}}_3[M]$ through the relation $\cdot \equiv \cdot \,(\mathsf{mod}\,\Omega)$.
\end{remark}

\begin{theorem}[Correspondence I]\label{theo:correspondence:null-spaces:1}
Let $(M,+,0)$ be an ic-monoid. For every pair $(n,m)$ of positive integers and every matrix $A \in \mathbf{M}_{n,m}(M)$, the following inclusion holds (see Convention \ref{conv:canonical-embedding:emb} and Convention \ref{conv:applying-morphism:on-matrix}).
\[
\mathsf{RNull}(\mathsf{emb}(A)) \subseteq \mathsf{SNull}(A)
\]
\end{theorem}
\begin{proof}
Before proving the statement, recall that the matrix $\mathsf{emb}(A)$ is equal to the matrix $(\uparrow \! X^{\mathbf{1}(a_{i,j})})_{i,j}$ in $\mathbf{M}_{n,m}(\overline{\mathbb{Z}}_3[\mathbb{Q}(M)])$ (see Convention \ref{conv:applying-morphism:on-matrix} and Convention \ref{conv:canonical-embedding:emb}). This implies that the equation $\widetilde{\odot}(\mathsf{emb}(A)) = \mathbf{1}(A)$ holds. Let us now show the inclusion given in the statement. In this respect, let $Z \in \mathbf{Ban}_m$ be an element of $\mathsf{RNull}(\mathsf{emb}(A))$ and let us show that $Z$ belongs to the set $\mathsf{SNull}(A)$.  There exists a matrix $Y \in \mathbf{Can}_{n,1}(M)$ such that the equation $\mathsf{emb}(A)  \,\overline{\boxdot}\,Z \equiv Y \,(\mathsf{wrt}\,\Omega)$ holds. As a result, we have the following equations.
\begin{align*}
\widetilde{\odot}(\mathsf{emb}(A) Z) & = \widetilde{\odot}(|\mathsf{emb}(A) Z|_{|\Omega})&(\textrm{Proposition \ref{prop:YAMC}})\\
& = \widetilde{\odot}(|Y|_{\Omega}) &(\textrm{Proposition \ref{prop:tensor_congruence:implies:bar_R}})\\
& = \widetilde{\odot}(Y)&(\textrm{Proposition \ref{prop:YAMC}})\\
& \Rrightarrow 0 &(\textrm{Proposition \ref{prop:cancleling-property:skew-relation:matrix}})
\end{align*}
We can also use Proposition \ref{prop:applying-morphism:on-matrix:alpha_tilde} to rewrite the expression of the matrix $\widetilde{\odot}(\mathsf{emb}(A) \,\overline{\boxdot}\, Z)$ in $\mathbf{M}_{n,1}(M)$. Specifically, we have the following equations in  $\mathbf{M}_{n,1}(M)$.
\begin{align*}
\widetilde{\odot}(\mathsf{emb}(A) \,\overline{\boxdot}\, Z) & = \widetilde{\odot}(\mathsf{emb}(A)) \,\overline{\odot} \,Z &(\textrm{Proposition \ref{prop:applying-morphism:on-matrix:alpha_tilde}})\\
& = \mathbf{1}(A) \,\overline{\odot}\,Z&(\widetilde{\odot}(\mathsf{emb}(A)) = \mathbf{1}(A))
\end{align*}
In other words, we have shown that the relation $A \overline{\boxdot} Z \Rrightarrow 0$ holds, which shows that $Z$ is an element of $\mathsf{SNull}(A)$.
\end{proof}

\begin{theorem}[Correspondence II]\label{theo:correspondence:null-spaces:2}
Let $(M,+,0)$ be an ic-monoid such that we can express $M$ as a (finite) product $\prod_{i \in [n]} M_i$ in $\mathbf{Icm}$. For every finite collection $e = (e_{j})_{j \in [m]}$ of elements in $M$, the following equation holds (see the notations of Remark \ref{rem:unification:2}).
\[
\mathsf{RNull}(\mathsf{emb}(\mathsf{A}(e))) = \mathsf{SNull}(\mathsf{A}(e))
\]
\end{theorem}
\begin{proof}
By Theorem \ref{theo:correspondence:null-spaces:1}, the inclusion $\mathsf{RNull}(\mathsf{emb}(\mathsf{A}(e))) \subseteq \mathsf{SNull}(\mathsf{A}(e))$ is satisfied. There only remains to show the opposite inclusion.

Let $Z  = (z_{k,1})_{k,1}$ be an element of $\mathsf{SNull}(\mathsf{A}(e)) \subseteq \mathbf{Ban}_{m}$ and let us show that the element $Z$ belongs to $\mathsf{RNull}(\mathsf{emb}(\mathsf{A}(e)))$. We shall denote by $Y = (y_i)_{i \in [n]}$ the matrix $\mathsf{emb}(\mathsf{A}(e)) \overline{\boxdot} Z$ in $\mathbf{M}_{n,1}(\overline{\mathbb{Z}}_3[M])$. In other words, for every $i \in [n]$, we have the following equations.
\[
y_i = \sum_{k \in [m]} (z_{k,1} X^{0_M}) \boxdot (\uparrow X^{e^i_{k}}) = \sum_{k \in [m]} z_{k,1}   X^{e^i_{k}}
\]
If we now use the fact that $y_i$ can be written as $\sum_{m \in \mathsf{Sup}(y_i)} y_{i,m}X^m$, then we can express $y_{i,m}$ as the following sum, where we denote $Q(m|e) = \{k~|~e^i_k = m\}$.
\[
y_{i,m} = \sum_{k \in Q(m|e)}z_{k,1}
\]
The previous formula implies that we have the following equations for every element $U \in \Omega$.
\begin{align*}
\chi_{U}(y_i) &= \{m~|~y_{i,m} \in U\}&\\
&= \{m~|~\textrm{there exists }k \in Q(m|e): z_{k,1} \in U\}&(\textrm{Proposition \ref{prop:extend-atomic:large-sums}})\\
&= \textstyle \{e^i_k~|~k \in \bigcup_{u \in U}\mathsf{Ind}_Z(u)\} & (\textrm{Convention \ref{conv:indices}})
\end{align*}
Since the elements of $\Omega$ are $\Uparrow = \{\uparrow,\updownarrow\}$ and $\Downarrow = \{\downarrow,\updownarrow\}$, the previous equations give us the following specifications.
\[
\chi_{\Uparrow}(y_i) = \{e^i_{k}~|~k \in \mathsf{Ind}_Z(\uparrow) \cup \mathsf{Ind}_Z(\updownarrow)\}
\quad\quad\quad
\chi_{\Downarrow}(y_i) = \{e^i_{k}~|~k \in \mathsf{Ind}_Z(\downarrow) \cup \mathsf{Ind}_Z(\updownarrow)\}
\]
Furthermore, since the relation $\mathbf{1}(A(e)) \,\overline{\odot}\, Z \Rrightarrow Y$ is satisfied (because $Z \in \mathsf{SNull}(\mathsf{A}(e))$), Remark \ref{rem:unification} -- and more specifically equation (\ref{eq:unification:2}) --  implies that there exists an element $\lambda_i \in M$ for which the equations shown below, in (\ref{eq:correspondenceII:2}), hold for every element $i \in [n]$.
\begin{equation}\label{eq:correspondenceII:2}
\kappa_{\Uparrow}(y_i) = \sum_{k \in \mathsf{Ind}_X(\uparrow)} e^i_k + \sum_{k \in \mathsf{Ind}_X(\updownarrow)} e^i_k = \lambda_i = \sum_{k \in \mathsf{Ind}_Z(\downarrow)} e^i_k + \sum_{k \in \mathsf{Ind}_X(\updownarrow)} e^i_k = \kappa_{\Downarrow}(y_i)
\end{equation}
Now, recall that the following equations hold.
\begin{equation}\label{eq:correspondenceII:1}
|y_i|_{\Omega} =  \left\{
\begin{array}{ll}
\uparrow X^{\kappa_{\Uparrow}(y_i)} +  \downarrow X^{\kappa_{\Downarrow}(y_i)}&\textrm{if }\chi_{\Uparrow}(y_i) \neq \cmemptyset\textrm{ and } \chi_{\Downarrow}(y_i) \neq \cmemptyset\\
\uparrow X^{\kappa_{\Uparrow}(y_i)} &\textrm{if }\chi_{\Uparrow}(y_i) \neq \cmemptyset\textrm{ and } \chi_{\Downarrow}(y_i) = \cmemptyset\\
\downarrow X^{\kappa_{\Downarrow}(y_i)}&\textrm{if }\chi_{\Uparrow}(y_i) = \cmemptyset\textrm{ and } \chi_{\Downarrow}(y_i) \neq \cmemptyset\\
0&\textrm{if }\chi_{\Uparrow}(y_i) = \cmemptyset\textrm{ and } \chi_{\Downarrow}(y_i) = \cmemptyset
\end{array}
\right.
\end{equation}
Since the matrix vector $Z$ is balanced (\emph{i.e.} $\mathsf{Ind}_Z(\uparrow) \neq \cmemptyset \Leftrightarrow \mathsf{Ind}_Z(\downarrow) \neq \cmemptyset$), we deduce from equation (\ref{eq:correspondenceII:2}) that the relations shown in (\ref{eq:correspondenceII:1}) can be reformulated as follows.
\begin{align*}
|y_i|_{\Omega} & =
\left\{
\begin{array}{ll}
\uparrow X^{\lambda_i} +  \downarrow X^{\lambda_i} &\textrm{if }\chi_{\Uparrow}(y_i) \neq \cmemptyset \textrm{ and } \chi_{\Downarrow}(y_i) \neq \cmemptyset\\
0&\textrm{otherwise.}
\end{array}
\right.\\
& =
\left\{
\begin{array}{ll}
\updownarrow X^{\lambda_i} &\textrm{if }\chi_{\Uparrow}(y_i) \neq \cmemptyset \textrm{ and } \chi_{\Downarrow}(y_i) \neq \cmemptyset\\
\updownarrow \boxdot \,0&\textrm{otherwise.}
\end{array}
\right.
\end{align*}
The previous equations show that if we let $Y'$ denote the matrix $(|y_i|_{\Omega})_{i,1}$ in $\mathbf{M}_{n,1}(\overline{\mathbb{Z}}_3[M])$, then we have $Y' \in  \mathbf{Can}_{n,1}(M)$. Also, by Proposition \ref{prop:modulo-tensor:fix-points}, the fact that we defined $Y'$ as the matrix $(|y_i|_{\Omega})_{i,1}$ implies that the relation $Y \equiv Y'\,(\mathsf{wrt}\,\Omega)$ holds. Hence, we have the relation $(\mathsf{emb}(\mathsf{A}(e)) \,\overline{\boxdot}\, Z \equiv Y'\,(\mathsf{wrt}\,\Omega)$ where $Y' \in  \mathbf{Can}_{n,1}(M)$. This means that the element $Z$ is in the relative null space $\mathsf{RNull}(\mathsf{emb}(\mathsf{A}(e)))$.
\end{proof}

\subsection{Calculus and algorithm}\label{ssec:Calculus_and_algorithm}
The goal of this section is to discuss an algorithmic procedure (see Remark \ref{rem:algogirthm:linear-systems} and Example \ref{exa:pedigrad-unification}) that returns a description of the null spaces discussed in Theorem \ref{theo:correspondence:null-spaces:2}. To this end, we shall reformulate the specification of these null spaces one last times solely in terms of operations in the semiring $\overline{\mathbb{Z}}_3$ (see Definition \ref{conv:null-space}). In order to make this reformulation, we shall need the homomorphic properties discussed in Remark \ref{rem:tensor-congurence:emb} and the concept of a basis for an ic-monoid (Definition \ref{def:basis}).

As mentioned earlier, we shall focus on the situation presented in Theorem \ref{theo:correspondence:null-spaces:2}. In this respect, we shall let $(M,+,0)$ denote an ic-monoid that can be expressed as a (finite) product $\prod_{i \in [n]} M_i$ in $\mathbf{Icm}$. Additionally, we shall assume that, for every $i \in [n]$, there exists a positive integer $n_i$ such that, if we let $S_i$ denote the finite set $[n_i]$, then the ic-monoid $M_i$ is (isomorphic to) the ic-monoid $B_2^{S_i}$. We suggest that the reader reviews Example \ref{exa:products:powers-of-B_2} as a refresher on certain notations and intuitions.

\begin{definition}[Basis]\label{def:basis}
Let $(N,+,0)$ be an ic-monoid. We will say that a collection $(h_i)_{i \in I}$ of elements in $N$ is a \emph{basis} for $N$ if for every element $e \in N$, there exists a unique finite subset $U \subseteq I$ such that we can express $e$ as a sum $\sum_{i \in U} h_i$.
\end{definition}

\begin{convention}[Basis]
For every element $i \in [n]$ and every element $x \in S_i$, we denote as $\mathsf{h}^i(x)$ the element of $M_i = B_2^{S_i}$ represented by the tuple $(\mathsf{h}^i_t(x))_{t \in S_i}$ whose coefficients are defined as follows.
\[
\mathsf{h}^i_t(x) =
\left\{
\begin{array}{ll}
1&\textrm{if }t = x\\
0&\textrm{otherwise.}
\end{array}
\right.
\]
For example, if we take $S_i = [4]$, then we have $\mathsf{h}^i(1) = \mathtt{1000}$, $\mathsf{h}^i(2) = \mathtt{0100}$, $\mathsf{h}^i(3) = \mathtt{0010}$ and $\mathsf{h}^i(4) = \mathtt{0001}$. It is straightforward to verify that the collection $(\mathsf{h}^i(x))_{x \in S_i}$ defines a basis for $M_i$, namely: for every element $e \in M_i$, there exists a unique subset $U \subseteq S_i$ such that the following equation holds.
\[
e = \sum_{x \in U}\mathsf{h}^i(x)
\]
For example, the element $\mathsf{1011}$ of $B_2^{S_i}$ can be expressed as $ \mathsf{h}^i(1) + \mathsf{h}^i(3)+\mathsf{h}^i(4)$ such that its associated set $U$ is equal to $\{1,3,4\}$.
\end{convention}

\begin{convention}[Coproduct and cardinality]\label{conv:coproduct:S_ast}
We shall denote as $S_{\ast}$ the coproduct $\coprod_{i \in [n]} S_i$. As is common practice in set theory, the elements of the set $S_{\ast}$ will be seen as pairs $(i,x)$ where $i \in [n]$ and $x \in S_i$. We shall denote as $n_{\ast}$ the cardinal of the set $S_{\ast}$. Note that the integer $n_{\ast}$ is equal to the sum $n_1+n_2+\dots+n_n$, where $n_i$ is the cardinal of the set $S_i$ for every $i \in [n]$. The definition of $n_{\ast}$ implicitely gives a bijection $\omega:S_{\ast} \to [n_{\ast}]$ sending an element $(i,x) \in S_{\ast}$ to the integer $\sum_{j = 1}^{i-1} n_j + x$.
\end{convention}

\begin{remark}[Submonoid and basis]\label{remark:basis:M_M_i}
As shown in Remark \ref{rem:unification:2}, the ic-monoid $M_i$ can be seen as a submonoid of $M$. As such, for every $i \in [n]$ and every $x \in S_i$, the element $\mathsf{h}^i(x) \in M_i$ can be seen as an element of $M$. Then, we can use Definition \ref{def:basis} and the projections $\pi_i:M \to M_i$ to show that the collection of elements $\mathsf{h}^i(x)$, where the elements $i$ and $x$ run over the sets $[n]$ and $S_i$, respectively, constitutes a basis for $M$. Indeed, for every element $e \in M$, we have the following expressions.
\[
\quad\quad\quad e = \sum_{i \in [n]}\pi_i(e) = \sum_{i \in [n]} \sum_{x \in U_i}\mathsf{h}^i(x)\quad\quad (\textrm{assuming that }\pi_i(e) = \sum_{x \in U_i}\mathsf{h}^i(x)\textrm{ for some }U_i)
\]
These expressions give us the expression $e = \sum_{(i,x) \in U} \mathsf{h}^i(x)$ where $U$ denotes the subset $\coprod_{i \in [n]} U_i$ of $S_{\ast}$. If there was another subset $V \subseteq S_{\ast}$ for which the formula
\[
e = \sum_{(i,x) \in V} \mathsf{h}^i(x)
\]
was satisfied, then, for every $i \in [n]$, the projection $\pi_i:M \to M_i$ would give us the equality $\sum_{x \in V_i} \mathsf{h}^i(x) = \pi_i(e) = \sum_{x \in U_i} \mathsf{h}^i(x)$ where $V_i = \{x~|~(i,x) \in V\}$. By universality of the basis $(\mathsf{h}^i(x))_{x \in S_{i}}$ for $M_i$, we would obtain the equation $U_i = V_i$. This implies that the collection $(\mathsf{h}^i(x))_{(i,x) \in S_{\ast}}$ constitutes a basis for the ic-monoid $M$.
\end{remark}

\begin{convention}[Notation]
We shall denote as $\mathsf{h}$ the basis of the ic-monoid $(M,+,0)$ defined by the collection $(\mathsf{h}^i(x))_{(i,x) \in S_{\ast}}$ (see Remark \ref{remark:basis:M_M_i} above).
\end{convention}

\begin{convention}[Generated set]
Let $(N,+_N,0_N)$ be an ic-monoid and let $(R,+_R,\cdot_R,0_R,1_R)$ be an ic-semiring. For every basis $h = (h_i)_{i \in I}$ of $N$, we will denote as $R[N|h]$ the set of elements $x \in R[N]$ that can be expressed as a sum $\sum_{i \in U} r_iX^{h_i}$ where $U$ is a finite subset of $I$ and $(r_i)_{i \in U}$ is a $U$-indexed collection of elements $r_i$ in $R$.
\end{convention}

\begin{proposition}\label{prop:canceling:updown:equivalence_through_bar_omega}
Let $(\Omega,\rho,\mathsf{Inv})$ be the multiplicative atomic structure on $\overline{\mathbb{Z}}_3$ defined in Example \ref{exa:Z3:multiplicative} and let $s$ be an element of $\overline{\mathbb{Z}}_3[M|\mathsf{h}]$. The following statements are equivalent.
\begin{itemize}
\item[1)] there exists $\lambda$ in $\overline{\mathbb{Z}}_3[M]$ such that $s = \updownarrow \boxdot\, \lambda$;
\item[2)] there exists $\lambda'$ in $\overline{\mathbb{Z}}_3[M]$ such that $|s|_{\Omega} = \updownarrow \boxdot\, \lambda'$;
\end{itemize}
\end{proposition}
\begin{proof}
Suppose that there exists an element $\lambda \in \overline{\mathbb{Z}}_3[M]$ such that $s = \updownarrow \boxdot\, \lambda$. By using the Cayley table shown in Example \ref{exa:semi-group:Z3Z}, we can show that each coefficient $s_m$ composing the tuple $s \in \overline{\mathbb{Z}}_3[M]$ is equal to either $0$ or $\updownarrow$. This means that the following equations hold (see Definition \ref{def:coefficient-sets}).
\[
\quad\quad\quad\quad\quad\quad\quad\quad\quad\kappa_{\Uparrow}(s) = K_{\updownarrow}(s) = \kappa_{\Downarrow}(s)\quad\textrm{where we denote } K_{\updownarrow}(s) = \{m~|~s_m = \updownarrow\}
\]
We deduce that if $s \neq 0$, then we have $|s|_{\Omega} = \,\uparrow X^{\kappa_{\Uparrow}(s)} + \downarrow X^{\kappa_{\Downarrow}(s)} = \,\updownarrow \! X^{K_{\updownarrow}(s)}$ and if $s = 0$, then we have $|s|_{\Omega} = 0 = \,\updownarrow \boxdot \,0$. In other words, we conclude that there exists an element $\lambda' \in \overline{\mathbb{Z}}_3[M]$ such that $|s|_{\Omega} = \,\updownarrow \boxdot\, \lambda'$.

Conversely, let us now assume that there exists an element $\lambda' \in \overline{\mathbb{Z}}_3[M]$ such that $|s|_{\Omega} = \,\updownarrow \boxdot\, \lambda'$. By Definition \ref{conv:modulo:formula:bar_s_R}, this can only happen if the identity $\kappa_{\Uparrow}(s) = \kappa_{\Downarrow}(s)$ holds. Note that this identity can be reformulated as follows (see Proposition \ref{prop:kappa-morphism}).
\begin{equation}\label{eq:canceling:updown:equivalence_through_bar_omega}
\textstyle \sum_{m \in \chi_{\Uparrow}(s)} m = \sum_{m \in \chi_{\Downarrow}(s)} m
\end{equation}
Since we have $s \in \overline{\mathbb{Z}}_3[M|\mathsf{h}]$, each index $m$ involved in equation (\ref{eq:canceling:updown:equivalence_through_bar_omega}) is an element of the basis $\mathsf{h} = (\mathsf{h}^i(x))_{(i,x) \in S_{\ast}}$. It follows from the universal property associated with $\mathsf{h}$ (see Definition \ref{def:basis}) and equation (\ref{eq:canceling:updown:equivalence_through_bar_omega}) that the relation $m \in \chi_{\Uparrow}(s)$ holds if, and only if, the relation $m \in \chi_{\Downarrow}(s)$ holds. In other words, the equation $\chi_{\Uparrow}(s) = \chi_{\Downarrow}(s)$ holds. This can only happen if the coefficients encoding the tuple $s \in \overline{\mathbb{Z}}_3[M]$ are equal to $0$ or $\updownarrow$ (for instance, if $s_m = \uparrow$, then $m \in \chi_{\Uparrow}(s) \backslash \chi_{\Downarrow}(s)$). We therefore deduce that the element $s \in \overline{\mathbb{Z}}_3[M]$ is of the form $\updownarrow \boxdot\, \lambda$ for some element $\lambda \in \overline{\mathbb{Z}}_3[M]$.
\end{proof}

\begin{convention}[Decomposition]\label{conv:decomposition_dec_op}
Let $e$ be an element of $M$ such that, for every $i \in [n]$, the projection $\pi_i(e)$ in $M_i$ admits a (unique) decomposition of the form $\sum_{x \in U_i} \mathsf{h}^i(x)$. For every such element $e$, we define the following elements in $\overline{\mathbb{Z}}_3[M|h]$.
\[
\textstyle
\mathsf{dec}_i(e) = \sum_{x \in U_i} \mathsf{emb}(\mathsf{h}^i(x))
\quad\quad\quad\quad
\mathsf{dec}(e) = \sum_{i \in [n]} \mathsf{dec}_i(e)
\]
As a result, the element $\mathsf{dec}(e)$ can also be expressed as a sum $\sum_{(i,x) \in U } \mathsf{emb}(\mathsf{h}^i(x))$ in $\overline{\mathbb{Z}}_3[M|h]$ where we denote as $U$ the subset $\prod_{i \in [n]} U_i$ of $S_{\ast}$.
\end{convention}

\begin{proposition}[Correspondence]\label{prop:dec-emb:correspondence}
Let $(\Omega,\rho,\mathsf{Inv})$ be the multiplicative atomic structure on $\overline{\mathbb{Z}}_3$ (defined in Example \ref{exa:Z3:multiplicative}). For every element $e \in M$, the following relation holds.
\[
\mathsf{emb}(e) \equiv \mathsf{dec}(e)\,(\mathsf{wrt}\,\Omega)
\]
\end{proposition}
\begin{proof}
Let $e$ be an element in $M$ and let $\sum_{(i,x) \in U} \mathsf{h}^i(x)$ be the unique expression of $e$ in $M$ in terms of the basis $(\mathsf{h}^i(x))_{(i,x) \in S_{\ast}}$. For every $i \in [n]$, we shall also denote as $U_i$ the sets $\{x~|~(i,x) \in U\}$ -- this means that the equation $U = \coprod_{i \in [n]} U_i$ holds. According to Remark \ref{rem:tensor-congurence:emb} and Theorem \ref{theo:tensor-congruence:semiring-compatiblity}, we have the following relations.
\begin{align*}
\mathsf{emb}(e) & \textstyle =  \mathsf{emb}(\sum_{(i,x) \in U} \mathsf{h}^i(x)) &\\
&\textstyle \equiv \sum_{(i,x) \in U} \mathsf{emb}(\mathsf{h}^i(x)) \,(\mathsf{wrt}\,\Omega) &(\textrm{Remark \ref{rem:tensor-congurence:emb}})\\
&\textstyle \equiv \mathsf{dec}(e) \,(\mathsf{wrt}\,\Omega)&(\textrm{Convention \ref{conv:decomposition_dec_op}})
\end{align*}
The previous series of relations proves the statement.
\end{proof}

\begin{proposition}[Correspondence]
Let $(\Omega,\rho,\mathsf{Inv})$ be the multiplicative atomic structure on $\overline{\mathbb{Z}}_3$ (defined in Example \ref{exa:Z3:multiplicative}). For every matrix $A$ in $\mathbf{M}_{n,m}(\overline{\mathbb{Z}}_3[M])$, the following equation holds.
\[
\mathsf{RNull}(\mathsf{emb}(A)) = \mathsf{RNull}(\mathsf{dec}(A))
\]
\end{proposition}
\begin{proof}
It follows from Proposition \ref{prop:dec-emb:correspondence} (and the fact that the relation $\cdot \equiv \cdot \,(\mathsf{wrt}\,\Omega)$ is symmetric) that the following relation holds.
\[
\mathsf{dec}(A) \equiv \mathsf{emb}(A) \,(\mathsf{wrt}\,\Omega)
\]
It then follows from Theorem \ref{theo:tensor-congruence:semiring-compatiblity} that we can multiply the previous relation with any matrix. Specificaly, for every matrix $Z$ in $\mathbf{M}_{m,1}(\overline{\mathbb{Z}}_3)$, the following relation holds.
\[
\mathsf{dec}(A)\,\overline{\boxdot}\,Z \equiv \mathsf{emb}(A)\,\overline{\boxdot}\,Z \,(\mathsf{wrt}\,\Omega)
\]
If we take $Z$ to be in $\mathsf{RNull}(\mathsf{emb}(A))$, then there exists $Y \in \mathbf{Can}_{n,1}(M)$ such that $\mathsf{emb}(A)\,\overline{\boxdot}\,Z \equiv Y \,(\mathsf{wrt}\,\Omega)$. As a result, we obtain the relation $\mathsf{dec}(A)\,\overline{\boxdot}\,Z \equiv Y \,(\mathsf{wrt}\,\Omega)$, which shows that $Z$ belongs to $\mathsf{RNull}(\mathsf{dec}(A))$. Conversely, if we take $Z$ to be in $\mathsf{RNull}(\mathsf{dec}(A))$, then there exists $Y \in \mathbf{Can}_{n,1}(M)$ such that $\mathsf{dec}(A)\,\overline{\boxdot}\,Z \equiv Y \,(\mathsf{wrt}\,\Omega)$, which shows that $\mathsf{emb}(A)\,\overline{\boxdot}\,Z \equiv Y \,(\mathsf{wrt}\,\Omega)$ and hence $Z \in \mathsf{RNull}(\mathsf{emb}(A))$. This shows the equality of the statement.
\end{proof}

The following definition should be compared to Definition \ref{def:weakly-canceling-porperty}. Note that the definition below only involves the semiring $\overline{\mathbb{Z}}_3$ while Definition \ref{def:weakly-canceling-porperty} involves semirings of polynomials.

\begin{definition}[Canceling]\label{def:canceling:coef_in_Z3}
Let $(n,m)$ be a pair of positive integers. We will say that a matrix $A = (a_{i,j})$ in $\mathbf{M}_{n,m}(\overline{\mathbb{Z}}_3)$ is \emph{canceling} if for every $(i,j) \in [n] \times [m]$, the relation $a_{i,j} \in \{0, \updownarrow\}$ holds. We shall denote by $\mathbf{Can}_{n,m}$ the set of canceling matrices in $\mathbf{M}_{n,m}(\overline{\mathbb{Z}}_3)$.
\end{definition}

\begin{proposition}[Canceling property]\label{prop:canceling-property}
Let $(n,m,p)$ be a triple of positive integers. For every matrix $B \in \mathbf{M}_{p,n}(\overline{\mathbb{Z}}_3)$ and every matrix $A \in \mathbf{Can}_{n,m}$, the product $BA$ is in $\mathbf{Can}_{p,m}$.
\end{proposition}
\begin{proof}
We shall let $A$ and $B$ be of the form $(a_{i,j})_{i,j}$ and $(b_{i,j})_{i,j}$, respectively. Since the product $BA$ is already in $\mathbf{M}_{p,m}(\overline{\mathbb{Z}}_3)$, there only remains to verify the property stated in Definition \ref{def:canceling:coef_in_Z3}. Let us denote the matrix $BA$ as $(c_{i,j})_{i,j}$. With these notations, we have the following expression for every $(i,j) \in [p]\times [m]$.
\[
c_{i,j} = \textstyle  \sum_{k \in [n]}b_{i,k} \cdot a_{k,j}
\]
Now, we can use Definition \ref{def:canceling:coef_in_Z3} to establish that, for every $(k,j) \in [n] \times [m]$, the relation $a_{k,j}  \in \{0,\updownarrow\}$ holds. According to the Cayley table shown in Example \ref{exa:semi-group:Z3Z}, this means that the relation $b_{i,k} \,\boxdot\, a_{k,j} \in \{0,\updownarrow\}$ holds for every triple $(i,k,j) \in [p] \times [n] \times [m]$.
It follows from the axioms for ic-semirings that the element $c_{i,j}$ belongs to the set $\{0,\updownarrow\}$ for every $(i,j) \in [p] \times [m]$.
\end{proof}

\begin{definition}[Null space]\label{conv:null-space}
For every pair $(n,m)$ of positive integers and every matrix $A \in \mathbf{M}_{n,m}(\overline{\mathbb{Z}}_3)$, we define the \emph{null space of $A$} as the following set.
\[
\mathsf{Null}(A) = \{Z \in \mathbf{Bal}_{m}~|~AZ \in \mathbf{Can}_{n,1}\}
\]
\end{definition}

\begin{proposition}[Null space]\label{prop:cnull-space:anceling-property}
Let $(n,m)$ be a pair of positive integers. For every matrix $B \in \mathbf{M}_{n,n}(\overline{\mathbb{Z}}_3)$ and every matrix $A \in \mathbf{M}_{n,m}(\overline{\mathbb{Z}}_3)$, the inclusion $\mathsf{Null}(A) \subseteq \mathsf{Null}(BA)$ holds.
\end{proposition}
\begin{proof}
Directly follows from Definition \ref{conv:null-space} and Proposition \ref{prop:canceling-property}.
\end{proof}

\begin{example}[Swapping operation]\label{exa:swapping-operations}
Recall that in standard linear algebra (over rings and fields), we can swap rows of a matrix by using a matrix product operation. In much the same way, one can also use a matrix product operation to swap the rows of a matrix with coefficients in the ic-monoid $\overline{\mathbb{Z}}_3$. Specifically, let $(n,m)$ be a pair of positive integers and let $A$ be a matrix in $\mathbf{M}_{n,m}(\overline{\mathbb{Z}}_3)$. We can swap
the $k_1$-th row of $A$ with the $k_2$-th row of $A$ (where $k_1 < k_2$) by multiplying the matrix $A$ with the matrix $\mathsf{B}_{k_1}^{k_2} = (b_{i,j})_{i,j}$ of $\mathbf{M}_{n,n}(\overline{\mathbb{Z}}_3)$ defined as follows.
\[
b_{i,j} = \left\{
\begin{array}{ll}
\uparrow &\textrm{if }i = j\textrm{ and }j \notin \{k_1,k_2\}\\
\uparrow &\textrm{if }i = k_1\textrm{ and }j = k_2\\
\uparrow &\textrm{if }i = k_2\textrm{ and }j = k_1\\
0 &\textrm{otherwise.}\\
\end{array}
\right.
\]
In this case, we can check that we have the following equations.
\begin{equation}\label{eq:swapping:matrix_product}
\mathsf{B}_{k_1}^{k_2}A =
\left(
\begin{array}{c}
(\mathsf{row}_i(A))_{i = 1,\dots,k_1-1}\\
\mathsf{row}_{k_2}(A)\\
(\mathsf{row}_i(A))_{i = k_1+1,\dots,k_2-1}\\
\mathsf{row}_{k_1}(A)\\
(\mathsf{row}_i(A))_{i = k_2+1,\dots,n-1}\\
\end{array}
\right)
\quad\quad\quad\mathsf{B}_{k_2}^{k_1}\mathsf{B}_{k_1}^{k_2} = \left(
\begin{array}{ccccc}
\uparrow & 0 & 0 &\dots & 0\\
0 & \uparrow & 0 &\dots & 0\\
0 & 0 & \uparrow &\dots & 0\\
\vdots & \vdots & \vdots &\dots & \vdots\\
0 & 0 & 0 &\dots & \uparrow\\
\end{array}
\right)= \mathsf{Id}_{n}
\end{equation}
It follows from the right-hand side equation of (\ref{eq:swapping:matrix_product}), combined with Proposition \ref{prop:cnull-space:anceling-property}, that swapping rows in a matrix does not change its underlying null space. In other words, if $B$ denotes a product of matrices  of the form $\mathsf{B}_{k_1}^{k_2}$, then the equation $\mathsf{Null}(A) = \mathsf{Null}(BA)$ holds.
\end{example}

The following definition makes use of the morhpism $\omega$ introduced in Convention \ref{conv:coproduct:S_ast}.

\begin{definition}[Coefficient matrix]
Let $e = (e_{j})_{j \in [m]}$ be a finite collection of elements in $M$. For every $j \in [m]$ and every $i \in [n]$, we shall let $U_i(j)$ denote the unique subset of $[n_i]$ indexing the sum composing the expression of the element $\pi_i(e_j) \in M_i$ in terms of the basis $(\mathsf{h}^i(x))_{x \in S_i}$ for $M_i$  (see below).
\[
\pi_i(e_j) = \sum_{x \in U_i(j)}\mathsf{h}^i(x)
\]
Then, we define the matrix $\mathsf{Coef}(e)$ in $\mathbf{M}_{n_{\ast},m}(\overline{\mathbb{Z}}_3)$ as follows (see Convention \ref{conv:coproduct:S_ast}).
\[
\mathsf{Coef}(e) = (c_{k,j})_{k,j}
\quad\quad
c_{k,j} = \left\{
\begin{array}{ll}
\uparrow&\textrm{if }\omega^{-1}(k) = (i,x)\textrm{ and }x \in U_i(j)\\
0&
\end{array}
\right.
\]
We will give an example for the operation $\mathsf{Coef}$ in Example \ref{exa:pedigrad-unification}.
\end{definition}

\begin{theorem}[Correspondence III]\label{theo:correspondence:null-spaces:3}
Let $(\Omega,\rho,\mathsf{Inv})$ be the multiplicative atomic structure on $\overline{\mathbb{Z}}_3$ (defined in Example \ref{exa:Z3:multiplicative}). For every matrix $A$ in $\mathbf{M}_{n,m}(\overline{\mathbb{Z}}_3[M])$, the following equation holds.
\[
\mathsf{Null}(\mathsf{Coef}(e)) = \mathsf{RNull}(\mathsf{dec}(\mathsf{A}(e)))
\]
\end{theorem}
\begin{proof}
Throughout this proof, we shall denote $\mathsf{Coef}(e) = (c_{k,j})_{k,j}$. Before proving the statement, we shall make some comment on the coefficients of the matrix $\mathsf{dec}(\mathsf{A}(e))$.
First, by Convention \ref{conv:decomposition_dec_op}, we have the following equations for every $i \in [n]$ and $j\in [m]$.
\begin{align*}
\mathsf{dec}(e_j^i) & = \mathsf{dec}_i(e_j)&\\
& = \textstyle\sum_{i \in [n]}\sum_{x \in U_i(j)} \mathsf{emb}(\mathsf{h}^i(x))&\\
& = \textstyle \sum_{i \in [n]}\sum_{x \in U_i(j)} \uparrow\! X^{\mathsf{h}^i(x)}&\\
& = \textstyle \sum_{i \in [n]}\sum_{x \in S_i} c_{\omega(i,x),j} X^{\mathsf{h}^i(x)}&
\end{align*}
Let $Z = (z_{j,1})_{j,1}$ denote a matrix vector in $\mathbf{Bal}_{m}$. We shall denote the $(i,1)$-coefficient of the matrix product $\mathsf{dec}(\mathsf{A}(e))\,\overline{\boxdot}\,Z$ in $\mathbf{M}_{n,1}(\overline{\mathbb{Z}}_3[M])$ as $\alpha_{i}$ and, for every $k \in [n_{\ast}]$, we shall denote the $(k,1)$-coefficient of the matrix product $\mathsf{Coef}(e)Z$ in $\mathbf{M}_{n,1}(\overline{\mathbb{Z}}_3)$ as $\beta_{k}$. The series of identities shown below establishes a relationship between these coefficients for every $i \in [n]$.
\begin{align*}
\alpha_{i} & = \textstyle \sum_{j \in [m]}z_{j,1} \boxdot \mathsf{dec}(e_j^i) &\\
& = \textstyle \sum_{j \in [m]}z_{j,1} \boxdot(\sum_{i \in [n]}\sum_{x \in S_i} c_{\omega(i,x),j} X^{\mathsf{h}^i(x)})&\\
& = \textstyle \sum_{i \in [n]}\sum_{x \in S_i} (\sum_{j \in [m]}z_{j,1} \cdot c_{\omega(i,x),j}) X^{\mathsf{h}^i(x)}&\\
& = \textstyle \sum_{i \in [n]}\sum_{x \in S_i} \beta_{\omega(i,x)} X^{\mathsf{h}^i(x)}&(\textstyle\beta_{k} = \sum_{j \in [m]}z_{j,1} \cdot c_{k,j})
\end{align*}
In other words, we have the following equation for every $i \in [n]$.
\begin{equation}\label{eq:correspondence:3}
\alpha_{i} = \sum_{i \in [n]}\sum_{x \in S_i} \beta_{\omega(i,x)} X^{\mathsf{h}^i(x)},
\end{equation}
We shall now show the equality of the statement by using a double-inclusion argument. Suppose that the matrix $Z$ belongs to $\mathsf{Null}(\mathsf{Coef}(e))$. This means that, for every $k \in  [n_*]$, we have the relation $\beta_k \in \{0,\updownarrow\}$. Since we have equation (\ref{eq:correspondence:3})
it follows that for every $i \in [n]$, there exists an element $\lambda_i$ in $\overline{\mathbb{Z}}_3[M]$ such that the coefficient $\alpha_{i}$ is of the from $\updownarrow \!\boxdot \,\lambda_i$. This means that the matrix $\mathsf{dec}(\mathsf{A}(e))\,\overline{\boxdot}\,Z$ is in $\mathbf{Can}_{n,1}(M)$. Since the relation $\cdot \equiv \cdot\,(\mathsf{wrt}\,\Omega)$ is reflexive, it follows that the matrix vector $Z$ belongs to $\mathsf{RNull}(\mathsf{dec}(\mathsf{A}(e)))$.

Conversely, if the matrix $Z$ belongs to $\mathsf{RNull}(\mathsf{dec}(\mathsf{A}(e)))$, then for every $i \in [n]$, there exists an element $\lambda_i$ in $\overline{\mathbb{Z}}_3[M]$ such that the relation $\alpha_{i} \equiv \updownarrow \!\boxdot \,\lambda_i$ holds. By Proposition \ref{prop:modulo-tensor:fix-points}, this means that we have the following equation.
\[
|\alpha_{i}|_{\Omega} = |\!\!\updownarrow \!\boxdot \,\lambda_i|_{\Omega}
\]
By Proposition \ref{prop:canceling:updown:equivalence_through_bar_omega}, there exists an element $\lambda_i' \in \overline{\mathbb{Z}}_3[M]$ for which the equation $|\alpha_{i}|_{\Omega} = \updownarrow \!\boxdot \,\lambda_i'$ holds. Another application of Proposition \ref{prop:canceling:updown:equivalence_through_bar_omega} shows that there exists an element $\lambda_i^{\prime\prime}$ in $\overline{\mathbb{Z}}_3[M]$ for which the equation $\alpha_{i} = \updownarrow \!\boxdot \,\lambda_i^{\prime\prime}$ holds. Since we have equation (\ref{eq:correspondence:3}) and the elements of the basis $\mathsf{h}$ are all distinct, it follows that we have the relation $\beta_{\omega(i,x)} \in \{0,\updownarrow\}$ for every $i \in [n]$ and every $x \in S_i$. Because $\omega$ is a bijection $S_{\ast} \to [n_{\ast}]$ (Convention \ref{conv:coproduct:S_ast}), this means that the relation $\beta_{k} \in \{0,\updownarrow\}$ holds for every $k \in [n_{\ast}]$. In other words, the matrix vector $\mathsf{Coef}(e)Z$ is in $\mathbf{Can}_{n,1}$.
\end{proof}

\begin{convention}[Reduced row echelon form]\label{conv:reduced_row_echelon_form}
For every pair $(n,m)$ of positive integers, a matrix $A = (a_{i,j})_{i,j}$ in $\mathbf{M}_{n,m}(\overline{\mathbb{Z}}_3)$ will be said to admit a \emph{reduced row echelon form} for a non-negative integer $n' \leq n$ and an non-decreasing function $\phi:[n'] \to [m]$ if it satisfies the following conditions.
\[
a_{i,j} = \left\{
\begin{array}{ll}
\uparrow &j = \phi(i)\textrm{ and }i \in [n']\\
0 &j < \phi(i)\textrm{ and }i \in [n']\\
0 &i \notin [n']\\
\end{array}
\right.
\]
If we refer to an arbitrary choice of an element in $\overline{\mathbb{Z}}_3$ by a symbol $\ast$, then the matrix shown below, on the left, describes a matrix in reduced row echelon form for the integer $4$ and the function $\phi$ displayed on the right.
\[\left(
\begin{array}{cccccc}
\uparrow & \ast & \ast & \ast & \ast& \ast\\
0 & \uparrow & \ast & \ast & \ast & \ast\\
0 & 0 & 0 & \uparrow & \ast & \ast\\
0 & 0 & 0 & \uparrow & \ast & \ast\\
0 & 0 & 0 & 0 & 0 & 0\\
\end{array}
\right)
\quad\quad\quad
\phi:
\left(
\begin{array}{ccc}
[4] & \to &[6]\\
1&\mapsto&1\\
2&\mapsto&2\\
3&\mapsto&4\\
4&\mapsto&4\\
\end{array}
\right)
\]
\end{convention}

\begin{convention}[Solution]\label{conv:solution_ud}
It follows from the Cayley table of Example \ref{exa:semi-group:Z3Z} for the addition that for every element $y \in \overline{\mathbb{Z}}_3$, the relation $x + y  \in \{0,\updownarrow\}$ always admits a solution $x$. Specifically, let $(x,y)$ be a pair of elements in $\overline{\mathbb{Z}}_3$. By Proposition \ref{prop:sum_to_zero}, we have $x+y = 0$ if, and only if, we have $x = y = 0$. It is also straightforward to verify that we have the equation $x+y = \updownarrow$ if, and only, if we have the following relations.
\[
x \in \left\{\begin{array}{ll}
\{\updownarrow\}&\textrm{if }y=0\\
\Downarrow&\textrm{if }y=\uparrow\\
\Uparrow&\textrm{if }y=\downarrow\\
\overline{\mathbb{Z}}_3&\textrm{if }y=\updownarrow\\
\end{array}
\right.
\]
To conclude, the relation $x+y \in \{0,\updownarrow\}$  holds if, and only, if we have the following parametrization of $x$ in terms of $y$:
\[
\quad\quad\quad\quad\quad x \in \langle y \rangle \quad\textrm{where }\quad
\langle y \rangle := \left\{\begin{array}{ll}
\{0,\updownarrow\}&\textrm{if }y=0,\\
\Downarrow&\textrm{if }y=\uparrow,\\
\Uparrow&\textrm{if }y=\downarrow,\\
\overline{\mathbb{Z}}_3&\textrm{if }y=\updownarrow.\\
\end{array}
\right.
\]
This relational parametrization is the closest to what we could see as a solution (or a root) of a linear equation $x+y = \updownarrow$. Note that if we have several equations of the form $\{x+y_i = \updownarrow \}_{i \in [n]}$, then we can take $x$ to be in the intersection $\bigcap_{i \in [n]} \langle y_i \rangle$, which always contains the element $\updownarrow$.
\end{convention}

\begin{remark}[Resolution algorithm for linear systems]\label{rem:algogirthm:linear-systems}
Let $(n,m)$ be a pair of positive integers and let $A = (a_{i,j})_{i,j}$ be a matrix in $\mathbf{M}_{n,m}(\overline{\mathbb{Z}}_3)$. Suppose that $A$ admits a reduced row echelon form for a pair $(n',\phi)$. By Definition \ref{conv:null-space}, a matrix vector $Z = (z_{i,1})_{i,1} \in \mathbf{M}_{m,1}(\overline{\mathbb{Z}}_3)$ is in $\mathsf{Null}(A)$ if, and only if, it is in $\mathbf{Bal}_{m}$ and the following relation holds for every $i \in [n']$.
\[
z_{\phi(i),1} + z_{\phi(i)+1,1} \cdot a_{i,\phi(i)+1} + \dots + z_{m,1} \cdot a_{i,m} \in \{0,\updownarrow\}
\]
To resolve such a system, we need to distinguish the independent variables from the dependent variables. Following Convention \ref{conv:solution_ud}, we can parametrize the leftmost variable $z_{\phi(j),1}$ in terms of the other variables $z_{\phi(j)+1,1}, \dots ,z_{m,1}$ (displayed on its right-hand side). Specifically, we have the following relation for every $i \in [n']$.
\[
z_{\phi(i),1} \in \left\langle z_{\phi(i)+1,1} \cdot a_{i,\phi(i)+1} + \dots + z_{m,1} \cdot a_{i,m}\right\rangle
\]
These relations suggest an algorithm as follows:
\begin{itemize}
\item[1)] (Independent variables) for every $i \in [m] \backslash \phi([n'])$, choose a value for $z_{i,1}$ in $\overline{\mathbb{Z}}_3$;
\item[2)] for every $j \in [n']$, once the value for $z_{k,1}$ is determined for all $k > \phi(j)$:
\begin{itemize}
\item[a)] find all the elements of the set $E(j) = \{j' \in [n']~|~\phi(j') = \phi(j)\}$, and
\item[b)] take $z_{\phi(j),1}$ to be in the following non-empty intersection.
\[
\bigcap_{j' \in E(j)} \left\langle z_{\phi(j')+1,1} \cdot a_{j',\phi(j')+1} + z_{\phi(j')+2,1} \cdot a_{j',\phi(j')+2} +\dots + z_{m,1} \cdot a_{j',m}\right\rangle
\]
\end{itemize}
\item[3)] repeat until the variable $z_{\phi(1),1}$ is given a value.
\end{itemize}
\end{remark}

We conclude this section with the following example, which extends to the discussion ended in
Example \ref{exa:pedigrads_and_encodings}. In particular, we show how pedigrads can be used to analyze interactions between genetic variants and how these interactions affect phenotypes.

\begin{example}[Unification]\label{exa:pedigrad-unification}
The goal of this example is to unify the various concepts developed throughout the paper within a single example and show how the questions raised in the introduction can be addressed by using the content of section \ref{sec:Pedigrads_in_idempotent_commutative_monoids} and section \ref{sec:solving_our_problem}.
Let $(\Omega,\preceq)$ be the Boolean pre-ordered set $\{0 \leq 1\}$ and let $(E,\varepsilon)$ be our usual pointed set $\{\mathtt{A},\mathtt{C},\mathtt{G},\mathtt{T},\varepsilon\}$. Take $\rho:\Delta_{[3]}(\tau) \Rightarrow \theta$ and $\rho':\Delta_{[3]}(\tau') \Rightarrow \theta'$ to be the two wide spans of homologous segments used in Example \ref{exa:Relative_definition_families} and take $(\Omega,D)$ to be the recombination chromology defined in Example \ref{exa:pedigrads_and_encodings} with respect to these cones (as shown below).
\[
D[n] =
\left\{
\begin{array}{ll}
\{\rho,\rho'\} &  \textrm{ if $n = 15$.}\\
\cmemptyset & \textrm{ if $n \neq 15$.}
\end{array}
\right.
\]
Let now $(\iota,T,\sigma)$ be the sequence alignment over $\mathbf{2}E_1^{\varepsilon}$ defined in Example \ref{exa:Sequence alignments}. As explained in Example \ref{exa:pedigrads_and_encodings}, the recombination monoid $D_ET$ is a $\mathcal{W}^{\mathrm{mon}}$-pedigrad for $(\Omega,D)$. If we denote as $M(\rho)$ the product $\prod_{i \in [3]} D_ET(\theta(i))$, the pedigrad structure of $D_ET$ gives a monomorphism $D_ET[\rho]:D_ET(\tau) \to M(\rho)$. This means that a finite collection $e = (e_{j})_{j \in [m]}$ of elements in $D_ET(\tau)$ will satisfy a linear relation in $D_ET(\tau)$ (such as the one shown on the left-hand side of (\ref{eq:conclusion:unification:1})) if, and only if, the images $D_ET[\rho](e_j)$ of the elements $e_j$ through the morphism $D_ET[\rho]:D_ET(\tau) \to M(\rho)$ satisfy the corresponding relationship in $M(\rho)$ (see right-hand side of (\ref{eq:conclusion:unification:1})).
\begin{equation}\label{eq:conclusion:unification:1}
\sum_{j \in A} e_j = \sum_{j \in B} e_j \quad \Leftrightarrow \quad \sum_{j \in A} D_ET[\rho](e_j) = \sum_{j \in B} D_ET[\rho](e_j)
\end{equation}
While Remark \ref{rem:unification:2} gives an equational description of the elements of the set $\mathsf{SNull}(\mathsf{A}(D_ET[\rho](e)))$ in terms of balanced linear relations of the form shown on the right of (\ref{eq:conclusion:unification:1}), we want to use Theorem \ref{theo:correspondence:null-spaces:2} and Theorem \ref{theo:correspondence:null-spaces:3} show that the elements of $\mathsf{SNull}(\mathsf{A}(D_ET[\rho](e)))$ can be computed as the element of the set $\mathsf{Null}(\mathsf{Coef}(D_ET[\rho](e)))$. After this, we want to use Proposition \ref{prop:cnull-space:anceling-property} and Example \ref{exa:swapping-operations} to show that the set $\mathsf{Null}(\mathsf{Coef}(D_ET[\rho](e)))$ can be computed as another null space $\mathsf{Null}(A)$ where $A$ is a permutation of the row of the matrix $\mathsf{Coef}(D_ET[\rho](e))$. If the matrix $A$ admits a reduced row echelon form, then Remark \ref{rem:algogirthm:linear-systems} shows that we can describe the elements of the set $\mathsf{Null}(A)$ -- which is equal to $\mathsf{SNull}(\mathsf{A}(D_ET[\rho](e)))$ -- algorithmically.

To illustrate the previous procedure, we shall let the finite collection $e$ be the collection $(\mathtt{p}_{2},\mathtt{p}_{3},\mathtt{p}_{4},\mathtt{p}_{6},\mathtt{p}_{8},\mathtt{p}_{9},\mathtt{p}_{10},\mathtt{p}_{12})$ of elements in $T(\tau)$. In such a situation, the matrix $\mathsf{A}(D_ET[\rho](e))$ is of the following form (see the table relative to the cone $\rho$ given in Example \ref{exa:pedigrads_and_encodings}).
\[
\prematrix{cccccccc}
{\mathtt{p}_{2}&\mathtt{p}_{3}&\mathtt{p}_{4}&\mathtt{p}_{6}&\mathtt{p}_{8}&\mathtt{p}_{9}&\mathtt{p}_{10}&\mathtt{p}_{12}}
{
\mathtt{0100001}&\mathtt{0110000}&\mathtt{0000011}&\mathtt{0000011}&\mathtt{1000010}&\mathtt{0001100}&\mathtt{0000110}&\mathtt{0001010}\\

\mathtt{0010010}&\mathtt{1000010}&\mathtt{0010100}&\mathtt{0000011}&\mathtt{0000011}&\mathtt{1000001}&\mathtt{0100001}&\mathtt{1000000}\\

\mathtt{0011}&\mathtt{1010}&\mathtt{0101}&\mathtt{1001}&\mathtt{1001}&\mathtt{1010}&\mathtt{0101}&\mathtt{0110}\\

}
\]
As explained above, computing the skew null space of the previous matrix amounts to computing the null space of the matrix $\mathsf{Coef}(D_ET[\rho](e))$ -- we display this matrix below, on the left. We can then swap the rows of the matrix $\mathsf{Coef}(D_ET[\rho](e))$ to obtain the matrix $A$ in row echelon form, as shown below, on the right.
\[
\prematrix{cccccccc}
{\mathtt{p}_{2}&\mathtt{p}_{3}&\mathtt{p}_{4}&\mathtt{p}_{6}&\mathtt{p}_{8}&\mathtt{p}_{9}&\mathtt{p}_{10}&\mathtt{p}_{12}}
{
0         &0        &0        &0        &\uparrow &0        &0        &0        \\
\uparrow  &\uparrow &0        &0        &0        &0        &0        &0        \\
0         &\uparrow &0        &0        &0        &0        &0        &0        \\
0         &0        &0        &0        &0        &\uparrow &0        &\uparrow \\
0         &0        &0        &0        &0        &\uparrow &\uparrow &0        \\
0         &0        &\uparrow &\uparrow &\uparrow &0        &\uparrow &\uparrow \\
\uparrow  &0        &\uparrow &\uparrow &0        &0        &0        &0        \\
0         &\uparrow &0        &0        &0        &\uparrow &0        &\uparrow \\
\uparrow  &0        &0        &0        &0        &0        &\uparrow &0        \\
\uparrow  &0        &\uparrow &0        &0        &0        &0        &0        \\
0         &0        &0        &0        &0        &0        &0        &0        \\
0         &0        &\uparrow &0        &0        &0        &0        &0        \\
0         &\uparrow &0        &\uparrow &\uparrow &0        &0        &0        \\
0         &0        &0        &\uparrow &\uparrow &\uparrow &\uparrow &0        \\
0         &\uparrow &0        &\uparrow &\uparrow &\uparrow &0        &0        \\
0         &0        &\uparrow &0        &0        &0        &\uparrow &\uparrow \\
\uparrow  &\uparrow &0        &0        &0        &\uparrow &0        &\uparrow \\
\uparrow  &0        &\uparrow &\uparrow &\uparrow &0        &\uparrow &0        \\
}
\quad
\to
\quad\prematrix{cccccccc}
{\mathtt{p}_{2}&\mathtt{p}_{3}&\mathtt{p}_{4}&\mathtt{p}_{6}&\mathtt{p}_{8}&\mathtt{p}_{9}&\mathtt{p}_{10}&\mathtt{p}_{12}}
{
\uparrow  &0        &0        &0        &0        &0        &\uparrow &0        \\
\uparrow  &0        &\uparrow &0        &0        &0        &0        &0        \\
\uparrow  &0        &\uparrow &\uparrow &0        &0        &0        &0        \\
\uparrow  &0        &\uparrow &\uparrow &\uparrow &0        &\uparrow &0        \\
\uparrow  &\uparrow &0        &0        &0        &0        &0        &0        \\
\uparrow  &\uparrow &0        &0        &0        &\uparrow &0        &\uparrow \\

0         &\uparrow &0        &0        &0        &0        &0        &0        \\
0         &\uparrow &0        &0        &0        &\uparrow &0        &\uparrow \\
0         &\uparrow &0        &\uparrow &\uparrow &0        &0        &0        \\
0         &\uparrow &0        &\uparrow &\uparrow &\uparrow &0        &0        \\

0         &0        &\uparrow &0        &0        &0        &0        &0        \\
0         &0        &\uparrow &0        &0        &0        &\uparrow &\uparrow \\
0         &0        &\uparrow &\uparrow &\uparrow &0        &\uparrow &\uparrow \\

0         &0        &0        &\uparrow &\uparrow &\uparrow &\uparrow &0        \\

0         &0        &0        &0        &\uparrow &0        &0        &0        \\

0         &0        &0        &0        &0        &\uparrow &0        &\uparrow \\
0         &0        &0        &0        &0        &\uparrow &\uparrow &0        \\

0         &0        &0        &0        &0        &0        &0        &0        \\

}
\]

Note that the matrix $A$ admits a reduced row echelon form for an obvious non-decreasing function $\phi:[17] \to [8]$ whose images are $\phi([17]) = \{1,2,3,4,5,6\}$. By Remark \ref{rem:algogirthm:linear-systems}, we can describe every element $Z = (z_1,z_2,z_3,z_4,z_5,z_6,z_7,z_8)$ of $\mathsf{Null}(A)$ algorithmically. Specifically, by following each steps of the algorithm presented thereof, we obtain the following algorithm for our specific situation:
\begin{itemize}
\item[1)] choose a value for the variables $z_7$ and $z_8$ in $\overline{\mathbb{Z}}_3$ -- these are our ``independent variables'';
\item[2)] choose a value for $z_6$ in $\langle z_7 \rangle \cap \langle z_8 \rangle$;
\item[3)] choose a value for $z_5$ in $\langle 0 \rangle$, namely either $0$ or $\updownarrow$;
\item[4)] choose a value for $z_4$ in the following intersection\footnote{We indicate that the sum $z_6+z_7$ is in $\{0,\updownarrow\}$ because we have $z_6 \in \langle z_7 \rangle$}:
\[
\langle  z_5+\underbrace{z_6+z_7}_{\in \{0,\updownarrow\}}\rangle.
\]
\item[5)] choose a value for $z_3$ in the following intersection:
\[
\langle z_4+z_5+z_7+z_8 \rangle \cap \langle z_7+z_8 \rangle \cap \langle 0\rangle
\]
\item[6)] choose a value for $z_2$ in the following intersection\footnote{We indicate that the sum $z_6+z_8$ is in $\{0,\updownarrow\}$ because we have $z_6 \in \langle z_8 \rangle$}:
\[
\langle z_4+z_5+z_6 \rangle \cap \langle z_4+z_5 \rangle \cap \langle \underbrace{z_6+z_8}_{\in \{0,\updownarrow\}} \rangle \cap \langle 0\rangle.
\]
\item[7)] choose a value for $z_1$ in the following intersection\footnote{We indicate that the sum $z_2+z_6+z_8$ is in $\{0,\updownarrow\}$ because we have $z_2 \in \langle z_6+z_8 \rangle$}:
\[
\langle \underbrace{z_2+z_6+z_8}_{\in \{0,\updownarrow\}} \rangle \cap \langle z_2 \rangle \cap \langle z_3+z_4+z_5+z_7 \rangle \cap \langle z_3+z_4 \rangle \cap \langle z_3 \rangle \cap \langle z_7 \rangle
\]
\end{itemize}
Note that the parametrization shown above can be used to reason about the general form of the elements $Z$ of $\mathsf{SNull}(\mathsf{A}(D_ET[\rho](e)))$. For example, if we pick $z_8 = \updownarrow$ and $z_7 = 0$, then we must necessarily have $z_6 \in \langle 0 \rangle$ since $\langle 0 \rangle \subseteq \langle \updownarrow \rangle$. Such specifications then allow us to restrict the ``space'' of choices for the values of $z_5$, $z_4$, $z_3$, $z_2$ and $z_1$, as shown below.
\[
\begin{array}{c}
z_5 \in \langle 0 \rangle \quad\quad z_4 \in \langle z_5+z_6 \rangle \quad\quad z_3 \in  \langle 0 \rangle\quad\quad z_2 \in \langle z_4+z_5+z_6 \rangle \cap \langle z_4+z_5 \rangle \cap \langle 0\rangle\\
z_1 \in \langle z_2 \rangle \cap \langle z_3+z_4+z_5 \rangle \cap \langle z_3+z_4 \rangle \cap \langle z_3 \rangle \cap \langle z_7 \rangle
\end{array}
\]
If we take $z_6 = 0$, $z_5 = \updownarrow$ and $z_3 = 0$, then we can take $z_4 = \uparrow$ and $z_2 = 0$. Finally, the previous values imply that we must have $z_1 \in \langle 0 \rangle \cap \langle \updownarrow \rangle$, which means that $z_1 = \updownarrow$. In other words, we have showed that the following equation holds.
\[
\quad\quad\quad\quad\mathtt{p}_{2} + \mathtt{p}_{8} + \mathtt{p}_{12} = \mathtt{p}_{2} + \mathtt{p}_{8} + \mathtt{p}_{12} +\mathtt{p}_{6}
\]
The previous equation provides the following kinship relationship -- we display the corresponding genotypes under each individual, as was done in Example \ref{exa:D_ET_preciting_phenotypes}.
\begin{equation}\label{eq:final:resolution}
\begin{array}{ccccccc}
\mathtt{p}_{2}&+&\mathtt{p}_{8}&+&\mathtt{p}_{12}&\triangleright&\mathtt{p}_{6}\\
\rotatebox[origin=c]{-90}{$
\begin{array}{l}
\fbox{$\mathtt{gTtCcC}$}\,\fbox{$\mathtt{tGa}$}\,\mathtt{(GtgatT)}\\
\mathtt{(aAtAcC)(cAt)}\,\fbox{$\mathtt{TgcctC}$}\\
\end{array}$}
&+&
\rotatebox[origin=c]{-90}{$
\begin{array}{l}
\mathtt{(aAcAgT)}\fbox{$\mathtt{tGa}$}\,\fbox{$\mathtt{TgcctC}$}\\
\mathtt{(gAtCcC)}\fbox{$\mathtt{tGa}$}\,\fbox{$\mathtt{GtgaaT}$}\\
\end{array}$}
&+&
\rotatebox[origin=c]{-90}{$
\begin{array}{l}
\fbox{$\mathtt{gAtCcC}$}\,\mathtt{(cAa)(GtgatT)}\\
\mathtt{(aTtAgT)(cAa)(GtgacT)}\\
\end{array}$}
&\triangleright&
\rotatebox[origin=c]{-90}{$
\begin{array}{l}
\fbox{$\mathtt{gAtCcC}$}\,\fbox{$\mathtt{tGa}$}\,\fbox{$\mathtt{TgcctC}$}\\
\fbox{$\mathtt{gTtCcC}$}\,\fbox{$\mathtt{tGa}$}\,\fbox{$\mathtt{GtgaaT}$}\\
\end{array}$}
\end{array}
\end{equation}
Interestingly, if we translate the previous relation into the corresponding phenotypes for each individual, we obtain the following relation.
\[
\mathsf{AD}+\mathsf{AD}+\mathsf{CD} \triangleright \mathsf{AB}
\]
The occurrence of the phenotype $\mathsf{B}$ on the right-hand side (while not being present on the left-hand side) suggests that the phenotype $\mathsf{B}$ interacts combinatorially with the phenotype $\mathsf{D}$ such that some of the variants responsible for the phenotype $\mathsf{B}$ must be complementary to variants associated with the phenotype $\mathsf{B}$. For example, at the level of single nucleotide variants (\emph{e.g.} SNPs), these complementary patterns would be as follows:
\begin{itemize}
\item[1)] we would have genotypes of the form $\mathtt{XX}$ in $\mathtt{p}_{6}$, while the genotypes found in $\mathtt{p}_{2}$, $\mathtt{p}_{8}$, $\mathtt{p}_{12}$ should be of the form $\mathtt{XY}$, $\mathtt{YX}$, or $\mathtt{YY}$;
\item[2)] we would have genotypes of the form  $\mathtt{XY}$ or $\mathtt{YX}$ in $\mathtt{p}_{6}$, while the genotypes found in $\mathtt{p}_{2}$, $\mathtt{p}_{8}$, $\mathtt{p}_{12}$ should be of the form $\mathtt{XX}$ or $\mathtt{YY}$.
\end{itemize}
This type of complementary patterns would also extend to multiple nucleotide variations (\emph{e.g} haplotypes) in a straightforward manner, except that these patterns would involve many more combinations (as opposed to just comparing genotypes of the form $\mathtt{X}$ and $\mathtt{Y}$). In this respect, in the case of relation (\ref{eq:final:resolution}), we can deduce that:
\begin{itemize}
\item[1)] the middle 3-nucleotide-long segments cannot \emph{solely explain} the interactions between $\mathsf{B}$ and $\mathsf{D}$ because we have the pairing $(\mathtt{tGa},\mathtt{tGa})$ on both sides;
\item[2)] the bottom 6-nucleotide-long segments cannot \emph{solely explain} the interactions between $\mathsf{B}$ and $\mathsf{D}$ because we have the pairing $(\mathtt{GtgatT},\mathtt{TgcctC})$ on both sides.
\end{itemize}
The previous facts suggest that the phenotypes $\mathsf{B}$ and $\mathsf{D}$ could potentially be caused by combinations of variants solely located in the topmost segment -- which turns out to be the case according to the four rules determining the phenotypes $\mathsf{A}$, $\mathsf{B}$, $\mathsf{C}$, and $\mathsf{D}$ (see section \ref{ssec:motivations}). However, we need to keep in mind that this is  a lucky circumstance in the case of our example and that the phenotypes $\mathsf{B}$ and $\mathsf{D}$ could have -- for instance -- been caused by variants \emph{also} belonging to the bottommost segment (note that phenotype $\mathtt{A}$ is an example of such a situation).

To resolve the previous uncertainties, we should also extend the previous reasoning to combinations of segments (as opposed to considering single segments). Since these combinations would involve more variations, we would certainly need to involve more individuals in our equations. Furthermore, to be able to apply the techniques developed in this paper, we would need to consider cones that would translate such combinations. This would ultimately allow us to see any combination of segments as a formal single segment. For example, the cone displayed in (\ref{eq:final_segment}) below combines the leftmost and rightmost segments of the topology used to defined the cone $\rho$.
\begin{equation}\label{eq:final_segment}
\begin{array}{l}
\rho_{1}^{\prime\prime}:\xymatrix@C-30pt@R-20pt{
(\bullet&\bullet&\bullet&\bullet&\bullet&\bullet)&(\bullet&\bullet&\bullet&\bullet&\bullet&\bullet)&(\bullet&\bullet&\bullet&\bullet&\bullet&\bullet)\ar[rr]
&\quad\quad\quad&
(\bullet&\bullet&\bullet&\bullet&\bullet&\bullet)&(\circ&\circ&\circ&\circ&\circ&\circ)&(\bullet&\bullet&\bullet&\bullet&\bullet&\bullet)
}\\
\rho_{2}^{\prime\prime}:\xymatrix@C-30pt@R-20pt{
(\bullet&\bullet&\bullet&\bullet&\bullet&\bullet)&(\bullet&\bullet&\bullet&\bullet&\bullet&\bullet)&(\bullet&\bullet&\bullet&\bullet&\bullet&\bullet)\ar[rr]
&\quad\quad\quad&
(\circ&\circ&\circ&\circ&\circ&\circ)&(\bullet&\bullet&\bullet&\bullet&\bullet&\bullet)&(\circ&\circ&\circ&\circ&\circ&\circ)
}
\end{array}
\end{equation}
Unfortunately, in practice, combinations of segments can give rise to long encoding vectors in $B_2$, mostly because DNA segments generally contain somatic mutations, which contributes to their diversity. As a result, it may sometimes be more effective to consider cones that also ignore certain positions for a given group of individuals, as illustrated below.
\[
\begin{array}{l}
\rho_{1}^{\prime\prime}:\xymatrix@C-30pt@R-20pt{
(\bullet&\bullet)&(\bullet)&(\bullet)&(\bullet)&(\bullet)&(\bullet&\bullet&\bullet&\bullet&\bullet&\bullet)&(\bullet&\bullet&\bullet)&(\bullet)&(\bullet&\bullet)\ar[rr]
&\quad\quad\quad&
(\bullet&\bullet)&(\circ)&(\bullet)&(\circ)&(\bullet)&(\circ&\circ&\circ&\circ&\circ&\circ)&(\bullet&\bullet&\bullet)&(\circ)&(\bullet&\bullet)
}\\
\rho_{2}^{\prime\prime}:\xymatrix@C-30pt@R-20pt{
(\bullet&\bullet)&(\bullet)&(\bullet)&(\bullet)&(\bullet)&(\bullet&\bullet&\bullet&\bullet&\bullet&\bullet)&(\bullet&\bullet&\bullet)&(\bullet)&(\bullet&\bullet)\ar[rr]
&\quad\quad\quad&
(\circ&\circ)&(\circ)&(\circ)&(\circ)&(\circ)&(\bullet&\bullet&\bullet&\bullet&\bullet&\bullet)&(\circ&\circ&\circ)&(\circ)&(\circ&\circ)
}
\end{array}
\]
Once an adequate cone is defined, we can reapply the techniques used to analyze equations of the form shown in (\ref{eq:final:resolution}), but for the corresponding cone. Each cone has the potential to give specific information about existing genetic variation interactions.

In the context of GWAS, the techniques and reasoning steps described above can be applied to (1) find genetic markers that are linked to observed phenotypes and (2) to unravel the interactions that lead to those phenotypes.
\end{example}

\section{Conclusion}\label{sec:conclusion}

Throughout this paper, we have demonstrate how pedigrads enriched in the category of idempotent commutative monoids could be used as a symbolic and computational framework
to (1) understand population stratification structures in patient data through the resolution of the word problem in ic-monoids and (2) localize genomic regions associated with complex genetic effect interactions. For the sake of exposition, we have illustrated the use of our framework through a specific problem, presented in section \ref{ssec:Main_example} (Main example), which was built on intuition developed in section \ref{ssec:motivations} (Motivations).

In a first step, we explained how we can reformulate the specifics of our problem within the pedigrad framework. To do so, we showed in Example \ref{exa:Sequence alignments} how to encode the data associated with our problem into a sequence alignment functor -- a concept introduced in earlier work. Then, we showed in Example \ref{exa:genotype_haplotype_haplogroup} and Example \ref{exa:Relative_definition_families} how we can use this sequence alignment functor to relate individuals to their haplotypes. Finally, through Example \ref{exa:Recombination_semimodule_for_DNA} and Example \ref{exa:D_ET_preciting_phenotypes}, we showed how to make these relationships more biologically accurate by taking into account segregation mechanisms. The underlying model for linking individuals was then integrated into an algebraic structure possessing pedigradic properties and called a recombination scheme.

In Example \ref{exa:Coequalizing_arrows} and Example \ref{exa:pedigrads_and_encodings}, we showed how recombination schemes and their pedigradic properties can be used to give computerized representation of genomic datasets and their underlying mechanisms. This was specifically achieved through Theorem \ref{theo:representable_pedigrad_E_b_varepsilon}
 and Theorem \ref{theo:morphism_to_mon_pedigrad}. Then, throughout section \ref{sec:solving_our_problem}, we developed a matrix algebra framework to find linear equations in idempotent commutative monoids. To do so, we defined a semiring structure (Theorem \ref{theo:tensor-congruence:semiring-compatiblity}) that allowed us to mimic standard linear algebra techniques: we used this semiring structure in a series of steps to define gradually more practical notions of null spaces and resolve linear relationships in idempotent commutative monoids (see Theorem
\ref{theo:correspondence:null-spaces:1}, Theorem \ref{theo:correspondence:null-spaces:2} and Theorem \ref{theo:correspondence:null-spaces:3}). Finally, we illustrated in Example \ref{exa:pedigrad-unification} how to apply the results of section \ref{sec:solving_our_problem}. Specifically, we used the computational framework provided by the recombination scheme of Example \ref{exa:pedigrads_and_encodings} to learn information about population stratification and find genomic regions demonstrating evidence of complex interactions between genetic markers associated with specific phenotypes.

Overall, our results pave the way for a framework extending the GWAS paradigm to a paradigm taking into account complex genetic interactions. In this respect, future work will investigate how our framework can be used in a statistical fashion for causal inference in situations where genetic signals are confounded by external factors and population stratification biases.



\bibliographystyle{plain}

\end{document}